\renewcommand{\le}{\leqslant}
\renewcommand{\ge}{\geqslant}
\newcommand{\+}{\nobreakdash-}
\renewcommand{\:}{\colon}
\renewcommand{\.}{\mskip.5\thinmuskip}
\newcommand{\ot}{\otimes}
\newcommand{\rarrow}{\longrightarrow}
\newcommand{\bu}{{\text{\smaller\smaller$\scriptstyle\bullet$}}}
\newcommand{\lrarrow}{\.\relbar\joinrel\relbar\joinrel\rightarrow\.}
\DeclareMathOperator{\Hom}{Hom}
\DeclareMathOperator{\Ext}{Ext}
\DeclareMathOperator{\Tor}{Tor}
\DeclareMathOperator{\Id}{Id}
\newcommand{\B}{\mathcal B}
\newcommand{\C}{\mathcal C}
\newcommand{\D}{\mathcal D}
\newcommand{\sA}{\mathsf A}
\newcommand{\sB}{\mathsf B}
\newcommand{\sC}{\mathsf C}
\newcommand{\sD}{\mathsf D}
\newcommand{\sE}{\mathsf E}
\newcommand{\sF}{\mathsf F}
\newcommand{\sG}{\mathsf G}
\newcommand{\sH}{\mathsf H}
\newcommand{\sJ}{\mathsf J}
\newcommand{\sK}{\mathsf K}
\newcommand{\sP}{\mathsf P}
\newcommand{\sS}{\mathsf S}
\newcommand{\DG}{\mathsf{DG}}
\newcommand{\Hot}{\mathsf{Hot}}
\newcommand{\boR}{\mathbb R}
\newcommand{\boL}{\mathbb L}
\newcommand{\boZ}{\mathbb Z}
\newcommand{\boQ}{\mathbb Q}
\renewcommand{\b}{{\mathsf{b}}}
\newcommand{\co}{{\mathsf{co}}}
\newcommand{\ctr}{{\mathsf{ctr}}}
\newcommand{\abs}{{\mathsf{abs}}}
\newcommand{\sico}{{\mathsf{sico}}}
\newcommand{\sictr}{{\mathsf{sictr}}}
\newcommand{\inj}{{\mathsf{inj}}}
\newcommand{\proj}{{\mathsf{proj}}}
\renewcommand{\flat}{{\mathsf{flat}}}
\newcommand{\sfpin}{{\mathsf{sfpin}}}
\newcommand{\sfpfl}{{\mathsf{sfpfl}}}
\newcommand{\spl}{{\mathsf{spl}}}
\newcommand{\rop}{{\mathrm{op}}}
\newcommand{\tors}{{\operatorname{\mathsf{-tors}}}}
\newcommand{\ctra}{{\operatorname{\mathsf{-ctra}}}}
\newcommand{\modl}{{\operatorname{\mathsf{--mod}}}}
\newcommand{\modr}{{\operatorname{\mathsf{mod--}}}}
\newcommand{\bimod}{{\operatorname{\mathsf{--mod--}}}}
\newcommand{\comodl}{{\operatorname{\mathsf{--comod}}}}
\newcommand{\contra}{{\operatorname{\mathsf{--contra}}}}
\newcommand{\Section}[1]{\bigskip\section{#1}\medskip}
\theoremstyle{plain}
\newtheorem{thm}{Theorem}[section]
\newtheorem{lem}[thm]{Lemma}
\newtheorem{prop}[thm]{Proposition}
\newtheorem{cor}[thm]{Corollary}
\theoremstyle{definition}
\newtheorem{exs}[thm]{Examples}
\newtheorem{rem}[thm]{Remark}
\begin{document}

\title{Pseudo-dualizing complexes \\ and pseudo-derived categories}
\author{Leonid Positselski}

\address{Institute of Mathematics, Czech Academy of Sciences,
\v Zitn\'a~25, 115~67 Prague~1, Czech Republic; and
\newline\indent Laboratory of Algebraic Geometry, National Research
University Higher School of Economics, Moscow 119048; and
\newline\indent Sector of Algebra and Number Theory, Institute for
Information Transmission Problems, Moscow 127051, Russia; and
\newline\indent Department of Mathematics, Faculty of Natural Sciences,
University of Haifa, Mount Carmel, Haifa 31905, Israel}
\email{posic@mccme.ru}

\begin{abstract}
 The definition of a pseudo-dualizing complex is obtained from that 
of a dualizing complex by dropping the injective dimension 
condition, while retaining the finite generatedness and homothety
isomorphism conditions.
 In the specific setting of a pair of associative rings, we show that
the datum of a pseudo-dualizing complex induces a triangulated
equivalence between a pseudo-coderived category and
a pseudo-contraderived category.
 The latter terms mean triangulated categories standing ``in between''
the conventional derived category and the coderived or
the contraderived category.
 The constructions of these triangulated categories use appropriate
versions of the Auslander and Bass classes of modules.
 The constructions of derived functors providing the triangulated
equivalence are based on a generalization of a technique developed
in our previous paper~\cite{Pmgm}.
\end{abstract}

\maketitle

\tableofcontents

\section*{Introduction}
\medskip

\subsection{{}} \label{pure-types-subsecn}
 According to the philosophy elaborated in the introduction
to~\cite{Pmgm}, the choice of a dualizing complex induces
a triangulated equivalence between the coderived category of
(co)modules and the contraderived category of (contra)modules,
while in order to construct an equivalence between the conventional
derived categories of (co)modules and (contra)modules one needs
a dedualizing complex.
 In particular, an associative ring $A$ is a dedualizing complex of
bimodules over itself, while a coassociative coalgebra $\C$ over
a field~$k$ is a dualizing complex of bicomodules over itself.
 The former assertion refers to the identity equivalence
\begin{equation} \label{taut}
 \sD(A\modl)=\sD(A\modl),
\end{equation}
while the latter one points to the natural triangulated equivalence
between the coderived category of comodules and the contraderived
category of contramodules
\begin{equation} \label{co-contra}
 \sD^\co(\C\comodl)\simeq\sD^\ctr(\C\contra),
\end{equation}
known as the \emph{derived comodule-contramodule
correspondence}~\cite[Sections~0.2.6\+-7 and~5.4]{Psemi},
\cite[Sections~4.4 and~5.2]{Pkoszul}.

 Given a left coherent ring $A$ and a right coherent ring $B$,
the choice of a dualizing complex of $A$\+$B$\+bimodules $D^\bu$
induces a triangulated equivalence between the coderived and
the contraderived category~\cite[Theorem~4.8]{IK},
\cite[Theorem~4.5]{Pfp}
\begin{equation} \label{iyengar-krause}
 \sD^\co(A\modl)\simeq\sD^\ctr(B\modl).
\end{equation}
 Given a left cocoherent coalgebra $\C$ and a right cocoherent
coalgebra $\D$ over a field~$k$, the choice of a dedualizing
complex of $\C$\+$\D$\+bicomodules $\B^\bu$ induces
a triangulated equivalence between the conventional derived
categories of comodules and contramodules~\cite[Theorem~3.6]{Pmc}
\begin{equation} \label{mgmcoalg}
 \sD(\C\comodl)\simeq\sD(\D\contra).
\end{equation}

\subsection{{}}
 The equivalences~(\ref{taut}\+-\ref{mgmcoalg}) of
Section~\ref{pure-types-subsecn} are the ``pure types''.
 The more complicated and interesting triangulated equivalences
of the ``broadly understood co-contra correspondence'' kind
are obtained by mixing these pure types, or maybe rather building
these elementary blocks on top of one another.

 In particular, let $R$ be a commutative ring and $I\subset R$ be
an ideal.
 An $R$\+module $M$ is said to be \emph{$I$\+torsion} if
$$
 R[s^{-1}]\ot_RM=0 \qquad\text{for all \,$s\in I$}.
$$
 Clearly, it suffices to check this condition for a set of generators
$\{s_j\}$ of the ideal~$I$.
 An $R$\+module $P$ is said to be an \emph{$I$\+contramodule} if
$$
 \Hom_R(R[s^{-1}],P)=0=\Ext^1_R(R[s^{-1}],P)
 \qquad\text{for all \,$s\in I$}.
$$
 Once again, it suffices to check these conditions for a set of
generators $\{s_j\}$ of the ideal~$I$ \cite[Theorem~5 and
Lemma~7(1)]{Yek}, \cite[Theorem~5.1]{Pcta}.
 Both the full subcategory of $I$\+torsion $R$\+modules $R\modl_{I\tors}
\subset R\modl$ and the full subcategory of $I$\+contramodule
$R$\+modules $R\modl_{I\ctra}\subset R\modl$ are abelian categories.

 Assume for simplicity that $R$ is a Noetherian ring.
 Then, using what the paper~\cite{Pmgm} calls a \emph{dedualizing
complex} $B^\bu$ for the ring $R$ with the ideal $I\subset R$,
one can construct a triangulated equivalence between the conventional
derived categories of the abelian categories of $I$\+torsion and
$I$\+contramodule $R$\+modules
\begin{equation} \label{matlis-greenlees-may}
 \sD(R\modl_{I\tors})\simeq\sD(R\modl_{I\ctra}).
\end{equation}
 This result can be generalized to the so-called weakly proregular
finitely generated ideals $I$ in the sense of~\cite{Sch,PSY} in
not necessarily Noetherian commutative rings~$R$
\cite[Corollary~3.5 or Theorem~5.10]{Pmgm}.

 Using what is generally known as a \emph{t\+dualizing}
(torsion-dualizing) \emph{complex}~\cite[Definition~2.5.1]{AJL},
\cite[Section~3]{PSY2} (called a ``dualizing complex for a ring $R$
with an ideal $I\subset R$'' in~\cite{Pcosh}), one can construct
a triangulated equvalence between the coderived category of
$I$\+torsion $R$\+modules and the contraderived category of
$I$\+contramodule $R$\+modules~\cite[Theorem~C.1.4]{Pcosh}
(see also~\cite[Theorem~C.5.10]{Pcosh})
\begin{equation} \label{covariant-serre-grothendieck}
 \sD^\co(R\modl_{I\tors})\simeq\sD^\ctr(R\modl_{I\ctra}).
\end{equation}
 This result can be generalized from affine Noetherian formal schemes
to ind-affine ind-Noetherian or ind-coherent ind-schemes with
a dualizing complex~\cite[Theorem~D.2.7]{Pcosh}
(see also~\cite[Remark~4.10]{Pmgm}).

 Informally, one can view the $I$\+adic completion of a ring $R$ as
``a ring in the direction of $R/I$ and a coalgebra in the transversal
direction of $R$ relative to $R/I$''.
 In this sense, one can say that (the formulation of) the triangulated
equivalence~\eqref{matlis-greenlees-may} is obtained by
building~\eqref{mgmcoalg} on top of~\eqref{taut}, while (the idea of)
the triangulated equivalence~\eqref{covariant-serre-grothendieck}
is the result of bulding~\eqref{co-contra} on top
of~\eqref{iyengar-krause}.

\subsection{{}}
 A number of other triangulated equivalences appearing in
the present author's work can be described as mixtures of
some of the equivalences~(\ref{taut}\+-\ref{mgmcoalg}).
 In particular, the equivalence between the coderived category
of comodules and the contraderived category of contramodules
over a pair of corings over associative rings
in~\cite[Corollaries~B.4.6 and~B.4.10]{Pcosh}
is another way of building~\eqref{co-contra} on top
of~\eqref{iyengar-krause}.

 The equivalence between the conventional derived
categories of semimodules and semicontramodules
in~\cite[Theorem~4.3]{Pmc} is obtained by
building~\eqref{taut} on top of~\eqref{mgmcoalg}.
 The equivalence between the semicoderived and
the semicontraderived categories of modules
in~\cite[Theorem~5.6]{Pfp} is the result of
building~\eqref{taut} on top of~\eqref{iyengar-krause}.

 The most deep and difficult in this series of triangulated
equivalences is the \emph{derived semimodule-semicontramodule
correspondence} of~\cite[Section~0.3.7]{Psemi} (see the proof
in a greater generality in~\cite[Section~6.3]{Psemi}).
  The application of this triangulated equivalence to the categories
$\mathsf O$ and $\mathsf O^\ctr$ over Tate Lie algebras
in~\cite[Corollary~D.3.1]{Psemi} is of particular importance.
 This is the main result of the book~\cite{Psemi}.
 It can be understood as obtainable by building~\eqref{taut} on top
of~\eqref{co-contra}.
 
 Note that all the expressions like ``can be obtained by'' or
``is the result of'' above refer, at best, to the \emph{formulations}
of the mentioned theorems, rather than to their \emph{proofs}.
 For example, the derived semimodule-semicontramodule
correspondence, even in the generality of~\cite[Section~0.3.7]{Psemi},
is a difficult theorem.
 There is no way to deduce it from the easy~\eqref{co-contra} and
the trivial~\eqref{taut}.
 The formulations of~\eqref{co-contra} and~\eqref{taut} serve as
an inspiration and the guiding heuristics for arriving to
the formulation of the derived semimodule-semicontramodule
correspondence.
 Subsequently, one has to develop appropriate techniques leading
to a proof.

\subsection{{}}
 More generally, beyond building things on top of one another, one
may wish to develop notions providing a kind of ``smooth
interpolation'' between various concepts.
 In particular, the notion of a discrete module over a topological ring
can be viewed as interpolating between those of a module over
a ring and a comodule over a coalgebra over a field, while the notion
of a contramodule over a topological ring (see~\cite[Remark~A.3]{Psemi}
or~\cite{PR}) interpolates between those of a module over a ring and
a contramodule over a coalgebra over a field.

 The notion of a \emph{pseudo-dualizing complex} (known as
a ``semi-dualizing complex'' in the literature) interpolates between
those of a dualizing and a dedualizing complex.
 Similarly, the notions of a \emph{pseudo-coderived} and
a \emph{pseudo-contraderived} category interpolate between those
of the conventional derived category and the co- or contraderived
category.
 The aim of this paper is to construct the related interpolation
between the triangulated equivalences~\eqref{taut}
and~\eqref{iyengar-krause}.

 Let us mention that a family of ``intermediate'' model structures
between conventional derived ones (``of the first kind'') and
the coderived ones (``of the second kind'') was constructed, in
the case of DG\+coalgebras and DG\+comodules, in
the paper~\cite{DCH}.
 There is some vague similarity between our construction and
the one in~\cite{DCH}.
 The differences are that we start from a  pseudo-dualizing complex
and obtain a triangulated equivalence for our intermediate triangulated
categories in the context of the comodule-contramodule correspondence,
while the authors of~\cite{DCH} start from a twisting cochain and
obtain a Quillen adjunction in the context of Koszul duality.

\subsection{{}} \label{two-associative-rings-introd}
 Let $A$ and $B$ be associative rings.
 A \emph{pseudo-dualizing complex} $L^\bu$ for the rings $A$ and $B$
is a finite complex of $A$\+$B$\+bimodules satisfying the following
two conditions:
\begin{enumerate}
\renewcommand{\theenumi}{\roman{enumi}}
\setcounter{enumi}{1}
\item as a complex of left $A$\+modules, $L^\bu$ is quasi-isomorphic
to a bounded above complex of finitely generated projective
$A$\+modules, and similarly, as a complex of right $B$\+modules,
$L^\bu$ is quasi-isomorphic to a bounded above complex of finitely
generated projective $B$\+modules;
\item the homothety maps $A\rarrow\Hom_{\sD^\b(\modr B)}(L^\bu,L^\bu[*])$
and $B^\rop\rarrow\Hom_{\sD^\b(A\modl)}(L^\bu,L^\bu[*])$ are
isomorphisms of graded rings. \emergencystretch=3em\hbadness=2900
\end{enumerate}
 This definition is obtained by dropping the injectivity (or
finite injective dimension, or fp\+injectivity, etc.)\ condition~(i)
from the definition of a \emph{dualizing} or (``cotilting'') complex
of $A$\+$B$\+bimodules $D^\bu$ in the papers~\cite{Miy,YZ,CFH,Pfp},
removing the Noetherianness/coherence conditions on the rings $A$
and $B$, and rewriting the finite generatedness/presentability
condition~(ii) accordingly.

 For example, when the rings $A$ and $B$ coincide, the one-term
complex $L^\bu=A=B$ becomes the simplest example of
a pseudo-dualizing complex.
 This is what can be called a \emph{dedualizing complex} in this
context.
 More generally, a ``dedualizing complex of $A$\+$B$\+bimodules'' is
the same thing as a ``(two-sided) tilting complex'' $T^\bu$ in
the sense of Rickard's derived Morita theory~\cite{Ric,Ric2,DS}.

 What in our terminology would be called ``pseudo-dualizing complexes
of modules over commutative Noetherian rings'' were studied in
the paper~\cite{Chr} and the references therein under some other names,
such as ``semi-dualizing complexes''.
 What the authors call ``semidualizing bimodules'' for pairs of
associative rings were considered in the paper~\cite{HW}.
 We use this other terminology of our own in this paper, because in
the context of the present author's work the prefix ``semi'' means
something related but different and more narrow (as in~\cite{Psemi}
and~\cite[Sections~5\+-6]{Pfp}).

 The main result of this paper provides the following commutative
diagram of triangulated functors associated with a pseudo-dualizing
complex of $A$\+$B$\+bimodules~$L^\bu$:
\begin{equation} \label{two-associative-rings-triangulated-diagram}
\begin{diagram}
\node{\sD^\co(A\modl)}\arrow{s,A}
\node[2]{\sD^\ctr(B\modl)}\arrow{s,A} \\
\node{\sD_\prime^{L^\bu}(A\modl)}\arrow{s,A}\arrow[2]{e,=}
\node[2]{\sD_{\prime\prime}^{L^\bu}(B\modl)}\arrow{s,A} \\
\node{\sD'_{L^\bu}(A\modl)}\arrow{s,A}\arrow[2]{e,=}
\node[2]{\sD''_{L^\bu}(B\modl)}\arrow{s,A} \\
\node{\sD(A\modl)}\node[2]{\sD(B\modl)}
\end{diagram}
\end{equation}
 Here the vertical arrows are Verdier quotient functors, while
the horizontal double lines are triangulated equivalences.

 Thus $\sD_\prime^{L^\bu}(A\modl)$ and $\sD'_{L^\bu}(A\modl)$ are
certain intermediate triangulated categories between the coderived
category of left $A$\+modules $\sD^\co(A\modl)$ and their
conventional unbounded derived category $\sD(A\modl)$.
 Similarly, $\sD_{\prime\prime}^{L^\bu}(B\modl)$ and
$\sD''_{L^\bu}(B\modl)$ are certain intermediate triangulated categories
between the contraderived category of left $B$\+modules
$\sD^\ctr(B\modl)$ and their conventional unbounded derived category
$\sD(B\modl)$.
 These intermediate triangulated quotient categories depend on, and
are determined by, the choice of a pseudo-dualizing complex $L^\bu$
for a pair of associative rings $A$ and~$B$.

 The triangulated category $\sD'_{L^\bu}(A\modl)$ is called
the \emph{lower pseudo-coderived category} of left $A$\+modules
corresponding to the pseudo-dualizing complex~$L^\bu$.
 The triangulated category $\sD''_{L^\bu}(B\modl)$ is called
the \emph{lower pseudo-contraderived category} of left $B$\+modules
corresponding to the pseudo-dualizing complex~$L^\bu$.
 The triangulated category $\sD_\prime^{L^\bu}(A\modl)$ is called
the \emph{upper pseudo-coderived category} of left $A$\+modules
corresponding to~$L^\bu$.
 The triangulated category $\sD_{\prime\prime}^{L^\bu}(B\modl)$ is
called the \emph{upper pseudo-contraderived category} of left
$B$\+modules corresponding to~$L^\bu$.
 The choice of a pseudo-dualizing complex $L^\bu$ also induces
triangulated equivalences $\sD'_{L^\bu}(A\modl)\simeq
\sD''_{L^\bu}(B\modl)$ and $\sD_\prime^{L^\bu}(A\modl)\simeq
\sD_{\prime\prime}^{L^\bu}(A\modl)$ forming the commutative
diagram~\eqref{two-associative-rings-triangulated-diagram}.

 In particular, when $L^\bu=D^\bu$ is a dualizing complex, i.~e.,
the condition~(i) of~\cite[Section~4]{Pfp} is satisfied, assuming
additionally that all fp\+injective left $A$\+modules have finite
injective dimensions, one has $\sD_\prime^{L^\bu}(A\modl)=
\sD^\co(A\modl)$ and $\sD_{\prime\prime}^{L^\bu}(B\modl)=
\sD^\ctr(B\modl)$, that is the upper two vertical arrows in
the diagram~\eqref{two-associative-rings-triangulated-diagram}
are isomorphisms of triangulated categories.
 The upper triangulated equivalence in
the diagram~\eqref{two-associative-rings-triangulated-diagram}
coincides with the one provided by~\cite[Theorem~4.5]{Pfp}
in this case.

 When $L^\bu=A=B$, one has $\sD'_{L^\bu}(A\modl)=\sD(A\modl)$ and
$\sD''_{L^\bu}(B\modl)=\sD(B\modl)$, that is the lower two vertical
arrows in 
the diagram~\eqref{two-associative-rings-triangulated-diagram}
are isomorphisms of triangulated categories.
 The lower triangulated equivalence in
the diagram~\eqref{two-associative-rings-triangulated-diagram} is
just the identity isomorphism $\sD(A\modl)=\sD(B\modl)$ is this case.
 More generally, the lower triangulated equivalence in
the diagram~\eqref{two-associative-rings-triangulated-diagram}
corresponding to a tilting complex $L^\bu=T^\bu$ recovers
Rickard's derived Morita equivalence~\cite[Theorem~6.4]{Ric},
\cite[Theorem~3.3 and Proposition~5.1]{Ric2} (see
also~\cite[Theorem~4.2]{DS}).

\subsection{{}}
 A delicate point is that when $A=B=R$ is, e.~g., a Gorenstein
Noetherian commutative ring of finite Krull dimension, the ring $R$
itself can be chosen as a dualizing complex of $R$\+$R$\+bimodules.
 So we are in both of the above-described situations at the same time.
 Still, the derived category of $R$\+modules $\sD(R\modl)$,
the coderived category $\sD^\co(R\modl)$, and the contraderived
category $\sD^\ctr(R\modl)$ are three quite different quotient
categories of the homotopy category of (complexes of) $R$\+modules
$\Hot(R\modl)$.
 In this case, the commutative
diagram~\eqref{two-associative-rings-triangulated-diagram} takes
the form
$$
\begin{diagram}
\node{\sD^\co(R\modl)}\arrow{s,A}\arrow[2]{e,=}
\node[2]{\sD^\ctr(R\modl)}\arrow{s,A} \\
\node{\sD(R\modl)}\arrow[2]{e,=}\node[2]{\sD(R\modl)}
\end{diagram}
$$

 More precisely, the two Verdier quotient functors $\Hot(R\modl)\rarrow
\sD^\co(R\modl)$ and $\Hot(R\modl)\allowbreak\rarrow\sD^\ctr(R\modl)$
both factorize naturally through the Verdier quotient functor
$\Hot(R\modl)\rarrow\sD^\abs(R\modl)$ from the homotopy category onto
the absolute derived category of $R$\+modules $\sD^\abs(R\modl)$.
 But the two resulting Verdier quotient functors
$\sD^\abs(R\modl)\rarrow\sD^\co(R\modl)$ and $\sD^\abs(R\modl)\rarrow
\sD^\ctr(R\modl)$ do \emph{not} form a commutative triangle with
the equivalence $\sD^\co(R\modl)\simeq\sD^\ctr(R\modl)$.
 Rather, they are the two adjoint functors on the two sides to
the fully faithful embedding of a certain (one and the same)
triangulated subcategory in $\sD^\abs(R\modl)$
\cite[proof of Theorem~3.9]{Pkoszul}.

 This example shows that one cannot hope to have a procedure recovering
the conventional derived category $\sD(A\modl)=\sD(B\modl)$ from
the dedualizing complex $L^\bu=A=B$, and at the same time recovering
the coderived category $\sD^\co(A\modl)$ and the contraderived category
$\sD^\ctr(B\modl)$ from a dualizing complex $L^\bu=D^\bu$.
 Thus the distinction between the lower and and the upper
pseudo-co/contraderived category constructions is in some sense
inevitable.

\subsection{{}}
 Before we finish this introduction, let us say a few words about
where the pseudo-coderived and pseudo-contraderived categories
come from in Section~\ref{two-associative-rings-introd}.
 We use ``pseudo-derived categories'' as a generic term for
the pseudo-coderived and pseudo-contraderived categories.
 The constructions of such triangulated categories that we use in
this paper were originally introduced for the purposes of
the infinitely generated Wakamatsu tilting theory~\cite[Section~4]{PS}
(an even more general approach to exotic derived categories is
suggested in~\cite[Section~5]{PS}).

 The pseudo-derived categories are constructed as the conventional
unbounded derived categories of certain exact subcategories
$\sE_\prime\subset\sE'\subset\sA$ and $\sF_{\prime\prime}\subset\sF''
\subset\sB$ in the abelian categories $\sA=A\modl$ and $\sB=B\modl$.
 The idea is that shrinking an abelian (or exact) category to its
exact subcategory leads, under certain assumptions, to a bigger
derived category, as complexes in the exact subcategory are
considered up to a finer equivalence relation in the derived
category construction.

 In the situation at hand, the larger subcategories $\sE'$ and $\sF''$
are our versions of what are called the \emph{Auslander and Bass
classes} in the literature~\cite{Chr,FJ,EJLR,CFH,HW}.
 Specifically, $\sF''$ is the Auslander class and $\sE'$ is
the Bass class.
 The two full subcategories $\sE_\prime$ and $\sF_{\prime\prime}$ are
certain natural smaller classes.
 One can say, in some approximate sense, that $\sE'$ and $\sF''$
are the \emph{maximal corresponding classes}, while $\sE_\prime$ and
$\sF_{\prime\prime}$ are the \emph{minimal corresponding classes} in
the categories $\sA$ and~$\sB$.

 More precisely, there is a natural single way to define the full
subcategories $\sE'\subset\sA$ and $\sF''\subset\sB$ when
the pseudo-dualizing complex $L^\bu$ is a one-term complex.
 In the general case, we have two sequences of embedded subcategories
$\sE_{d_1}\subset\sE_{d_1+1}\subset\sE_{d_1+2}\subset\dotsb\subset\sA$
and $\sF_{d_1}\subset\sF_{d_1+1}\subset\sF_{d_1+2}\subset\dotsb\subset
\sB$ indexed by large enough integers.
 All the subcategories $\sE_{l_1}$ with varying index $l_1=d_1$,
$d_1+1$, $d_1+2$,~\dots\ are ``the same up to finite homological
dimension'', and so are all the subcategories~$\sF_{l_1}$.
 Hence the triangulated functors $\sD(\sE_{l_1})\rarrow\sD(\sE_{l_1+1})$
and $\sD(\sF_{l_1})\rarrow\sD(\sF_{l_1+1})$ induced by the exact
embeddings $\sE_{l_1}\rarrow\sE_{l_1+1}$ and $\sF_{l_1}\rarrow
\sF_{l_1+1}$ are triangulated equivalences, so the pseudo-derived
categories $\sD'_{L^\bu}(A\modl)=\sD(\sE_{l_1})$ and
$\sD''_{L^\bu}(B\modl)=\sD(\sF_{l_1})$ do not depend on the choice of
a number~$l_1$.

 The idea of the construction of the triangulated equivalence between
the two lower pseudo-derived categories is that the functor
$\sD'_{L^\bu}(\sA)\rarrow\sD''_{L^\bu}(\sB)$ should be a version of
$\boR\Hom_A(L^\bu,{-})$, while the inverse functor $\sD''_{L^\bu}(\sB)
\rarrow\sD'_{L^\bu}(\sA)$ is a version of derived tensor product
$L^\bu\ot_B^\boL{-}$.
 The full subcategories $\sE_{l_1}\subset\sA$ and $\sF_{l_1}\subset\sB$
are defined by the conditions of uniform boundedness of cohomology
of such Hom and tensor product complexes (hence dependence on
a fixed bound~$l_1$) and the composition of the two operations leading
back to the original object.

 The point is that the two functors $\boR\Hom_A(L^\bu,{-})$ and
$L^\bu\ot_B^\boL{-}$ are mutually inverse when viewed as acting between
the pseudo-derived categories $\sD(\sE)$ and $\sD(\sF)$, but objects
of the pseudo-derived categories are complexes viewed up to a more
delicate equivalence relation than in the conventional derived
categories $\sD(\sA)$ and $\sD(\sB)$.
 When this subtlety is ignored, the two functors cease to be mutually
inverse, generally speaking, and such mutual inverseness needs to be
enforced as an additional adjustness restriction on the objects one
is working with.

 Similarly, there is a natural single way to define the full
subcategories $\sE_\prime\subset\sA$ and $\sF_{\prime\prime}\subset\sF$
when the pseudo-dualizing complex $L^\bu$ is a one-term complex.
 In the general case, we have two sequences of embedded subcategories
$\sE^{d_2}\supset\sE^{d_2+1}\supset\sE^{d_2+2}\supset\dotsb$ in $\sA$
and $\sF^{d_2}\supset\sF^{d_2+1}\supset\sF^{d_2+2}\supset\dotsb$
in $\sB$, indexed by large enough integers.
 As above, all the subcategories $\sE^{l_2}$ with varying $l_2=d_2$,
$d_2+1$, $d_2+2$,~\dots\ are ``the same up to finite homological
dimension'', and so are all the subcategories~$\sF^{l_2}$.
 Hence the triangulated functors $\sD(\sE^{l_2+1})\rarrow\sD(\sE^{l_2})$
and $\sD(\sF^{l_2+1})\rarrow\sD(\sF^{l_2})$ induced by the exact
embeddings $\sE^{l_2+1}\rarrow\sE^{l_2}$ and $\sF^{l_2+1}\rarrow
\sF^{l_2}$ are triangulated equivalences, so the pseudo-derived
categories $\sD_\prime^{L^\bu}(A\modl)=\sD(\sE^{l_2})$ and
$\sD_{\prime\prime}^{L^\bu}(B\modl)=\sD(\sF^{l_2})$ do not depend on
the choice of a number~$l_2$.

 The triangulated equivalence between the two upper pseudo-derived
categories is also provided by some versions of derived functors
$\boR\Hom_A(L^\bu,{-})$ and $L^\bu\ot_B^\boL{-}$.
 The full subcategories $\sE^{l_2}\subset\sA$ and $\sF^{l_2}\subset\sB$
are produced by a kind of generation process.
 One starts from declaring that all the injectives in $\sA$ belong
to $\sE^{l_2}$ and all the projectives in $\sB$ belong to~$\sF^{l_2}$.
 Then one proceeds with generating further objects of $\sF^{l_2}$ by
applying $\boR\Hom_A(L^\bu,{-})$ to objects of $\sE^{l_2}$, and further
objects of $\sE^{l_2}$ by applying $L^\bu\ot_B^\boL{-}$ to objects
of~$\sF^{l_2}$.
 One needs to resolve the complexes so obtained to produce objects
of the abelian module categories, and the number~$l_2$ indicates
the length of the resolutions used.
 More objects are added to $\sE^{l_2}$ and $\sF^{l_2}$ to make these
full subcategories closed under certain operations.

 We refer to the main body of the paper for further details.

\subsection{{}} \label{introd-adjoints}
 Inspired by the recent paper~\cite{BT}, in the last three sections of
the present paper we address the question of existence of left and
right adjoint functors to the Verdier quotient functors in
the diagram~\eqref{two-associative-rings-triangulated-diagram}
from Section~\ref{two-associative-rings-introd}.
 More precisely, for any pseudo-dualizing complex of $A$\+$B$\+bimodules
$L^\bu$ the adjoint functors shown by curvilinear arrows on
the following diagram exist: {\hfuzz=2.2pt
\begin{equation} \label{easy-adjoints-diagram}
\begin{tikzcd}
\Hot(A\modl) \arrow[d, two heads] &&&&&
\Hot(B\modl) \arrow[d, two heads] \\
\sD^\co(A\modl) \arrow[d, two heads] &&&&&
\sD^\ctr(B\modl) \arrow[d, two heads] \\
\sD^{L^\bu}_{\prime}(A\modl) \arrow[d, two heads]
\arrow[rrrrr, Leftrightarrow, no head, no tail] &&&&&
\sD^{L^\bu}_{\prime\prime}(B\modl) \arrow[d, two heads] \\
\sD'_{L^\bu}(A\modl) \arrow[d, two heads]
\arrow[rrrrr, Leftrightarrow, no head, no tail] &&&&&
\sD''_{L^\bu}(B\modl) \arrow[d, two heads] \\
\sD(A\modl) \arrow[uuuu, tail, bend left=102]
\arrow[uuuu, tail, bend right=102]
\arrow[uuu, tail, bend left=90] \arrow[uuu, tail, bend right=90]
\arrow[uu, tail, bend left=78] \arrow[uu, tail, bend right=78]
\arrow[u, tail, bend left=70] \arrow[u, tail, bend right=70]
&&&&& \sD(B\modl) \arrow[uuuu, tail, bend left=102]
\arrow[uuuu, tail, bend right=102]
\arrow[uuu, tail, bend left=90] \arrow[uuu, tail, bend right=90]
\arrow[uu, tail, bend left=78] \arrow[uu, tail, bend right=78]
\arrow[u, tail, bend left=70] \arrow[u, tail, bend right=70]
\end{tikzcd}
\end{equation}
 The functors} shown by curvilinear arrows, being adjoints to Verdier
quotient functors, are fully faithful.
 Their existence follows straightforwardly from the existence of 
left and right adjoint functors to the natural Verdier quotient functor
$\Hot(R\modl)\rarrow\sD(R\modl)$ for any associative ring~$R$.

 Moreover, assuming that the ring $A$ is left coherent, fp\+injective
left $A$\+modules have finite injective dimensions, the ring $B$ is
right coherent, and flat left $B$\+modules have finite projective
dimensions, the adjoint functors shown by the curvilinear arrows
on the following diagram also exist:
\begin{equation} \label{hard-adjoints-diagram}
\begin{tikzcd}
\sD^\co(A\modl) \arrow[d, two heads] &&
\sD^\ctr(B\modl) \arrow[d, two heads] \\
\sD'_{L^\bu}(A\modl) \arrow[d, two heads]
\arrow[rr, Leftrightarrow, no head, no tail] 
\arrow[u, tail, bend left=68] \arrow[u, tail, bend right=68] &&
\sD''_{L^\bu}(B\modl) \arrow[d, two heads] 
\arrow[u, tail, bend left=68] \arrow[u, tail, bend right=68] \\
\sD(A\modl) \arrow[uu, tail, bend left=80]
\arrow[uu, tail, bend right=80]
\arrow[u, tail, bend left=68] \arrow[u, tail, bend right=68]
&& \sD(B\modl) \arrow[uu, tail, bend left=80]
\arrow[uu, tail, bend right=80]
\arrow[u, tail, bend left=68] \arrow[u, tail, bend right=68]
\end{tikzcd}
\end{equation}
 As compared to the previous diagram, the nontrivial additional part
here is the existence of the left and right adjoints to the natural
Verdier quotient functors
$\sD^\co(A\modl)\rarrow\sD'_{L^\bu}(A\modl)$ and
$\sD^\ctr(B\modl)\rarrow\sD''_{L^\bu}(B\modl)$ between
the co/contraderived categories and the lower pseudo-derived
categories of modules over $A$ and~$B$.
 In order to prove that these functors exist, we show that
the Auslander and Bass classes of modules $\sF_{l_1}(L^\bu)
\subset B\modl$ and $\sE_{l_1}(L^\bu)\subset A\modl$ 
are \emph{deconstructible}, deduce from this the \emph{existence
of Hom sets} in the lower pseudo-derived categories, recall from
the literature that the co/contraderived categories are
\emph{compactly generated} in the above assumptions, and
observe that the Verdier quotient functors in question preserve
infinite direct sums and products.

\subsection{{}}
 To end, let us say a few words about how the results of this paper
compare to the theory developed in the papers~\cite{PS0,PS}.
 Both~\cite{PS0,PS} and this paper can be viewed as generalizations
of the classical finitely generated $n$\+tilting theory, as
developed, e.~g., in~\cite{Mi} and~\cite{CPS}, albeit in different
subsets of directions.

 Basically, using the notion of a finitely generated $n$\+tilting
module as the starting point, one can
\begin{enumerate}
\renewcommand{\theenumi}{\alph{enumi}}
\item drop the condition that the tilting module is finitely generated;
and/or
\item drop the condition that the tilting module has finite projective
dimension~$n$; and/or
\item replace a tilting module by a tilting complex; and/or
\item replace the category of modules over an associative ring by
an abelian category of more general nature (like, e.~g., a Grothendieck
category).
\end{enumerate}

 All these four lines of thought have been explored by many authors
over the years; we refer to the introduction to~\cite{PS} for
an (admittedly very possibly incomplete) collection of references.
 In particular, it seems that (a)~was first done in~\cite{CT},
(b)~was initiated in~\cite{Wak1,Wak2}, (c)~was introduced
in~\cite{Ric,Ric2}, and (d)~was first attempted in~\cite{Co}.

 Our paper~\cite{PS0} does (a) and~(d).
 In particular, the generalization to infinitely generated tilting
modules/objects is the reason why contramodule categories appear
(as the tilting hearts) in~\cite{PS0}.
 The paper~\cite{PS} does (a), (b), and~(d).
 In particular, the generalization to tilting modules/objects of
infinite projective dimension is the reason why pseudo-derived
categories were needed in~\cite[Section~4]{PS} (and an even more general
approach using t\+structures was developed in~\cite[Section~5]{PS}).

 The present paper does~(b) and~(c).
 The generalization from $\infty$\+tilting \emph{modules} to
pseudo-dualizing \emph{complexes} in this paper (cf.~\cite{HW}
and~\cite[Example~6.6]{PS}) is what makes many aspects of our exposition
here so much more complicated technically than in~\cite{PS}.
 In fact, the triangulated equivalences $\sD^\star(\sE)\simeq
\sD^\star(\sF)$ come for free in~\cite{PS0,PS}, being immediate
consequences of the equivalences of exact categories $\sE\simeq\sF$.
 In this paper, a whole (rather long and complicated) appendix is
needed just to construct the pair of adjoint derived functors providing
this triangulated equivalence.
 On the other hand, our pseudo-dualizing complexes are presumed to be
strongly finitely generated (on each side and up to quasi-isomorphism)
in this paper.
 That is why the underlying abelian categories on both sides of our
equivalences are the conventional categories of modules over
associative rings, and no contramodules appear.

\subsection{{}} \label{introd-hom-sets}
 One terminological disclaimer is in order.
 With the exception of the final
Sections~\ref{deconstructibility-secn}\+-%
\ref{existence-of-adjoints-secn}, throughout this paper we adopt
the policy of benign neglect of the issue of \emph{existence} of
Verdier quotients (cf.~\cite[Set-Theoretic Remark~10.3.3]{Wei}).

 Generally speaking, given a triangulated subcategory in a triangulated
category, their Verdier quotient (such as, e.~g., the derived category of
an abelian or exact category) may not exist as a category, in the sense
that morphisms between a fixed pair of objects in the Verdier quotient
may not form a set.
 So we tacitly presume that a Grothendieck universe has been chosen, and
all our sets (rings, modules, etc.) and categories are sets and categories
in this universe.
 If the need arises, the problem of existence of a Verdier quotient category
can be then resolved by considering such Verdier quotient as
a category in a larger universe.

 When (in the final two sections) we will need to emphasize that
a particular Verdier quotient category \emph{does} exist (in the fixed,
unchanged universe), we will say that such a Verdier quotient
\emph{has Hom sets}.

\medskip

\textbf{Acknowledgement.} 
 I~am grateful to Vladimir Hinich, Jan Trlifaj,
Jan \v St\!'ov\'\i\v cek, Hanno Becker, Amnon Yekutieli,
and Marco Tarantino for helpful discussions.
 I~would like also to thank the anonymous referee for several helpful
suggestions on the improvement of the exposition.
 The author's research is supported by research plan RVO:~67985840,
by the Israel Science Foundation grant~\#\,446/15, and by
the Grant Agency of the Czech Republic under the grant P201/12/G028.

\Section{Pseudo-Coderived and Pseudo-Contraderived Categories}
\label{pseudo-derived-introd}

 For any additive category $\sA$, we denote by $\Hot^\star(\sA)$, where
$\star=\b$, $+$, $-$, or~$\varnothing$, the categories of
(respectively bounded or unbounded) cochain complexes in $\sA$ with
the morphisms considered up to the cochain homotopy.

 Let $\sA$ be an exact category (in Quillen's sense).
 A complex in $\sA$ is said to be \emph{exact} if it is obtained by
splicing short exact sequences in~$\sA$.
 A complex in $\sA$ is \emph{acyclic} if it is homotopy equivalent to
an exact complex, or equivalently, if it is a direct summand of
an exact complex.
 The (bounded or unbounded) conventional derived category
$\sD^\star(\sA)$ with the symbol $\star=\b$, $+$, $-$, or~$\varnothing$
is defined as the quotient category of $\Hot^\star(\sA)$ by the thick
subcategory of (respectively bounded or unbounded) acyclic complexes
(see the paper~\cite{Neem0} and the overviews~\cite{Kel,Bueh}).

 A short exact sequence of complexes in $\sA$ can be viewed as
a bicomplex with three rows.
 Taking the total complex of such a bicomplex, one obtains an exact
complex in~$\sA$.
 A $\star$\+bounded complex in $\sA$ is said to be \emph{absolutely
acyclic} if it belongs to the thick subcategory of $\Hot^\star(\sA)$
generated by the totalizations of short exact sequences of
$\star$\+bounded complexes in~$\sA$.
 In fact, a $\star$\+bounded complex in $\sA$ is absolutely acyclic if
and only if it is absolutely acyclic as an unbounded
complex~\cite[Lemma~A.1.1]{Pcosh}.
 Any bounded acyclic complex is absolutely acyclic (as a bounded
complex, i.~e., for $\star=\b$).
 The \emph{absolute derived categories} $\sD^{\abs+}(\sA)$, \
$\sD^{\abs-}(\sA)$, and $\sD^\abs(\sA)$ are defined as the quotient
categories of the respective homotopy categories $\Hot^+(\sA)$, \
$\Hot^-(\sA)$, and $\Hot(\sA)$ by their thick subcategories of
absolutely acyclic complexes (see~\cite[Section~A.1]{Pcosh}
or~\cite[Appendix~A]{Pmgm} for a further discussion).

 We will say that a full subcategory $\sE\subset\sA$ is
\emph{coresolving} if $\sE$ is closed under extensions and
the passages to the cokernels of admissible monomorphisms in $\sA$,
and every object of $\sA$ is the source of an admissible monomorphism
into an object of~$\sE$.
 This definition slightly differs from that in~\cite[Section~2]{St} in
that we do not require $\sE$ to be closed under direct summands
(cf.~\cite[Section~A.3]{Pcosh}).
 Obviously, any coresolving subcategory $\sE$ inherits an exact
category structure from the ambient exact category~$\sA$.

 Let $\sA$ be an exact category in which the functors of infinite
direct sum are everywhere defined and exact.
 A complex in $\sA$ is said to be \emph{coacyclic} if it belongs to
the minimal triangulated subcategory of the homotopy category
$\Hot(\sA)$ containing the totalizations of short exact sequences of
complexes in $\sA$ and closed under infinite direct sums.
 The \emph{coderived category} $\sD^\co(\sA)$ is defined as
the Verdier quotient category of $\Hot(\sA)$ by the thick subcategory
of coacyclic complexes (see~\cite[Section~2.1]{Psemi},
\cite[Section~A.1]{Pcosh}, or~\cite[Appendix~A]{Pmgm}).

 The conventional unbounded derived category $\sD(\sA)$ is naturally
a quotient category of $\sD^\co(\sA)$.
 A triangulated category $\sD'$ is called a \emph{pseudo-coderived}
category of $\sA$ if triangulated Verdier quotient functors
$\sD^\co(\sA)\rarrow\sD'\rarrow\sD(\sA)$ are given forming
a commutative triangle with the canonical Verdier quotient functor
$\sD^\co(\sA)\rarrow\sD(\sA)$ between the coderived and
the conventional derived category of the exact category~$\sA$.

 Let $\sE\subset\sA$ be a coresolving subcategory closed under
infinite direct sums.
 According to the dual version of~\cite[Proposition~A.3.1(b)]{Pcosh}
(formulated explicitly in~\cite[Proposition~2.1]{Pfp}),
the triangulated functor between the coderived categories
$\sD^\co(\sE)\rarrow\sD^\co(\sA)$ induced by the direct sum-preserving
embedding of exact categories $\sE\rarrow\sA$ is an equivalence of
triangulated categories.
 From the commutative diagram of triangulated functors
$$
\begin{diagram}
\node{\sD^\co(\sE)}\arrow{s,A}\arrow[2]{e,=}
\node[2]{\sD^\co(\sA)}\arrow{s,A} \\
\node{\sD(\sE)}\arrow[2]{e}
\node[2]{\sD(\sA)}
\end{diagram} 
$$
one can see that the lower horizontal arrow is a Verdier quotient
functor.
 Thus $\sD'=\sD(\sE)$ is a pseudo-coderived category of~$\sA$
\cite[Proposition~4.2(a)]{PS}.

 Furthermore, let $\sE_\prime\subset\sE'\subset\sA$ be two embedded
coresolving subcategories, both closed under infinite direct sums
in~$\sA$.
 Then the canonical Verdier quotient functor $\sD^\co(\sA)\rarrow
\sD(\sA)$ decomposes into a sequence of Verdier quotient functors
$$
 \sD^\co(\sA)\lrarrow\sD(\sE_\prime)\lrarrow\sD(\sE')\lrarrow\sD(\sA).
$$
 In other words, when the full subcategory $\sE\subset\sA$ is
expanded, the related pseudo-coderived category $\sD(\sE)$ gets
deflated \cite[Remark~4.3]{PS}.

 Notice that, as a coresolving subcategory closed under infinite
direct sums $\sE\subset\sA$ varies, its conventional derived category
behaves in quite different ways depending on the boundedness
conditions.
 The functor $\sD^\b(\sE_\prime)\rarrow\sD^\b(\sE')$ induced by
the embedding $\sE_\prime\rarrow\sE'$ is fully faithful, the functor
$\sD^+(\sE_\prime)\rarrow\sD^+(\sE')$ is a triangulated equivalence
(by the assertion dual to~\cite[Proposition~A.3.1(a)]{Pcosh}),
and the functor $\sD(\sE_\prime)\rarrow\sD(\sE')$ is a Verdier
quotient functor.

\medskip

 Let $\sB$ be another exact category.
 We will say that a full subcategory $\sF\subset\sB$ is \emph{resolving}
if $\sF$ is closed under extensions and the passages to the kernels of
admissible epimorphisms, and every object of $\sB$ is the target of
an admissible epimorphism from an object of~$\sF$.
 Obviously, a resolving subcategory $\sF$ inherits an exact category
structure from the ambient exact category~$\sB$.

 Let $\sB$ be an exact category in which the functors of infinite
product are everywhere defined and exact.
 A complex in $\sB$ is said to be \emph{contraacyclic} if it belongs
to the minimal triangulated subcategory of $\Hot(\sB)$ containing
the totalizations of short exact sequences of complexes in $\sB$ and
closed under infinite products.
 The \emph{contraderived category} $\sD^\ctr(\sB)$ is defined as
the Verdier quotient category of $\Hot(\sB)$ by the thick subcategory
of contraacyclic complexes (see~\cite[Section~4.1]{Psemi},
\cite[Section~A.1]{Pcosh}, or~\cite[Appendix~A]{Pmgm}).

 The conventional unbounded derived category $\sD(\sB)$ is naturally
a quotient category of $\sD^\ctr(\sB)$.
 A triangulated category $\sD''$ is called a \emph{pseudo-contraderived}
category of $\sB$ if Verdier quotient functors $\sD^\ctr(\sB)\rarrow
\sD''\rarrow\sD(\sB)$ are given forming a commutative triangle with
the canonical Verdier quotient functor $\sD^\ctr(\sB)\rarrow\sD(\sB)$
between the contraderived and the conventional derived categories
of the exact category~$\sB$.

 Let $\sF\subset\sB$ be a resolving subcategory closed under infinite
products.
 According to~\cite[Proposition~A.3.1(b)]{Pcosh}, the triangulated
functor between the contraderived categories $\sD^\ctr(\sF)\rarrow
\sD^\ctr(\sB)$ induced by the product-preserving embedding of exact
categories $\sF\rarrow\sB$ is an equivalence of triangulated categories.
 From the commutative diagram of triangulated functors
$$
\begin{diagram}
\node{\sD^\ctr(\sF)}\arrow{s,A}\arrow[2]{e,=}
\node[2]{\sD^\ctr(\sB)}\arrow{s,A} \\
\node{\sD(\sF)}\arrow[2]{e}
\node[2]{\sD(\sB)}
\end{diagram} 
$$
one can see that the lower horizontal arrow is a Verdier quotient
functor.
 Thus $\sD''=\sD(\sF)$ is a pseudo-contraderived category of~$\sB$
\cite[Proposition~4.2(b)]{PS}.

 Let $\sF_{\prime\prime}\subset\sF''\subset\sB$ be two embedded resolving
subcategories, both closed under infinite products in~$\sF$.
 Then the canonical Verdier quotient functor $\sD^\ctr(\sB)\rarrow
\sD(\sB)$ decomposes into a sequence of Verdier quotient functors
$$
 \sD^\ctr(\sB)\lrarrow\sD(\sF_{\prime\prime})\lrarrow\sD(\sF'')\lrarrow
 \sD(\sB).
$$
 In other words, when the full subcategory $\sF\subset\sB$ is expanded,
the related pseudo-contraderived category $\sD(\sF)$ gets deflated
\cite[Remark~4.3]{PS}.

 Once again, we notice that, as a resolving subcategory closed under
infinite products $\sF\subset\sB$ varies, the behavior of its
conventional derived category depends on the boundedness conditions.
 The functor $\sD^\b(\sF_{\prime\prime})\rarrow\sD^\b(\sF'')$ is fully
faithful, the functor $\sD^-(\sF_{\prime\prime})\rarrow\sD^-(\sF'')$
is a triangulated equivalence~\cite[Proposition~A.3.1(a)]{Pcosh}, and
the functor $\sD(\sF_{\prime\prime})\rarrow\sD(\sF'')$ is a Verdier
quotient functor.

\medskip

 Some of the simplest examples of coresolving subcategories $\sE$
closed under infinite direct sums and resolving subcategories $\sF$
closed under infinite products in the abelian categories of modules
over associative rings will be given in
Examples~\ref{pseudo-coderived-examples}\+-%
\ref{pseudo-contraderived-examples}; and more complicated examples
will be discussed in Sections~\ref{two-rings-auslander-bass-subsecn},
\ref{minimal-classes-secn}, and~\ref{two-rings-base-change-subsecn}.

\Section{Strongly Finitely Presented Modules}
\label{sfp-modules-subsecn}

 Let $A$ be an associative ring.
 We denote by $A\modl$ the abelian category of left $A$\+modules
and by $\modr A$ the abelian category of right $A$\+modules.

 We say that an $A$\+module is \emph{strongly finitely presented}
if it has a projective resolution consisting of finitely
generated projective $A$\+modules.

\begin{rem}
 In the traditional terminology, such modules are said to be
\emph{pseudo-coherent}~\cite{Il,SP} or
\emph{of type FP$_\infty$} \cite{Bier,BGH,BP}.
 More generally, an $A$\+module $M$ is said to be 
\emph{$n$\+pseudo-coherent} or \emph{of type $FP_n$} if it has
a projective resolution $\dotsb\rarrow P_2\rarrow P_1\rarrow P_0
\rarrow M\rarrow 0$ such that the $A$\+modules $P_i$ are
finitely generated for all $0\le i\le n$.
 We prefer our (admittedly less conventional) terminology for
several reasons, one of them being that we would like to speak also
about strongly finitely presented \emph{complexes} of modules
(see below in this section), and the term ``chain complex of type
FP$_\infty$'' already means something else~\cite[Section~2.1]{BG}.
 On the other hand, the prefix ``pseudo-'' is used for quite
different purposes in this paper.
 Besides, ``strongly finitely presented'' abbreviates to
a convenient prefix ``sfp\+'', upon which we build our
terminological system.
\end{rem}

\begin{lem} \label{strongly-finitely-presented-modules}
 Let\/ $0\rarrow K\rarrow L\rarrow M\rarrow0$ be a short exact
sequence of $A$\+modules.
 Then whenever two of the three modules $K$, $L$, $M$ are strongly
finitely presented, so is the third one.
\end{lem}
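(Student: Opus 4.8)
The plan is to reduce everything to the standard ``two-out-of-three'' property for modules admitting resolutions by finitely generated projectives, using the long exact sequence of $\Ext$ and the horseshoe/mapping-cone constructions on resolutions. Recall that an $A$-module is strongly finitely presented exactly when it is of type $FP_\infty$, and the class of such modules is known to be closed under extensions, kernels of surjections between them, and cokernels of injections between them. The task is simply to verify, for a short exact sequence $0\to K\to L\to M\to 0$, each of the three cases where two of the modules are assumed strongly finitely presented.

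First I would treat the case where $K$ and $M$ are strongly finitely presented. Here one builds a projective resolution of $L$ by the horseshoe lemma: given finitely generated projective resolutions $P_\bullet\to K$ and $Q_\bullet\to M$, one obtains a resolution $P_\bullet\oplus Q_\bullet\to L$ with each term $P_i\oplus Q_i$ finitely generated projective. So $L$ is strongly finitely presented. Second, suppose $L$ and $M$ are strongly finitely presented; I want $K$ strongly finitely presented. One chooses a finitely generated projective resolution $Q_\bullet\to M$, lifts it over the surjection $L\to M$ to a map of complexes, and forms the mapping cone, or more directly: it suffices to show $K$ is of type $FP_\infty$, and since $M$ is of type $FP_\infty$ and $L$ is of type $FP_n$ for all $n$, the standard syzygy argument (using that a module is $FP_{n}$ iff its first syzygy with respect to a finitely generated free presentation is $FP_{n-1}$) gives the result by induction on~$n$; alternatively one notes $K$ is the kernel of a surjection between modules of type $FP_\infty$, which is again of type $FP_\infty$. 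The third case, $K$ and $L$ strongly finitely presented implying $M$ strongly finitely presented, is dual: $M$ is the cokernel of an admissible monomorphism between strongly finitely presented modules, handled by the same syzygy induction, or by observing that a finitely generated projective resolution of $L$ together with one of $K$ and the connecting maps assembles, via the snake lemma applied degreewise, into one for~$M$.

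The one point requiring a little care — and the step I would expect to be the main obstacle, to the extent there is one — is making the induction on homological degree clean in the second and third cases, i.e.\ checking that ``strongly finitely presented'' propagates to syzygies in the right direction. Concretely: if $M$ is strongly finitely presented and $0\to K\to L\to M\to 0$ with $L$ strongly finitely presented, pick a finitely generated free module $F_0$ surjecting onto $L$; then $F_0$ also surjects onto $M$ (via $L\to M$), and one compares the two syzygies. Using the $3\times 3$ lemma (or a diagram chase with the snake lemma) relating the kernel of $F_0\twoheadrightarrow L$, the kernel of $F_0\twoheadrightarrow M$, and $K$, one gets a new short exact sequence of the same shape but with the outer terms replaced by their first syzygies, with the ``already known'' modules having strictly smaller projective dimension data; iterating, all syzygies are finitely generated, so $M$ (resp.\ $K$) is strongly finitely presented. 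Since each of the three classical two-out-of-three facts for $FP_\infty$ modules is exactly what is being invoked, the proof is essentially a bookkeeping exercise assembling these, and I would keep it short by citing the closure properties of $FP_\infty$-modules from the references \cite{Bier,BGH,BP} rather than re-deriving them.
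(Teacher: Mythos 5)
Your argument is correct and essentially the same as the paper's: the horseshoe lemma handles the case where $K$ and $M$ are strongly finitely presented, and the mapping cone/cocone of a lifted morphism between finitely generated projective resolutions (equivalently, your syzygy comparison via the $3\times3$ lemma, which in effect reduces the kernel case to the cokernel case) handles the remaining two cases, just as in the paper, which likewise points to \cite{Bour}, \cite{Bier}, \cite{BP} for alternative treatments. One caution: your aside that ``$K$ is the kernel of a surjection between modules of type FP$_\infty$, which is again of type FP$_\infty$'' merely restates the case being proved rather than proving it, but your cone/syzygy route does cover it, so this is a matter of phrasing rather than a gap.
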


\begin{proof}
 This result, in a more general version for modules of type FP$_{n-1}$
and FP$_n$, goes back to~\cite[Exercise~I.2.6]{Bour}.
 For a different proof, see~\cite[Proposition~1.4]{Bier}, and for
a discussion with further references, \cite[Section~1]{BP}.
 Here is yet another proof.

 If $P_\bu\rarrow K$ and $R_\bu\rarrow M$ are projective resolutions of
the $A$\+modules $K$ and $M$, then there is a projective resolution
$Q_\bu\rarrow L$ of the $A$\+module $L$ with the terms
$Q_i\simeq P_i\oplus R_i$.
 If $P_\bu\rarrow K$ and $Q_\bu\rarrow L$ are projective resolutions of
the $A$\+modules $K$ and $L$, then there exists a morphism of complexes
of $A$\+modules $P_\bu\rarrow Q_\bu$ inducing the given morphism
$K\rarrow L$ on the homology modules.
 The cone $R_\bu$ of the morphism of complexes $P_\bu\rarrow Q_\bu$ is
a projective resolution of the $A$\+module $M$ with the terms
$R_i\simeq Q_i\oplus P_{i-1}$.

 If $Q_\bu\rarrow L$ and $R_\bu\rarrow M$ are projective resolutions of
the $A$\+modules $L$ and $M$, then there exists a morphism of complexes
of $A$\+modules $Q_\bu\rarrow R_\bu$ inducing the given morphism
$L\rarrow M$ on the homology modules.
 The cocone $P'_\bu$ of the morphism of complexes $Q_\bu\rarrow R_\bu$
is a bounded above complex of $R$\+modules with the terms
$P'_i=Q_i\oplus R_{i+1}$ and the only nonzero cohomology module
$H_0(P'_\bu)\simeq K$.
 Still, the complex $P'_\bu$ is not yet literally a projective
resolution of $K$, as its term $P'_{-1}\simeq R_0$ does not vanish.
 Setting $P_{-1}=0$, \ $P_0=\ker(P'_0\to P'_{-1})$, and $P'_i=P_i$
for $i\ge 1$, one obtains a subcomplex $P_\bu\subset P'_\bu$ with
a termwise split embedding $P_\bu\rarrow P'_\bu$ such that $P_\bu$
is a projective resolution of the $R$\+module~$K$.
\end{proof}

 Abusing terminology, we will say that a bounded above complex of
$A$\+modules $M^\bu$ is \emph{strongly finitely presented} if it is
quasi-isomorphic to a bounded above complex of finitely generated
projective $A$\+modules.
 (Such complexes are called ``pseudo-coherent'' in~\cite{Il,SP}.)
 Clearly, the class of all strongly finitely presented complexes
is closed under shifts and cones in $\sD^-(A\modl)$.

\begin{lem} \label{sfp-complexes-lemma}
\textup{(a)} Any bounded above complex of strongly finitely
presented $A$\+modules is strongly finitely presented. \par
\textup{(b)} Let $M^\bu$ be a complex of $A$\+modules concentrated in
the cohomological degrees~$\le\nobreak n$, where $n$~is a fixed integer.
 Then $M^\bu$ is strongly finitely presented if and only if it is
quasi-isomorphic to a complex of finitely generated projective
$A$\+modules concentrated in the cohomological
degrees~$\le\nobreak n$. \par
\textup{(c)} Let $M^\bu$ be a finite complex of $A$\+modules
concentrated in the cohomological degrees $n_1\le m\le n_2$.
 Then $M^\bu$ is strongly finitely presented if and only if it is
quasi-isomorphic to a complex of $A$\+modules $R^\bu$ concentrated
in the cohomological degrees $n_1\le m\le n_2$ such that
the $A$\+modules $R^m$ are finitely generated and projective for
all $n_1+1\le m\le n_2$, while the $A$\+module $R^{n_1}$ is strongly
finitely presented.
\end{lem}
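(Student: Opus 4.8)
The plan is to establish (a) first, by explicitly building a finitely generated projective resolution of $M^\bu$, and then to deduce (b) and (c) by truncating a given such resolution from above, respectively from below. For (a), write $M^\bu$ in cohomological degrees $\le N$. If $M^\bu$ is moreover bounded below, a descending induction on the number of nonzero terms works: the termwise split short exact sequence $0\rarrow M^N[-N]\rarrow M^\bu\rarrow\sigma^{\le N-1}M^\bu\rarrow0$, with $\sigma$ the stupid truncation, exhibits $M^\bu$ as the cone of a morphism $\sigma^{\le N-1}M^\bu[-1]\rarrow M^N[-N]$ in $\sD^-(A\modl)$; since $M^N[-N]$ is strongly finitely presented (the module $M^N$ being so) and $\sigma^{\le N-1}M^\bu$ is so by the induction hypothesis, the closure of the class of strongly finitely presented complexes under shifts and cones finishes the job. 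For $M^\bu$ bounded above but not bounded below this induction does not terminate, and instead I would choose, for each $i\le N$, a resolution $P^i_\bu\rarrow M^i$ by finitely generated projective $A$-modules and assemble these into a single complex $P^\bu$ by the standard Cartan--Eilenberg-type construction, correcting the horizontal differentials by successive homotopies so that $d_{P^\bu}^2=0$ (the correction terminates on each summand because $M^\bu$ is bounded above). Then $P^\bu$ is concentrated in degrees $\le N$, its term in cohomological degree $n$ is the \emph{finite} direct sum $\bigoplus_{n\le i\le N}P^i_{i-n}$ of finitely generated projectives, hence is finitely generated projective, and the augmentation $P^\bu\rarrow M^\bu$ is a quasi-isomorphism by a standard, automatically convergent spectral sequence argument. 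Hence $M^\bu$ is strongly finitely presented.

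For (b), the ``if'' direction is trivial, a complex of finitely generated projectives concentrated in degrees $\le n$ being in particular a bounded above complex of finitely generated projectives. For ``only if'', pick a bounded above complex $Q^\bu$ of finitely generated projective $A$-modules, concentrated in degrees $\le N$, with a quasi-isomorphism $Q^\bu\rarrow M^\bu$. If $N\le n$ we are done; otherwise $H^i(Q^\bu)=H^i(M^\bu)=0$ for $i>n$, so $Q^\bu$ is exact in degrees $n+1,\dots,N$ and we obtain a finite exact sequence $0\rarrow Z^n\rarrow Q^n\rarrow Q^{n+1}\rarrow\dotsb\rarrow Q^N\rarrow0$ with $Z^n=\ker(Q^n\to Q^{n+1})$. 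Splitting this off one term at a time from the right---each quotient occurring is projective, so each kernel is a direct summand of the next $Q^i$---one sees that $Z^n$ is a direct summand of $Q^n$, hence a finitely generated projective module. Then the canonical truncation $\tau^{\le n}Q^\bu=(\dotsb\to Q^{n-1}\to Z^n\to0)$ is a complex of finitely generated projectives concentrated in degrees $\le n$, and $\tau^{\le n}Q^\bu\hookrightarrow Q^\bu$ is a quasi-isomorphism, so $M^\bu$ is quasi-isomorphic to it.

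For (c): if $M^\bu$ is quasi-isomorphic to a complex $R^\bu$ of the stated form, then $R^\bu$ is a bounded complex all of whose terms are strongly finitely presented modules, so $R^\bu$, and hence $M^\bu$, is strongly finitely presented by part~(a). Conversely, if $M^\bu$ is strongly finitely presented then, being concentrated in degrees $\le n_2$, it is by part~(b) quasi-isomorphic to a complex $S^\bu$ of finitely generated projectives concentrated in degrees $\le n_2$; since $M^\bu$ is concentrated in degrees $\ge n_1$, one has $H^i(S^\bu)=0$ for $i<n_1$, so the canonical truncation from below $R^\bu=(0\to S^{n_1}/\operatorname{im}(d^{n_1-1})\to S^{n_1+1}\to\dotsb\to S^{n_2}\to0)$ is quasi-isomorphic to $S^\bu$, hence to $M^\bu$, is concentrated in degrees $n_1\le m\le n_2$, has the terms $S^{n_1+1},\dots,S^{n_2}$ finitely generated projective, and has strongly finitely presented degree-$n_1$ term, because $\dotsb\to S^{n_1-1}\to S^{n_1}\to S^{n_1}/\operatorname{im}(d^{n_1-1})\to0$ is then a resolution by finitely generated projective modules (exactness in degrees below $n_1$ being the vanishing $H^{<n_1}(S^\bu)=0$).

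The step I expect to be the main obstacle is the unbounded-below case of part~(a): one must produce a \emph{single} bounded above complex of \emph{finitely generated} projectives quasi-isomorphic to $M^\bu$, and the naive term-by-term descending construction breaks down, since the cohomology modules of the mapping cones arising along the way need not be finitely generated over a non-coherent ring. The Cartan--Eilenberg-type total complex circumvents this precisely because the upper bound on cohomological degrees keeps all the direct sums in play finite; everything else is the routine bookkeeping of stupid and canonical truncations and of splitting finite exact sequences of finitely generated projectives.
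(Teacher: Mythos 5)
Your argument is correct, and for parts~(b) and~(c) it is exactly the paper's (very tersely sketched) proof: split the exact tail of a bounded above complex of finitely generated projectives off term by term, using that the kernel of a surjection of finitely generated projectives is again finitely generated projective, and then take canonical truncations from above (for~(b)) and from below (for~(c)). Where you genuinely diverge is part~(a), in the bounded-above-but-not-below case. The paper's hint ``provable using Lemma~\ref{strongly-finitely-presented-modules}'' points to an elementary successive-replacement argument: one improves the complex two terms at a time, replacing an sfp term by a finitely generated projective cover $P$ and the term below it by the fiber product with $P$, whose strong finite presentability is supplied by the two-out-of-three Lemma~\ref{strongly-finitely-presented-modules} (kernel of $P\twoheadrightarrow E$ is sfp, and an extension of sfp modules is sfp); since each step only changes two degrees, the complexes stabilize degreewise and the limit is the desired resolution. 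You instead resolve each term separately and assemble a twisted total complex (a Cartan--Eilenberg-type construction with higher homotopy corrections), then compare via the column-filtration spectral sequence, which converges because each total degree receives only finitely many summands. This is also valid: the only point you leave implicit is why all the higher corrections exist, and they do because the obstruction at the $k$-th stage is a class in $H^{2-k}$ of the Hom complex between two projective resolutions, i.e.\ in $\Ext^{2-k}_A(M^i,M^{i+k})$, which vanishes for $k\ge3$, while for $k=2$ the obstruction class is the map induced by $d_M^2=0$. So your route trades the paper's purely module-theoretic induction (which is why Lemma~\ref{strongly-finitely-presented-modules} is cited) for a one-shot homotopy-theoretic construction; both correctly isolate the same finiteness mechanism, namely that boundedness above keeps every relevant direct sum finite.
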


\begin{proof}
 Part~(a) is provable using
Lemma~\ref{strongly-finitely-presented-modules}.
 Part~(b) holds, because the kernel of a surjective morphism of
finitely generated projective $A$\+modules is a finitely generated
projective $A$\+module; and part~(c) is also easy.
\end{proof}

 Let $A$ and $B$ be associative rings.
 A left $A$\+module $J$ is said to be \emph{sfp\+injective} if
$\Ext^1_A(M,J)=0$ for all strongly finitely presented left $A$\+modules
$M$, or equivalently, $\Ext^n_A(M,J)=0$ for all strongly finitely
presented left $A$\+modules $M$ and all $n>0$.
 A left $B$\+module $P$ is said to be \emph{sfp\+flat} if
$\Tor^B_1(N,P)=0$ for all strongly finitely presented right
$B$\+modules $N$, or equivalently, $\Tor^B_n(N,P)=0$ for all strongly
finitely presented right $B$\+modules $N$ and all $n>0$.

 What we call ``sfp\+injective modules'' are otherwise known as
``FP$_\infty$\+injective'', and what we call ``sfp\+flat'' modules
would be usually called ``FP$_\infty$\+flat'' \cite[Section~3]{BP}.
 In the terminology of the papers~\cite[Section~2]{BGH} and~\cite{BG},
the former modules are also called ``absolutely clean'', and the latter
ones are called ``level''.

\begin{lem} \label{sfp-injective-flat-modules-are-closed-under}
\textup{(a)} The class of all sfp\+injective left $A$\+modules is
closed under extensions, the cokernels of injective morphisms, filtered
inductive limits, infinite direct sums, and infinite products. \par
\textup{(b)} The class of all sfp\+flat left $B$\+modules is
closed under extensions, the kernels of surjective morphisms, filtered
inductive limits, infinite direct sums, and infinite products.
\end{lem}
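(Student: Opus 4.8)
The plan is to reduce every assertion to two elementary observations about the homological functors attached to strongly finitely presented modules, combined with the long exact sequences of $\Ext$ and $\Tor$.

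First I would record the computational fact underlying both parts. If $M$ is a strongly finitely presented left $A$\+module, then by definition it admits a projective resolution $P_\bu\rarrow M$ with every $P_i$ finitely generated projective, so $\Ext^n_A(M,{-})=H^n(\Hom_A(P_\bu,{-}))$ for all~$n$. Now $\Hom_A(P,{-})$ commutes with filtered inductive limits (since $P$ is finitely presented), with infinite direct sums (since $P$ is finitely generated), and with infinite products (automatically, for any $P$) whenever $P$ is finitely generated projective; and cohomology of a complex of abelian groups commutes with all three of these exact operations. Hence $\Ext^n_A(M,{-})$ commutes with filtered inductive limits, infinite direct sums, and infinite products. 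Consequently, for any family $(J_\alpha)$ of sfp\+injective left $A$\+modules and any strongly finitely presented $M$, the groups $\Ext^n_A\bigl(M,\varinjlim J_\alpha\bigr)$, $\Ext^n_A\bigl(M,\bigoplus_\alpha J_\alpha\bigr)$, and $\Ext^n_A\bigl(M,\prod_\alpha J_\alpha\bigr)$ vanish for $n>0$, so the class of sfp\+injective modules is closed under filtered inductive limits, direct sums, and products. The symmetric argument, using a finitely generated projective resolution $Q_\bu\rarrow N$ of a strongly finitely presented right $B$\+module $N$ and the fact that $Q\ot_B{-}$ commutes with filtered inductive limits, direct sums, and products when $Q$ is finitely generated projective, gives the same three closure properties for sfp\+flat left $B$\+modules.

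Next I would handle the remaining closure properties via the long exact sequences. For a short exact sequence $0\rarrow J'\rarrow J\rarrow J''\rarrow0$ of left $A$\+modules and any strongly finitely presented $M$, the long exact sequence
$$
 \dotsb\rarrow\Ext^n_A(M,J')\rarrow\Ext^n_A(M,J)\rarrow\Ext^n_A(M,J'')
 \rarrow\Ext^{n+1}_A(M,J')\rarrow\dotsb
$$
shows that if $J'$ and $J''$ are sfp\+injective then $\Ext^n_A(M,J)=0$ for $n>0$, so $J$ is sfp\+injective (closure under extensions), and that if $J'$ and $J$ are sfp\+injective then $\Ext^n_A(M,J'')=0$ for $n>0$, so $J''$ is sfp\+injective (closure under cokernels of injective morphisms). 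Here one uses the equivalence, built into the definition of sfp\+injectivity, between the vanishing of $\Ext^1_A(M,{-})$ and the vanishing of all $\Ext^n_A(M,{-})$ with $n>0$ on strongly finitely presented $M$; this equivalence itself follows from the fact that a syzygy of a strongly finitely presented module is again strongly finitely presented, which is Lemma~\ref{strongly-finitely-presented-modules}. Symmetrically, for a short exact sequence $0\rarrow P'\rarrow P\rarrow P''\rarrow0$ of left $B$\+modules and a strongly finitely presented right $B$\+module $N$, the long exact sequence of $\Tor^B_\ast(N,{-})$ shows that sfp\+flatness of $P'$ and $P''$ forces sfp\+flatness of $P$ (closure under extensions), while sfp\+flatness of $P$ and $P''$ forces sfp\+flatness of $P'$ (closure under kernels of surjective morphisms).

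Since neither step uses anything beyond the standard properties of $\Ext$ and $\Tor$, I do not expect a genuine obstacle here; the only point that requires a little care is the interchange of cohomology (resp.\ homology) with colimits and products in the first step, where the finite generation of the terms of the resolution of $M$ (resp.\ of $N$) is precisely what makes it work.
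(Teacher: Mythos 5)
Your proof is correct. Note, however, that the paper does not actually prove this lemma itself: its ``proof'' consists of the citation to~\cite[Propositions~2.7 and~2.10]{BGH} (with~\cite[Propositions~3.10 and~3.11]{BP} for the FP$_n$ generalization). So what you have done is supply the standard direct argument that those references encapsulate: compute $\Ext^n_A(M,{-})$ and $\Tor^B_n(N,{-})$ from a resolution of the strongly finitely presented module by finitely generated projectives, use that $\Hom_A(P,{-})$ and $P\ot_B{-}$ for $P$ finitely generated projective commute with filtered inductive limits, direct sums, and products, and that these operations are exact on abelian groups, to get closure under (co)limits and products; then use the long exact sequences of $\Ext$ and $\Tor$, together with the equivalence ``$\Ext^1$ vanishes iff all $\Ext^{>0}$ vanish'' (syzygies of strongly finitely presented modules are strongly finitely presented, by Lemma~\ref{strongly-finitely-presented-modules} or direct truncation of the resolution), for closure under extensions and cokernels of monomorphisms (resp.\ kernels of epimorphisms). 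All the delicate points you flag --- finite presentability for filtered colimits, finite generation for direct sums, and finite generation plus projectivity for the interchange of $P\ot_B{-}$ with infinite products --- are exactly the ones that matter, and your handling of them is correct. So the proposal is a complete, self-contained replacement for the external citation rather than a divergence from the paper's method.
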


\begin{proof}
 This is~\cite[Propositions~2.7 and~2.10]{BGH}.
 For a generalization to FP$_n$\+injective and FP$_n$\+flat modules,
$n\ge2$, see~\cite[Propositions~3.10 and~3.11]{BP}.
\end{proof}

\begin{exs} \label{pseudo-coderived-examples}
 (1)~The following construction using strongly finitely presented
modules provides some examples of pseudo-coderived categories of
modules over an associative ring in the sense of
Section~\ref{pseudo-derived-introd}.
 Let $A$ be an associative ring and $\sS$ be a set of strongly
finitely presented left $A$\+modules.
 Denote by $\sE\subset\sA=A\modl$ the full subcategory formed by all
the left $A$\+modules $E$ such that $\Ext^i_A(S,E)=0$ for all
$S\in\sS$ and all $i>0$.
 Then the full subcategory $\sE\subset A\modl$ is a coresolving
subcategory closed under infinite direct sums (and products).
 So the induced triangulated functor between the two coderived
categories $\sD^\co(\sE)\rarrow\sD^\co(A\modl)$ is a triangulated
equivalence by the dual version of~\cite[Proposition~A.3.1(b)]{Pcosh}
(cf.~\cite[Proposition~2.1]{Pfp}).
 Thus the derived category $\sD(\sE)$ of the exact category $\sE$ is
a pseudo-coderived category of the abelian category $A\modl$, that is
an intermediate quotient category between the coderived category
$\sD^\co(A\modl)$ and the derived category $\sD(A\modl)$.

\smallskip

 (2)~In particular, if $\sS=\varnothing$, then one has $\sE=A\modl$.
 On the other hand, if $\sS$ is the set of all strongly finitely
presented left $A$\+modules, then the full subcategory
$\sE\subset A\modl$ consists of all the sfp\+injective modules.
 When the ring $A$ is left coherent, all the finitely presented left
$A$\+modules are strongly finitely presented, and objects of the class
$\sE$ are called \emph{fp\+injective} left $A$\+modules.
 In this case, the derived category $\sD(\sE)$ of the exact category
$\sE$ is equivalent to the homotopy $\Hot(A\modl_\inj)$ of the additive
category of injective left $A$\+modules~\cite[Theorem~6.12]{St2}.

\smallskip

 (3)~More generally, for any associative ring $A$, the category
$\Hot(A\modl_\inj)$ can be called the \emph{coderived category in
the sense of Becker}~\cite{Bec} of the category of left $A$\+modules.
 A complex of left $A$\+modules $X^\bu$ is called \emph{coacyclic in
the sense of Becker} if the complex of abelian groups
$\Hom_A(X^\bu,J^\bu)$ is acyclic for any complex of injective left
$A$\+modules~$J^\bu$.
 According to~\cite[Proposition~1.3.6(2)]{Bec}, the full subcategories
of complexes of injective modules and coacyclic complexes in
the sense of Becker form a semiorthogonal decomposition of
the homotopy category of left $A$\+modules $\Hot(A\modl)$.
 According to~\cite[Theorem~3.5(a)]{Pkoszul}, any coacyclic complex of
left $A$\+modules in the sense of~\cite[Section~2.1]{Psemi},
\cite[Appendix~A]{Pmgm} is also coacyclic in the sense of Becker.
 Thus $\Hot(A\modl_\inj)$ occurs as an intermediate triangulated
quotient category between $\sD^\co(A\modl)$ and $\sD(A\modl)$.
 So the coderived category in the sense of Becker is
a pseudo-coderived category in our sense.  {\hfuzz=2.2pt\par}

 We do \emph{not} know whether the Verdier quotient functor
$\sD^\co(A\modl)\rarrow\Hot(A\modl_\inj)$ is a triangulated equivalence
(or, which is the same, the natural fully faithful triangulated functor
$\Hot(A\modl_\inj)\rarrow\sD^\co(A\modl)$ is a triangulated equivalence)
for an arbitrary associative ring~$A$.
 Partial results in this direction are provided
by~\cite[Theorem~3.7]{Pkoszul}
and~\cite[Theorem~2.4]{Pfp} (see also
Proposition~\ref{sfp-inj-homotopy-coderived-comparison} below).
\hbadness=1600
\end{exs}

\begin{exs} \label{pseudo-contraderived-examples}
 (1)~The following dual version of
Example~\ref{pseudo-coderived-examples}(1) provides some examples
of pseudo-contraderived categories of modules.
 Let $B$ be an associative ring and $\sS$ be a set of strongly
finitely presented right $B$\+modules.
 Denote by $\sF\subset\sB=B\modl$ the full subcategory formed by all
the left $B$\+modules $F$ such that $\Tor^B_i(S,F)=0$ for all
$S\in\sS$ and $i>0$.
 Then the full subcategory $\sF\subset B\modl$ is a resolving
subcategory closed under infinite products (and direct sums).
 So the induced triangulated functor between the two contraderived
categories $\sD^\ctr(\sF)\rarrow\sD^\ctr(B\modl)$ is a triangulated
equivalence by~\cite[Proposition~A.3.1(b)]{Pcosh}.
 Thus the derived category $\sD(\sF)$ of the exact category $\sF$ is
a pseudo-contraderived category of the abelian category $B\modl$,
that is an intermediate quotient category between the contraderived
category $\sD^\ctr(B\modl)$ and the derived category $\sD(B\modl)$,
as it was explained in Section~\ref{pseudo-derived-introd}.

\smallskip

 (2)~In particular, if $\sS=\varnothing$, then one has $\sF=B\modl$.
 On the other hand, if $\sS$ is the set of all strongly finitely
presented right $B$\+modules, then the full subcategory
$\sF\subset B\modl$ consists of all the sfp\+flat modules.
 When the ring $B$ is right coherent, all the sfp\+flat left
$B$\+modules are flat and $\sF$ is the full subcategory of all flat
left $B$\+modules.
 For any associative ring $B$, the derived category of the exact
category of flat left $B$\+modules is equivalent to the homotopy
category $\Hot(B\modl_\proj)$ of the additive category of projective
left $B$\+modules~\cite[Proposition~8.1 and Theorem~8.6]{Neem}.

\smallskip

 (3)~For any associative ring $B$, the category $\Hot(B\modl_\proj)$
can be called the \emph{contraderived category in the sense of Becker}
of the category of left $B$\+modules.
  A complex of left $B$\+modules $Y^\bu$ is called \emph{contraacyclic
in the sense of Becker} if the complex of abelian groups
$\Hom_B(P^\bu,Y^\bu)$ is acyclic for any complex of projective left
$B$\+modules~$P^\bu$.
 According to~\cite[Proposition~1.3.6(1)]{Bec}, the full subcategories
of contraacyclic complexes in the sense of Becker and complexes of
projective modules form a semiorthogonal decomposition of
the homotopy category of left $B$\+modules $\Hot(B\modl)$.
 According to~\cite[Theorem~3.5(b)]{Pkoszul}, any contraacyclic complex
of left $B$\+modules in the sense of~\cite[Section~4.1]{Psemi},
\cite[Appendix~A]{Pmgm} is also contraacyclic in the sense of Becker.
 Thus $\Hot(B\modl_\proj)$ occurs as an intermediate triangulated
quotient category between $\sD^\ctr(B\modl)$ and $\sD(B\modl)$.
 So the contraderived category in the sense of Becker is
a pseudo-contraderived category in our sense.

 We do \emph{not} know whether the Verdier quotient functor
$\sD^\ctr(B\modl)\rarrow\Hot(B\modl_\proj)$ is a triangulated
equivalence (or, which is the same, the natural fully faithful
triangulated functor $\Hot(B\modl_\proj)\rarrow\sD^\ctr(B\modl)$ is
a triangulated equivalence) for an arbitrary associative ring~$B$.
 A partial result in this direction is provided
by~\cite[Theorem~3.8]{Pkoszul} (cf.~\cite[Theorem~4.4]{Pfp};
see also Proposition~\ref{sfp-flat-homotopy-contraderived-comparison}
below).  \hbadness=1375
\end{exs}

\Section{Auslander and Bass Classes}
\label{two-rings-auslander-bass-subsecn}

 We recall the definition of a pseudo-dualizing complex of bimodules
from Section~\ref{two-associative-rings-introd} of the Introduction.
 Let $A$ and $B$ be associative rings.

 A \emph{pseudo-dualizing complex} $L^\bu$ for the rings $A$ and $B$
is a finite complex of $A$\+$B$\+bimodules satisfying the following
two conditions:
\begin{enumerate}
\renewcommand{\theenumi}{\roman{enumi}}
\setcounter{enumi}{1}
\item the complex $L^\bu$ is strongly finitely presented as
a complex of left $A$\+modules and as a complex of right $B$\+modules;
\item the homothety maps $A\rarrow\Hom_{\sD^\b(\modr B)}(L^\bu,L^\bu[*])$ 
and $B^\rop\rarrow\Hom_{\sD^\b(A\modl)}(L^\bu,L^\bu[*])$ are
isomorphisms of graded rings. \emergencystretch=3em\hbadness=2900
\end{enumerate}
 Here the condition~(ii) refers to the definition of a strongly
finitely presented complex of modules in
Section~\ref{sfp-modules-subsecn}.
 The complex $L^\bu$ is viewed as an object of the bounded derived
category of $A$\+$B$\+bimodules $\sD^\b(A\bimod B)$.

 We will use the following simplified notation: given two complexes
of left $A$\+mod\-ules $M^\bu$ and $N^\bu$, we denote by
$\Ext^n_A(M^\bu,N^\bu)$ the groups $H^n\boR\Hom_A(M^\bu,N^\bu)=
\Hom_{\sD(A\modl)}(M^\bu,N^\bu[n])$.
 Given a complex of right $B$\+modules $N^\bu$ and a complex of
left $B$\+modules $M^\bu$, we denote by $\Tor_n^B(N^\bu,M^\bu)$
the groups $H^{-n}(N^\bu\ot_B^\boL M^\bu)$.

 The tensor product functor $L^\bu\ot_B{-}\:\Hot(B\modl)\rarrow
\Hot(A\modl)$ acting between the unbounded homotopy categories
of left $B$\+modules and left $A$\+modules is left adjoint to
the functor $\Hom_A(L^\bu,{-})\:\Hot(A\modl)\rarrow\Hot(B\modl)$.
 Using homotopy flat and homotopy injective resolutions of
the second arguments, one constructs the derived functors
$L^\bu\ot_B^\boL{-}\:\sD(B\modl)\rarrow\sD(A\modl)$ and
$\boR\Hom_A(L^\bu,{-})\:\sD(A\modl)\rarrow\sD(B\modl)$ acting
between the (conventional) unbounded derived categories of
left $A$\+modules and left $B$\+modules. 
 As always with the left and right derived functors (e.~g., in
the sense of Deligne~\cite[1.2.1\+-2]{Del}), the functor
$L^\bu\ot_B^\boL{-}$ is left adjoint to the functor
$\boR\Hom_A(L^\bu,{-})$ \cite[Lemma~8.3]{Psemi}.

 Suppose that the finite complex $L^\bu$ is situated in
the cohomological degrees $-d_1\le m\le d_2$.
 Then one has $\Ext_A^n(L^\bu,J)=0$ for all $n>d_1$ and all
sfp\+injective left $A$\+modules~$J$.
 Similarly, one has $\Tor_n^B(L^\bu,P)=0$ for all $n>d_1$ and all
sfp\+flat left $B$\+modules~$P$.
 Choose an integer $l_1\ge d_1$ and consider the following full
subcategories in the abelian categories of left $A$\+modules
and left $B$\+modules:
\begin{itemize}
\item $\sE_{l_1}=\sE_{l_1}(L^\bu)\subset A\modl$ is the full subcategory
consisting of all the $A$\+modules $E$ such that $\Ext^n_A(L^\bu,E)=0$
for all $n>l_1$ and the adjunction morphism $L^\bu\ot_B^\boL
\boR\Hom_A(L^\bu,E)\rarrow E$ is an isomorphism in $\sD^-(A\modl)$;
\item $\sF_{l_1}=\sF_{l_1}(L^\bu)\subset B\modl$ is the full subcategory
consisting of all the $B$\+modules $F$ such that $\Tor^B_n(L^\bu,F)=0$
for all $n>l_1$ and the adjunction morphism $F\rarrow\boR\Hom_A(L^\bu,\>
L^\bu\ot_B^\boL F)$ is an isomorphism in $\sD^+(B\modl)$.
\end{itemize}
 Clearly, for any $l_1''\ge l_1'\ge d_1$, one has $\sE_{l_1'}\subset
\sE_{l_1''}\subset A\modl$ and $\sF_{l_1'}\subset\sF_{l_1''}\subset
B\modl$.

 The category $\sF_{l_1}$ is our version of what is called
the \emph{Auslander class} in~\cite{Chr,FJ,EJLR,CFH,HW}, while
the category $\sE_{l_1}$ is our version of the \emph{Bass class}.
 The definition of such classes of modules goes back to
Foxby~\cite[Section~1]{Fo}.

 The next three Lemmas~\ref{two-rings-E-F-closed-kernels-cokernels}\+-%
\ref{two-rings-closed-sums-products} are our versions
of~\cite[Lemma~4.1, Proposition~4.2, and Theorem~6.2]{HW}.

\begin{lem} \label{two-rings-E-F-closed-kernels-cokernels}
\textup{(a)} The full subcategory\/ $\sE_{l_1}\subset A\modl$ is closed
under the cokernels of injective morphisms, extensions, and direct
summands. \par
\textup{(b)} The full subcategory\/ $\sF_{l_1}\subset B\modl$ is closed
under the kernels of surjective morphisms, extensions, and direct
summands. \qed
\end{lem}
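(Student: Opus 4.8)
The plan is to prove part~(a); part~(b) is entirely dual (interchanging left $A$\+modules with left $B$\+modules, $\boR\Hom_A(L^\bu,{-})$ with $L^\bu\ot_B^\boL{-}$, injective morphisms with surjective ones, and the categories $\sD^-$ with $\sD^+$). The subcategory $\sE_{l_1}$ is cut out by two kinds of conditions: the \emph{vanishing condition} $\Ext^n_A(L^\bu,E)=0$ for $n>l_1$, and the \emph{adjunction condition} that $L^\bu\ot_B^\boL\boR\Hom_A(L^\bu,E)\rarrow E$ be an isomorphism in $\sD^-(A\modl)$. I would handle these two conditions separately and then combine them.

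First I would treat the vanishing condition. Given a short exact sequence $0\rarrow E'\rarrow E\rarrow E''\rarrow0$ in $A\modl$ with $E'$, $E''$ (or the relevant two of the three terms) satisfying $\Ext^n_A(L^\bu,{-})=0$ for $n>l_1$, the long exact sequence of $\Ext^*_A(L^\bu,{-})$ obtained by applying $\boR\Hom_A(L^\bu,{-})$ to the triangle $E'\rarrow E\rarrow E''\rarrow E'[1]$ immediately gives the same vanishing for the third term, proving closure under cokernels of injective morphisms and under extensions. Closure under direct summands is clear, since $\Ext^n_A(L^\bu,{-})$ commutes with finite direct sums and a summand's Ext is a summand of the Ext of the whole.

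Next I would treat the adjunction condition. The key observation is that $\boR\Hom_A(L^\bu,{-})$ and $L^\bu\ot_B^\boL{-}$ are triangulated functors, so given a short exact sequence in $A\modl$, viewed as a distinguished triangle in $\sD(A\modl)$, the composite functor $\Phi=L^\bu\ot_B^\boL\boR\Hom_A(L^\bu,{-})$ sends it to a distinguished triangle, and the adjunction morphism $\Phi\rarrow\Id$ is a morphism of triangles. Hence if two of the three morphisms $\Phi(E')\rarrow E'$, $\Phi(E)\rarrow E$, $\Phi(E'')\rarrow E''$ are isomorphisms in $\sD(A\modl)$, so is the third, by the $5$\+lemma / the standard fact that in a morphism of triangles two isomorphisms force the third (here one must check it in $\sD^-$, but the vanishing condition guarantees the relevant complexes lie in bounded-above degrees, so this is harmless). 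For direct summands, $\Phi$ is an additive functor and $\Phi\rarrow\Id$ is a natural transformation, so the adjunction morphism for a summand is a retract of that for the whole object, and a retract of an isomorphism is an isomorphism.

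The only point requiring a little care — and the place I'd expect the main (minor) obstacle — is the bookkeeping that lets one pass freely between $\sD^-(A\modl)$ and $\sD(A\modl)$ when invoking the "two-out-of-three for isomorphisms in a morphism of triangles" principle: one must know that for $E\in A\modl$ the object $\boR\Hom_A(L^\bu,E)$ really is represented by a bounded-above complex (which follows from $L^\bu$ being strongly finitely presented as a complex of left $A$\+modules, so $\boR\Hom_A(L^\bu,E)$ is computed by $\Hom_A(P^\bu,E)$ for a bounded-above complex $P^\bu$ of finitely generated projectives, hence concentrated in degrees $\le d_2$), and similarly that applying $L^\bu\ot_B^\boL{-}$ keeps us bounded above. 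Granting that, the vanishing conditions from the first step ensure all the complexes in sight are uniformly bounded above, and the triangulated two-out-of-three argument applies verbatim. Combining the two steps: an object lies in $\sE_{l_1}$ iff it satisfies both conditions, each of which is closed under cokernels of admissible monos, extensions, and summands, so $\sE_{l_1}$ is too. This completes part~(a), and dualizing gives part~(b). \qed
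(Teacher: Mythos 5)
Your argument is correct, and it is essentially the routine verification the paper has in mind: the lemma is stated with its proof omitted (as a version of [HW, Lemma~4.1]), and splitting the definition of $\sE_{l_1}$ into the Ext-vanishing condition (handled by the long exact sequence of $\Ext^*_A(L^\bu,{-})$) and the adjunction-isomorphism condition (handled by two-out-of-three for a morphism of distinguished triangles coming from the triangulated natural transformation $\Phi\rarrow\Id$, plus the retract argument for direct summands) is exactly the standard way to check it, with part~(b) dual. One cosmetic slip in your parenthetical: for $P^\bu$ a bounded-above complex of finitely generated projectives in degrees $\le d_2$, the complex $\Hom_A(P^\bu,E)$ is bounded \emph{below} (concentrated in degrees $\ge-d_2$), not above; it is only after imposing the Ext-vanishing (which bounds its cohomology above by~$l_1$) and applying $L^\bu\ot_B^\boL{-}$ that everything lands in $\sD^-(A\modl)$ — which, as you correctly note, suffices because the embedding $\sD^-(A\modl)\rarrow\sD(A\modl)$ is fully faithful.
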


\begin{lem} \label{two-rings-contains-injectives-flats}
\textup{(a)} The full subcategory\/ $\sE_{l_1}\subset A\modl$ contains
all the injective left $A$\+modules. \par
\textup{(b)} The full subcategory\/ $\sF_{l_1}\subset B\modl$ contains
all the flat left $B$\+modules.
\end{lem}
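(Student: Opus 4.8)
The plan is to verify the two defining conditions of $\sE_{l_1}$ (respectively $\sF_{l_1}$) directly, using the homothety condition~(iii) on $L^\bu$. I treat part~(a); part~(b) is dual, swapping $\boR\Hom_A(L^\bu,{-})$ for $L^\bu\ot_B^\boL{-}$ and injectives for flats, and $\sD^-$ for $\sD^+$. Let $J$ be an injective left $A$\+module, viewed as a one-term complex. The first condition, $\Ext^n_A(L^\bu,J)=0$ for $n>l_1$, is immediate from the remark preceding the definition: injective modules are in particular sfp\+injective, so $\Ext^n_A(L^\bu,J)=0$ already for all $n>d_1$, and $l_1\ge d_1$. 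So the real content is the second condition: the adjunction morphism
\[
 L^\bu\ot_B^\boL\boR\Hom_A(L^\bu,J)\lrarrow J
\]
is an isomorphism in $\sD^-(A\modl)$.

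For this I would compute $\boR\Hom_A(L^\bu,J)$ concretely. Since $L^\bu$ is strongly finitely presented as a complex of left $A$\+modules, by Lemma~\ref{sfp-complexes-lemma} it is quasi-isomorphic to a bounded above complex $P^\bu$ of finitely generated projective left $A$\+modules; fix such a $P^\bu$, with an $A$\+$B$\+bimodule structure obtained by a standard resolution argument (or simply resolve $L^\bu$ over $A\bimod B$ by bimodules that are finitely generated projective as left $A$\+modules). Then $\boR\Hom_A(L^\bu,J)=\Hom_A(P^\bu,J)$, and because each $P^i$ is finitely generated projective over $A$, there is a natural isomorphism of complexes of right $B$\+modules $\Hom_A(P^i,J)\cong\Hom_A(P^i,A)\ot_A J$; since $J$ is injective, the relevant complexes behave well, and the key point is that the evaluation map makes
\[
 L^\bu\ot_B^\boL\boR\Hom_A(L^\bu,J)\;\simeq\;P^\bu\ot_B\Hom_A(P^\bu,J)\;\simeq\;\bigl(P^\bu\ot_B\Hom_A(P^\bu,A)\bigr)\ot_A J
\]
and the inner complex $P^\bu\ot_B\Hom_A(P^\bu,A)$ computes $L^\bu\ot_B^\boL\boR\Hom_A(L^\bu,A)$, i.e.\ $\boR\Hom_{\modr B}(\boR\Hom_A(L^\bu,A),\nobreak A)$-type data controlled by the homothety map $B^\rop\to\Hom_{\sD^\b(A\modl)}(L^\bu,L^\bu[*])$ of condition~(iii). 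Tensoring the resulting quasi-isomorphism over $A$ with the injective module $J$ preserves it. Thus the adjunction morphism is an isomorphism, so $J\in\sE_{l_1}$.

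The main obstacle I expect is the bookkeeping needed to identify $P^\bu\ot_B\Hom_A(P^\bu,A)$ with the homothety data of condition~(iii) and to see that the composite adjunction map is literally the evaluation/trace map, rather than merely some isomorphism — one must check the derived tensor–Hom adjunction unit coincides, term by term, with the evaluation morphism $A\to\Hom_{\modr B}(\Hom_A(P^\bu,A),\Hom_A(P^\bu,A))$ and that condition~(iii) says exactly that this is an isomorphism in the derived category of right $B$\+modules. Once that identification is in place, flatness of tensoring $J$ over $A$ (for part~(a); a bounded-resolution argument for part~(b), using that $B$ is not assumed coherent) finishes things. For part~(b) the dual computation uses that $L^\bu$ is strongly finitely presented over $B$ on the right and that flat $B$\+modules have $\Tor^B_n(L^\bu,F)=0$ for $n>d_1$, together with the other homothety map $A\to\Hom_{\sD^\b(\modr B)}(L^\bu,L^\bu[*])$; the adjunction morphism $F\to\boR\Hom_A(L^\bu,L^\bu\ot_B^\boL F)$ becomes, after resolving, the evaluation map tensored with $F$, which is an isomorphism by~(iii).
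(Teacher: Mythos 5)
Your verification of the Ext\+vanishing half is fine, but your treatment of the adjunction condition has two genuine problems. First, condition~(ii) provides a bounded above complex of finitely generated projective left $A$\+modules quasi-isomorphic to $L^\bu$ \emph{only as a complex of left $A$\+modules}; it gives no complex of $A$\+$B$\+bimodules with such terms (and there is no reason such a bimodule resolution should exist), so the expressions $P^\bu\ot_B\Hom_A(P^\bu,J)$ and $P^\bu\ot_B\Hom_A(P^\bu,A)$ are not defined -- and even granting a bimodule structure, $P^\bu$ would not be known to be homotopy flat over $B$, so these complexes would not be known to compute $L^\bu\ot_B^\boL\boR\Hom_A(L^\bu,{-})$. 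Second, and more fundamentally, your reduction pulls $J$ out of the Hom and rests on the claim that the evaluation $P^\bu\ot_B\Hom_A(P^\bu,A)\rarrow A$, i.e.\ $L^\bu\ot_B^\boL\boR\Hom_A(L^\bu,A)\rarrow A$, is a quasi-isomorphism forced by the homothety condition~(iii). It is not: condition~(iii) says $A\simeq\boR\Hom_{B^\rop}(L^\bu,L^\bu)$ and $B\simeq\boR\Hom_A(L^\bu,L^\bu)$, but it does not put the module $A$ itself into the Bass class. In fact $L^\bu\ot_B^\boL\boR\Hom_A(L^\bu,A)\simeq A$ would force $L^\bu$ to be a tilting complex; already for $A=B=R$ a non-Gorenstein Cohen--Macaulay local ring and $L^\bu=\omega$ its dualizing module (a legitimate pseudo-dualizing complex) this fails, since it would make $\omega$ invertible in $\sD(R\modl)$ and hence $R$ Gorenstein. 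Moreover, even where such a quasi-isomorphism exists, tensoring it over $A$ with the injective -- generally non-flat -- module $J$ need not preserve it, contrary to your final step. The whole content of the lemma is that injectives lie in $\sE_{l_1}$ although $A$ in general does not, so a reduction to the case ``$J=A$'' cannot succeed.

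The paper's proof resolves on the \emph{other} side and keeps $J$ inside a Hom, which is where injectivity is actually used. Choose a quasi-isomorphism ${}'L^\bu\rarrow L^\bu$ with ${}'L^\bu$ a bounded above complex of finitely generated projective \emph{right $B$\+modules}. Then ${}'L^\bu\ot_B\Hom_A(L^\bu,J)$ computes $L^\bu\ot_B^\boL\boR\Hom_A(L^\bu,J)$ (homotopy flatness of ${}'L^\bu$ over $B$ plus injectivity of $J$), and finite generation and projectivity over $B$ give a termwise isomorphism ${}'L^\bu\ot_B\Hom_A(L^\bu,J)\simeq\Hom_A(\Hom_{B^\rop}({}'L^\bu,L^\bu),J)$. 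The relevant homothety map for part~(a) is then the $A$\+side one: $A\rarrow\Hom_{B^\rop}({}'L^\bu,L^\bu)$ is a quasi-isomorphism, and applying the exact functor $\Hom_A({-},J)$ yields that the adjunction morphism is an isomorphism. (Note you invoked the $B$\+side homothety map for part~(a); it is the $A$\+side one that does the work there, and dually in part~(b), where one resolves $L^\bu$ by finitely generated projective left $A$\+modules, uses $\Hom_A({}''L^\bu,\>L^\bu\ot_BP)\simeq\Hom_A({}''L^\bu,L^\bu)\ot_BP$, and tensors the $B$\+side homothety quasi-isomorphism with the flat module~$P$.) If you rework your argument along these lines, it goes through.
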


\begin{proof}
 Part~(a): let ${}'\!\.L^\bu$ be a bounded above complex of finitely
generated projective right $B$\+modules endowed with
a quasi-isomorphism of complexes of right $B$\+modules
${}'\!\.L^\bu\rarrow L^\bu$.
 Then the complex ${}'\!\.L^\bu\ot_B\Hom_A(L^\bu,J)$ computes
$L^\bu\ot_B^\boL\boR\Hom_A(L^\bu,J)$ as an object of the derived
category of abelian groups for any injective left $A$\+module~$J$.
 Now we have an isomorphism of complexes of abelian groups
${}'\!\.L^\bu\ot_B\Hom_A(L^\bu,J)\simeq
\Hom_A(\Hom_{B^\rop}({}'\!\.L^\bu,L^\bu),J)$ and a quasi-isomorphism of
complexes of left $A$\+modules $A\rarrow\Hom_{B^\rop}({}'\!\.L^\bu,
L^\bu)$, implying that the natural morphism $L^\bu\ot_B^\boL
\boR\Hom_A(L^\bu,J)\rarrow J$ is an isomorphism in the derived
category of abelian groups, hence also in the derived category
of left $A$\+modules.

 Part~(b): let ${}''\!\.L^\bu$ be a bounded above complex of finitely
generated projective left $A$\+modules endowed with
a quasi-isomorphism of complexes of left $A$\+modules
${}''\!\.L^\bu\rarrow L^\bu$.
 Then the complex $\Hom_A({}''\!\.L^\bu,\>L^\bu\ot_BP)$ represents
the derived category object $\boR\Hom_A(L^\bu,\>L^\bu\ot_B^\boL P)$
for any flat left $B$\+module~$P$.
 Now we have an isomorphism of complexes of abelian groups
$\Hom_A({}''\!\.L^\bu,\>L^\bu\ot_BP)\simeq\Hom_A({}''\!\.L^\bu,L^\bu)
\ot_BP$ and a quasi-isomorphism of complexes of right $B$\+modules
$B\rarrow\Hom_A({}''\!\.L^\bu,L^\bu)$.
\end{proof}

 If $L^\bu$ is finite complex of $A$\+$B$\+bimodules that are strongly
finitely presented as left $A$\+modules and as right $B$\+modules,
then the class $\sE_{l_1}$ contains also all the sfp\+injective left
$A$\+modules and the class $\sF_{l_1}$ contains all the sfp\+flat
left $B$\+modules.

\begin{lem} \label{two-rings-closed-sums-products}
\textup{(a)} The full subcategory\/ $\sE_{l_1}\subset A\modl$ is closed
under infinite direct sums and products. \par
\textup{(b)} The full subcategory\/ $\sF_{l_1}\subset B\modl$ is closed
under infinite direct sums and products.
\end{lem}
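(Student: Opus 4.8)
The plan is to deduce the statement from two commutation properties of the derived functors $\boR\Hom_A(L^\bu,{-})$ and $L^\bu\ot_B^\boL{-}$, combined with the naturality of the adjunction (co)unit morphisms. First I would record the commutation facts. Using condition~(ii) and Lemma~\ref{sfp-complexes-lemma}, fix a quasi-isomorphism ${}'\!L^\bu\rarrow L^\bu$ from a bounded above complex of finitely generated projective left $A$\+modules, and a quasi-isomorphism ${}''\!L^\bu\rarrow L^\bu$ from a bounded above complex of finitely generated projective right $B$\+modules. Then $\boR\Hom_A(L^\bu,{-})$ is computed by $\Hom_A({}'\!L^\bu,{-})$, so it commutes with infinite products (because $\Hom$ commutes with products in the second argument) and, since each term of ${}'\!L^\bu$ is finitely generated, it also commutes with infinite direct sums. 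Dually, $L^\bu\ot_B^\boL{-}$ is computed by ${}''\!L^\bu\ot_B{-}$, so it commutes with infinite direct sums always, and it commutes with infinite products because each term of ${}''\!L^\bu$, being a finitely generated projective right $B$\+module, is a direct summand of a finitely generated free module. Passing to cohomology is harmless throughout, as infinite direct sums and products are exact functors on module categories.

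For part~(a), direct sums: given a family $(E_\alpha)$ in $\sE_{l_1}$, the identification $\boR\Hom_A(L^\bu,\bigoplus_\alpha E_\alpha)\simeq\bigoplus_\alpha\boR\Hom_A(L^\bu,E_\alpha)$ gives $\Ext^n_A(L^\bu,\bigoplus_\alpha E_\alpha)=\bigoplus_\alpha\Ext^n_A(L^\bu,E_\alpha)=0$ for $n>l_1$, and the same identification followed by $L^\bu\ot_B^\boL\bigl(\bigoplus_\alpha{-}\bigr)\simeq\bigoplus_\alpha\bigl(L^\bu\ot_B^\boL{-}\bigr)$ identifies $L^\bu\ot_B^\boL\boR\Hom_A(L^\bu,\bigoplus_\alpha E_\alpha)$ with $\bigoplus_\alpha\bigl(L^\bu\ot_B^\boL\boR\Hom_A(L^\bu,E_\alpha)\bigr)\simeq\bigoplus_\alpha E_\alpha$. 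The one point needing care is that this composite identification really is the adjunction counit morphism; this follows from naturality of the counit with respect to the coproduct injections $E_\alpha\rarrow\bigoplus_\alpha E_\alpha$, which makes the square commute whose top edge is the canonical comparison map $\bigoplus_\alpha L^\bu\ot_B^\boL\boR\Hom_A(L^\bu,E_\alpha)\rarrow L^\bu\ot_B^\boL\boR\Hom_A(L^\bu,\bigoplus_\alpha E_\alpha)$, whose two vertical edges are counits, and whose bottom edge is the identity. For part~(a), products, the argument is identical: now I use that $\boR\Hom_A(L^\bu,{-})$ commutes with products (always) and $L^\bu\ot_B^\boL{-}$ commutes with products (by strong finite presentability of $L^\bu$ as a right $B$\+module), together with naturality of the counit with respect to the product projections.

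Part~(b) is dual, working with the unit morphism $F\rarrow\boR\Hom_A(L^\bu,\>L^\bu\ot_B^\boL F)$ in place of the counit: for direct sums one uses that $L^\bu\ot_B^\boL{-}$ commutes with direct sums (always) and $\boR\Hom_A(L^\bu,{-})$ commutes with direct sums (sfp over $A$), and for products that both functors commute with products, the tensor one by sfp over $B$; in each case the vanishing $\Tor^B_n(L^\bu,{-})=0$ for $n>l_1$ is immediate, and compatibility of the comparison maps with the unit again comes from its naturality with respect to the coproduct injections, resp.\ product projections. The only genuinely non-formal input is the commutation of the relevant derived functor with the relevant (co)product coming from strong finite presentability on the appropriate side; I do not expect a real obstacle here. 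The one step warranting attention is verifying that, after replacing $L^\bu$ by its finitely generated projective resolution, the comparison morphisms one writes are the canonical ones, so that the naturality squares genuinely apply.
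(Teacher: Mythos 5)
Your proof is correct and follows essentially the same route as the paper's, which simply records that $\boR\Hom_A(L^\bu,{-})$ preserves infinite products (always) and infinite direct sums of uniformly bounded below families, while $L^\bu\ot_B^\boL{-}$ preserves infinite direct sums (always) and infinite products of uniformly bounded above families, and notes that these observations imply both assertions; your explicit check that the comparison isomorphisms are compatible with the adjunction (co)unit via naturality is exactly the implicit step there. One small caveat: your unqualified claims that $\Hom_A({}'\!L^\bu,{-})$ commutes with direct sums and ${}''\!L^\bu\ot_B{-}$ with products do require the uniform boundedness restriction (the totalizations involve infinite products, resp.\ direct sums, over the degrees of the bounded above resolutions), but this is satisfied in all your applications, since the families involved are modules or complexes with cohomology uniformly concentrated in the degrees between $-\max(d_2,l_1)$ and $\max(d_2,l_1)$.
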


\begin{proof}
 The functor $\boR\Hom_A(L^\bu,{-})\:\sD(A\modl)\rarrow\sD(B\modl)$
preserves infinite direct sums of uniformly bounded below families
of complexes and infinite products of arbitrary families of
complexes.
 The functor $L^\bu\ot_B^\boL{-}\:\sD(B\modl)\rarrow\sD(A\modl)$
preserves infinite products of uniformly bounded above families of
complexes and infinite direct sums of arbitrary families of complexes.
 These observations imply both the assertions~(a) and~(b).
\end{proof}

 For a further result in the direction of the above three lemmas, see
Corollary~\ref{two-rings-closed-filtered-colimits} below (claiming
that the classes of modules $\sE_{l_1}$ and $\sF_{l_1}$ are also closed
under filtered inductive limits).

 The full subcategories $\sE_{l_1}\subset A\modl$ and $\sF_{l_1}\subset
B\modl$ inherit exact category structures from the abelian categories
$A\modl$ and $B\modl$.
 It follows from Lemmas~\ref{two-rings-E-F-closed-kernels-cokernels}
and~\ref{two-rings-contains-injectives-flats} that the induced
triangulated functors $\sD^\b(\sE_{l_1})\rarrow \sD^\b(A\modl)$ and
$\sD^\b(\sF_{l_1})\rarrow\sD^\b(B\modl)$ are fully faithful.
 The following lemma describes their essential images.

\begin{lem} \label{d-b-e-to-a-f-to-b-essential-images}
\textup{(a)} Let $M^\bu$ be a complex of left $A$\+modules concentrated
in the cohomological degrees $-n_1\le m\le n_2$.
 Then $M^\bu$ is quasi-isomorphic to a complex of left $A$\+modules
concentrated in the cohomological degrees $-n_1\le m\le n_2$ with
the terms belonging to the full subcategory\/ $\sE_{l_1}\subset A\modl$
if and only if\/ $\Ext^n_A(L^\bu,M^\bu)=0$ for $n>n_2+l_1$ and
the adjunction morphism $L^\bu\ot_B^\boL\boR\Hom_A(L^\bu,M^\bu)\rarrow
M^\bu$ is an isomorphism in\/ $\sD^-(A\modl)$. \par
\textup{(b)} Let $N^\bu$ be a complex of left $B$\+modules concentrated
in the cohomological degrees $-n_1\le m\le n_2$.
 Then $N^\bu$ is quasi-isomorphic to a complex of left $B$\+modules
concentrated in the cohomological degrees $-n_1\le m\le n_2$ with
the terms belonging to the full subcategory\/ $\sF_{l_1}\subset B\modl$
if and only if\/ $\Tor_n^B(L^\bu,N^\bu)=0$ for $n>n_1+l_1$ and
the adjunction morphism\/ $N^\bu\rarrow\boR\Hom_A(L^\bu,\>
L^\bu\ot_B^\boL N^\bu)$ is an isomorphism in\/ $\sD^+(B\modl)$.
\end{lem}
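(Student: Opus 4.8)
The plan is to prove part~(a); part~(b) is entirely dual (swapping $\boR\Hom_A(L^\bu,{-})$ for $L^\bu\ot_B^\boL{-}$, injectives for flats, $\sD^-$ for $\sD^+$, and reversing arrows). The ``only if'' direction is the easy one: if $M^\bu$ is quasi-isomorphic to a complex $E^\bu$ concentrated in degrees $-n_1\le m\le n_2$ with all terms in $\sE_{l_1}$, then since $L^\bu$ sits in degrees $-d_1\le m\le d_2$ with $l_1\ge d_1$, and each $E^m$ satisfies $\Ext^n_A(L^\bu,E^m)=0$ for $n>l_1$, a standard spectral-sequence (or stupid-filtration/\allowbreak iterated-cone) argument gives $\Ext^n_A(L^\bu,M^\bu)=0$ for $n>n_2+l_1$. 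For the adjunction morphism, one notes that $L^\bu\ot_B^\boL\boR\Hom_A(L^\bu,{-})\rarrow\Id$ is a morphism of triangulated functors on $\sD^-(A\modl)$, it is an isomorphism on each term $E^m$ by definition of $\sE_{l_1}$, and therefore (using that $E^\bu$ is a bounded complex, so can be built from its terms by finitely many cones, and the functors involved are way-out in the appropriate direction on $\sD^-$) it is an isomorphism on $M^\bu\simeq E^\bu$.

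The ``if'' direction is the substantive part, and is where I expect the main obstacle. Assume $M^\bu$ is concentrated in degrees $-n_1\le m\le n_2$, that $\Ext^n_A(L^\bu,M^\bu)=0$ for $n>n_2+l_1$, and that $L^\bu\ot_B^\boL\boR\Hom_A(L^\bu,M^\bu)\rarrow M^\bu$ is an isomorphism in $\sD^-(A\modl)$. The first step is to replace $M^\bu$ by a quasi-isomorphic complex $E^\bu$ with terms in $\sE_{l_1}$ that is still concentrated in degrees $\le n_2$: using Lemma~\ref{two-rings-contains-injectives-flats}(a) (the injectives, hence sfp-injectives, lie in $\sE_{l_1}$) together with the closure properties in Lemma~\ref{two-rings-E-F-closed-kernels-cokernels}(a), one takes a right coresolution of $M^\bu$ by complexes with terms in $\sE_{l_1}$ — concretely, embed $M^\bu$ into a complex of injectives in degrees $\ge -n_1$, but this a priori produces an unbounded-above resolution. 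The key point is to show that this coresolution can be \emph{truncated} to be concentrated in degrees $-n_1\le m\le n_2$ while keeping all terms in $\sE_{l_1}$: the obstruction to truncating at the top is precisely the vanishing of a certain $\Ext$ or cohomology group, and here is where the hypothesis $\Ext^n_A(L^\bu,M^\bu)=0$ for $n>n_2+l_1$ does its work. More precisely, if $\cdots\to E^{n_2}\to E^{n_2+1}\to\cdots$ is such a coresolution, the truncation $\tau^{\le n_2}E^\bu$ has top term a cokernel of a map between objects of $\sE_{l_1}$; to conclude this cokernel again lies in $\sE_{l_1}$ — which is \emph{not} closed under arbitrary cokernels, only cokernels of admissible monos in $\sE_{l_1}$ — one must verify the relevant $\Ext^n_A(L^\bu,{-})$ and adjunction conditions, and this reduces to the hypothesis $\Ext^{>n_2+l_1}_A(L^\bu,M^\bu)=0$ plus the fact that the higher coresolution terms contribute nothing.

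The cleanest way I would organize the ``if'' direction is: (1) build a (possibly unbounded-above) coresolution $M^\bu\to E^\bu$ with $E^m\in\sE_{l_1}$ and $E^m=0$ for $m<-n_1$, using $\sE_{l_1}\supseteq A\modl_\inj$ and closure under cokernels of monos and extensions; (2) form the brutal truncation $'E^\bu=\tau^{\le n_2}E^\bu$, which is concentrated in $[-n_1,n_2]$ with all terms in $\sE_{l_1}$ except possibly the top one $Z^{n_2}=\operatorname{coker}(E^{n_2-1}\to E^{n_2})$; (3) observe that $'E^\bu$ and $M^\bu$ differ by the acyclic-in-degrees-$>n_2$ complex $\tau^{\ge n_2+1}E^\bu$ shifted, so that $Z^{n_2}$ fits in an exact sequence expressing it as an extension/cokernel built from $M^\bu$ and objects of $\sE_{l_1}$, together with the cohomology $H^{>n_2}$ of the truncated-off part; (4) use the hypothesis on $\Ext^{>n_2+l_1}_A(L^\bu,M^\bu)$ and the corresponding adjunction-isomorphism hypothesis to check directly that $Z^{n_2}$ satisfies $\Ext^n_A(L^\bu,Z^{n_2})=0$ for $n>l_1$ and $L^\bu\ot_B^\boL\boR\Hom_A(L^\bu,Z^{n_2})\rarrow Z^{n_2}$ is an isomorphism, i.e.\ $Z^{n_2}\in\sE_{l_1}$. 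Step~(4) is the heart of the matter and the expected obstacle: it requires carefully tracking a couple of long exact sequences of $\Ext^*_A(L^\bu,{-})$-groups and the compatibility of the adjunction morphism with them (a standard but delicate $5$-lemma-in-$\sD^-(A\modl)$ argument, using that all complexes in sight are bounded so the way-out lemmas apply). Once $Z^{n_2}\in\sE_{l_1}$ is established, $'E^\bu$ is the desired complex quasi-isomorphic to $M^\bu$, concentrated in $[-n_1,n_2]$ with terms in $\sE_{l_1}$, and the proof is complete.
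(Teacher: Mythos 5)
Your plan is correct and is essentially the paper's own proof: the easy direction by finite d\'evissage over the terms of the bounded complex, and the substantive direction by replacing $M^\bu$ with a quasi-isomorphic complex in the same degree range whose terms below the top degree are injective (hence lie in $\sE_{l_1}$ by Lemma~\ref{two-rings-contains-injectives-flats}(a)), the top term then being checked to lie in $\sE_{l_1}$ by exactly the long-exact-sequence/two-out-of-three verification of the $\Ext$-vanishing and adjunction-isomorphism conditions that you outline in step~(4). The only slip is notational: the truncation you need is the canonical one, whose top term is the cosyzygy $\ker(E^{n_2}\to E^{n_2+1})$, not the brutal truncation or the cokernel you wrote (the brutal truncation is not quasi-isomorphic to $M^\bu$); also note that the ``if''/``only if'' labels in the paper's proof are swapped relative to your (standard) usage, but the substance agrees with yours.
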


\begin{proof}
 Part~(a): the ``only if'' part is obvious.
 To prove the ``if'', replace $M^\bu$ by a quasi-isomorphic complex
${}'\!M^\bu$ concentrated in the same cohomological degrees
$-n_1\le m\le n_2$ with ${}'\!M^m\in A\modl_\inj$ for $-n_1\le m < n_2$,
and use Lemma~\ref{two-rings-contains-injectives-flats}(a) in order to
check that ${}'\!M^m\in\sE_{l_1}$ for all $-n_1\le m\le n_2$.
 Part~(b): to prove the ``if'', replace $N^\bu$ by a quasi-isomorphic
complex ${}'\!N^\bu$ concentrated in the same cohomological degrees
$-n_1\le m\le n_2$ with ${}'\!N^m\in B\modl_\proj$ for $-n_1<m\le n_2$,
and use Lemma~\ref{two-rings-contains-injectives-flats}(b) in order to
check that ${}'\!N^m\in\sF_{l_1}$ for all $-n_1\le m\le n_2$.
\end{proof}

 Thus the full subcategory $\sD^\b(\sE_{l_1})\subset\sD(A\modl)$
consists of all the complexes of left $A$\+modules $M^\bu$ with bounded
cohomology such that the complex $\boR\Hom_A(L^\bu,M^\bu)$ also has
bounded cohomology and the adjunction morphism
$L^\bu\ot_B^\boL\boR\Hom_A(L^\bu,M^\bu)\rarrow M^\bu$ is an isomorphism.
 Similarly, the full subcategory $\sD^\b(\sF_{l_1})\subset\sD(B\modl)$
consists of all the complexes of left $B$\+modules $N^\bu$ with
bounded cohomology such that the complex $L^\bu\ot_B^\boL N^\bu$ also
has bounded cohomology and the adjunction morphism
$N^\bu\rarrow\boR\Hom_A(L^\bu,\>L^\bu\ot_B^\boL N^\bu)$ is an isomorphism.
{\hbadness=1500\par}

 These full subcategories are usually called the \emph{derived Bass}
and \emph{Auslander classes}.
 As any pair of adjoint functors restricts to an equivalence between
the full subcategories of all objects whose adjunction morphisms are
isomorphisms~\cite[Theorem~1.1]{FJ}, the functors
$\boR\Hom_A(L^\bu,{-})$ and $L^\bu\ot_B^\boL{-}$ restrict to
a triangulated equivalence between the derived Bass and Auslander
classes~\cite[Theorem~3.2]{AF}, \cite[Theorem~4.6]{Chr}
\begin{equation} \label{bounded-derived-bass-auslander-equiv}
 \sD^\b(\sE_{l_1})\simeq\sD^\b(\sF_{l_1}).
\end{equation}

\begin{lem} \label{E-l-F-l-taken-into-each-other-up-to-finite}
\textup{(a)} For any $A$\+module $E\in\sE_{l_1}$, the object\/
$\boR\Hom_A(L^\bu,E)\in\sD^\b(B\modl)$ can be represented by
a complex of $B$\+modules concentrated in the cohomological degrees
$-d_2\le m\le l_1$ with the terms belonging to\/~$\sF_{l_1}$. \par
\textup{(b)} For any $B$\+module $F\in\sF_{l_1}$, the object\/
$L^\bu\ot_B^\boL F\in\sD^\b(A\modl)$ can be represented by
a complex of $A$\+modules concentrated in the cohomological degrees
$-l_1\le m\le d_2$ with the terms belonging to\/~$\sE_{l_1}$.
\end{lem}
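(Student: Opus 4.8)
The plan is to reduce both parts to the essential-image description of the derived Bass and Auslander classes recorded in Lemma~\ref{d-b-e-to-a-f-to-b-essential-images}. I would treat part~(a) in detail; part~(b) is the mirror argument, with $\boR\Hom_A(L^\bu,{-})$ and $L^\bu\ot_B^\boL{-}$, and $\sE_{l_1}$ and $\sF_{l_1}$, interchanged. So fix $E\in\sE_{l_1}$ and set $N^\bu=\boR\Hom_A(L^\bu,E)\in\sD(B\modl)$. First I would pin down its cohomological amplitude: since $L^\bu$ is strongly finitely presented as a complex of left $A$\+modules and is situated in degrees $\le d_2$, Lemma~\ref{sfp-complexes-lemma}(b) lets us compute $\boR\Hom_A(L^\bu,E)$ by means of a bounded-above complex of finitely generated projective left $A$\+modules concentrated in degrees $\le d_2$; this shows $\Ext^n_A(L^\bu,E)=0$ for $n<-d_2$, while $\Ext^n_A(L^\bu,E)=0$ for $n>l_1$ by the definition of $\sE_{l_1}$. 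Hence $N^\bu$ has cohomology concentrated in the degrees $-d_2\le m\le l_1$, and after a canonical truncation from both sides we may assume $N^\bu$ to be an actual complex of left $B$\+modules concentrated in those degrees; in particular $N^\bu\in\sD^\b(B\modl)$.

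Next I would verify the two hypotheses of Lemma~\ref{d-b-e-to-a-f-to-b-essential-images}(b) for this $N^\bu$, with $n_1=d_2$ and $n_2=l_1$. The defining property of $\sE_{l_1}$ says precisely that the counit $\varepsilon_E\:L^\bu\ot_B^\boL\boR\Hom_A(L^\bu,E)\rarrow E$ of the adjunction $L^\bu\ot_B^\boL{-}\dashv\boR\Hom_A(L^\bu,{-})$ is an isomorphism; hence $L^\bu\ot_B^\boL N^\bu\cong E$, so $\Tor_n^B(L^\bu,N^\bu)=H^{-n}(E)$ vanishes for all $n\ne0$, in particular for $n>d_2+l_1$. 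For the adjunction morphism $N^\bu\rarrow\boR\Hom_A(L^\bu,\>L^\bu\ot_B^\boL N^\bu)$, the triangle identity $\boR\Hom_A(L^\bu,\varepsilon_E)\circ\eta_{N^\bu}=\mathrm{id}_{N^\bu}$ shows that, $\varepsilon_E$ being invertible, the unit $\eta_{N^\bu}$ (which is exactly this adjunction morphism) is an isomorphism in $\sD(B\modl)$, hence in $\sD^+(B\modl)$. Lemma~\ref{d-b-e-to-a-f-to-b-essential-images}(b) then produces a complex of left $B$\+modules concentrated in the degrees $-d_2\le m\le l_1$ with terms in $\sF_{l_1}$ and quasi-isomorphic to $N^\bu$, i.e.\ representing $\boR\Hom_A(L^\bu,E)$; this is part~(a).

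For part~(b), given $F\in\sF_{l_1}$ I set $M^\bu=L^\bu\ot_B^\boL F$. Using that $L^\bu$ is, by Lemma~\ref{sfp-complexes-lemma}(b), quasi-isomorphic as a complex of right $B$\+modules to a complex of finitely generated projective right $B$\+modules concentrated in degrees $\le d_2$, together with $\Tor^B_n(L^\bu,F)=0$ for $n>l_1$, one finds that $M^\bu$ has cohomology in the degrees $-l_1\le m\le d_2$, and I take a representative concentrated there. The defining property of $\sF_{l_1}$ is that the unit $\eta_F\:F\rarrow\boR\Hom_A(L^\bu,\>L^\bu\ot_B^\boL F)$ is an isomorphism, so $\boR\Hom_A(L^\bu,M^\bu)\cong F$ and $\Ext^n_A(L^\bu,M^\bu)=H^n(F)=0$ for $n\ne0$; moreover the triangle identity $\varepsilon_{M^\bu}\circ(L^\bu\ot_B^\boL\eta_F)=\mathrm{id}_{M^\bu}$ shows the counit $\varepsilon_{M^\bu}\:L^\bu\ot_B^\boL\boR\Hom_A(L^\bu,M^\bu)\rarrow M^\bu$ is an isomorphism. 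Applying Lemma~\ref{d-b-e-to-a-f-to-b-essential-images}(a) with $n_1=l_1$ and $n_2=d_2$ then gives the claim. The only point in this argument that is not pure bookkeeping with cohomological degrees and citations of Lemmas~\ref{sfp-complexes-lemma} and~\ref{d-b-e-to-a-f-to-b-essential-images} is the invertibility of these adjunction (co)units on the relevant objects, and I expect that to be an immediate formal consequence of the triangle identities as indicated above.
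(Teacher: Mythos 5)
Your proof is correct and takes essentially the same route as the paper: both reduce the statement to Lemma~\ref{d-b-e-to-a-f-to-b-essential-images}, applied to $\boR\Hom_A(L^\bu,E)$ in part~(a) and to $L^\bu\ot_B^\boL F$ in part~(b), using the isomorphism $L^\bu\ot_B^\boL\boR\Hom_A(L^\bu,E)\simeq E$ (resp.\ $\boR\Hom_A(L^\bu,\>L^\bu\ot_B^\boL F)\simeq F$) coming from the definition of $\sE_{l_1}$ (resp.\ $\sF_{l_1}$) to verify the Tor/Ext vanishing hypothesis. The paper's proof is simply terser, leaving implicit the amplitude bookkeeping and the triangle-identity argument showing that the other adjunction morphism is an isomorphism, both of which you spell out correctly.
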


\begin{proof}
 Part~(a) follows from
Lemma~\ref{d-b-e-to-a-f-to-b-essential-images}(b), as
the derived category object $L^\bu\ot_B^\boL\boR\Hom_A(L^\bu,E)
\simeq E$ has no cohomology in the cohomological degrees
$-n<-d_2-l_1\le-d_2-d_1\le0$.
 Part~(b) follows from
Lemma~\ref{d-b-e-to-a-f-to-b-essential-images}(a), as
the derived category object $\boR\Hom_A(L^\bu,\>L^\bu\ot_B^\boL F)
\simeq F$ has no cohomology in the cohomological degrees
$n>d_2+l_1\ge d_2+d_1\ge0$.
\end{proof}

 We refer to~\cite[Section~2]{St} and~\cite[Section~A.5]{Pcosh} for
discussions of the \emph{coresolution dimension} of objects of
an exact category $\sA$ with respect to its coresolving subcategory
$\sE$ and the \emph{resolution dimension} of objects of an exact
category $\sB$ with respect to its resolving subcategory $\sF$
(called the \emph{right\/ $\sE$\+homological dimension} and
the \emph{left\/ $\sF$\+homological dimension} in~\cite{Pcosh}).

\begin{lem} \label{two-rings-maximal-classes-co-resolution-dimension}
\textup{(a)} For any integers $l_1''\ge l_l'\ge d_1$, the full
subcategory\/ $\sE_{l_1''}\subset A\modl$ consists precisely of
all the left $A$\+modules whose $\sE_{l_1'}$\+coresolution dimension
does not exceed $l_1''-l_1'$. \par
\textup{(b)} For any integers $l_1''\ge l_1'\ge d_1$, the full
subcategory\/ $\sF_{l_1''}\subset B\modl$ consists precisely of
all the left $B$\+modules whose $\sF_{l_1'}$\+resolution dimension
does not exceed $l_1''-l_1'$.
\end{lem}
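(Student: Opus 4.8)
The plan is to deduce both parts from Lemma~\ref{d-b-e-to-a-f-to-b-essential-images}, which already characterizes, in terms of $\Ext^*_A(L^\bu,{-})$ and the adjunction morphism, the complexes concentrated in a prescribed range of cohomological degrees that are quasi-isomorphic to a complex with terms in $\sE_{l_1}$ (and dually for $\sF_{l_1}$ in terms of $\Tor_*^B(L^\bu,{-})$). I will write out part~(a) only; part~(b) is entirely dual, reading Lemma~\ref{d-b-e-to-a-f-to-b-essential-images}(b), resolutions, and $\Tor_*^B(L^\bu,{-})$ in place of Lemma~\ref{d-b-e-to-a-f-to-b-essential-images}(a), coresolutions, and $\Ext^*_A(L^\bu,{-})$. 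By Lemmas~\ref{two-rings-E-F-closed-kernels-cokernels}(a) and~\ref{two-rings-contains-injectives-flats}(a) the full subcategory $\sE_{l_1'}\subset A\modl$ is coresolving, so the $\sE_{l_1'}$\+coresolution dimension of an $A$\+module $E$ is defined, and it does not exceed~$d$ precisely when there is an exact sequence $0\rarrow E\rarrow G^0\rarrow\dotsb\rarrow G^d\rarrow0$ in $A\modl$ with all $G^m\in\sE_{l_1'}$. Put $d=l_1''-l_1'\ge0$; since $l_1'\ge d_1$, both classes $\sE_{l_1'}$ and $\sE_{l_1''}$ are defined.

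First I would show that every $A$\+module $E$ of $\sE_{l_1'}$\+coresolution dimension $\le d$ lies in $\sE_{l_1''}$. Choosing an exact coresolution $0\rarrow E\rarrow G^0\rarrow\dotsb\rarrow G^d\rarrow0$ with $G^m\in\sE_{l_1'}$, the finite complex $G^\bu=(G^0\rarrow\dotsb\rarrow G^d)$, placed in the cohomological degrees $0\le m\le d$, is quasi-isomorphic to $E$, its only nonvanishing cohomology being $H^0(G^\bu)\simeq E$. So $E$, regarded as a complex concentrated in degrees $0\le m\le d$, is quasi-isomorphic to a complex in those degrees with terms in $\sE_{l_1'}$, and Lemma~\ref{d-b-e-to-a-f-to-b-essential-images}(a), applied with $n_1=0$, $n_2=d$, and $l_1'$ in the role of $l_1$, yields $\Ext^n_A(L^\bu,E)=0$ for $n>d+l_1'=l_1''$ together with the fact that $L^\bu\ot_B^\boL\boR\Hom_A(L^\bu,E)\rarrow E$ is an isomorphism in $\sD^-(A\modl)$. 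This is exactly the condition $E\in\sE_{l_1''}$.

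Conversely, I would take $E\in\sE_{l_1''}$, so that $\Ext^n_A(L^\bu,E)=0$ for $n>l_1''=d+l_1'$ and $L^\bu\ot_B^\boL\boR\Hom_A(L^\bu,E)\rarrow E$ is an isomorphism in $\sD^-(A\modl)$. Viewing $E$ as a complex concentrated in degrees $0\le m\le d$ and invoking Lemma~\ref{d-b-e-to-a-f-to-b-essential-images}(a) in the other direction, with the same parameters $n_1=0$, $n_2=d$, $l_1=l_1'$, we obtain a complex $G^\bu$ concentrated in degrees $0\le m\le d$ with terms in $\sE_{l_1'}$ and quasi-isomorphic to $E$. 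Since $E$ sits in cohomological degree~$0$ alone, $G^\bu=(G^0\rarrow\dotsb\rarrow G^d)$ has $H^0(G^\bu)\simeq E$ and no higher cohomology, which is the same as an exact sequence $0\rarrow E\rarrow G^0\rarrow\dotsb\rarrow G^d\rarrow0$ with $G^m\in\sE_{l_1'}$. Hence the $\sE_{l_1'}$\+coresolution dimension of $E$ is at most~$d$, which establishes part~(a).

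The whole argument is merely a translation between the cohomological degree bounds appearing in Lemma~\ref{d-b-e-to-a-f-to-b-essential-images} and the lengths of co/resolutions, so I do not anticipate a genuine obstacle. The one place where a moment's attention is needed is the passage between ``quasi-isomorphic to a bounded complex with terms in $\sE_{l_1'}$ concentrated in degrees $0\le m\le d$'' and ``admitting an exact $\sE_{l_1'}$\+coresolution of length $d$''; but in the abelian category $A\modl$ this is tautological, a complex in degrees $0\le m\le d$ with cohomology only in degree~$0$ being nothing but an exact sequence augmented by its zeroth cohomology module.
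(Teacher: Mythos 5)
Your proof is correct and follows the paper's own route: the paper likewise deduces both parts by applying Lemma~\ref{d-b-e-to-a-f-to-b-essential-images} to the one-term complex $E$ (resp.\ $F$) with $n_1=0$, $n_2=l_1''-l_1'$, $l_1=l_1'$ (resp.\ $n_2=0$, $n_1=l_1''-l_1'$), the translation between bounded complexes with terms in $\sE_{l_1'}$ quasi-isomorphic to a module and finite $\sE_{l_1'}$\+coresolutions being exactly the tautology you point out.
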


\begin{proof}
 Part~(a) is obtained by applying
Lemma~\ref{d-b-e-to-a-f-to-b-essential-images}(a) to
a one-term complex $M^\bu=E$, concentrated in the cohomological
degree~$0$, with $n_1=0$, \ $n_2=l_1''-l_1'$, and $l_1=l_1'$.
 Part~(b) is obtained by applying
Lemma~\ref{d-b-e-to-a-f-to-b-essential-images}(b) to
a one-term complex $N^\bu=F$, concentrated in the cohomological
degree~$0$, with $n_2=0$, \ $n_1=l_1''-l_1'$, and $l_1=l_1'$.
\end{proof}

\begin{rem} \label{finite-injective-flat-dim-bass-auslander-remark}
 In particular, it follows from
Lemmas~\ref{two-rings-contains-injectives-flats}
and~\ref{two-rings-maximal-classes-co-resolution-dimension} that,
for any $n\ge0$, all the left $A$\+modules of injective dimension
not exceeding~$n$ belong to $\sE_{d_1+n}$ and all the left $B$\+modules
of flat dimension not exceeding~$n$ belong to~$\sF_{d_1+n}$.
\end{rem}

 We refer to~\cite[Section~A.1]{Pcosh}, \cite[Appendix~A]{Pmgm},
or Section~\ref{pseudo-derived-introd} for the definitions of exotic
derived categories appearing in the next proposition.

\begin{prop} \label{two-rings-maximal-classes-essentially-the-same}
 For any $l_1''\ge l_1'\ge d_1$ and any conventional or exotic derived
category symbol\/ $\star=\b$, $+$, $-$, $\varnothing$, $\abs+$,
$\abs-$, $\co$, $\ctr$, or\/~$\abs$, the exact embedding functors\/
$\sE_{l_1'}\rarrow\sE_{l_1''}$ and\/ $\sF_{l_1'}\rarrow\sF_{l_1''}$ induce
triangulated equivalences
$$
 \sD^\star(\sE_{l_1'})\simeq\sD^\star(\sE_{l_1''})\quad\text{and}\quad
 \sD^\star(\sF_{l_1'})\simeq\sD^\star(\sF_{l_1''}).
$$
\end{prop}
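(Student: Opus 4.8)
The plan is to reduce the proposition, by means of Lemma~\ref{two-rings-maximal-classes-co-resolution-dimension}, to the general principle that replacing an exact category by a coresolving (or resolving) subcategory of finite coresolution (resp.\ resolution) dimension changes none of the conventional or exotic derived categories. By Lemma~\ref{two-rings-maximal-classes-co-resolution-dimension}(a), $\sE_{l_1'}$ is a coresolving subcategory of the exact category $\sE_{l_1''}$ (with the exact structure inherited from $A\modl$) such that the $\sE_{l_1'}$\+coresolution dimension of every object of $\sE_{l_1''}$ does not exceed $n:=l_1''-l_1'$; dually, by part~(b), $\sF_{l_1'}$ is a resolving subcategory of $\sF_{l_1''}$ with every object of $\sF_{l_1''}$ of $\sF_{l_1'}$\+resolution dimension $\le n$. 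By Lemma~\ref{two-rings-closed-sums-products}, all four exact categories $\sE_{l_1'}$, $\sE_{l_1''}$, $\sF_{l_1'}$, $\sF_{l_1''}$ are closed under infinite direct sums and products, so that their coderived, contraderived, and absolute derived categories are defined in the usual way. Hence it suffices to establish the following claim, together with its dual (obtained by replacing ``coresolving'' by ``resolving'', ``coresolution'' by ``resolution'', and interchanging the roles of coproducts and products): \emph{if $\sG'\subset\sG$ is a coresolving subcategory of an exact category with exact infinite coproducts and products such that the $\sG'$\+coresolution dimension of every object of $\sG$ is at most~$n$, then the exact embedding $\sG'\rarrow\sG$ induces a triangulated equivalence $\sD^\star(\sG')\simeq\sD^\star(\sG)$ for every symbol $\star=\b$, $+$, $-$, $\varnothing$, $\abs+$, $\abs-$, $\co$, $\ctr$, $\abs$.} This is covered by the discussion of coresolution dimension in~\cite[Section~2]{St} and~\cite[Section~A.5]{Pcosh}; I outline the argument.

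Essential surjectivity of $\sD^\star(\sG')\rarrow\sD^\star(\sG)$ is seen as follows. Given a complex $X^\bu$ over $\sG$, choose a $\sG'$\+coresolution of length~$\le n$ of each term and assemble these, by the usual inductive (Cartan--Eilenberg-type) construction along the differential of $X^\bu$, into a finite exact complex of complexes $X^\bu\rarrow C^{\bu,0}\rarrow C^{\bu,1}\rarrow\dotsb\rarrow C^{\bu,n}\rarrow0$ in which the complexes $C^{\bu,j}$ have all their terms in $\sG'$. Set $\widetilde X^\bu=\mathrm{Tot}(C^{\bu,\bu})$, a complex over $\sG'$; then the natural morphism $X^\bu\rarrow\widetilde X^\bu$ has cone isomorphic (up to shift) to the total complex of the finite exact complex of complexes $(X^\bu\rarrow C^{\bu,\bu})$, which is absolutely acyclic over $\sG$ by the standard properties of the absolute derived category, hence $\star$\+acyclic for every~$\star$. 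Since the coresolutions have length~$\le n$, the passage from $X^\bu$ to $\widetilde X^\bu$ preserves boundedness above, below, or on both sides, so it respects all the symbols~$\star$. Thus $X^\bu\cong\widetilde X^\bu$ in $\sD^\star(\sG)$, and $\widetilde X^\bu$ lies in the image of $\sD^\star(\sG')$.

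For full faithfulness it remains to prove that a complex over $\sG'$ which is $\star$\+acyclic when viewed over $\sG$ is already $\star$\+acyclic over $\sG'$; the converse inclusion of classes of complexes is obvious, since every admissible short exact sequence in $\sG'$ is one in $\sG$. For $\star=\b$, $+$, $-$, or $\varnothing$ this follows by dimension shifting for coresolution dimension: the cocycle objects $B^m$ of an acyclic complex $Z^\bu$ over $\sG'$ have $\sG'$\+coresolution dimension $\le n$ and occur in the $n$\+step coresolutions $0\rarrow B^m\rarrow Z^m\rarrow\dotsb\rarrow Z^{m+n-1}\rarrow B^{m+n}\rarrow 0$ with terms in $\sG'$, whence $B^{m+n}\in\sG'$; as $m$ is arbitrary, all the $B^m$ lie in $\sG'$ and $Z^\bu$ is acyclic over $\sG'$. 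The case of the exotic symbols $\star=\abs+$, $\abs-$, $\abs$, $\co$, $\ctr$ is the main technical point, and it is here that one must invoke the finer analysis of~\cite[Section~A.5]{Pcosh}: the point is to show that the coresolution construction of the preceding paragraph sends absolutely (resp.\ co-, resp.\ contra-)acyclic complexes over $\sG$ to complexes of the same kind over $\sG'$ and is compatible, after a suitable functorial refinement, with the formation of cones, infinite coproducts, and infinite products, so that a $\star$\+acyclic complex over $\sG'$ cannot become $\star$\+acyclic over $\sG$ without already being $\star$\+acyclic over $\sG'$. Alternatively, one can reduce to the case $n=1$ by induction on~$n$ --- the dimension\+shifting inequalities for (co)resolution dimension show that $\sE_k\subset\sE_{k+1}$ is coresolving of coresolution dimension~$\le1$ for every $k\ge d_1$, and similarly for the $\sF_k$ --- in which case the coresolution of a complex is supported in only two columns and the requisite compatibilities are transparent. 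Applying the general claim and its dual to $\sG'=\sE_{l_1'}\subset\sG=\sE_{l_1''}$ and to $\sG'=\sF_{l_1'}\subset\sG=\sF_{l_1''}$ completes the proof.
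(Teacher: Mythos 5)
Your proof is correct and follows essentially the same route as the paper: it reduces the statement, via Lemma~\ref{two-rings-maximal-classes-co-resolution-dimension}, to the general fact that a coresolving (resp.\ resolving) subcategory with uniformly bounded coresolution (resp.\ resolution) dimension induces equivalences of all the conventional and exotic derived categories, which is exactly~\cite[Proposition~A.5.6]{Pcosh}, the result the paper quotes. Your supplementary sketch of that general fact is a reasonable outline, but for the exotic symbols ($\abs\pm$, $\abs$, $\co$, $\ctr$) and for upgrading acyclicity detection to full faithfulness it ultimately defers back to the same reference, so it does not constitute an independent argument.
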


\begin{proof}
 In view of~\cite[Proposition~A.5.6]{Pcosh}, the assertions follow
from Lemma~\ref{two-rings-maximal-classes-co-resolution-dimension}.
\end{proof}

 In particular, the unbounded derived category $\sD(\sE_{l_1})$ is
the same for all $l_1\ge d_1$ and the unbounded derived category
$\sD(\sF_{l_1})$ is the same for all $l_1\ge d_1$.

 As it was explained in Section~\ref{pseudo-derived-introd}, it follows
from Lemmas~\ref{two-rings-E-F-closed-kernels-cokernels}\+-%
\ref{two-rings-closed-sums-products} by virtue
of~\cite[Proposition~A.3.1(b)]{Pcosh} that the natural Verdier
quotient functor $\sD^\co(A\modl)\rarrow\sD(A\modl)$ factorizes
into two Verdier quotient functors
$$
 \sD^\co(A\modl)\lrarrow\sD(\sE_{l_1})\lrarrow\sD(A\modl),
$$
and the natural Verdier quotient functor $\sD^\ctr(B\modl)\rarrow
\sD(B\modl)$ factorizes into two Verdier quotient functors
$$
 \sD^\ctr(B\modl)\lrarrow\sD(\sF_{l_1})\lrarrow\sD(B\modl).
$$
 In other words, the triangulated category $\sD(\sE_{l_1})$ is
a pseudo-coderived category of the abelian category of left
$A$\+modules and the triangulated category $\sD(\sF_{l_1})$ is
a pseudo-contraderived category of the abelian category of
left $B$\+modules.

 These are called the \emph{lower pseudo-coderived category} of
left $A$\+modules and the \emph{lower pseudo-contraderived category} of
left $B$\+modules corresponding to the pseudo-dualizing complex~$L^\bu$.
 The notation is
$$
 \sD'_{L^\bu}(A\modl)=\sD(\sE_{l_1}) \quad\text{and}\quad
 \sD''_{L^\bu}(B\modl)=\sD(\sF_{l_1}).
$$
 The next theorem provides, in particular, a triangulated
equivalence between the lower pseudo-coderived and the lower
pseudo-contraderived category,
$$
 \sD'_{L^\bu}(A\modl)=\sD(\sE_{l_1})\.\simeq\.
 \sD(\sF_{l_1})=\sD''_{L^\bu}(B\modl). 
$$

\begin{thm} \label{two-rings-lower-main-thm}
 For any symbol\/ $\star=\b$, $+$, $-$, $\varnothing$, $\abs+$,
$\abs-$, $\co$, $\ctr$, or\/~$\abs$, there is a triangulated
equivalence\/ $\sD^\star(\sE_{l_1})\simeq\sD^\star(\sF_{l_1})$
provided by (appropriately defined) mutually inverse functors\/
$\boR\Hom_A(L^\bu,{-})$ and $L^\bu\ot_B^\boL{-}$.
\end{thm}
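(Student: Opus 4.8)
The plan is to construct a pair of mutually inverse triangulated functors between $\sD^\star(\sE_{l_1})$ and $\sD^\star(\sF_{l_1})$ lifting the honest derived functors $\boR\Hom_A(L^\bu,{-})$ and $L^\bu\ot_B^\boL{-}$, for each of the listed symbols~$\star$. For $\star=\b$ there is nothing new to do: by the remarks following Lemma~\ref{d-b-e-to-a-f-to-b-essential-images}, the functors $\sD^\b(\sE_{l_1})\rarrow\sD(A\modl)$ and $\sD^\b(\sF_{l_1})\rarrow\sD(B\modl)$ are fully faithful onto the derived Bass and Auslander classes, and these two subcategories are interchanged by $\boR\Hom_A(L^\bu,{-})$ and $L^\bu\ot_B^\boL{-}$ by~\eqref{bounded-derived-bass-auslander-equiv}. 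So the work is in the remaining symbols $\star=+$, $-$, $\varnothing$, $\abs+$, $\abs-$, $\co$, $\ctr$, $\abs$, to which the bounded equivalence~\eqref{bounded-derived-bass-auslander-equiv} does not immediately extend.

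I would make use of bounded above complexes of finitely generated projective modules quasi-isomorphic to $L^\bu$ on either side, together with the homothety quasi-isomorphisms of condition~(iii), exactly as in the proof of Lemma~\ref{two-rings-contains-injectives-flats}. Following the method of~\cite{Pmgm}, where the case of a one-term complex $L^\bu$ is treated, I would use these data to construct a functor $\Phi\colon\sD^\star(\sE_{l_1})\rarrow\sD^\star(\sF_{l_1})$ refining $\boR\Hom_A(L^\bu,{-})$: Lemma~\ref{E-l-F-l-taken-into-each-other-up-to-finite}(a) represents $\boR\Hom_A(L^\bu,E)$, for a single object $E\in\sE_{l_1}$, by a complex over $\sF_{l_1}$ lying in the fixed cohomological window $[-d_2,l_1]$; making this assignment functorial and applying it termwise to a complex $E^\bu$ over $\sE_{l_1}$ yields a bicomplex over $\sF_{l_1}$ one of whose directions has length bounded by a constant depending only on $L^\bu$ and~$l_1$, and $\Phi(E^\bu)$ is its total complex. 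Symmetrically, Lemma~\ref{E-l-F-l-taken-into-each-other-up-to-finite}(b) yields a functor $\Psi\colon\sD^\star(\sF_{l_1})\rarrow\sD^\star(\sE_{l_1})$ refining $L^\bu\ot_B^\boL{-}$, with values represented by complexes over $\sE_{l_1}$ in the window $[-l_1,d_2]$. Because these windows have fixed finite width, the totalizations preserve all the boundedness conditions implicit in the various~$\star$.

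Two things then have to be checked. First, that $\Phi$ and $\Psi$ descend to well-defined triangulated functors on each $\sD^\star$, i.e.\ that they send acyclic complexes over $\sE_{l_1}$ to acyclic complexes over $\sF_{l_1}$ for $\star\in\{\b,+,-,\varnothing\}$, and coacyclic, contraacyclic, or absolutely acyclic complexes to complexes of the same kind for $\star\in\{\co,\ctr,\abs,\abs+,\abs-\}$ (and dually over $\sF_{l_1}$). The essential input here is the uniform amplitude bound of Lemma~\ref{E-l-F-l-taken-into-each-other-up-to-finite}: a totalization of a bicomplex over $\sE_{l_1}$ one of whose directions has bounded length behaves, with respect to any of these acyclicity notions, like a finite iterated extension, so the exactness properties of $\boR\Hom_A(L^\bu,{-})$ and $L^\bu\ot_B^\boL{-}$ on $\sE_{l_1}$ and $\sF_{l_1}$ propagate to complexes. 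Second, that $\Psi\Phi\simeq\Id$ and $\Phi\Psi\simeq\Id$: on a single object $E\in\sE_{l_1}$ the composite $\Psi\Phi(E)\rarrow E$ is precisely the adjunction morphism $L^\bu\ot_B^\boL\boR\Hom_A(L^\bu,E)\rarrow E$, which is an isomorphism by the very definition of the class~$\sE_{l_1}$ (and dually $F\rarrow\Phi\Psi(F)$ for $F\in\sF_{l_1}$); the passage from objects to bounded, and then to arbitrary and exotic-acyclic, complexes again rests on the adjunction $L^\bu\ot_B^\boL{-}\dashv\boR\Hom_A(L^\bu,{-})$ recalled in the excerpt, together with the same amplitude bookkeeping.

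The main obstacle is the construction in the second paragraph for the ``second-kind'' symbols $\star=\co$, $\ctr$, $\abs$, and for the unbounded symbol $\star=\varnothing$: promoting the object-wise representations of Lemma~\ref{E-l-F-l-taken-into-each-other-up-to-finite} to genuinely functorial complex-level operations, and verifying that they descend to the co-, contra-, and absolute derived quotient categories. Already for a one-term complex $L^\bu$ this demanded delicate work in~\cite{Pmgm}, and the generalization to a finite complex $L^\bu$ of arbitrary length forces one to carry bicomplex totalizations and length estimates throughout, which is why this part of the argument should be relegated to a dedicated appendix. Once $\Phi$ and $\Psi$ are in hand on all $\sD^\star$, establishing their mutual inverseness is comparatively formal.
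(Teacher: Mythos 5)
Your reduction of the case $\star=\b$ to the derived Bass--Auslander equivalence~\eqref{bounded-derived-bass-auslander-equiv} is fine and agrees with the paper, and you correctly locate where the real work lies. But for all the remaining symbols your proposal has a genuine gap: the functors $\Phi$ and $\Psi$ are never actually constructed. Lemma~\ref{E-l-F-l-taken-into-each-other-up-to-finite} only provides, for each single module $E\in\sE_{l_1}$, \emph{some} complex over $\sF_{l_1}$ in a bounded window representing $\boR\Hom_A(L^\bu,E)$ in the derived category; there is no canonical such representative, so ``making this assignment functorial and applying it termwise'' is precisely the step that cannot be taken at face value, and you acknowledge as much by relegating it to an unspecified appendix. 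The second difficulty you name --- descent to $\sD^\co$, $\sD^\ctr$, $\sD^\abs$ and to the unbounded conventional derived category --- is likewise only addressed by the slogan that bounded-width totalizations ``behave like finite iterated extensions,'' which is not an argument for these exotic acyclicity notions. So the proposal identifies the obstacles but does not overcome them.

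The paper resolves both issues by a different mechanism (Theorem~\ref{two-rings-generalized-main-thm} plus the appendix), which never functorializes the object-wise representations at all. One takes the honest, strictly functorial adjoint DG\+functors $\Psi=\Hom_A(L^\bu,{-})\:\sC^+(A\modl_\inj)\rarrow\sC^+(B\modl)$ and $\Phi=L^\bu\ot_B{-}\:\sC^-(B\modl_\proj)\rarrow\sC^-(A\modl)$; conditions~(I\+-IV) (in particular the content of Lemma~\ref{E-l-F-l-taken-into-each-other-up-to-finite}) guarantee that $\Psi$ carries ${}_\sE\sC^{\ge0}(\sJ)$ exactly into $\sC^{\ge-l_2}_\sF(\sB)^{\le l_1}$, which together with the equivalence $\sD^+(\sJ)\simeq\sD^+(\sE)$ yields $\boR\Psi\:\sD^\b(\sE_{l_1})\rarrow\sD^\b(\sF_{l_1})$. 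The passage to $\star=\varnothing$, $\abs\pm$, $\abs$, $\co$, $\ctr$ is then done not by termwise bookkeeping but by applying the same bounded construction to the exact categories of unbounded complexes $\sC(\sE_{l_1})$, $\sC(\sF_{l_1})$ and descending along the totalization functors $\sD^\b(\sC(\sA))\rarrow\sD^\abs(\sA)$, $\sD(\sA)$, which are Verdier quotients (Proposition~\ref{d-b-c-d-abs-efimov} and Corollary~\ref{d-b-c-d-conv}, following Efimov). Finally, mutual inverseness is not checked object by object on $\sE_{l_1}$ and $\sF_{l_1}$ directly; it is deduced from the bounded equivalence~\eqref{bounded-derived-bass-auslander-equiv} via Theorem~\ref{appx-triangulated-equivalence-thm}, whose proof rests on the conservativity in total of the termwise evaluation functors $\sD^\b(\sC(\sG))\rarrow\sD^\b(\sG)$ and on Lemma~\ref{descent-adjunction-lem}. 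If you want to salvage your outline, this resolution-based route (derive $\Hom_A(L^\bu,{-})$ on complexes of injectives and $L^\bu\ot_B{-}$ on complexes of projectives, then use the complexes-of-complexes trick) is the missing idea.
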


\begin{proof}
 This is a particular case of
Theorem~\ref{two-rings-generalized-main-thm} below.
 The construction of the derived functors $\boR\Hom_A(L^\bu,{-})$ and
$L^\bu\ot_B^\boL{-}$ is explained in the proof of
Theorem~\ref{two-rings-generalized-main-thm} and in the appendix.
\end{proof}

\Section{Abstract Corresponding Classes}
\label{two-rings-abstract-classes}

 More generally, suppose that we are given two full subcategories
$\sE\subset A\modl$ and $\sF\subset B\modl$ satisfying the following
conditions (for some fixed integers $l_1$ and~$l_2$):
\begin{enumerate}
\renewcommand{\theenumi}{\Roman{enumi}}
\item the full subcategory $\sE\subset A\modl$ is closed under
extensions and the cokernels of injective morphisms, and contains
all the injective left $A$\+modules;
\item the full subcategory $\sF\subset B\modl$ is closed under
extensions and the kernels of surjective morphisms, and contains
all the projective left $B$\+modules;
\item for any $A$\+module $E\in\sE$, the object $\boR\Hom_A(L^\bu,E)
\in\sD^+(B\modl)$ can be represented by a complex of $B$\+modules
concentrated in the cohomological degrees $-l_2\le m\le l_1$ with
the terms belonging to~$\sF$;
\item for any $B$\+module $F\in\sF$, the object $L^\bu\ot_B^\boL F
\in\sD^-(A\modl)$ can be represented by a complex of $A$\+modules
concentrated in the cohomological degrees $-l_1\le m\le l_2$ with
the terms belonging to~$\sE$.
\end{enumerate}

 One can see from the conditions~(I) and~(III), or~(II) and~(IV),
that $l_1\ge d_1$ and $l_2\ge d_2$ if $H^{-d_1}(L^\bu)\ne0
\ne H^{d_2}(L^\bu)$.
 According to
Lemmas~\ref{two-rings-E-F-closed-kernels-cokernels},
\ref{two-rings-contains-injectives-flats}, and
\ref{E-l-F-l-taken-into-each-other-up-to-finite}, the two classes
$\sE=\sE_{l_1}$ and $\sF=\sF_{l_1}$ satisfy the conditions~(I\+-IV)
with $l_2=d_2$.

 The following lemma, providing a kind of converse implication, can be
obtained as a byproduct of the proof of
Theorem~\ref{two-rings-generalized-main-thm} below (based on
the arguments in the appendix).
 It is somewhat counterintuitive, claiming that the adjunction
isomorphism conditions on the modules in the classes $\sE$ and
$\sF$, which were necessary in the context of the previous
Section~\ref{two-rings-auslander-bass-subsecn}, follow from
the conditions~(I\+-IV) in our present context.
 So we prefer to present a separate explicit proof. 

\begin{lem} \label{two-rings-adjunction-isomorphisms-follow-lemma}
\textup{(a)} For any $A$\+module $E\in\sE$, the adjunction morphism
$L^\bu\ot_B^\boL\boR\Hom_A(L^\bu,E)\rarrow E$ is an isomorphism
in $\sD^\b(A\modl)$. \par
\textup{(b)} For any $B$\+module $F\in\sF$, the adjunction morphism
$F\rarrow\boR\Hom_A(L^\bu,\>\allowbreak L^\bu\ot_B^\boL\nobreak F)$
is an isomorphism in $\sD^\b(B\modl)$.
\end{lem}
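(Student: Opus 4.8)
The plan is to prove part~(a) in detail; part~(b) then follows by the dual argument (injective left $A$\+modules replaced by projective left $B$\+modules, the coresolving subcategory $\sE\subset A\modl$ by the resolving subcategory $\sF\subset B\modl$, and the adjunction counit by the adjunction unit). Write $\Phi$ for the triangulated endofunctor $L^\bu\ot_B^\boL\boR\Hom_A(L^\bu,{-})$ of $\sD(A\modl)$ and $\epsilon\:\Phi\rarrow\Id$ for the counit of the adjunction recalled in the previous section; for a left $A$\+module $E$, let $C_E$ be the cone of $\epsilon_E\:\Phi(E)\rarrow E$ in $\sD(A\modl)$. The goal is to show $C_E=0$ for every $E\in\sE$; since $\Phi(E)$ and $E$ have bounded cohomology, this is equivalent to $\epsilon_E$ being an isomorphism in $\sD^\b(A\modl)$.

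The first ingredient is a \emph{uniform cohomological bound}: setting $N=l_1+l_2$, I claim that for \emph{every} $E'\in\sE$ the complex $\Phi(E')$ has cohomology concentrated in the degrees $-N\le m\le N$, so that $C_{E'}$ has cohomology concentrated in a range of cohomological degrees depending only on $l_1$ and $l_2$, say $-N-1\le m\le N$. Indeed, condition~(III) represents $\boR\Hom_A(L^\bu,E')$ by a complex $F^\bu$ of left $B$\+modules from $\sF$ placed in the degrees $-l_2\le m\le l_1$; applying $L^\bu\ot_B^\boL{-}$ to the finite filtration of $F^\bu$ whose associated graded pieces are the one-term complexes $F^k[-k]$, and using condition~(IV) to see that each $L^\bu\ot_B^\boL F^k$ has cohomology in the degrees $-l_1\le m\le l_2$, one obtains a finite filtration of $\Phi(E')=L^\bu\ot_B^\boL F^\bu$ with graded pieces $(L^\bu\ot_B^\boL F^k)[-k]$, each having cohomology in the range $-N\le m\le N$; hence so does $\Phi(E')$.

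The second ingredient is \emph{d\'evissage through injectives}. Since $A\modl$ has enough injectives, which all lie in $\sE$, and $\sE$ is closed under cokernels of injective morphisms (condition~(I)), one constructs a coresolution $0\rarrow E\rarrow J^0\rarrow J^1\rarrow J^2\rarrow\dotsb$ with injective terms $J^n$ and cosyzygies $E^{(0)}=E$, \ $E^{(n)}=\operatorname{coker}(E^{(n-1)}\to J^{n-1})$, all lying in $\sE$. Each short exact sequence $0\rarrow E^{(n)}\rarrow J^n\rarrow E^{(n+1)}\rarrow0$ gives a distinguished triangle in $\sD(A\modl)$; applying $\Phi$ and the natural transformation $\epsilon$ produces a morphism of distinguished triangles whose middle component $\epsilon_{J^n}\:\Phi(J^n)\rarrow J^n$ is an isomorphism --- which is precisely what the proof of Lemma~\ref{two-rings-contains-injectives-flats}(a) establishes. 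Since the cones of the three components of a morphism of distinguished triangles form a distinguished triangle (by the octahedral axiom), and the middle cone vanishes here, we get $C_{E^{(n+1)}}\simeq C_{E^{(n)}}[1]$, and hence $C_{E^{(n)}}\simeq C_E[n]$ for all $n\ge0$. By the first ingredient $C_E$ is bounded above, so $C_E[n]$ has cohomology only in degrees $\le N-n$, which for $n$ large enough all lie strictly below $-N-1$; but the first ingredient, applied to $E^{(n)}\in\sE$, says $C_{E^{(n)}}$ has no cohomology below degree $-N-1$. Hence $C_{E^{(n)}}=0$, and therefore $C_E\simeq C_{E^{(n)}}[-n]=0$, which proves~(a).

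Part~(b) is handled symmetrically, working with $\Psi=\boR\Hom_A(L^\bu,\>L^\bu\ot_B^\boL{-})$, the unit $\eta\:\Id\rarrow\Psi$, the cone $D_F$ of $\eta_F$, a projective resolution of $F$, and the isomorphy of $\eta_P$ for projective (indeed flat) $B$\+modules $P$ from Lemma~\ref{two-rings-contains-injectives-flats}(b); here the cohomology of $D_{F^{(n)}}\simeq D_F[-n]$ moves off to $+\infty$ with $n$ while staying bounded above, forcing $D_F=0$. The point requiring care --- and the source of the ``somewhat counterintuitive'' flavour noted in the text --- is that $E$ need not belong to the thick triangulated subcategory of $\sD(A\modl)$ generated by the injective modules (its coresolution is genuinely infinite), so one cannot just quote closure properties of the subcategory $\{X\in\sD(A\modl):\epsilon_X\text{ is an isomorphism}\}$; the argument turns instead on the clash between the cohomological amplitude of $C_{E^{(n)}}$ being \emph{uniform in $n$} (from conditions~(III)--(IV)) and the amplitude of $C_E[n]$ \emph{sliding with $n$} (from the d\'evissage). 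This is exactly the mechanism making the adjunction-isomorphism conditions --- which had to be imposed by hand in Section~\ref{two-rings-auslander-bass-subsecn} --- automatic here. The only remaining point is the existence of a distinguished triangle on the cones of a morphism of distinguished triangles, a standard consequence of the octahedral axiom.
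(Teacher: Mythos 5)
Your strategy is the same mechanism as the paper's own proof: a uniform two\-sided bound on the cohomology of $L^\bu\ot_B^\boL\boR\Hom_A(L^\bu,E')$ for $E'\in\sE$ coming from conditions~(III)--(IV), d\'evissage along an injective coresolution whose cosyzygies stay in $\sE$ by condition~(I), and the fact that the adjunction counit is an isomorphism on injective modules (Lemma~\ref{two-rings-contains-injectives-flats}(a)). The gap is the step $C_{E^{(n+1)}}\simeq C_{E^{(n)}}[1]$. You justify it by asserting that the cones of the three components of a morphism of distinguished triangles form a distinguished triangle ``by the octahedral axiom''. That is not a theorem of triangulated categories: the octahedral axiom (equivalently, the $3\times3$ lemma, or Neeman's existence of ``good'' morphisms of triangles) only says that, given the commuting square formed by $\epsilon_{E^{(n)}}$ and $\epsilon_{J^n}$, there exists \emph{some} third map $h'\:L^\bu\ot_B^\boL\boR\Hom_A(L^\bu,E^{(n+1)})\rarrow E^{(n+1)}$ completing it to a morphism of triangles whose cones fit into a triangle; it does not say this for the \emph{given} third map. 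In your situation all three components are prescribed (they are counits), the possible completions differ by morphisms of the form $\varepsilon\circ\partial$ with $\varepsilon\:\bigl(L^\bu\ot_B^\boL\boR\Hom_A(L^\bu,E^{(n)})\bigr)[1]\rarrow E^{(n+1)}$, and cones are not invariant under such a change in general. Nor can you simply replace $\epsilon_{E^{(n+1)}}$ by a good completion $h'$, because the next step of your iteration uses naturality of the counit for the next short exact sequence, which is available only for the actual counit. So the identification $C_{E^{(n)}}\simeq C_E[n]$, on which the whole argument rests, is unproved as written.

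The step is true here, but proving it is exactly the technical content the paper supplies: the counit has to be realized coherently at the level of complexes, which the paper does by choosing injective coresolutions of the short exact sequences $0\rarrow Z^i\rarrow K^i\rarrow Z^{i+1}\rarrow0$ compatible with one another, lifting them to compatible complexes of projective left $B$\+modules, and totalizing the resulting bicomplexes (alternatively, one can invoke the DG\-level adjunction of the appendix, where cones are functorial). Another repair stays closer to your d\'evissage but avoids cones altogether: use the morphism of cohomology long exact sequences induced by the morphism of triangles (all three squares do commute, by naturality of the counit and its compatibility with shifts). First, the uniform bound together with the isomorphisms $H^{j}\Phi(E^{(n)})\simeq H^{j-1}\Phi(E^{(n+1)})$ for $j\le-1$ kills the negative cohomology of $\Phi=L^\bu\ot_B^\boL\boR\Hom_A(L^\bu,{-})$ on all of $\sE$; then the ladder for one short exact sequence shows that $H^0$ of the counit is injective on $\sE$, hence (being surjective on cosyzygies) bijective there, hence $H^1\Phi$ vanishes on $\sE$ and $H^0$ of the counit is an isomorphism by the five lemma; finally $H^m\Phi(E)\simeq H^1\Phi(E^{(m-1)})=0$ for $m\ge2$. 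Some argument of this kind must be supplied; without it your central step is a genuine gap.
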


\begin{proof}
 We will prove part~(a); the proof of part~(b) is similar.
 Specifically, let $0\rarrow E\rarrow K^0\rarrow K^1\rarrow\dotsb$
be an exact sequence of left $A$\+modules with $E\in\sE$ and
$K^i\in\sE$ for all $i\ge0$.
 Suppose that the adjunction morphisms
$L^\bu\ot_B^\boL\boR\Hom_A(L^\bu,K^i)\rarrow K^i$ are isomorphisms
in $\sD^\b(A\modl)$ for all $i\ge0$.
 We will show that the adjunction morphism
$L^\bu\ot_B^\boL\boR\Hom_A(L^\bu,E)\rarrow E$ is also an isomorphism
in this case.
 As injective left $A$\+modules belong to~$\sE$ by the condition~(I),
the desired assertion will then follow from
Lemma~\ref{two-rings-contains-injectives-flats}(a).

 Let $Z^i$ denote the kernel of the differential $K^i\rarrow K^{i+1}$;
in particular, $Z^0=E$.
 The key observation is that, according to the condition~(I), one has
$Z^i\in\sE$ for all $i\ge0$.
 For every~$i\ge0$, choose a coresolution of the short exact
sequence $0\rarrow Z^i\rarrow K^i\rarrow Z^{i+1}$ by short exact
sequences of injective left $A$\+modules $0\rarrow Y^{i,j}\rarrow
J^{i,j}\rarrow Y^{i+1,j}\rarrow0$, \ $j\ge0$.
 Applying the functor $\Hom_A(L^\bu,{-})$ to the complexes of
left $A$\+modules $J^{i,\bu}$ and $Y^{i,\bu}$, we obtain short exact
sequences of complexes of left $B$\+modules
$0\rarrow G^{i,\bu}\rarrow F^{i,\bu}\rarrow G^{i+1,\bu}\rarrow0$,
where $G^{i,\bu}=\Hom_A(L^\bu,Y^{i,\bu})$ and $F^{i,\bu}=
\Hom_A(L^\bu,J^{i,\bu})$.
 According to the condition~(III), each complex $G^{i,\bu}$ and
$F^{i,\bu}$ is quasi-isomorphic to a complex of left $B$\+modules
concentrated in the cohomological degrees $-l_2\le m\le l_1$ with
the terms belonging to~$\sF$.

 For every $i\ge0$, choose complexes of projective left $B$\+modules
$Q^{i,\bu}$ and $P^{i,\bu}$, concentrated in the cohomological
degrees~$\le l_1$ and endowed with quasi-isomorphisms of complexes
of left $B$\+modules $Q^{i,\bu}\rarrow G^{i,\bu}$ and $P^{i,\bu}\rarrow
F^{i,\bu}$ so that there are short exact sequences of complexes
of left $B$\+modules $0\rarrow Q^{i,\bu}\rarrow P^{i,\bu}\rarrow
Q^{i+1,\bu}\rarrow0$ and the whole diagram is commutative.
 Applying the functor $L^\bu\ot_B{-}$ to the complexes of left
$B$\+modules $P^{i,\bu}$ and $Q^{i,\bu}$, we obtain short exact
sequences of complexes of left $A$\+modules $0\rarrow N^{i,\bu}
\rarrow M^{i,\bu}\rarrow N^{i+1,\bu}\rarrow0$, where
$N^{i,\bu}=L^\bu\ot_B Q^{i,\bu}$ and $M^{i,\bu}=L^\bu\ot_B P^{i,\bu}$.
 It follows from the condition~(IV) that each complex $M^{i,\bu}$
and $N^{i,\bu}$ is quasi-isomorphic to a complex of left $A$\+modules
concentrated in the cohomological degrees $-l_1-l_2\le m\le l_1+l_2$.
 In particular, the cohomology modules of the complexes
$M^{i,\bu}$ and $N^{i,\bu}$ are concentrated in the degrees
$-l_1-l_2\le m\le l_1+l_2$.

 Applying the functors of two-sided canonical truncation
$\tau_{\ge-l_1-l_2}\tau_{\le l_1+l_2}$ to the complexes $M^{i,\bu}$ and
$N^{i,\bu}$, we obtain short exact sequences $0\rarrow
{}'\!N^{i,\bu}\rarrow{}'\!M^{i,\bu}\rarrow{}'\!N^{i+1,\bu}\rarrow0$
of complexes whose terms are concentrated in the cohomological
degrees $-l_1-l_2\le m\le l_1+l_2$.
 Similarly, applying the functors of canonical truncation
$\tau_{\le l_1+l_2}$ to the complexes $J^{i,\bu}$ and $Y^{i,\bu}$, we
obtain short exact sequences
$0\rarrow{}'Y^{i,\bu}\rarrow{}'\!J^{i,\bu}\rarrow{}'Y^{i+1,\bu}\rarrow0$
of complexes whose terms are concentrated in the cohomological
degrees $0\le m\le l_1+l_2$.
 Now we have two morphisms of bicomplexes ${}'\!M^{i,j}\rarrow
{}'\!J^{i,j}$ and $K^i\rarrow{}'\!J^{i,j}$, which are both
quasi-isomorphisms of finite complexes along the grading~$j$
at every fixed degree~$i$, by assumption.
 Furthermore, we have two morphisms of bicomplexes
${}'\!N^{0,j}\rarrow{}'\!M^{i,j}$ and ${}'Y^{0,j}\rarrow{}'\!J^{i,j}$,
which are both quasi-isomorphisms along the grading~$i$ at
every fixed degree~$j$, by construction.
 We also have a quasi-isomorphism $E\rarrow{}'Y^{0,\bu}$.

 Passing to the total complexes, we see that the morphism of complexes
${}'\!N^{0,\bu}\rarrow{}'Y^{0,\bu}$ is a quasi-isomorphism, because
so are the morphisms ${}'\!N^{0,\bu}\rarrow{}'\!M^{\bu,\bu}$,
\ ${}'\!M^{\bu,\bu}\rarrow{}'\!J^{\bu,\bu}$, and
${}'Y^{0,\bu}\rarrow{}'\!J^{\bu,\bu}$ in a commutative square.
 This proves that the adjunction morphism
$L^\bu\ot_B^\boL\boR\Hom_A(L^\bu,E)\rarrow E$ is an isomorphism
in the derived category.
\end{proof}

 Assuming that $l_1\ge d_1$ and $l_2\ge d_2$, it is now clear from
the conditions~(III\+-IV) and
Lemma~\ref{two-rings-adjunction-isomorphisms-follow-lemma}
that one has $\sE\subset\sE_{l_1}$ and
$\sF\subset\sF_{l_1}$ for any two classes of objects $\sE\subset
A\modl$ and $\sF\subset B\modl$ satisfying~(I\+-IV).
 Furthermore, it follows from the conditions~(I\+-II) that
the triangulated functors $\sD^\b(\sE)\rarrow\sD^\b(A\modl)$ and
$\sD^\b(\sF)\rarrow\sD^\b(B\modl)$ induced by the exact embeddings
$\sE\rarrow A\modl$ and $\sF\rarrow B\modl$ are fully faithful.
 Hence so are the triangulated functors $\sD^\b(\sE)\rarrow
\sD^\b(\sE_{l_1})$ and $\sD^\b(\sF)\rarrow\sD^\b(\sF_{l_1})$.
 In view of the conditions~(III\+-IV), we can conclude that
equivalence~\eqref{bounded-derived-bass-auslander-equiv} restricts
to a triangulated equivalence
\begin{equation} \label{two-rings-bounded-derived-abstract-equiv}
 \sD^\b(\sE)\simeq\sD^\b(\sF).
\end{equation}

 The following theorem is the main result of this paper.

\begin{thm} \label{two-rings-generalized-main-thm}
 Let\/ $\sE\subset A\modl$ and $\sF\subset B\modl$ be a pair of
full subcategories of modules satisfying the conditions~(I\+-IV) for
a pseudo-dualizing complex of $A$\+$B$\+bimodules~$L^\bu$.
 Then for any symbol\/ $\star=\b$, $+$, $-$, $\varnothing$, $\abs+$,
$\abs-$, $\co$, $\ctr$, or\/~$\abs$, there is a triangulated
equivalence\/ $\sD^\star(\sE)\simeq\sD^\star(\sF)$ provided
by (appropriately defined) mutually inverse functors\/
$\boR\Hom_A(L^\bu,{-})$ and $L^\bu\ot_B^\boL{-}$.

 Here, in the case\/ $\star=\co$ it is assumed that the full
subcategories\/ $\sE\subset A\modl$ and\/ $\sF\subset B\modl$ are
closed under infinite direct sums, while in the case\/ $\star=\ctr$
it is assumed that these two full subcategories are closed under
infinite products.
\end{thm}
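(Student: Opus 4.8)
The plan is to produce, for each symbol $\star$ in the list, a pair of triangulated functors $\Phi=\boR\Hom_A(L^\bu,{-})\:\sD^\star(\sE)\rarrow\sD^\star(\sF)$ and $\Psi=L^\bu\ot_B^\boL{-}\:\sD^\star(\sF)\rarrow\sD^\star(\sE)$, together with natural isomorphisms $\Psi\Phi\simeq\Id_{\sD^\star(\sE)}$ and $\Phi\Psi\simeq\Id_{\sD^\star(\sF)}$ realized by the derived adjunction morphisms. For $\star=\b$ there is nothing to do: this is the triangulated equivalence~\eqref{two-rings-bounded-derived-abstract-equiv}, which was deduced from Lemma~\ref{two-rings-adjunction-isomorphisms-follow-lemma} together with the general fact that a pair of adjoint functors restricts to an equivalence between the full subcategories of objects on which the adjunction morphisms are invertible~\cite[Theorem~1.1]{FJ}. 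The whole point is to extend this to the remaining (bounded, unbounded, and exotic) symbols, where even the \emph{definition} of $\Phi$ and $\Psi$ on $\sD^\star(\sE)$ and $\sD^\star(\sF)$ takes work; this is what the appendix is for, and the present theorem is a corollary of the constructions carried out there.

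To construct the functors, recall first that the derived functors $\boR\Hom_A(L^\bu,{-})\:\sD(A\modl)\rarrow\sD(B\modl)$ and $L^\bu\ot_B^\boL{-}\:\sD(B\modl)\rarrow\sD(A\modl)$ between the unbounded derived categories exist (computed via homotopy injective and homotopy flat resolutions of the second arguments) and are adjoint by~\cite[Lemma~8.3]{Psemi}. The task is to corestrict them to functors between $\sD^\star(\sE)$ and $\sD^\star(\sF)$, and here conditions~(III) and~(IV) are exactly what is needed: for an object $E$ of $\sE$ the complex $\boR\Hom_A(L^\bu,E)$ is quasi-isomorphic to a complex over $\sF$ concentrated in the cohomological degrees $[-l_2,l_1]$, and for an object $F$ of $\sF$ the complex $L^\bu\ot_B^\boL F$ is quasi-isomorphic to a complex over $\sE$ concentrated in the cohomological degrees $[-l_1,l_2]$. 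The role of the appendix is to upgrade these object-wise statements to functors on the whole of $\sD^\star(\sE)$ and $\sD^\star(\sF)$: one chooses such bounded $\sF$\+ and $\sE$\+representatives compatibly, by resolutions-of-resolutions (Cartan--Eilenberg-type bicomplexes), along the lines of the proof of Lemma~\ref{two-rings-adjunction-isomorphisms-follow-lemma}; applies the construction term by term to an arbitrary complex over $\sE$ (resp.\ over $\sF$); and totalizes. The finiteness of the complex $L^\bu$ and the \emph{uniform} bounds $l_1$, $l_2$ in~(III\+-IV) guarantee that every cohomological degree of such a totalization receives only finitely many nonzero contributions, so the totalizations are unproblematic and the resulting functors $\Phi$, $\Psi$ are termwise exact; they moreover commute with infinite direct sums when $\sE$ and $\sF$ are closed under direct sums, and with infinite products when $\sE$ and $\sF$ are closed under products, which accounts for the extra hypotheses in the cases $\star=\co$ and $\star=\ctr$. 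Together with conditions~(I\+-II) and (the dual of) \cite[Proposition~A.3.1]{Pcosh} and its analogues, this makes $\Phi$ and $\Psi$ descend to triangulated functors between $\sD^\star(\sE)$ and $\sD^\star(\sF)$; being restrictions of adjoint functors, they remain adjoint.

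It remains to identify the two compositions with the identity functors. By construction $\Psi\Phi$ and $\Phi\Psi$ carry the derived adjunction morphisms $L^\bu\ot_B^\boL\boR\Hom_A(L^\bu,{-})\rarrow\Id$ and $\Id\rarrow\boR\Hom_A(L^\bu,\>L^\bu\ot_B^\boL{-})$, which are the same natural transformations as on the full derived categories $\sD(A\modl)$ and $\sD(B\modl)$. By Lemma~\ref{two-rings-adjunction-isomorphisms-follow-lemma} these are isomorphisms on every object of $\sE$ and of $\sF$. The full subcategory of objects of $\sD^\star(\sE)$ on which a triangulated natural transformation between triangulated functors preserving direct sums (resp.\ products, in the $\ctr$ case) is an isomorphism is itself a triangulated subcategory closed under those operations; since $\sE$ is coresolving in $A\modl$ (and closed under direct sums when $\star=\co$) and $\sF$ is resolving in $B\modl$ (and closed under products when $\star=\ctr$), the objects of $\sE$ and $\sF$ generate $\sD^\star(\sE)$ and $\sD^\star(\sF)$ as such subcategories, so the transformations are isomorphisms everywhere. (Verifying this last generation statement for the exotic symbols is again part of the appendix's bookkeeping. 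For the symbols $\star=\b$, $+$, $-$, $\abs+$, $\abs-$ one may instead bypass the appendix at this point, reducing to the classes $\sE_{l_1}\subset A\modl$ and $\sF_{l_1}\subset B\modl$ of Section~\ref{two-rings-auslander-bass-subsecn} by means of Lemma~\ref{d-b-e-to-a-f-to-b-essential-images}, Proposition~\ref{two-rings-maximal-classes-essentially-the-same}, and a dimension-shifting argument, and then invoking the equivalence~\eqref{bounded-derived-bass-auslander-equiv}.)

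The main obstacle is therefore not the verification that the compositions are identities --- that is the soft triangulated argument just sketched, powered by Lemma~\ref{two-rings-adjunction-isomorphisms-follow-lemma} --- but the construction of well-defined functors $\Phi$ and $\Psi$ on the \emph{exotic} derived categories $\sD^\co$, $\sD^\ctr$, $\sD^\abs$ (and on the unbounded $\sD^\varnothing$). The only handle on the statement that $\boR\Hom_A(L^\bu,{-})$ takes complexes over $\sE$ to complexes over $\sF$ is condition~(III), which is formulated one module at a time and with a cohomological-degree shift controlled by $l_1$ and~$l_2$; promoting it to an honest functor on unbounded complexes --- functorially, exactly (sending short exact sequences of complexes to distinguished triangles), and compatibly with infinite direct sums and products so that the class of acyclic complexes defining $\sD^\star$ is preserved --- is the long technical content of the appendix, and is where essentially all the work of this theorem lies.
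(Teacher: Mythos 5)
Your global strategy is the same as the paper's (set up the pair of functors via the appendix-style resolution/totalization machinery, establish adjointness, then check the adjunction morphisms are invertible using Lemma~\ref{two-rings-adjunction-isomorphisms-follow-lemma}), and your description of the construction step is in the right spirit. But the argument you give for the decisive last step is not sound. You pass from ``the adjunction morphisms are isomorphisms on objects of $\sE$ and $\sF$'' to ``they are isomorphisms on all of $\sD^\star(\sE)$ and $\sD^\star(\sF)$'' by claiming that the objects of $\sE$ and $\sF$ generate these triangulated categories under the available operations. For $\star=\varnothing$, $\abs$, $\abs+$, $\abs-$ no infinite direct sums or products are assumed, and the thick subcategory of $\sD(\sE)$ (or $\sD^\abs(\sE)$) generated by $\sE$ is only the image of $\sD^\b(\sE)$ --- already for $\sE=A\modl$ the thick subcategory of $\sD(A\modl)$ generated by the modules consists of the complexes with bounded cohomology, not of all complexes. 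Even for $\star=\co,\ctr$, where sums/products are available, the generation statement you need is not established anywhere; in particular it is not ``part of the appendix's bookkeeping'' --- the appendix never proves such a statement, so deferring to it at this point leaves a real hole.

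The paper's mechanism for this step is different and is exactly what makes the theorem work for the unbounded and exotic symbols: one does not try to generate $\sD^\star(\sE)$ from $\sE$ inside the triangulated category, but instead lifts every object of $\sD^\star(\sE)$ to a one-column object of the bounded derived category $\sD^\b(\sC(\sE))$ of the exact category of unbounded complexes. There the functors \eqref{R-Psi-C-D-b} and \eqref{L-Phi-C-D-b} commute with the termwise evaluation functors $\Theta^n\:\sD^\b(\sC(\sE))\rarrow\sD^\b(\sE)$, and since a complex of complexes is acyclic if and only if it is termwise acyclic, the family $(\Theta^n)$ is conservative in total; hence the adjunction morphisms are isomorphisms in $\sD^\b(\sC(\sE))$ and $\sD^\b(\sC(\sF))$ because termwise they reduce to the bounded equivalence \eqref{two-rings-bounded-derived-abstract-equiv} (or, for the second assertion of Theorem~\ref{appx-triangulated-equivalence-thm}, to objects of $\sJ$ and $\sP$). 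One then descends along the totalization Verdier quotient functors of Proposition~\ref{d-b-c-d-abs-efimov} and Corollary~\ref{d-b-c-d-conv}, using Lemma~\ref{descent-adjunction-lem}, to conclude on $\sD^\star(\sE)\simeq\sD^\star(\sF)$. A related smaller inaccuracy in your write-up: for the exotic symbols the categories $\sD^\star(\sE)$, $\sD^\star(\sF)$ are not subcategories of $\sD(A\modl)$, $\sD(B\modl)$, so the functors are not ``corestrictions'' of $\boR\Hom_A(L^\bu,{-})$ and $L^\bu\ot_B^\boL{-}$ and their adjointness cannot be inherited by restriction; it is re-proved from the partial DG\+adjunction \eqref{dg-adjunction} and the same descent lemma.
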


\begin{proof}
 The words ``appropriately defined'' here mean ``as defined or
constructed in the appendix''.
 Specifically, given a complex of left $A$\+modules $E^\bu$ with
the terms $E^i\in\sE$, we choose its termwise injective coresolution,
which is a bounded below complex of complexes of injective left
$A$\+modules $J^{\bu,\bu}$.
 Applying the functor $\Hom_A(L^\bu,{-})$ to every bounded below
complex of injective left $A$\+modules $J^{i,\bu}$ coresolving
the left $A$\+module $E^i$, we obtain a bicomplex of left
$B$\+modules $N^{\bu,\bu}$, which can be replaced by a quasi-isomorphic
(in the sense explained in the appendix, see~\eqref{R-Psi-C-D-b})
bounded complex of complexes of left $B$\+modules $F^{\bu,\bu}$ with
the terms $F^{i,j}\in\sF$.
 Totalizing the bicomplex $F^{\bu,\bu}$, we obtain the desired
complex of left $B$\+modules $\boR\Hom_A(L^\bu,E^\bu)$.

 Similarly, given a complex of left $B$\+modules $F^\bu$ with
the terms $F^i\in\sF$, we choose its termwise projective resolution,
which is a bounded above complex of complexes of projective left
$B$\+modules $P^{\bu,\bu}$.
 Applying the functor $L^\bu\ot_B{-}$ to every bounded above complex
of projective left $B$\+modules $P^{i,\bu}$ resolving the left
$B$\+module $F^i$, we obtain a bicomplex of left $A$\+modules
$M^{\bu,\bu}$, which can be replaced by a quasi-isomorphic bounded
complex of complexes of left $A$\+modules $E^{\bu,\bu}$ with
the terms $E^{i,j}\in\sE$.
 Totalizing the bicomplex $E^{\bu,\bu}$, we obtain the desired
complex of left $A$\+modules $L^\bu\ot_B^\boL\nobreak F^\bu$.

 Arguing more formally, the theorem is a straightforward application of
the results of the appendix.
 In the context of the latter, we set
\begin{align*}
 \sA=A\modl\,&\supset\,\sE\,\supset\,\sJ=A\modl_\inj \\
 \sB=B\modl\,&\supset\,\sF\,\supset\,\sP=B\modl_\proj.
\end{align*}
 Consider the adjoint pair of DG\+functors
\begin{alignat*}{2}
 \Psi=\Hom_A(L^\bu,{-})\:&\sC^+(\sJ)&&\lrarrow\sC^+(\sB) \\
 \Phi=L^\bu\ot_B{-}\:&\sC^-(\sP)&&\lrarrow\sC^-(\sA).
\end{alignat*}
 Then the conditions of Sections~\ref{posing-problem-appx}
and~\ref{dual-setting-appx} are satisfied, so the constructions of
Sections~\ref{derived-functor-construction-appx}\+-%
\ref{dual-setting-appx} provide the derived functors $\boR\Psi$ and
$\boL\Phi$.
 The arguments in Section~\ref{deriving-adjoints-appx} show
that these two derived functors are naturally adjoint to each other,
and the first assertion of
Theorem~\ref{appx-triangulated-equivalence-thm} explains how to
deduce the claim that they are mutually inverse triangulated
equivalences from the triangulated
equivalence~\eqref{two-rings-bounded-derived-abstract-equiv}.

 Alternatively, applying the second assertion of
Theorem~\ref{appx-triangulated-equivalence-thm} together with
Lemma~\ref{two-rings-contains-injectives-flats} allows to
reprove the triangulated
equivalence~\eqref{two-rings-bounded-derived-abstract-equiv}
rather than use it, thus obtaining another and more ``conceptual''
proof of Lemma~\ref{two-rings-adjunction-isomorphisms-follow-lemma}.
\end{proof}

 Now suppose that we have two pairs of full subcategories
$\sE_\prime\subset\sE'\subset A\modl$ and $\sF_{\prime\prime}\subset
\sF''\subset B\modl$ such that both the pairs
$(\sE_\prime,\sF_{\prime\prime})$ and $(\sE',\sF'')$ satisfy
the conditions~(I\+-IV), and both the full subcategories
$\sE_\prime$ and $\sE'$ are closed under infinite direct sums in
$A\modl$, while both the full subcategories $\sF_{\prime\prime}$
and $\sF''$ are closed under infinite products in $B\modl$.
 Then, in view of the discussion in Section~\ref{pseudo-derived-introd}
and according to Theorem~\ref{two-rings-generalized-main-thm} (for
$\star=\varnothing$), we have a diagram of triangulated functors
\begin{equation} \label{two-associative-rings-abstract-diagram}
\begin{diagram}
\node{\sD^\co(A\modl)}\arrow{s,A}
\node[2]{\sD^\ctr(B\modl)}\arrow{s,A} \\
\node{\sD(\sE_\prime)}\arrow{s,A}\arrow[2]{e,=}
\node[2]{\sD(\sF_{\prime\prime})}\arrow{s,A} \\
\node{\sD(\sE')}\arrow{s,A}\arrow[2]{e,=}
\node[2]{\sD(\sF'')}\arrow{s,A} \\
\node{\sD(A\modl)}\node[2]{\sD(B\modl)}
\end{diagram}
\end{equation}

 The vertical arrows are Verdier quotient functors, so both
the triangulated categories $\sD(\sE_\prime)$ and $\sD(\sE')$ are
pseudo-coderived categories of left $A$\+modules,
and both the triangulated categories $\sD(\sF_{\prime\prime})$ and
$\sD(\sF'')$ are pseudo-contraderived categories of left $B$\+modules.
 The horizontal double lines are triangulated equivalences.
 The inner square in
the diagram~\eqref{two-associative-rings-abstract-diagram} is
commutative, as one can see from the construction of the derived
functors in Theorem~\ref{two-rings-generalized-main-thm}.

 More generally, for any symbol $\star=\b$, $+$, $-$, $\varnothing$,
$\abs+$, $\abs-$, or~$\abs$, there is a commutative diagram
of triangulated functors and triangulated equivalences
\begin{equation} \label{abstract-star-triangulated-diagram}
\begin{diagram}
\node{\sD^\star(\sE_\prime)}\arrow{s}\arrow[2]{e,=}
\node[2]{\sD^\star(\sF_{\prime\prime})}\arrow{s} \\
\node{\sD^\star(\sE')}\arrow[2]{e,=}\node[2]{\sD^\star(\sF'')}
\end{diagram}
\end{equation}

 When all the four full subcategories $\sE_\prime$, $\sE'\subset
A\modl$ and $\sF_{\prime\prime}$, $\sF''\subset B\modl$ are closed
under infinite direct sums (respectively, infinite products),
there is also a commutative diagram of triangulated functors and
triangulated equivalences~\eqref{abstract-star-triangulated-diagram}
with $\star=\co$ (resp., $\star=\ctr$).

\Section{Minimal Corresponding Classes} \label{minimal-classes-secn}

 Let $A$ and $B$ be associative rings, and $L^\bu$ be a pseudo-dualizing
complex of $A$\+$B$\+bimodules.

\begin{prop} \label{two-rings-minimal-corresponding-classes-prop}
 Fix $l_1=d_1$ and $l_2\ge d_2$.
 Then there exists a unique minimal pair of full subcategories\/
$\sE^{l_2}=\sE^{l_2}(L^\bu)\subset A\modl$ and\/ $\sF^{l_2}=\sF^{l_2}(L^\bu)
\subset B\modl$ satisfying the conditions~(I\+-IV) together with
the additional requirements that\/ $\sE^{l_2}$ is closed under infinite
direct sums in $A\modl$ and\/ $\sF^{l_2}$ is closed under infinite
products in $B\modl$.
 For any pair of full subcategories\/ $\sE\subset A\modl$ and\/
$\sF\subset B\modl$ satisfying the conditions~(I\+-IV) such that\/
$\sE$ is closed under infinite direct sums in $A\modl$ and\/ $\sF$
is closed under infinite products in $B\modl$ one has\/
$\sE^{l_2}\subset\sE$ and\/ $\sF^{l_2}\subset\sF$.
\end{prop}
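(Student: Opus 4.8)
The plan is to build the pair $(\sE^{l_2},\sF^{l_2})$ by a transfinite generation procedure carried out inside the ambient classes $\sE_{d_1}=\sE_{d_1}(L^\bu)$ and $\sF_{d_1}=\sF_{d_1}(L^\bu)$, and then to read off the minimality assertion from the construction. Since $l_1=d_1$ and $l_2\ge d_2$, Lemmas~\ref{two-rings-E-F-closed-kernels-cokernels}, \ref{two-rings-contains-injectives-flats}, and~\ref{E-l-F-l-taken-into-each-other-up-to-finite} show that the pair $(\sE_{d_1},\sF_{d_1})$ itself satisfies~(I\+-IV) together with the direct sum/product closure requirements, so the family of pairs being minimised over is nonempty; and by the discussion following Lemma~\ref{two-rings-adjunction-isomorphisms-follow-lemma}, every such pair $(\sE,\sF)$ is contained in $(\sE_{d_1},\sF_{d_1})$, which keeps the whole construction inside these fixed classes.

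First I would fix, once and for all, canonical ways of representing the two derived functors by complexes of honest modules: for an injective left $A$\+module $J$ the finite total complex $\Hom_A(L^\bu,J)$ already computes $\boR\Hom_A(L^\bu,J)$ and is concentrated in degrees $-d_2\le m\le d_1$ with terms the $B$\+modules $\Hom_A(L^j,J)$; for a general $E\in\sE_{d_1}$ one takes an injective coresolution $0\to E\to J^0\to J^1\to\dotsb$ (whose cosyzygies stay in $\sE_{d_1}$) and represents $\boR\Hom_A(L^\bu,E)$ by the two\+sided canonical truncation $\tau_{\ge-l_2}\tau_{\le l_1}$ of the totalization of the bicomplex $\Hom_A(L^\bu,J^\bu)$, a complex in degrees $-l_2\le m\le l_1$ whose terms are built, by finite direct sums, passages to kernels of surjections and to cosyzygies, out of the modules $\Hom_A(L^j,J^i)$ and projective/injective $B$\+modules; dually for $L^\bu\ot_B^\boL F$ in degrees $-l_1\le m\le l_2$ using projective resolutions. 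Now define $\sE^{l_2}$, $\sF^{l_2}$ as the smallest pair of full subcategories such that $\sE^{l_2}$ contains all injective left $A$\+modules and is closed under extensions, cokernels of injective morphisms and infinite direct sums; $\sF^{l_2}$ contains all projective left $B$\+modules and is closed under extensions, kernels of surjective morphisms and infinite products; for every $E\in\sE^{l_2}$ the terms of the chosen representative of $\boR\Hom_A(L^\bu,E)$ lie in $\sF^{l_2}$; and for every $F\in\sF^{l_2}$ the terms of the chosen representative of $L^\bu\ot_B^\boL F$ lie in $\sE^{l_2}$. Concretely this pair is obtained by starting from $(A\modl_\inj,B\modl_\proj)$ and iterating the corresponding one\+step enlargement through the ordinals, taking unions at limits; a deconstructibility/cardinality bound (the only objects not obtained as subquotients of complexes built from $L^\bu$ and set\+many previously constructed modules are the injective/projective ``seeds'', which the sum/product closures absorb) shows the process stabilises, and one sets $(\sE^{l_2},\sF^{l_2})$ equal to the stable value.

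It then remains to verify~(I\+-IV) and minimality. Conditions~(I), (II) and the sum/product closures are built in, and the adjunction\+isomorphism conditions are automatic by Lemma~\ref{two-rings-adjunction-isomorphisms-follow-lemma}. For~(III), given $E\in\sE^{l_2}\subset\sE_{d_1}$, the chosen representative of $\boR\Hom_A(L^\bu,E)$ is by construction a complex in degrees $-l_2\le m\le l_1$ all of whose terms were, inductively along the coresolution, either adjoined to $\sF^{l_2}$ or obtained from earlier terms by the operations $\sF^{l_2}$ is closed under; condition~(IV) is dual. For minimality, let $(\sE,\sF)$ be any pair satisfying~(I\+-IV) with $\sE$ closed under direct sums and $\sF$ under products; one shows $\sE^{l_2}\subset\sE$ and $\sF^{l_2}\subset\sF$ by transfinite induction on the generation stage, the base step being~(I), (II), the closure steps using the hypotheses on $(\sE,\sF)$, and the remaining step --- that the terms of the chosen representatives of $\boR\Hom_A(L^\bu,E)$ and $L^\bu\ot_B^\boL F$ (for $E\in\sE$, $F\in\sF$ already known to lie in $(\sE,\sF)$) are forced into $\sF$ resp.\ $\sE$ --- being exactly where the commitment to canonical representatives pays off, since those terms are produced from injectives of $A\modl$ and projectives of $B\modl$ by precisely the operations (finite/infinite sums and products, extensions, kernels of surjections) under which $\sF$ resp.\ $\sE$ is closed.

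I expect the main obstacle to be this last point: conditions~(III) and~(IV) assert only the \emph{existence} of \emph{some} bounded representing complex with terms in the class, with no uniqueness, whereas the proposition claims a \emph{unique} minimal pair below all others. The way around it is to refuse arbitrary representatives and to commit to ones assembled by canonical truncation of (co)resolution\+based complexes whose terms are expressible through operations preserved by every pair satisfying~(I\+-IV), so that membership of those terms in any valid pair is not optional; this is what makes both the ``$(\sE^{l_2},\sF^{l_2})$ satisfies~(III\+-IV)'' direction and the ``$(\sE^{l_2},\sF^{l_2})$ is contained in every valid pair'' direction go through with the same bookkeeping. A secondary technical point is the stabilisation of the transfinite generation in the presence of closure under \emph{arbitrary} infinite direct sums and products, which is handled in the usual deconstructibility style and, in the spirit of Section~\ref{introd-hom-sets}, under benign neglect of the residual set\+theoretic issues.
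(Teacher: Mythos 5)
Your overall strategy---transfinite generation starting from $A\modl_\inj$ and $B\modl_\proj$ inside the ambient pair $(\sE_{d_1},\sF_{d_1})$, closing under the operations of (I\+-II) and sums/products, and feeding in terms of complexes representing $\boR\Hom_A(L^\bu,{-})$ and $L^\bu\ot_B^\boL{-}$---is the same as the paper's, but the step you yourself identify as the crux does not work as you set it up. Conditions~(III\+-IV) assert only the existence of \emph{some} representative with terms in the class; your remedy is to commit to the canonical truncation $\tau_{\ge-l_2}\tau_{\le l_1}$ of the totalization of $\Hom_A(L^\bu,J^\bu)$ for an injective coresolution $J^\bu$ of $E$, and to adjoin \emph{all} of its terms to $\sF^{l_2}$. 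Those terms are finite direct sums of modules $\Hom_A(L^j,J^i)$ (plus one kernel and one cokernel), and they are \emph{not}, in general, obtainable from projective left $B$\+modules by extensions, kernels of surjections and products---so nothing forces them into an arbitrary valid $\sF$, and the minimality claim collapses. Concretely, take $A=B=\boZ$, \ $L^\bu=\boZ$, \ $l_2\ge1$, and let $\sF_0$ be the class generated by the free abelian groups under extensions, kernels of surjections and infinite products; every group in $\sF_0$ is torsion-free, and the pair $(\sE,\sF)=(\boZ\modl,\sF_0)$ satisfies (I\+-IV) (free resolutions of length~$1$ witness~(III), and $F$ itself in degree~$0$ witnesses~(IV)). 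Yet for $E=\boQ/\boZ\in A\modl_\inj$ your canonical representative is $\boQ/\boZ$ itself, so your generation puts $\boQ/\boZ$ into $\sF^{l_2}$, whence $\sF^{l_2}\not\subset\sF_0$.

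The paper avoids this by a different choice of representative plus a comparison lemma, rather than canonicity. Given $E\in\sE^{l_2}\subset\sE_{d_1}$, it picks a complex $F^\bu$ representing $\boR\Hom_A(L^\bu,E)$, concentrated in degrees $-l_2\le m\le d_1$, whose terms $F^m$ are \emph{projective} for all $m\ge-l_2+1$ (the canonical truncation of a bounded above complex of projectives; this exists because the cohomology sits in degrees $\ge-d_2\ge-l_2$), and adjoins to $\sF^{l_2}$ only the single extreme term $F^{-l_2}$. Its membership in every valid $\sF$ is then not a matter of choice: condition~(III) for $(\sE,\sF)$ supplies some complex with terms in $\sF$ in the same range of degrees, and the resolution-dimension comparison of \cite[Corollary~A.5.2]{Pcosh} (using that $\sF$ contains the projectives and is closed under extensions and kernels of surjections) forces $F^{-l_2}\in\sF$. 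Dually, for $F\in\sF^{l_2}$ one represents $L^\bu\ot_B^\boL F$ by a complex in degrees $-d_1\le m\le l_2$ with injective terms except in degree~$l_2$, and adjoins only $E^{l_2}$. With this generation step both halves of your bookkeeping do go through: the constructed pair satisfies (III\+-IV) because all the remaining terms are projective resp.\ injective, and it is contained in every valid pair. Your secondary worry about stabilisation is inessential---one simply takes the class of all modules obtainable at some ordinal stage, under the paper's declared benign neglect of the set-theoretic issues.
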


\begin{proof}
 The full subcategories $\sE^{l_2}\subset A\modl$ and $\sF^{l_2}\subset
B\modl$ are constructed simultaneously by a kind of generation process.
 By construction, for any full subcategories $\sE\subset A\modl$ and
$\sF\subset B\modl$ as above we will have $\sE^{l_2}\subset\sE$ and
$\sF^{l_2}\subset\sF$.
 In particular, the pair of full subcategories $\sE=\sE_{d_1}$ and
$\sF=\sF_{d_1}$ satisfies all the mentioned conditions, so we will have
$\sE^{l_2}\subset\sE_{d_1}$ and $\sF^{l_2}\subset\sF_{d_1}$.

 Firstly, all the injective left $A$\+modules belong to $\sE^{l_2}$ and
all the projective left $B$\+modules belong to $\sF^{l_2}$, as dictated
by the conditions~(I\+-II).
 Secondly, let $E$ be an $A$\+module belonging to~$\sE^{l_2}$.
 Then $E\in\sE_{d_1}$, so the derived category object
$\boR\Hom_A(L^\bu,E)\in\sD^\b(B\modl)$ has cohomology modules
concentrated in the degrees $-d_2\le m\le d_1$.
 Pick a complex of left $B$\+modules $F^\bu$ representing
$\boR\Hom_A(L^\bu,E)$ such that $F^\bu$ is concentrated in
the degrees $-l_2\le m\le d_1$ and the $B$\+modules $F^m$ are
projective for all $-l_2+1\le m\le d_1$.
 According to~\cite[Corollary~A.5.2]{Pcosh}, we have $F^{-l_2}\in\sF$.
 So we say that the $B$\+module $F^{-l_2}$ belongs to~$\sF^{l_2}$.
 Similarly, let $F$ be a $B$\+module belonging to~$\sF^{l_2}$.
 Then $F\in\sF_{d_1}$, so the derived category object
$L^\bu\ot_B^\boL F$ has cohomology modules concentrated in
the degrees $-d_1\le m\le d_2$.
 Pick a complex of left $A$\+modules $E^\bu$ representing
$L^\bu\ot_B^\boL F$ such that $E^\bu$ is concentrated in
the degrees $-d_1\le m\le l_2$ and the $A$\+modules $E^m$ are
injective for all $-d_1\le m\le l_2-1$.
 According to the dual version of~\cite[Corollary~A.5.2]{Pcosh},
we have $E^{l_2}\in\sE$.
 So we say that the $A$\+module $E^{l_2}$ belongs to~$\sE^{l_2}$.

 Thirdly and finally, we add to $\sE^{l_2}$ all the extensions,
cokernels of injective morphisms, and infinite direct sums of its
objects, and similarly add to $\sF^{l_2}$ all the extensions, kernels
of surjective morphisms, and infinite products of its objects.
 Then the second and third steps are repeated in transfinite
iterations, as it may be necessary, until all the modules that can be
obtained in this way have been added and the full subcategories
of all such modules $\sE^{l_2}\subset A\modl$ and $\sF^{l_2}\subset
B\modl$ have been formed.
\end{proof}

\begin{rem}
 It is clear from the construction in the proof of
Proposition~\ref{two-rings-minimal-corresponding-classes-prop}
that for any two values of the parameters $l_1\ge d_1$ and $l_2\ge d_2$,
and any two full subcategories $\sE\subset A\modl$ and
$\sF\subset B\modl$ satisfying the conditions~(I\+-IV) with
the parameters~$l_1$ and~$l_2$ such that $\sE$ is closed under infinite
direct sums in $A\modl$ and $\sF$ is closed under infinite products in
$B\modl$, one has $\sE^{l_2}\subset\sE$ and $\sF^{l_2}\subset\sF$.
\end{rem}

 Notice that the conditions~(III\+-IV) become weaker as
the parameter~$l_2$ increases.
 It follows that one has $\sE^{l_2}\supset\sE^{l_2+1}$ and
$\sF^{l_2}\supset\sF^{l_2+1}$ for all $l_2\ge d_2$.
 So the inclusion relations between our classes of modules
have the form
\begin{align*}
\dotsb\subset\sE^{d_2+2}\subset\sE^{d_2+1}\subset\sE^{d_2}&\subset
\sE_{d_1}\subset\sE_{d_1+1}\subset\sE_{d_1+2}\subset\dotsb\subset
A\modl, \\
\dotsb\subset\sF^{d_2+2}\subset\sF^{d_2+1}\subset\sF^{d_2}&\subset
\sF_{d_1}\subset\sF_{d_1+1}\subset\sF_{d_1+2}\subset\dotsb\subset
B\modl.
\end{align*}

\begin{lem} \label{two-rings-abstract-classes-co-resolution-dimension}
 Let $n\ge0$ and $l_1\ge d_1$, \,$l_2\ge d_2+n$ be some integers.
 Let\/ $\sE\subset A\modl$ and\/ $\sF\subset B\modl$ be a pair of
full subcategories satisfying the conditions (I\+-IV) with
the parameters~$l_1$ and~$l_2$.
 Denote by\/ $\sE(n)\subset A\modl$ the full subcategory of all
left $A$\+modules of\/ $\sE$\+coresolution dimension not exceeding~$n$
and by\/ $\sF(n)\subset B\modl$ the full subcategory of all left
$B$\+modules of\/ $\sF$\+resolution dimension not exceeding~$n$.
 Then the two classes of modules\/ $\sE(n)$ and\/ $\sF(n)$ satisfy
the conditions (I\+-IV) with the parameters~$l_1+\nobreak n$
and~$l_2-\nobreak n$.
\end{lem}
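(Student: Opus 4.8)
The plan is to prove the assertion by induction on~$n$, reducing the whole statement to the hypotheses (I)--(IV) for the pair $(\sE,\sF)$ together with the standard bookkeeping for coresolution and resolution dimensions (closure properties and dimension shifting in short exact sequences, as recalled in \cite[Section~2]{St} and \cite[Section~A.5]{Pcosh}).

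Conditions (I) and (II) for the pair $(\sE(n),\sF(n))$ are immediate from those general properties. Since $\sE$ is coresolving in $A\modl$, the class of $A$\+modules of $\sE$\+coresolution dimension $\le n$ is closed under extensions and under cokernels of (admissible $=$ arbitrary) monomorphisms, and it contains all injective $A$\+modules because these have coresolution dimension~$0$; dually for $\sF(n)$, the projectives, and kernels of epimorphisms. No parameter restriction is needed for this part.

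For condition (III) I would induct on~$n$. The case $n=0$ is exactly (III) for $(\sE,\sF)$, since $\sE(0)=\sE$, $\sF(0)=\sF$ and the parameters are unchanged. For the step, let $E\in\sE(n)$ and pick an exact sequence $0\rarrow E\rarrow J\rarrow\bar E\rarrow0$ with $J$ injective; dimension shifting gives $\bar E\in\sE(n-1)$. Applying $\boR\Hom_A(L^\bu,{-})$ yields a distinguished triangle $\boR\Hom_A(L^\bu,E)\rarrow\boR\Hom_A(L^\bu,J)\rarrow\boR\Hom_A(L^\bu,\bar E)\rarrow{}$, whose last two vertices are, by (III) applied to $J\in\sE$ and by the induction hypothesis applied to $\bar E$, representable by complexes $G^\bu$ (in degrees $-l_2\le m\le l_1$, terms in $\sF$) and $\bar G^\bu$ (in degrees $-(l_2-n+1)\le m\le l_1+n-1$, terms in $\sF(n-1)$). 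Hence $\boR\Hom_A(L^\bu,E)$ is represented by the relevant shift of the cone of a chain map $G^\bu\rarrow\bar G^\bu$ lifting the connecting morphism; a degree count gives a complex $A^\bu$ in degrees $-l_2\le m\le l_1+n$ with terms in $\sF(n-1)$, and with the key refinement that $A^m\in\sF$ for $m\le-l_2+n-1$ (in those degrees $\bar G^\bu$ vanishes, so only the $G^\bu$\+summand contributes). Now $\boR\Hom_A(L^\bu,E)$ has cohomology concentrated in degrees $\ge-d_2$, and $-d_2\ge-(l_2-n)$ because $l_2\ge d_2+n$; so $A^\bu$ is exact in degrees $\le-(l_2-n)-1=-l_2+n-1$, and the canonical truncation $\tau_{\ge-(l_2-n)}A^\bu$ represents the same object, sits in degrees $-(l_2-n)\le m\le l_1+n$, and has all terms in degrees $>-(l_2-n)$ lying in $\sF(n-1)\subset\sF(n)$. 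For the single new bottom term $\tau_{\ge-(l_2-n)}A^{-(l_2-n)}=\operatorname{coker}(A^{-l_2+n-1}\to A^{-l_2+n})$, note that the exact sequence $0\rarrow A^{-l_2}\rarrow\dotsb\rarrow A^{-l_2+n-1}\rarrow\operatorname{im}(A^{-l_2+n-1}\to A^{-l_2+n})\rarrow0$ exhibits that image as an object of $\sF$\+resolution dimension $\le n-1$ (all the $A^m$ here lie in $\sF$); combining this with $0\rarrow\operatorname{im}\rarrow A^{-l_2+n}\rarrow\operatorname{coker}\rarrow0$ and $A^{-l_2+n}\in\sF(n-1)$ gives $\operatorname{coker}\in\sF(n)$. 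This proves (III) for $(\sE(n),\sF(n))$ with parameters $l_1+n$ and $l_2-n$. Condition (IV) is proved by the dual argument: take $0\rarrow\bar F\rarrow Q\rarrow F\rarrow0$ with $Q$ projective, so $\bar F\in\sF(n-1)$, apply $L^\bu\ot_B^\boL{-}$, use (IV) for $Q$ and induction for $\bar F$ to get the cone in degrees $-(l_1+n)\le m\le l_2$ with terms in $\sE(n-1)$ and in $\sE$ in the top degrees, and then truncate from above at $l_2-n$ (using $l_2-n\ge d_2$), checking that the new top term lands in $\sE(n)$.

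I expect the one genuinely delicate point to be the lifting of the connecting morphism of the triangle to an honest chain map between the bounded complexes $G^\bu$ and $\bar G^\bu$ with the stated terms: a projective resolution of $G^\bu$ or an injective resolution of $\bar G^\bu$ would destroy either the boundedness or the membership of the terms in the $\sF$\+classes, and the truncation argument relies on both. This is precisely the kind of issue handled by the explicit construction of the derived functors $\boR\Hom_A(L^\bu,{-})$ and $L^\bu\ot_B^\boL{-}$ in the appendix that already underlies the proof of Theorem~\ref{two-rings-generalized-main-thm}; alternatively one can sidestep it by representing $E\in\sE(n)$ by a bounded complex $[E^0\rarrow\dotsb\rarrow E^n]$ with $E^j\in\sE$ and computing $\boR\Hom_A(L^\bu,E)$ as the total complex of a bicomplex whose columns are the complexes supplied by condition (III) for the individual modules $E^j$, after which the degree count and the truncation proceed exactly as above.
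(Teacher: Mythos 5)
Your handling of conditions (I)--(II) via the standard closure properties of (co)resolution dimension, and your endgame for (III)--(IV) --- first get a representative with terms in $\sF$ in the widened window $[-l_2,\,l_1+n]$, then canonically truncate at $-(l_2-n)$ using $\Ext^m_A(L^\bu,E)=0$ for $m<-d_2$ together with $l_2\ge d_2+n$, and check that the new bottom term has $\sF$\+resolution dimension at most~$n$ --- coincide with the paper's argument, which obtains the widened-window representative in one stroke from \cite[Corollary~A.5.5(b)]{Pcosh}. The gap is in how you produce that representative. Your inductive step forms the cocone of a morphism $G^\bu\rarrow\bar G^\bu$ in $\sD^\b(B\modl)$ between the chosen representatives of $\boR\Hom_A(L^\bu,J)$ and $\boR\Hom_A(L^\bu,\bar E)$; but a derived-category morphism is only a roof and need not be realizable by a chain map between these particular complexes, and your truncation argument depends on the cocone having terms literally in $\sF$ in the degrees $\le -l_2+n-1$ (this is what yields the finite $\sF$\+resolution of the image and hence puts the cokernel term into $\sF(n)$). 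The generic fix --- replacing $G^\bu$ by a bounded above complex of projectives in order to realize the map --- destroys exactly this: the cocone then has terms in $\sF$ but is unbounded below, and after truncation there is no control on the bottom term, since an infinite left resolution by objects of $\sF$ gives no bound on the $\sF$\+resolution dimension.

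Neither of your two remedies closes this as stated. The appendix's DG\+level functor $\Psi=\Hom_A(L^\bu,{-})$ applied to compatibly chosen injective coresolutions does give honest chain maps and short exact sequences of complexes, but the terms of those complexes are of the form $\Hom_A(L^\bu,J)$ and are not in $\sF$; condition~(III) only asserts that each such column is \emph{quasi-isomorphic} to an $\sF$\+term complex, which is the original problem over again. Likewise, the bicomplex variant requires chain maps between the representatives chosen for the individual modules $E^j$ that lift the differentials and commute strictly, which condition~(III) does not supply. What is missing is precisely a horseshoe/Cartan--Eilenberg-type statement for the resolving subcategory $\sF$ --- equivalently, the resolution-dimension lemma \cite[Corollary~A.5.5(b)]{Pcosh} (cf.\ \cite[Section~2]{St}) that the paper cites; compare also the proof of Lemma~\ref{two-rings-adjunction-isomorphisms-follow-lemma}, where the analogous compatibility of resolutions is arranged by passing to complexes of \emph{projectives}, for which such lifts do exist. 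With that ingredient supplied, your degree count and truncation go through verbatim.
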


\begin{proof}
 According to~\cite[Proposition~2.3(2)]{St}
or~\cite[Lemma~A.5.4(a\+b)]{Pcosh} (and the assertions dual to these),
the full subcategories $\sE(n)\subset A\modl$ and $\sF(n)\subset
B\modl$ satisfy the conditions~(I\+-II). 
 Using~\cite[Corollary~A.5.5(b)]{Pcosh}, one shows that for any
$A$\+module $E\in\sE(n)$ the derived category object
$\boR\Hom_A(L^\bu,E)\in\sD^\b(B\modl)$ can be represented by
a complex concentrated in the cohomological degrees
$-l_2\le m\le l_1+n$ with the terms belonging to~$\sF$.
 Moreover, one has $\Ext^m_A(L^\bu,E)=0$ for all $m<-d_2$.
 It follows that $\boR\Hom_A(L^\bu,E)$ can be also represented by
a complex concentrated in the cohomological degrees $-l_2+n\le m\le
l_1+n$ with the terms belonging to~$\sF(n)$.
 Similarly one can show that for any $B$\+module $F\in\sF(n)$
the derived category object $L^\bu\ot_B^\boL F\in\sD^b(A\modl)$ can be
represented by a complex concentrated in the cohomological degrees
$-l_1-n\le m\le l_2$ with the terms belonging to~$\sE$.
 Moreover, one has $\Tor_{-m}^B(L^\bu,F)=0$ for all $m>d_2$.
 It follows that $L^\bu\ot_B^\boL F$ can be also represented by
a complex concentrated in the cohomological degrees $-l_1-n\le m
\le l_2-n$ with the terms belonging to~$\sE(n)$.
 This proves the conditions~(III\+-IV).
\end{proof}

\begin{prop}
 For any $l_2''\ge l_2'\ge d_2$ and any conventional or exotic
derived category symbol\/ $\star=\b$, $+$, $-$, $\varnothing$,
$\abs+$, $\abs-$, or\/~$\abs$, the exact embedding functors\/
$\sE^{l_2''}\rarrow\sE^{l_2'}$ and\/ $\sF^{l_2''}\rarrow\sF^{l_2'}$ induce
triangulated equivalences
$$
 \sD^\star(\sE^{l_2''})\simeq\sD^\star(\sE^{l_2'})\quad\text{and}\quad
 \sD^\star(\sF^{l_2''})\simeq\sD^\star(\sF^{l_2'}).
$$
 The same exact embeddings also induce triangulated equivalences\/
$$
 \sD^\co(\sE^{l_2''})\simeq\sD^\co(\sE^{l_2'})\quad\text{and}\quad
 \sD^\ctr(\sF^{l_2''})\simeq\sD^\ctr(\sF^{l_2'}).
$$
\end{prop}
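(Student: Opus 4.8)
The plan is to imitate the proof of Proposition~\ref{two-rings-maximal-classes-essentially-the-same}: I would show that the objects of the larger class have uniformly bounded co/resolution dimension with respect to the smaller class, and then invoke~\cite[Proposition~A.5.6]{Pcosh} (and its dual) to conclude. The technical heart is extracting that dimension bound from Lemma~\ref{two-rings-abstract-classes-co-resolution-dimension} together with the minimality property of Proposition~\ref{two-rings-minimal-corresponding-classes-prop}.

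First I would check that $\sE^{l_2''}$ is a coresolving subcategory of the exact category $\sE^{l_2'}$, closed under infinite direct sums, and dually that $\sF^{l_2''}$ is a resolving subcategory of $\sF^{l_2'}$, closed under infinite products. Indeed, $\sE^{l_2''}\subset\sE^{l_2'}$ (the classes shrink as $l_2$ grows); $\sE^{l_2''}$ is closed under extensions and under the cokernels of admissible monomorphisms by condition~(I), and closed under infinite direct sums by Proposition~\ref{two-rings-minimal-corresponding-classes-prop}; and any $E\in\sE^{l_2'}$ admits an embedding $E\rarrow J$ into an injective left $A$\+module $J$, where $J\in\sE^{l_2''}$ by~(I) and $J/E\in\sE^{l_2'}$ again by~(I), so $0\rarrow E\rarrow J\rarrow J/E\rarrow0$ is a conflation in $\sE^{l_2'}$.

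The main step is the dimension bound. Put $n=l_2''-l_2'\ge0$, so that $l_2''=l_2'+n\ge d_2+n$. Applying Lemma~\ref{two-rings-abstract-classes-co-resolution-dimension} to the pair $\sE^{l_2''}$, $\sF^{l_2''}$, which satisfies the conditions~(I\+-IV) with the parameters $d_1$ and~$l_2''$, I obtain that the pair $\sE^{l_2''}(n)$, $\sF^{l_2''}(n)$ of all modules of $\sE^{l_2''}$\+coresolution, respectively $\sF^{l_2''}$\+resolution, dimension $\le n$ satisfies~(I\+-IV) with the parameters $d_1+n$ and~$l_2'$. Moreover $\sE^{l_2''}(n)$ is closed under infinite direct sums and $\sF^{l_2''}(n)$ under infinite products (form termwise the direct sums, respectively products, of co/resolutions, using exactness of coproducts in $A\modl$ and of products in $B\modl$). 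By the Remark following Proposition~\ref{two-rings-minimal-corresponding-classes-prop}, which allows the first parameter to take any value $\ge d_1$, it follows that $\sE^{l_2'}\subset\sE^{l_2''}(n)$ and $\sF^{l_2'}\subset\sF^{l_2''}(n)$. Hence every object of $\sE^{l_2'}$ admits an $\sE^{l_2''}$\+coresolution of length $\le n$, which by condition~(I) is a sequence of conflations in $\sE^{l_2'}$; so the $\sE^{l_2''}$\+coresolution dimension computed inside the exact category $\sE^{l_2'}$ is also $\le n$, and dually for $\sF$.

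Finally, I would apply~\cite[Proposition~A.5.6]{Pcosh} to the coresolving subcategory $\sE^{l_2''}\subset\sE^{l_2'}$ and its dual version to the resolving subcategory $\sF^{l_2''}\subset\sF^{l_2'}$, obtaining the triangulated equivalences $\sD^\star(\sE^{l_2''})\simeq\sD^\star(\sE^{l_2'})$ and $\sD^\star(\sF^{l_2''})\simeq\sD^\star(\sF^{l_2'})$ for $\star=\b$, $+$, $-$, $\varnothing$, $\abs+$, $\abs-$, $\abs$; and, since $\sE^{l_2''}$, $\sE^{l_2'}$ are closed under infinite direct sums while $\sF^{l_2''}$, $\sF^{l_2'}$ are closed under infinite products, the same proposition yields $\sD^\co(\sE^{l_2''})\simeq\sD^\co(\sE^{l_2'})$ and $\sD^\ctr(\sF^{l_2''})\simeq\sD^\ctr(\sF^{l_2'})$ as well. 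No step here should be a genuine obstacle once all the pieces are assembled; the only point requiring some care is the combination of Lemma~\ref{two-rings-abstract-classes-co-resolution-dimension} with the minimality Remark, which is precisely what makes the passage of parameters from $(d_1,l_2'')$ to $(d_1+n,l_2')$ harmless and shows that none of the derived categories involved actually changes.
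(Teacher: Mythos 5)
Your proposal is correct and follows essentially the same route as the paper: one applies Lemma~\ref{two-rings-abstract-classes-co-resolution-dimension} with $n=l_2''-l_2'$ to the pair $(\sE^{l_2''},\sF^{l_2''})$, checks closure of $\sE^{l_2''}(n)$ and $\sF^{l_2''}(n)$ under direct sums, resp.\ products, invokes the minimality remark to get $\sE^{l_2'}\subset\sE^{l_2''}(n)$ and $\sF^{l_2'}\subset\sF^{l_2''}(n)$, and concludes by the finite co/resolution dimension argument via~\cite[Proposition~A.5.6]{Pcosh}, exactly as in the paper. The extra verifications you include (that $\sE^{l_2''}$ is coresolving in $\sE^{l_2'}$ and that the dimension bound holds inside the smaller exact category) are details the paper leaves implicit but are correctly handled.
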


\begin{proof}
 As in
Proposition~\ref{two-rings-maximal-classes-essentially-the-same},
we check that the $\sE^{l_2''}$\+coresolution dimension of any object
of $\sE^{l_2'}$ does not exceed $l_2''-l_2'$ and
the $\sF^{l_2''}$\+resolution dimension of any object of $\sF^{l_2'}$
does not exceed $l_2''-l_2'$.
 Indeed, according to
Lemma~\ref{two-rings-abstract-classes-co-resolution-dimension},
the pair of full subcategories $\sE^{l_2''}(l_2''-l_2')\subset A\modl$
and $\sF^{l_2''}(l_2''-l_2')\subset B\modl$ satisfies
the conditions~(I\+-IV) with the parameters $l_1=d_1+l_2''-l_2'$
and $l_2=l_2'$.
 Furthermore, since infinite direct sums are exact and the full
subcategory $\sE^{l_2''}$ is closed under infinite direct sums in
$A\modl$,  so is the full subcategory
$\sE^{l_2''}(l_2''-l_2')$.
 Since infinite products are exact and the full subcategory
$\sF^{l_2''}$ is closed under infinite products in $B\modl$, so is
the full subcategory $\sF^{l_2''}(l_2''-l_2')$.
 It follows that $\sE^{l_2'}\subset\sE^{l_2''}(l_2''-l_2')$ and
$\sF^{l_2'}\subset\sF^{l_2''}(l_2''-l_2')$.
\end{proof}

 In particular, the unbounded derived category $\sD(\sE^{l_2})$ is
the same for all $l_2\ge d_2$ and the unbounded derived category
$\sD(\sF^{l_2})$ is the same for all $l_2\ge d_2$.

 As it was explained in Section~\ref{pseudo-derived-introd}, it follows
from the condition~(I) together with the condition that $\sE^{l_2}$ is
closed under infinite direct sums in $A\modl$ that the natural Verdier
quotient functor $\sD^\co(A\modl)\rarrow\sD(A\modl)$ factorizes into two
Verdier quotient functors
$$
 \sD^\co(A\modl)\lrarrow\sD(\sE^{l_2})\lrarrow\sD(A\modl).
$$
 Similarly, it follows from the condition~(II) together with
the condition that $\sF^{l_2}$ is closed under infinite products in
$B\modl$ that the natural Verdier quotient functor
$\sD^\ctr(B\modl)\rarrow\sD(B\modl)$ factorizes into two Verdier
quotient functors
$$
 \sD^\ctr(B\modl)\lrarrow\sD(\sF^{l_2})\lrarrow\sD(B\modl).
$$
 In other words, the triangulated category $\sD(\sE^{l_2})$ is
a pseudo-coderived category of left $A$\+modules and
the triangulated category $\sD(\sF^{l_2})$ is a pseudo-contraderived
category of left $B$\+modules.

 These are called the \emph{upper pseudo-coderived category} of
left $A$\+modules and the \emph{upper pseudo-contraderived category} of
left $B$\+modules corresponding to the pseudo-dualizing complex~$L^\bu$. 
 The notation is
$$
 \sD_\prime^{L^\bu}(A\modl)=\sD(\sE^{l_2}) \quad\text{and}\quad
 \sD_{\prime\prime}^{L^\bu}(B\modl)=\sD(\sF^{l_2}).
$$
 The next theorem provides, in particular, a triangulated
equivalence between the upper pseudo-coderived and the upper
pseudo-contraderived category,
$$
 \sD_\prime^{L^\bu}(A\modl)=\sD(\sE^{l_2})\.\simeq\.
 \sD(\sF^{l_2})=\sD_{\prime\prime}^{L^\bu}(B\modl). 
$$

\begin{thm} \label{two-rings-upper-main-thm}
 For any symbol\/ $\star=\b$, $+$, $-$, $\varnothing$, $\abs+$,
$\abs-$, or\/~$\abs$, there is a triangulated equivalence\/
$\sD^\star(\sE^{l_2})\simeq\sD^\star(\sF^{l_2})$ provided by
(appropriately defined) mutually inverse functors\/
$\boR\Hom_A(L^\bu,{-})$ and $L^\bu\ot_B^\boL{-}$.
\end{thm}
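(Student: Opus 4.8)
The plan is to deduce this statement directly from the main Theorem~\ref{two-rings-generalized-main-thm}, in the same way that Theorem~\ref{two-rings-lower-main-thm} was.

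First I would invoke Proposition~\ref{two-rings-minimal-corresponding-classes-prop}: for the parameters $l_1=d_1$ and the chosen $l_2\ge d_2$, the pair of full subcategories $\sE^{l_2}\subset A\modl$ and $\sF^{l_2}\subset B\modl$ satisfies the conditions~(I\+-IV) of Section~\ref{two-rings-abstract-classes}; indeed, these conditions are exactly what the transfinite generation process defining $\sE^{l_2}$ and $\sF^{l_2}$ was set up to enforce. Hence Theorem~\ref{two-rings-generalized-main-thm} applies verbatim to the pair $(\sE^{l_2},\sF^{l_2})$. For each of the symbols $\star=\b$, $+$, $-$, $\varnothing$, $\abs+$, $\abs-$, or~$\abs$ that theorem imposes no closure hypothesis on $\sE$ or $\sF$, so it produces a triangulated equivalence $\sD^\star(\sE^{l_2})\simeq\sD^\star(\sF^{l_2})$, realized by the derived functors $\boR\Hom_A(L^\bu,{-})$ and $L^\bu\ot_B^\boL{-}$ that are built via the DG\+functor formalism of the appendix, applied here with $\sJ=A\modl_\inj\subset\sE^{l_2}$ and $\sP=B\modl_\proj\subset\sF^{l_2}$ (the inclusions $\sJ\subset\sE^{l_2}$ and $\sP\subset\sF^{l_2}$ being parts of the conditions~(I) and~(II)).

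The one place where something must be checked --- and the reason the present list of symbols is strictly shorter than the one in Theorem~\ref{two-rings-generalized-main-thm} --- concerns the coderived and contraderived cases, which is where I expect whatever ``obstacle'' there is to lie. By construction $\sE^{l_2}$ is closed under infinite direct sums in $A\modl$ but need not be closed under infinite products, and dually $\sF^{l_2}$ is closed under infinite products but need not be closed under infinite direct sums. Therefore the hypotheses under which Theorem~\ref{two-rings-generalized-main-thm} treats $\star=\co$ (both $\sE$ and $\sF$ closed under direct sums) and $\star=\ctr$ (both closed under products) are unavailable in our situation, so these two symbols genuinely have to be dropped from the statement. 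For all the remaining symbols the deduction is purely formal, and no additional work is needed: the content of the theorem resides entirely in Proposition~\ref{two-rings-minimal-corresponding-classes-prop} and Theorem~\ref{two-rings-generalized-main-thm}, both of which have already been established.
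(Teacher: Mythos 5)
Your proposal is correct and matches the paper's own argument: the paper simply notes that Theorem~\ref{two-rings-upper-main-thm} is another particular case of Theorem~\ref{two-rings-generalized-main-thm}, the hypotheses~(I\+-IV) being guaranteed for the pair $(\sE^{l_2},\sF^{l_2})$ by Proposition~\ref{two-rings-minimal-corresponding-classes-prop}. Your explanation of why the symbols $\star=\co$ and~$\ctr$ are absent (the mismatched closure hypotheses on direct sums and products) is also the right reading of the situation.
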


\begin{proof}
 This is another particular case of
Theorem~\ref{two-rings-generalized-main-thm}.
 It is explained in the proof of that theorem what ``appropriately defined''
means.
\end{proof}

 Substituting $\sE'=\sE_{l_1}$, \ $\sE_\prime=\sE^{l_2}$, \
$\sF''=\sF_{l_1}$, and $\sF_{\prime\prime}=\sF^{l_2}$ (for some
$l_1\ge\nobreak d_1$ and $l_2\ge\nobreak d_2$) into the commutative
diagram of triangulated
functors~\eqref{two-associative-rings-abstract-diagram} 
from Section~\ref{two-rings-abstract-classes}, one obtains
the commutative diagram of triangulated
functors~\eqref{two-associative-rings-triangulated-diagram} 
promised in Section~\ref{two-associative-rings-introd}
of the Introduction.

\Section{Dedualizing Complexes}

 Let $A$ and $B$ be associative rings.
 A \emph{dedualizing complex} of $A$\+$B$\+bimodules $L^\bu=T^\bu$
is a pseudo-dualizing complex (according to the definition
in Section~\ref{two-rings-auslander-bass-subsecn}) satisfying
the following additional condition:
\begin{enumerate}
\renewcommand{\theenumi}{\roman{enumi}}
\item As a complex of left $A$\+modules, $T^\bu$ is quasi-isomorphic
to a finite complex of projective $A$\+modules, and as a complex of
right $B$\+modules, $T^\bu$ is quasi-isomorphic to a finite complex
of projective $B$\+modules.
\end{enumerate}

 Taken together, the conditions~(i) and~(ii) mean that, as a complex
of left $A$\+modules, $T^\bu$ is quasi-isomorphic to a finite complex
of finitely generated projective $A$\+modules, and as a complex of
right $B$\+modules, $T^\bu$ is quasi-isomorphic to a finite complex
of finitely generated projective $B$\+modules.
 In other words, $T^\bu$ is a perfect complex of left $A$\+modules and
a perfect complex of right $B$\+modules.

 This definition of a dedualizing complex is slightly less general
than that of a \emph{tilting complex} in the sense
of~\cite[Theorem~1.1]{Ric2} and slightly more general than that of
a \emph{two-sided tilting complex} in the sense
of~\cite[Definition~3.4]{Ric2}.

 Let $L^\bu=T^\bu$ be a dedualizing complex of $A$\+$B$\+bimodules.
 We refer to the beginning of
Section~\ref{two-rings-auslander-bass-subsecn}
for the discussion of the pair of adjoint derived functors
$\boR\Hom_A(T^\bu,{-})\:\sD(A\modl)\rarrow\sD(B\modl)$ and
$T^\bu\ot_B^\boL{-}\:\sD(B\modl)\rarrow\sD(A\modl)$.
 The following assertion is a version of~\cite[Proposition~5.1]{Ric2}
and~\cite[Theorem~4.2\,(2)$\Leftrightarrow$(4)]{DS}.

\begin{prop}
 The derived functors\/ $\boR\Hom_A(T^\bu,{-})$ and $T^\bu\ot_B^\boL{-}$
are mutually inverse triangulated equivalences between the conventional
unbounded derived categories\/ $\sD(A\modl)$ and\/ $\sD(B\modl)$.
\end{prop}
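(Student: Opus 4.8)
The plan is to obtain this proposition as the special case of Theorem~\ref{two-rings-generalized-main-thm} (with derived category symbol $\star=\varnothing$) in which the abstract corresponding classes are taken to be as large as possible, namely $\sE=A\modl$ and $\sF=B\modl$. A dedualizing complex $T^\bu$ is, by definition, a pseudo-dualizing complex, so Theorem~\ref{two-rings-generalized-main-thm} is applicable to $L^\bu=T^\bu$; the only point to verify is that the pair $(\sE,\sF)=(A\modl,B\modl)$ satisfies conditions~(I\+-IV) of Section~\ref{two-rings-abstract-classes} for sufficiently large integers $l_1$ and~$l_2$.

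Conditions~(I) and~(II) hold trivially, since $A\modl$ is closed under extensions and cokernels of injective morphisms and contains all the injective left $A$\+modules (dually for $B\modl$). For conditions~(III\+-IV) one uses the defining property~(i) of a dedualizing complex. Since $T^\bu$ is quasi-isomorphic, as a complex of left $A$\+modules, to a finite complex of finitely generated projective $A$\+modules, the functor $\boR\Hom_A(T^\bu,{-})$ has finite homological dimension: the object $\boR\Hom_A(T^\bu,E)\in\sD(B\modl)$ has cohomology concentrated in one and the same bounded range of degrees for every left $A$\+module $E$ (equivalently, $\Ext^n_A(T^\bu,E)=0$ outside that range, uniformly in~$E$, as one sees by computing with the finite projective model of $T^\bu$). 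Hence, taking $l_1$ and~$l_2$ large enough, $\boR\Hom_A(T^\bu,E)$ is represented by a complex of left $B$\+modules concentrated in the cohomological degrees $-l_2\le m\le l_1$ (apply a canonical truncation to any complex of $B$\+modules representing it); the requirement that the terms lie in $\sF$ is vacuous because $\sF=B\modl$. This is condition~(III). Dually, since $T^\bu$ is quasi-isomorphic, as a complex of right $B$\+modules, to a finite complex of finitely generated projective $B$\+modules, the functor $T^\bu\ot_B^\boL{-}$ has finite homological dimension, so $T^\bu\ot_B^\boL F$ is represented by a complex of left $A$\+modules concentrated in the degrees $-l_1\le m\le l_2$ for any left $B$\+module $F$ (after enlarging $l_1$ and~$l_2$ if necessary); this gives condition~(IV).

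With conditions~(I\+-IV) in hand, Theorem~\ref{two-rings-generalized-main-thm} produces a triangulated equivalence $\sD(A\modl)=\sD(\sE)\simeq\sD(\sF)=\sD(B\modl)$ implemented by (appropriately defined) mutually inverse functors $\boR\Hom_A(T^\bu,{-})$ and $T^\bu\ot_B^\boL{-}$. Finally I would note that these ``appropriately defined'' functors are nothing but the usual derived functors: as $\sE=A\modl$ has enough injectives and $\sF=B\modl$ has enough projectives, the resolutions employed in the construction of the appendix are the ordinary injective and projective resolutions, so the functors coincide with the standard $\boR\Hom_A(T^\bu,{-})\colon\sD(A\modl)\rarrow\sD(B\modl)$ and $T^\bu\ot_B^\boL{-}\colon\sD(B\modl)\rarrow\sD(A\modl)$, which are moreover adjoint to one another by~\cite[Lemma~8.3]{Psemi}. (Equivalently, the computation above shows that $\sE_{l_1}(T^\bu)=A\modl$ and $\sF_{l_1}(T^\bu)=B\modl$ for $l_1$ large, so one may instead invoke the $\star=\varnothing$ case of Theorem~\ref{two-rings-lower-main-thm}.) The only genuinely nontrivial step is the verification of~(III\+-IV), i.e., the observation that perfectness of $T^\bu$ on each side makes the two derived functors bounded, uniformly over all modules; this is a version of the classical fact, going back to Rickard, that a two-sided tilting complex induces a derived equivalence, cf.~\cite[Proposition~5.1]{Ric2} and~\cite[Theorem~4.2]{DS}.
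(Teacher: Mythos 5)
Your strategy is genuinely different from the paper's and is not circular: Theorem~\ref{two-rings-generalized-main-thm}, Lemma~\ref{two-rings-adjunction-isomorphisms-follow-lemma} and the bounded equivalence~\eqref{bounded-derived-bass-auslander-equiv} nowhere use the present Proposition, and your verification of conditions~(I)--(IV) for the maximal pair $\sE=A\modl$, $\sF=B\modl$ is correct ((I)--(II) are vacuous, (III)--(IV) follow from the perfectness of $T^\bu$ over $A$ and over $B^\rop$ with your choice of $l_1$, $l_2$). The paper, however, proves the Proposition directly in a few lines: for a homotopy injective $J^\bu$ the counit is represented by ${}'T^\bu\ot_B\Hom_A(T^\bu,J^\bu)\rarrow J^\bu$, which is identified with $\Hom_A(\Hom_{B^\rop}({}'T^\bu,T^\bu),J^\bu)\rarrow J^\bu$ and is a quasi-isomorphism by the homothety condition, and dually for the unit using a homotopy flat complex and ${}''T^\bu$. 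Note also that the paper's logical order is the opposite of yours: it uses the Proposition to conclude $\sE_{l_1}(T^\bu)=A\modl$ and $\sF_{l_1}(T^\bu)=B\modl$, and only then quotes Theorem~\ref{two-rings-lower-main-thm} for Corollary~\ref{two-rings-dedualizing-cor}. Your route buys all symbols~$\star$ at once, at the price of the appendix machinery, where the paper's argument is elementary and self-contained.

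There are two places where your write-up needs repair. First, Theorem~\ref{two-rings-generalized-main-thm} produces the ``appropriately defined'' derived functors constructed in the appendix, and your one-line identification of these with the standard $\boR\Hom_A(T^\bu,{-})$ and $T^\bu\ot_B^\boL{-}$ of Section~\ref{two-rings-auslander-bass-subsecn} misdescribes that construction: on unbounded complexes the appendix does not use homotopy injective/flat resolutions, but rather resolutions inside the exact categories of complexes $\sC(\sE)$ and $\sC(\sF)$ by complexes of objects of $\sC(\sJ)$ or $\sC(\sP)$, followed by totalization (Section~\ref{derived-functor-construction-appx}). The coincidence is true, but it requires an argument---the quickest being that both constructions are computed by the finite complexes of finitely generated projectives: $\Hom_A({}''T^\bu,{-})$ and ${}'T^\bu\ot_B{-}$ are exact, preserve quasi-isomorphisms of arbitrary complexes (a bounded complex of projectives is homotopy projective, a bounded complex of flats is homotopy flat), and are compatible with the totalizations involved, so they compute the standard derived functors and the appendix functors alike. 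Since the statement of the Proposition is precisely about the standard functors, this identification is where its content sits and cannot be waved through. Second, your parenthetical shortcut is unjustified as stated: the boundedness computation gives only the $\Ext$/$\Tor$-vanishing halves of the definitions of $\sE_{l_1}(T^\bu)$ and $\sF_{l_1}(T^\bu)$; the adjunction-isomorphism halves are exactly what the Proposition asserts (this is how the paper deduces $\sE_{l_1}=A\modl$, $\sF_{l_1}=B\modl$ from it), so to use that route independently you would have to invoke Lemma~\ref{two-rings-adjunction-isomorphisms-follow-lemma} with $\sE=A\modl$, $\sF=B\modl$, not ``the computation above''. With these two points fixed, your proof goes through.
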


\begin{proof}
 We have to show that the adjunction morphisms are isomorphisms.
 Let $J^\bu$ be a homotopy injective complex of left $A$\+modules.
 Then the complex of left $B$\+modules $\Hom_A(T^\bu,J^\bu)$
represents the derived category object $\boR\Hom_A(T^\bu,J^\bu)\in
\sD(B\modl)$.
 Let ${}'\.T^\bu$ be a finite complex of finitely generated projective
right $B$\+modules endowed with a quasi-isomorphism of complexes of
right $B$\+modules ${}'\.T^\bu\rarrow T^\bu$.
 Then the adjunction morphism $T^\bu\ot_B^\boL\boR\Hom_A(T^\bu,J^\bu)
\rarrow J^\bu$ is represented, as a morphism in the derived category of
abelian groups, by the morphism of complexes $'\.{}T^\bu\ot_B
\Hom_A(T^\bu,J^\bu)\rarrow J^\bu$.
 Now the complex of abelian groups ${}'\.T^\bu\ot_B\Hom_A(T^\bu,J^\bu)$
is naturally isomorphic to $\Hom_A(\Hom_{B^\rop}({}'\.T^\bu,T^\bu),
J^\bu)$, and the morphism of complexes of left $A$\+modules $A\rarrow
\Hom_{B^\rop}({}'\.T^\bu,T^\bu)$ is a quasi-isomorphism by
the condition~(iii).

 Similarly, let $P^\bu$ be a homotopy flat complex of left $B$\+modules.
 Then the complex of left $A$\+modules $T^\bu\ot_BP^\bu$ represents
the derived category object $T^\bu\ot_B^\boL P^\bu\in\sD(A\modl)$.
 Let ${}''\.T^\bu$ be a finite complex of finitely generated projective
left $A$\+modules endowed with a quasi-isomorphism of complexes of
left $A$\+modules ${}''T^\bu\rarrow T^\bu$.
 Then the adjunction morphism $P^\bu\rarrow\boR\Hom_A(T^\bu,\>
T^\bu\ot_B^\boL P^\bu)$ is represented, as a morphism in the derived
category of abelian groups, by the morphism of complexes $P^\bu\rarrow
\Hom_A({}''\.T^\bu,\>T^\bu\ot_BP^\bu)$.
 Now the complex of abelian groups $\Hom_A({}''\.T^\bu,\>
T^\bu\ot_BP^\bu)$ is naturally isomorphic to $\Hom_A({}''\.T^\bu,T^\bu)
\ot_BP^\bu$, and the morphism of complexes of right $B$\+modules
$B\rarrow\Hom_A({}''\.T^\bu,T^\bu)$ is a quasi-isomorphism by
the condition~(iii).
\end{proof}

 In particular, it follows that the derived Bass and Auslander classes
associated with a dedualizing complex $L^\bu=T^\bu$ (as discussed in
Section~\ref{two-rings-auslander-bass-subsecn}) coincide with the whole
bounded derived categories $\sD^\b(A\modl)$ and $\sD^\b(B\modl)$, and
the triangulated
equivalence~\eqref{bounded-derived-bass-auslander-equiv} takes the form
$\sD^\b(A\modl)\simeq\sD^\b(B\modl)$.

 Now let us choose the parameter~$l_1$ in such a way that $T^\bu$ is
quasi-isomorphic to a complex of (finitely generated) projective left
$A$\+modules concentrated in the cohomological degrees
$-l_1\le m\le d_2$ and to a complex of (finitely generated) projective
right $B$\+modules concentrated in the cohomological degrees
$-l_1\le m\le d_2$.
 Then we have $\sE_{l_1}(T^\bu)=A\modl$ and $\sF_{l_1}(T^\bu)=B\modl$.

 The next corollary is a (partial) extension
of~\cite[Theorem~6.4]{Ric}, \cite[Theorem~3.3 and Proposition~5.1]{Ric2},
and~\cite[Proposition~5.1]{DS}.

\begin{cor} \label{two-rings-dedualizing-cor}
 For any symbol\/ $\star=\b$, $+$, $-$, $\varnothing$, $\abs+$,
$\abs-$, $\co$, $\ctr$, or\/~$\abs$, there is a triangulated
equivalence\/ $\sD^\star(A\modl)\simeq\sD^\star(B\modl)$
provided by (appropriately defined) mutually inverse functors\/
$\boR\Hom_A(T^\bu,{-})$ and $T^\bu\ot_B^\boL{-}$.
\end{cor}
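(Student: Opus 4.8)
The plan is to obtain this as a special case of Theorem~\ref{two-rings-lower-main-thm} (hence ultimately of Theorem~\ref{two-rings-generalized-main-thm}) for a suitably large value of the parameter~$l_1$. First I would fix $l_1\ge d_1$ large enough that $T^\bu$ is quasi-isomorphic, as a complex of left $A$\+modules, to a complex of finitely generated projective $A$\+modules concentrated in the cohomological degrees $-l_1\le m\le d_2$, and likewise, as a complex of right $B$\+modules, to a complex of finitely generated projective $B$\+modules concentrated in degrees $-l_1\le m\le d_2$. Such an~$l_1$ exists because, by conditions~(i) and~(ii) of the definition of a dedualizing complex, $T^\bu$ is a perfect complex of left $A$\+modules and a perfect complex of right $B$\+modules.

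Next I would check that for this choice one has $\sE_{l_1}(T^\bu)=A\modl$ and $\sF_{l_1}(T^\bu)=B\modl$, as already recorded in the discussion preceding the corollary. Representing $\boR\Hom_A(T^\bu,E)$ by $\Hom_A({}'\.T^\bu,E)$, where ${}'\.T^\bu$ is a complex of finitely generated projective left $A$\+modules concentrated in degrees $-l_1\le m\le d_2$, shows that $\Ext^n_A(T^\bu,E)=0$ for all $n>l_1$ and every left $A$\+module~$E$; and the adjunction morphism $T^\bu\ot_B^\boL\boR\Hom_A(T^\bu,E)\rarrow E$ is an isomorphism in $\sD^-(A\modl)$ by the preceding proposition, which shows that $\boR\Hom_A(T^\bu,{-})$ and $T^\bu\ot_B^\boL{-}$ are mutually inverse triangulated equivalences between $\sD(A\modl)$ and $\sD(B\modl)$, so that in particular all adjunction morphisms are isomorphisms on module objects. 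The verification that $\sF_{l_1}(T^\bu)=B\modl$ is symmetric, using a complex of finitely generated projective right $A$\+modules quasi-isomorphic to $T^\bu$.

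Finally I would apply Theorem~\ref{two-rings-lower-main-thm} with $\sE_{l_1}=A\modl$ and $\sF_{l_1}=B\modl$. Since $A\modl$ and $B\modl$ are cocomplete and complete abelian categories, they are trivially closed under infinite direct sums and under infinite products, so the extra hypotheses required for the symbols $\star=\co$ and $\star=\ctr$ hold automatically. The theorem then yields the desired triangulated equivalence $\sD^\star(A\modl)=\sD^\star(\sE_{l_1})\simeq\sD^\star(\sF_{l_1})=\sD^\star(B\modl)$ for every symbol~$\star$ in the list, provided by the appropriately defined mutually inverse functors $\boR\Hom_A(T^\bu,{-})$ and $T^\bu\ot_B^\boL{-}$.

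Since the corollary is essentially a degenerate instance of Theorem~\ref{two-rings-generalized-main-thm}, there is no serious obstacle here; all of the real work --- the construction of the adjoint derived functors and the passage from the bounded equivalence to the various $\sD^\star$ --- is already done there and in the appendix. The only point deserving attention is that in this situation, where the exact subcategories $\sE$, $\sF$ coincide with the whole module categories, the derived functors furnished by the appendix's construction agree with the ordinary $\boR\Hom_A(T^\bu,{-})$ and $T^\bu\ot_B^\boL{-}$ computed via homotopy injective and homotopy flat resolutions; this is immediate from the construction, and with it in hand the statement follows.
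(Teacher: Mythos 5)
Your proof is correct and follows essentially the same route as the paper: choose $l_1$ so that $T^\bu$ is represented on either side by a complex of finitely generated projectives concentrated in degrees $-l_1\le m\le d_2$, deduce $\sE_{l_1}(T^\bu)=A\modl$ and $\sF_{l_1}(T^\bu)=B\modl$ using the preceding proposition on the adjunction isomorphisms, and invoke Theorem~\ref{two-rings-lower-main-thm}. The only slip is typographical: the symmetric verification that $\sF_{l_1}(T^\bu)=B\modl$ uses a complex of finitely generated projective right $B$\+modules (not right $A$\+modules) quasi-isomorphic to~$T^\bu$, since the Tor side is computed from the right $B$\+module structure.
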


\begin{proof}
 In view of the above observations, this is a particular case of
Theorem~\ref{two-rings-lower-main-thm}.
\end{proof}

\Section{Dualizing Complexes} \label{two-rings-dualizing-subsecn}

 Let $A$ and $B$ be associative rings.
 Our aim is to work out a generalization of the results
of~\cite[Theorem~4.8]{IK} and~\cite[Sections~2 and~4]{Pfp} falling
in line with the exposition in the present paper
(with the Noetherianness/coherence assumptions removed).

 Firstly we return to the discussion of sfp\+injective and
sfp\+flat modules started in Section~\ref{sfp-modules-subsecn}.
 Denote the full subcategory of sfp\+injective left $A$\+modules
by $A\modl_\sfpin\subset A\modl$ and the full subcategory of
sfp\+flat left $B$\+modules by $B\modl_\sfpfl\subset B\modl$.
 It is clear from
Lemma~\ref{sfp-injective-flat-modules-are-closed-under} that
the categories $A\modl_\sfpin$ and $B\modl_\sfpfl$ have exact category
structures inherited from the abelian categories $A\modl$ and $B\modl$.

\begin{prop} \label{sfp-inj-homotopy-coderived-comparison}
\textup{(a)} The triangulated functor\/ $\sD^\co(A\modl_\sfpin)\rarrow
\sD^\co(A\modl)$ induced by the embedding of exact categories
$A\modl_\sfpin\rarrow A\modl$ is an equivalence of triangulated
categories. \par
\textup{(b)} If all sfp\+injective left $A$\+modules have finite
injective dimensions, then the triangulated functor\/
$\Hot(A\modl_\inj)\rarrow\sD^\co(A\modl)$ induced by the embedding
of additive/exact categories $A\modl_\inj\rarrow A\modl$ is
an equivalence of triangulated categories.
\end{prop}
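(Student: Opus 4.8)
\textit{Part~(a).} The plan is to identify $A\modl_\sfpin$ as a coresolving subcategory of $A\modl$ closed under infinite direct sums, and then to invoke the dual of~\cite[Proposition~A.3.1(b)]{Pcosh} (equivalently, \cite[Proposition~2.1]{Pfp}) exactly as in Section~\ref{pseudo-derived-introd}. For this, first note that every injective left $A$\+module is sfp\+injective, since $\Ext^1_A(M,J)=0$ for injective~$J$ and arbitrary~$M$; in particular every left $A$\+module admits a monomorphism into an object of $A\modl_\sfpin$. Together with Lemma~\ref{sfp-injective-flat-modules-are-closed-under}(a) — closedness of $A\modl_\sfpin$ under extensions, cokernels of injective morphisms, and infinite direct sums in $A\modl$ — this gives precisely the properties required of a coresolving subcategory closed under infinite direct sums, and part~(a) follows.

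\textit{Part~(b).} The key observation is that one should \emph{not} try to argue directly in $A\modl$, because the hypothesis constrains only the sfp\+injective modules, whereas in general not every left $A$\+module has finite injective dimension; instead the plan is to pass, by means of part~(a), to the exact category $\sE:=A\modl_\sfpin$. First I would upgrade the hypothesis to a \emph{uniform} bound: by Lemma~\ref{sfp-injective-flat-modules-are-closed-under}(a) the class of sfp\+injective left $A$\+modules is closed under infinite products, and since $\Ext^n_A(N,\prod_\alpha J_\alpha)=\prod_\alpha\Ext^n_A(N,J_\alpha)$ for any family $(J_\alpha)$ and any~$N$, an infinite product of sfp\+injective modules of strictly growing injective dimensions would be an sfp\+injective module of infinite injective dimension, contradicting the hypothesis; hence there is an integer $d\ge0$ with $\mathrm{inj.dim}_A J\le d$ for every sfp\+injective left $A$\+module~$J$.

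Next I would analyse $\sE=A\modl_\sfpin$ as an exact category. Since the injective $A$\+modules are sfp\+injective and $\sE$ is closed under cokernels of injective morphisms, $\sE$ has enough injectives, its injective objects are exactly the injective $A$\+modules, $A\modl_\inj$ is a coresolving subcategory of~$\sE$, and, as an injective $A$\+module coresolution of an object $E\in\sE$ keeps all its cosyzygies inside $\sE$, the $A\modl_\inj$\+coresolution dimension of $E$ in $\sE$ equals $\mathrm{inj.dim}_A E$ and is $\le d$; thus $\sE$ has finite homological dimension. Now part~(a) gives an equivalence $\sD^\co(\sE)\simeq\sD^\co(A\modl)$ induced by the embedding; finite homological dimension gives $\sD^\co(\sE)=\sD^\abs(\sE)$; the case $\star=\abs$ of~\cite[Proposition~A.5.6]{Pcosh}, applied to the coresolving subcategory $A\modl_\inj\subset\sE$ of bounded coresolution dimension, gives an equivalence $\sD^\abs(A\modl_\inj)\simeq\sD^\abs(\sE)$; and $\sD^\abs(A\modl_\inj)=\Hot(A\modl_\inj)$ because the exact structure inherited by $A\modl_\inj$ from $\sE$ is the split one. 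Composing these equivalences, all induced by the relevant embeddings, yields the asserted equivalence $\Hot(A\modl_\inj)\simeq\sD^\co(A\modl)$ induced by $A\modl_\inj\rarrow A\modl$.

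\textit{Main obstacle.} The exact\+category bookkeeping and the finite\+homological\+dimension comparisons above are essentially routine, resting on~\cite{Pcosh} and the definitions in~\cite[Appendix~A]{Pmgm}; the one point to handle with care is the identity $\sD^\co(\sE)=\sD^\abs(\sE)$, i.e.\ that finite homological dimension collapses the coderived and the absolute derived category of~$\sE$. The genuinely essential ingredients are the reduction through $A\modl_\sfpin$ afforded by part~(a) and, above all, the passage in part~(b) from the pointwise finiteness of injective dimension in the hypothesis to the \emph{uniform} bound~$d$ via the product argument — it is precisely this uniform bound that makes the machinery of exotic derived categories of finite homological dimension applicable here.
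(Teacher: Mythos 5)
Your proposal is correct. For part~(a) you argue exactly as the paper does: injectives are sfp\+injective, Lemma~\ref{sfp-injective-flat-modules-are-closed-under}(a) makes $A\modl_\sfpin$ a coresolving subcategory closed under infinite direct sums, and the dual of \cite[Proposition~A.3.1(b)]{Pcosh} applies. For part~(b) you diverge from the paper, which simply cites \cite[Section~3.7]{Pkoszul} (or the dual of \cite[Corollary~A.6.2]{Pcosh}): you instead assemble a self-contained proof from part~(a), the collapse $\sD^\co(\sE)=\sD^\abs(\sE)$ for an exact category $\sE$ of finite homological dimension (this is \cite[Remark~2.1]{Psemi}, the same fact the paper itself invokes later in the proof of Corollary~\ref{two-rings-dualizing-main-cor}), the equivalence $\sD^\abs(A\modl_\inj)\simeq\sD^\abs(\sE)$ from \cite[Proposition~A.5.6]{Pcosh}, and the identification $\sD^\abs(A\modl_\inj)=\Hot(A\modl_\inj)$ for the split exact structure. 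The genuinely new ingredient is your extraction of a \emph{uniform} bound on injective dimensions from the pointwise hypothesis, via closure of sfp\+injectives under infinite products together with $\Ext^n_A(N,\prod_\alpha J_\alpha)\simeq\prod_\alpha\Ext^n_A(N,J_\alpha)$; this step is correct and fills a gap that the paper's own later statement avoids only by explicitly postulating a uniform bound~$n$ (the paper's proof of~(b) delegates this to the cited references, and additionally records that the hypothesis can be weakened to fp\+injective modules, which your argument does not address but is not needed for the statement). In short: same proof for~(a); for~(b) the paper buys brevity by citation, while your route is longer but self-contained within the toolkit already present in the paper, with the product trick being a worthwhile explicit supplement.
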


\begin{proof}
 Part~(a) is but an application of the assertion dual
to~\cite[Proposition~A.3.1(b)]{Pcosh} (cf.~\cite[Theorem~2.2]{Pfp}).
 Part~(b) was proved in~\cite[Section~3.7]{Pkoszul} (for a more
general argument, one can use the assertion dual
to~\cite[Corollary~A.6.2]{Pcosh}).
 In fact, the assumption in part~(b) can be weakened by requiring only
that fp\+injective left $A$\+modules have finite injective dimensions,
as infinite direct sums of fp\+injective left $A$\+modules are
fp\+injective over an arbitrary ring (cf.~\cite[Theorem~2.4]{Pfp}).
\end{proof}

\begin{prop} \label{sfp-flat-homotopy-contraderived-comparison}
\textup{(a)} The triangulated functor\/ $\sD^\ctr(B\modl_\sfpfl)\rarrow
\sD^\ctr(B\modl)$ induced by the embedding of exact categories
$B\modl_\sfpfl\rarrow B\modl$ is an equivalence of triangulated
categories. \par
\textup{(b)} If all sfp\+flat left $B$\+modules have finite projective
dimensions, then the triangulated functor\/
$\Hot(B\modl_\proj)\rarrow\sD^\ctr(B\modl)$ induced by the embedding
of additive/exact categories $B\modl_\proj\rarrow B\modl$ is
an equivalence of triangulated categories.
\end{prop}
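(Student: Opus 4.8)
This proposition is the formal dual of
Proposition~\ref{sfp-inj-homotopy-coderived-comparison}, obtained by
interchanging left $A$\+modules with left $B$\+modules, injective modules
with projective modules, sfp\+injective modules with sfp\+flat modules,
the coderived with the contraderived category, and $\Ext$ with $\Tor$.
 So the plan is to imitate the proof of
Proposition~\ref{sfp-inj-homotopy-coderived-comparison}, splitting the
argument into the same two steps.
 Part~(a) should be a direct application of~\cite[Proposition~A.3.1(b)]{Pcosh}
to the embedding $B\modl_\sfpfl\rarrow B\modl$.
 Part~(b) should be obtained by factoring the functor in question through
the contraderived category $\sD^\ctr(B\modl_\sfpfl)$ and invoking the
argument of~\cite[Section~3.8]{Pkoszul} (or, in a more general form,
\cite[Corollary~A.6.2]{Pcosh}).

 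For part~(a), I would first check that $B\modl_\sfpfl$ is a resolving
subcategory of $B\modl$ closed under infinite products.
 Closure under extensions, under the kernels of surjective morphisms, and
under infinite products is exactly
Lemma~\ref{sfp-injective-flat-modules-are-closed-under}(b).
 That every left $B$\+module is the target of an admissible epimorphism
from an sfp\+flat module is clear, since every free, hence every projective,
left $B$\+module is sfp\+flat: one has $\Tor^B_1(N,P)=0$ for any projective
left $B$\+module $P$ and any right $B$\+module~$N$.
 It then remains to apply~\cite[Proposition~A.3.1(b)]{Pcosh}, according
to which the product-preserving exact embedding $B\modl_\sfpfl\rarrow B\modl$
induces an equivalence of contraderived categories
$\sD^\ctr(B\modl_\sfpfl)\rarrow\sD^\ctr(B\modl)$.

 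For part~(b), the first point is that the hypothesis that all sfp\+flat
left $B$\+modules have finite projective dimension in fact forces
a \emph{uniform} bound~$n$ on these dimensions: the class of sfp\+flat
modules is closed under infinite direct sums
(Lemma~\ref{sfp-injective-flat-modules-are-closed-under}(b)), and the
projective dimension of a direct sum is the supremum of the projective
dimensions of the summands, so a family of sfp\+flat modules of unbounded
finite projective dimension would have an sfp\+flat direct sum of infinite
projective dimension.
 Fixing such an~$n$, one observes that the exact category $B\modl_\sfpfl$
(with the exact structure inherited from $B\modl$) has enough projectives
(these being the projective $B$\+modules: every sfp\+flat module is
a quotient of a projective one, with kernel again sfp\+flat by
Lemma~\ref{sfp-injective-flat-modules-are-closed-under}(b)), and that every
object of $B\modl_\sfpfl$ has projective dimension at most~$n$ in this exact
category, since its syzygies stay sfp\+flat.
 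Moreover, all short exact sequences in the exact category $B\modl_\proj$
split, so its derived and exotic derived categories all coincide with
$\Hot(B\modl_\proj)$; in particular $\sD^\ctr(B\modl_\proj)=\Hot(B\modl_\proj)$.
 As $B\modl_\sfpfl$ has finite homological dimension and exact infinite
products, \cite[Corollary~A.6.2]{Pcosh} (or the argument
of~\cite[Section~3.8]{Pkoszul}, cf.~\cite[Theorem~4.4]{Pfp}) then yields
that the natural functor
$\Hot(B\modl_\proj)=\sD^\ctr(B\modl_\proj)\rarrow\sD^\ctr(B\modl_\sfpfl)$
is a triangulated equivalence; composing with the equivalence of part~(a),
and using that $B\modl_\proj\rarrow B\modl$ factors through $B\modl_\sfpfl$,
one gets the desired equivalence $\Hot(B\modl_\proj)\rarrow\sD^\ctr(B\modl)$.

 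The step I expect to be the main obstacle is precisely this detour through
$B\modl_\sfpfl$ in part~(b).
 Unlike $B\modl_\sfpfl$, the subcategory $B\modl_\proj$ is \emph{not} closed
under infinite products inside $B\modl$ (a product of projective modules need
not be projective), so~\cite[Proposition~A.3.1(b)]{Pcosh} cannot be applied
to it directly; one must instead reduce to the intermediate exact category
$B\modl_\sfpfl$ and exploit its finite homological dimension, which in turn
hinges on upgrading ``pointwise finite projective dimension'' to ``uniformly
bounded projective dimension''.
 It is also worth noting that, in contrast with the weakening available in
Proposition~\ref{sfp-inj-homotopy-coderived-comparison}(b), one should not
expect to relax the hypothesis here to ``all flat left $B$\+modules have
finite projective dimension'' by a parallel argument, since infinite products
of flat modules need not be flat over a ring that is not right coherent.
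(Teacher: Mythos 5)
Your proposal is correct and follows essentially the same route as the paper, which disposes of part~(a) by the very application of~\cite[Proposition~A.3.1(b)]{Pcosh} you describe and of part~(b) by citing~\cite[Section~3.8]{Pkoszul} (with~\cite[Corollary~A.6.2]{Pcosh} as the more general argument) -- precisely the finite-homological-dimension reduction through $\sD^\ctr(B\modl_\sfpfl)$ that you carry out. Your explicit upgrade from pointwise finite projective dimension to a uniform bound, via closure of $B\modl_\sfpfl$ under infinite direct sums and the fact that the projective dimension of a direct sum is the supremum of those of the summands, is a correct filling-in of a detail the paper leaves implicit.
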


\begin{proof}
 Part~(a) is but an application of~\cite[Proposition~A.3.1(b)]{Pcosh}
(cf.~\cite[Theorem~4.4]{Pfp}).
 Part~(b) was proved in~\cite[Section~3.8]{Pkoszul} (for a more
general argument, see~\cite[Corollary~A.6.2]{Pcosh}).
\end{proof}

 The following lemma is a version of~\cite[Lemma~4.1]{Pfp} applicable
to arbitrary rings.

\begin{lem}
\textup{(a)} Let $P$ be a flat left $B$\+module and $K$ be
an $A$\+sfp\+injective $A$\+$B$\+bimodule.
 Then the tensor product $K\ot_B P$ is an sfp\+injective left
$A$\+module. \par
\textup{(b)} Let $J$ be an injective left $A$\+module and $K$
be a $B$\+sfp\+injective $A$\+$B$\+bimodule.
 Then the left $B$\+module\/ $\Hom_A(K,J)$ is sfp\+flat.
\end{lem}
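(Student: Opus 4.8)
The plan is to reduce both parts to the vanishing of higher $\Ext$ groups over $A$ (respectively, over $B^\rop$), which one computes by means of resolutions by finitely generated projectives. Recall that, by definition, a strongly finitely presented module is precisely one that admits a projective resolution all of whose terms are finitely generated projective modules.

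For part~(a), I would take a strongly finitely presented left $A$\+module $M$ and a resolution $Q_\bu\rarrow M$ with every $Q_i$ a finitely generated projective left $A$\+module. The key ingredient is the natural isomorphism $\Hom_A(Q,\,K\ot_BP)\simeq\Hom_A(Q,K)\ot_BP$ valid for any finitely generated projective left $A$\+module $Q$: it holds for $Q=A$, hence for finitely generated free $Q$, and then for direct summands of such; being natural in $Q$, it assembles into an isomorphism of complexes $\Hom_A(Q_\bu,\,K\ot_BP)\simeq\Hom_A(Q_\bu,K)\ot_BP$. Since $P$ is flat as a left $B$\+module, the functor ${-}\ot_BP$ is exact and thus commutes with the passage to cohomology, so that $\Ext^n_A(M,\,K\ot_BP)\simeq\Ext^n_A(M,K)\ot_BP$. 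As $K$ is sfp\+injective as a left $A$\+module, the right-hand side vanishes for all $n>0$, and therefore so does $\Ext^n_A(M,\,K\ot_BP)$; this proves that $K\ot_BP$ is sfp\+injective.

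For part~(b), I would take a strongly finitely presented right $B$\+module $N$ and a resolution $R_\bu\rarrow N$ with every $R_i$ a finitely generated projective right $B$\+module. Here the relevant swap isomorphism is $R\ot_B\Hom_A(K,J)\simeq\Hom_A(\Hom_{B^\rop}(R,K),\,J)$ for any finitely generated projective right $B$\+module $R$ (again verified for $R=B$, extended to finitely generated free modules, and then to their direct summands, naturally in $R$). Applying it in each degree yields an isomorphism of complexes $R_\bu\ot_B\Hom_A(K,J)\simeq\Hom_A(\Hom_{B^\rop}(R_\bu,K),\,J)$. Since $J$ is injective, the functor $\Hom_A({-},J)$ is exact and commutes with cohomology, so on passing to homology one obtains $\Tor^B_n(N,\Hom_A(K,J))\simeq\Hom_A(\Ext^n_{B^\rop}(N,K),\,J)$ for all $n$. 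Because $K$ is sfp\+injective as a right $B$\+module, $\Ext^n_{B^\rop}(N,K)=0$ for $n>0$; hence $\Tor^B_n(N,\Hom_A(K,J))=0$ for $n>0$, i.e., $\Hom_A(K,J)$ is sfp\+flat.

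I do not expect any genuinely hard step here: the only point deserving care is the verification of the two ``finite projective swap'' isomorphisms and their naturality in the finitely generated projective argument, which is what upgrades the module-level identities to isomorphisms of complexes; everything else is formal, resting only on the exactness of ${-}\ot_BP$ (resp.\ of $\Hom_A({-},J)$) and on the definitions of sfp\+injective and sfp\+flat modules. For part~(a) one may alternatively bypass resolutions entirely: by Lazard's theorem $P$ is a filtered colimit of finitely generated free left $B$\+modules, so $K\ot_BP$ is a filtered colimit of finite direct sums of copies of $K$, and the class of sfp\+injective left $A$\+modules is closed under filtered inductive limits and infinite direct sums by Lemma~\ref{sfp-injective-flat-modules-are-closed-under}(a).
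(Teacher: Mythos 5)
Your argument is correct and is essentially the paper's: the paper deduces this lemma as the one-term special case of the subsequent complex-level Lemma~\ref{sfp-flat-inj-tensor-hom-homological-dimension}, whose proof rests on precisely your two swap isomorphisms $\Hom_A(Q_\bu,\>K\ot_BP)\simeq\Hom_A(Q_\bu,K)\ot_BP$ and $R_\bu\ot_B\Hom_A(K,J)\simeq\Hom_A(\Hom_{B^\rop}(R_\bu,K),J)$ applied to resolutions by finitely generated projectives, combined with the exactness of ${-}\ot_BP$ and $\Hom_A({-},J)$. Your alternative for part~(a) via Lazard's theorem and Lemma~\ref{sfp-injective-flat-modules-are-closed-under}(a) is also a valid shortcut, though not the route the paper takes.
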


\begin{proof}
 This is a particular case of the next
Lemma~\ref{sfp-flat-inj-tensor-hom-homological-dimension}.
\end{proof}

\begin{lem}  \label{sfp-flat-inj-tensor-hom-homological-dimension}
\textup{(a)} Let $P^\bu$ be a complex of flat left $B$\+modules
concentrated in the cohomological degrees $-n\le m\le 0$ and
$K^\bu$ be a complex of $A$\+$B$\+bimodules which, as a complex
of left $A$\+modules, is quasi-isomorphic to a complex of
sfp\+injective $A$\+modules concentrated in the cohomological
degrees $-d\le m\le l$.
 Then the tensor product $K^\bu\ot_B P^\bu$ is a complex of left
$A$\+modules quasi-isomorphic to a complex of sfp\+injective
left $A$\+modules concentrated in the cohomological degrees
$-n-d\le m\le l$. \par
\textup{(b)} Let $J^\bu$ be a complex of injective left $A$\+modules
concentrated in the cohomological degrees $0\le m\le n$ and
$K^\bu$ be a complex of $A$\+$B$\+bimodules which, as a complex
of right $B$\+modules, is quasi-isomorphic to a complex of
sfp\+injective right $B$\+modules concentrated in the cohomological
degrees $-d\le m\le l$.
 Then the complex of left $B$\+modules $\Hom_A(K^\bu,J^\bu)$ is
quasi-isomorphic to a complex of sfp\+flat $B$\+modules concentrated
in the cohomological degrees $-l\le m\le n+d$.
\end{lem}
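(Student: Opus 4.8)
The plan is to prove part~(a) in detail and then indicate the dual modifications for part~(b). In each case the argument has two halves. First, one bounds the cohomology of the complex in question and establishes a vanishing statement for $\Ext$ (resp.\ $\Tor$) against strongly finitely presented modules. Second, one takes an ordinary injective (resp.\ projective) resolution and \emph{folds} it, using the vanishing statement to recognize the extreme term of the truncated resolution as sfp-injective (resp.\ sfp-flat).

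For part~(a), put $X^\bu=K^\bu\ot_BP^\bu$. Since $P^\bu$ is a bounded complex of flat left $B$-modules, $X^\bu$ already computes $K^\bu\ot_B^\boL P^\bu$; and as $K^\bu$ has cohomology concentrated in degrees $[-d,l]$ (the cohomology modules do not depend on which of the two module structures is remembered) while $P^\bu$ sits in degrees $[-n,0]$, the complex $X^\bu$ has cohomology concentrated in the degrees $-n-d\le m\le l$. Now let $M$ be a strongly finitely presented left $A$-module; choosing a resolution $Q_\bu\to M$ by finitely generated projective $A$-modules, one has $\Hom_A(Q_i,\>K^\bu\ot_BP^\bu)\cong\Hom_A(Q_i,K^\bu)\ot_BP^\bu$ for each~$i$ by finitely generated projectivity, whence a ``projection formula'' quasi-isomorphism $\boR\Hom_A(M,X^\bu)\simeq\boR\Hom_A(M,K^\bu)\ot_B^\boL P^\bu$. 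As each term of the bounded complex of sfp-injective $A$-modules quasi-isomorphic to $K^\bu$ is acyclic for $\boR\Hom_A(M,{-})$, the object $\boR\Hom_A(M,K^\bu)$ is represented by the termwise $\Hom$ and so has cohomology in $[-d,l]$; tensoring with the flat bounded complex $P^\bu$ in degrees $[-n,0]$ keeps it in $[-d-n,l]$. Therefore $\Ext^m_A(M,X^\bu)=0$ for all $m>l$ and all strongly finitely presented~$M$.

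To conclude part~(a), pick an injective resolution $W^\bu$ of $X^\bu$; since $X^\bu$ has cohomology in $[-n-d,l]$ we may take $W^\bu$ concentrated in degrees $\ge -n-d$. The canonical truncation $\tau_{\le l}W^\bu$ is then a complex concentrated in the degrees $-n-d\le m\le l$ and quasi-isomorphic to $X^\bu$; its terms in degrees $<l$ are injective, hence sfp-injective, and its top term $\ker(W^l\to W^{l+1})$ is sfp-injective because $0\to\ker(W^l\to W^{l+1})\to W^l\to W^{l+1}\to\dotsb$ is an injective resolution of it, so that $\Ext^1_A(M,\ker(W^l\to W^{l+1}))\cong\Ext^{l+1}_A(M,X^\bu)=0$ for every strongly finitely presented~$M$ by the vanishing just proved (alternatively, this last step may be quoted from the dual of~\cite[Corollary~A.5.2]{Pcosh}). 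Part~(b) is dual. One sets $Y^\bu=\Hom_A(K^\bu,J^\bu)$, which computes $\boR\Hom_A(K^\bu,J^\bu)$ since $J^\bu$ is a bounded complex of injectives; replacing $K^\bu$ by its canonical truncation into degrees $[-d,l]$ exhibits $Y^\bu$ as quasi-isomorphic to a complex concentrated in the degrees $-l\le m\le n+d$. For a strongly finitely presented right $B$-module $N$ with finitely generated projective resolution $N_\bu$, finite projectivity gives $N_i\ot_B\Hom_A(K^\bu,J^\bu)\cong\Hom_A(\Hom_{B^\rop}(N_i,K^\bu),J^\bu)$, hence $N\ot_B^\boL Y^\bu\simeq\boR\Hom_A(\boR\Hom_{B^\rop}(N,K^\bu),J^\bu)$, which has cohomology in $[-l,n+d]$ (as $\boR\Hom_{B^\rop}(N,K^\bu)$ has cohomology in $[-d,l]$ by sfp-injectivity and $J^\bu$ lives in $[0,n]$); thus $\Tor^B_m(N,Y^\bu)=0$ for all $m>l$. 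Finally, a projective resolution $V^\bu\to Y^\bu$ with $V^\bu$ concentrated in degrees $\le n+d$ folds: $\tau_{\ge -l}V^\bu$ is concentrated in $[-l,n+d]$, quasi-isomorphic to $Y^\bu$, with terms in degrees $>-l$ projective hence sfp-flat and bottom term $\mathrm{coker}(V^{-l-1}\to V^{-l})$ sfp-flat, its first $\Tor^B$ against any strongly finitely presented $N$ being $\Tor^B_{l+1}(N,Y^\bu)=0$.

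The only genuinely load-bearing input is the projection-type isomorphism $\boR\Hom_A(M,{-}\ot_B^\boL P^\bu)\simeq\boR\Hom_A(M,{-})\ot_B^\boL P^\bu$ and its dual $N\ot_B^\boL\Hom_A({-},J^\bu)\simeq\boR\Hom_A(\boR\Hom_{B^\rop}(N,{-}),J^\bu)$: this is where the strong finite presentability of $M$ (resp.~$N$) is used in an essential way, and where some care is needed because of the infinite products in the totalized $\Hom$ complexes — harmless here, since all complexes in sight have bounded cohomology and may be replaced by bounded ones. Everything else — the hyper(co)homology degree bounds and the folding of the resolutions — is routine once the bookkeeping of cohomological degrees is kept straight.
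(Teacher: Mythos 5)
Your proof is correct and takes essentially the same route as the paper's: in both arguments the load-bearing step is the isomorphism $\Hom_A(Q_i,\>K^\bu\ot_BP^\bu)\simeq\Hom_A(Q_i,K^\bu)\ot_BP^\bu$ (resp.\ its dual $N_i\ot_B\Hom_A(K^\bu,J^\bu)\simeq\Hom_A(\Hom_{B^\rop}(N_i,K^\bu),J^\bu)$) for a resolution by finitely generated projectives of the strongly finitely presented module, combined with the degree bound $[-d,l]$ on the cohomology of $\boR\Hom_A(M,K^\bu)$ (resp.\ $\boR\Hom_{B^\rop}(N,K^\bu)$) coming from sfp\+injectivity. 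The only difference is presentational: you spell out, by truncating an injective (resp.\ projective) resolution, the reduction of ``quasi-isomorphic to a complex with sfp\+injective/sfp\+flat terms in the stated range'' to the $\Ext$/$\Tor$ vanishing, which the paper treats as immediate (``equivalently, this means\dots''), and you likewise make explicit the harmless replacement of $K^\bu$ by a bounded complex that the paper phrases as ``without loss of generality.''
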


\begin{proof}
 Part~(a): clearly, the tensor product $K^\bu\ot_BP^\bu$ is
quasi-isomorphic to a complex of left $A$\+modules concentrated
in the cohomological degrees $-n-d\le m\le l$; the nontrivial aspect
is to show that there is such a complex with sfp\+injective terms.
 Equivalently, this means that $\Ext_A^i(M,\>K^\bu\ot_BP^\bu)=0$ for
all strongly finitely presented left $A$\+modules $M$ and all $i>l$.
 Indeed, let $R^\bu$ be a resolution of $M$ by finitely generated
projective left $A$\+modules.
 Without loss of generality, we can assume that $K^\bu$ is a finite
complex of $A$\+$B$\+bimodules.
 Then the complex $\Hom_A(R^\bu,\>K^\bu\ot_BP^\bu)$ is isomorphic to
$\Hom_A(R^\bu,K^\bu)\ot_BP^\bu$ and the cohomology modules of
the complex $\Hom_A(R^\bu,K^\bu)$ are concentrated in the degrees
$-d\le m\le l$.

 Part~(b): clearly, the complex $\Hom_A(K^\bu,J^\bu)$ is
quasi-isomorphic to a complex of left $B$\+modules concentrated in
the cohomological degrees $-l\le m\le n+d$; we have to show that
there is such a complex with sfp\+flat terms.
 Equivalently, this means that $\Tor^B_i(N,\Hom_A(K^\bu,J^\bu))=0$ for
all strongly finitely presented right $B$\+modules $N$ and all $i>l$.
 Indeed, let $Q^\bu$ be a resolution of $N$ by finitely generated
projective right $B$\+modules.
 Without loss of generality, we can assume that $K^\bu$ is a finite
complex of $A$\+$B$\+bimodules.
 Then the complex $Q^\bu\ot_B\Hom_A(K^\bu,J^\bu)$ is isomorphic to
$\Hom_A(\Hom_{B^\rop}(Q^\bu,K^\bu),J^\bu)$ and the cohomology modules of
the complex $\Hom_{B^\rop}(Q^\bu,K^\bu)$ are concentrated in the degrees
$-d\le m\le l$.
\end{proof}

 A \emph{dualizing complex} of $A$\+$B$\+bimodules $L^\bu=D^\bu$ is
a pseudo-dualizing complex (according to the definition in
Section~\ref{two-rings-auslander-bass-subsecn}) satisfying
the following additional condition:
\begin{enumerate}
\renewcommand{\theenumi}{\roman{enumi}}
\item As a complex of left $A$\+modules, $D^\bu$ is quasi-isomorphic
to a finite complex of sfp\+injective $A$\+modules, and as a complex
of right $B$\+modules, $D^\bu$ is quasi-isomorphic to a finite complex
of sfp\+injective $B$\+modules.
\end{enumerate}

 This definition of a dualizing complex is a version of the definition
of a ``cotilting bimodule complex'' in~\cite[Section~2]{Miy},
reproduced as the definition of a ``weak dualizing complex''
in~\cite[Section~3]{Pfp} (cf.\  the definition of a ``dualizing
complex'' in~\cite[Section~4]{Pfp}), extended from the case of coherent
rings to arbitrary rings $A$ and~$B$ in the spirit
of~\cite[Definition~2.1]{HW} and~\cite[Section~B.4]{Pcosh}.
 (Other versions of the definition of a dualizing complex of bimodules
known in the literature can be found in~\cite[Definition~1.1]{YZ}
and~\cite[Definition~1.1]{CFH}.)
 In order to prove the results below, we will have to impose some
homological dimension conditions on the rings $A$ and $B$, bringing
our definition of a dualizing complex even closer to the definition
in~\cite{Miy} and the definition of a weak dualizing complex
in~\cite{Pfp}.

 Specifically, we will have to assume that all sfp\+injective left
$A$\+modules have finite injective dimensions.
 This assumption always holds when the ring $A$ is left coherent
and there exists an integer $n\ge0$ such that every left ideal in $A$
is generated by at most~$\aleph_n$ elements~\cite[Proposition~2.3]{Pfp}.

 We will also have to assume that all sfp\+flat left $B$\+modules have
finite projective dimensions.
 For a right coherent ring $B$, this would simply mean that all flat
left $B$\+modules have finite projective dimensions.
 The class of rings satisfying the latter condition was discussed,
under the name of ``left $n$\+perfect rings'', in
the paper~\cite{EJLR}.
 We refer to~\cite[Proposition~4.3]{Pfp}, the discussions
in~\cite[Section~3]{IK} and~\cite[Section~3.8]{Pkoszul}, and
the references therein, for further sufficient conditions.

 Let us choose the parameter~$l_2$ in such a way that $D^\bu$ is
quasi-isomorphic to a complex of sfp\+injective left $A$\+modules
concentrated in the cohomological degrees $-d_1\le m\le l_2$ and
to a complex of sfp\+injective right $B$\+modules concentrated
in the cohomological degrees $-d_1\le m\le l_2$.

\begin{prop}
 Let $A$ and $B$ be associative rings such that all sfp\+injective
left $A$\+modules have finite injective dimensions and all
sfp\+flat left $B$\+modules have finite projective dimensions.
 Let $L^\bu=D^\bu$ be a dualizing complex of $A$\+$B$\+bimodules,
and let the parameter~$l_2$ be chosen as stated above.
 Then the related minimal corresponding classes\/
$\sE^{l_2}=\sE^{l_2}(D^\bu)$ and\/ $\sF^{l_2}=\sF^{l_2}(D^\bu)$ are
contained in the classes of sfp\+injective $A$\+modules and
spf\+flat $B$\+modules, $\sE^{l_2}\subset A\modl_\sfpin$ and\/
$\sF^{l_2}\subset B\modl_{\sfpfl}$.

 Moreover, let $n\ge0$~be an integer such that the injective dimensions
of sfp\+injective left $A$\+modules do not exceed~$n$ and the projective
dimensions of sfp\+flat left $B$\+modules do not exceed~$n$.
 Then the classes of modules\/ $\sE=A\modl_\sfpin$ and\/
$\sF=B\modl_\sfpfl$ satisfy the conditions~(I\+-IV) with the parameters
$l_1=n+d_1$ and~$l_2$.
\end{prop}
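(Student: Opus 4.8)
The plan is to verify the two assertions in turn, relying on the structural lemmas just established. For the first assertion, I would argue that the minimal classes $\sE^{l_2}(D^\bu)$ and $\sF^{l_2}(D^\bu)$ are contained in $A\modl_\sfpin$ and $B\modl_\sfpfl$ respectively by inspecting the generation process in the proof of Proposition~\ref{two-rings-minimal-corresponding-classes-prop}. The point is that $A\modl_\sfpin$ and $B\modl_\sfpfl$ themselves form a pair of classes satisfying the conditions~(I\+-IV), and since $\sE^{l_2}$, $\sF^{l_2}$ are the \emph{minimal} such pair (with the appropriate closure properties), the inclusion follows once this is checked. Thus the bulk of the work is in proving the ``moreover'' clause, which simultaneously establishes the first assertion as a special case of minimality.

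\textbf{Verifying (I\+-IV) for $\sE=A\modl_\sfpin$, $\sF=B\modl_\sfpfl$.} Conditions~(I) and~(II) are immediate from Lemma~\ref{sfp-injective-flat-modules-are-closed-under}: the class of sfp\+injective left $A$\+modules is closed under extensions and cokernels of injections and contains all injectives (injectives are obviously sfp\+injective), and dually the class of sfp\+flat left $B$\+modules is closed under extensions and kernels of surjections and contains all projectives (projectives are flat, hence sfp\+flat). Also both classes are closed under infinite direct sums and products, again by Lemma~\ref{sfp-injective-flat-modules-are-closed-under}, which is what is needed for the $\star=\co$ and $\star=\ctr$ cases of Theorem~\ref{two-rings-generalized-main-thm}. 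The substantive content is conditions~(III\+-IV), and here the key tool is Lemma~\ref{sfp-flat-inj-tensor-hom-homological-dimension}, which controls exactly how the functors $\Hom_A(K^\bu,{-})$ and $K^\bu\ot_B{-}$ move complexes of injectives/flats to complexes of sfp\+flats/sfp\+injectives with explicit bounds on the cohomological amplitude.

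\textbf{Condition (III).} Let $E\in A\modl_\sfpin$. By hypothesis $E$ has finite injective dimension, not exceeding~$n$, so $E$ is quasi-isomorphic to a complex $J^\bu$ of injective left $A$\+modules concentrated in degrees $0\le m\le n$. Since $D^\bu$, as a complex of right $B$\+modules, is quasi-isomorphic to a finite complex of sfp\+injective right $B$\+modules concentrated in degrees $-d_1\le m\le l_2$ (by the condition~(i) of being a dualizing complex, together with the way $l_2$ was chosen), Lemma~\ref{sfp-flat-inj-tensor-hom-homological-dimension}(b) shows that $\boR\Hom_A(D^\bu,E)\simeq\Hom_A(D^\bu,J^\bu)$ is quasi-isomorphic to a complex of sfp\+flat left $B$\+modules concentrated in degrees $-l_2\le m\le n+d_1$. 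This is precisely condition~(III) with $l_1=n+d_1$ and the given~$l_2$ (note that $\sF=B\modl_\sfpfl$ is where the terms land). Dually, for condition~(IV): let $F\in B\modl_\sfpfl$; then $F$ has projective dimension at most~$n$, hence is quasi-isomorphic to a complex $P^\bu$ of projective, in particular flat, left $B$\+modules concentrated in degrees $-n\le m\le 0$. Since $D^\bu$ as a complex of left $A$\+modules is quasi-isomorphic to a finite complex of sfp\+injective left $A$\+modules in degrees $-d_1\le m\le l_2$, Lemma~\ref{sfp-flat-inj-tensor-hom-homological-dimension}(a) gives that $D^\bu\ot_B^\boL F\simeq D^\bu\ot_B P^\bu$ is quasi-isomorphic to a complex of sfp\+injective left $A$\+modules concentrated in degrees $-n-d_1\le m\le l_2$, which is condition~(IV) with the same parameters $l_1=n+d_1$ and $l_2$. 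Finally, to conclude $\sE^{l_2}(D^\bu)\subset A\modl_\sfpin$ and $\sF^{l_2}(D^\bu)\subset B\modl_\sfpfl$, observe that the pair $(A\modl_\sfpin,\,B\modl_\sfpfl)$ satisfies~(I\+-IV) (with parameters $l_1=n+d_1$ and our~$l_2$) and has the required closure under sums and products, so the minimality clause of Proposition~\ref{two-rings-minimal-corresponding-classes-prop} (in the form recorded in the Remark following it, allowing the value of $l_1$ to differ) yields the inclusions.

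\textbf{Main obstacle.} I expect the only real subtlety to be bookkeeping with the cohomological amplitudes: one must make sure that the bounds coming out of Lemma~\ref{sfp-flat-inj-tensor-hom-homological-dimension} match the degree windows $-l_2\le m\le l_1$ and $-l_1\le m\le l_2$ required in conditions~(III\+-IV), and in particular that the choice of $l_2$ (made so that $D^\bu$ is represented by sfp\+injective complexes in degrees $-d_1\le m\le l_2$ on both sides) is consistent with the choice $l_1=n+d_1$. There is also a minor point in that Lemma~\ref{sfp-flat-inj-tensor-hom-homological-dimension} is stated for complexes of \emph{honest} injectives/flats and bimodule complexes that are merely quasi-isomorphic to complexes of sfp\+injectives; one invokes it after replacing $E$ and $F$ by their finite injective, resp.\ projective, resolutions, which is legitimate precisely because of the finite homological dimension hypotheses on $A$ and~$B$. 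Everything else is a direct unwinding of the definitions together with Lemmas~\ref{sfp-injective-flat-modules-are-closed-under} and~\ref{sfp-flat-inj-tensor-hom-homological-dimension}.
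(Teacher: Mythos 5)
Your proposal is correct and follows essentially the same route as the paper: conditions (I\+-II) from Lemma~\ref{sfp-injective-flat-modules-are-closed-under}, conditions (III\+-IV) from Lemma~\ref{sfp-flat-inj-tensor-hom-homological-dimension} applied to finite injective/projective resolutions with $l_1=n+d_1$, and then the first assertion deduced from the second via the closure properties and the minimality of $\sE^{l_2}$, $\sF^{l_2}$. The only difference is that you spell out the amplitude bookkeeping that the paper leaves implicit, which is fine.
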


\begin{proof}
 The second assertion is true, as the conditions~(I\+-II) are satisfied
by Lemma~\ref{sfp-injective-flat-modules-are-closed-under}
and the conditions~(III\+-IV) hold by
Lemma~\ref{sfp-flat-inj-tensor-hom-homological-dimension}.
 The first assertion follows from the second one together with
Lemma~\ref{sfp-injective-flat-modules-are-closed-under}.
\end{proof}

 Let $B\modl_\flat\subset B\modl$ denote the full subcategory of
flat left $B$\+modules.
 It inherits the exact category structure of the abelian category
$B\modl$.

\begin{cor} \label{two-rings-dualizing-main-cor}
 Let $A$ and $B$ be associative rings such that all sfp\+injective
left $A$\+modules have finite injective dimensions and all
sfp\+flat left $B$\+modules have finite projective dimensions.
 Let $L^\bu=D^\bu$ be a dualizing complex of $A$\+$B$\+bimodules,
and let the parameter~$l_2$ be chosen as above.
 Then there is a triangulated equivalence\/ $\sD^\co(A\modl)\simeq
\sD^\ctr(B\modl)$ provided by (appropriately defined) mutually
inverse functors\/ $\boR\Hom_A(D^\bu,{-})$ and $D^\bu\ot_B^\boL{-}$.

 Furthermore, there is a chain of triangulated equivalences
\begin{multline*}
 \sD^\co(A\modl)\simeq\sD^{\abs=\varnothing}(A\modl_\sfpin)
 \simeq\sD^{\abs=\varnothing}(\sE^{l_2})\simeq \\ \Hot(A\modl_\inj)
 \simeq\Hot(B\modl_\proj) \\ \simeq\sD^{\abs=\varnothing}(\sF^{l_2})
 \simeq\sD^{\abs=\varnothing}(B\modl_\flat)\simeq
 \sD^{\abs=\varnothing}(B\modl_\sfpfl) \simeq\sD^\ctr(B\modl),
\end{multline*}
where the notation\/ $\sD^{\abs=\varnothing}(\sC)$ is a shorthand for
an identity isomorphism of triangulated categories\/
$\sD^\abs(\sC)=\sD(\sC)$ between the absolute derived category and
the conventional derived category of an exact category\/~$\sC$.
 Moreover, for any symbol\/ $\star=\b$, $+$, $-$, or\/~$\varnothing$,
there are triangulated equivalences
\begin{multline*}
 \sD^\star(A\modl_\sfpin)\simeq\sD^\star(\sE^{l_2}) \\ \simeq
 \Hot^\star(A\modl_\inj) \simeq\Hot^\star(B\modl_\proj) \\ \simeq
 \sD^\star(\sF^{l_2})\simeq\sD^\star(B\modl_\flat)\simeq
 \sD^\star(B\modl_\sfpfl).
\end{multline*}
\end{cor}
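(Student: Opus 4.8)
The plan is to deduce the whole statement from Theorems~\ref{two-rings-generalized-main-thm} and~\ref{two-rings-upper-main-thm}, Propositions~\ref{sfp-inj-homotopy-coderived-comparison}\+-\ref{sfp-flat-homotopy-contraderived-comparison}, and one structural observation about the exact categories
\begin{gather*}
 A\modl_\inj \subset \sE^{l_2} \subset A\modl_\sfpin \subset A\modl,
 \qquad
 B\modl_\proj \subset \sF^{l_2} \subset B\modl_\sfpfl, \\
 B\modl_\proj \subset B\modl_\flat \subset B\modl_\sfpfl \subset B\modl .
\end{gather*}
Here the inclusions $A\modl_\inj \subset \sE^{l_2}$ and $B\modl_\proj \subset \sF^{l_2}$ hold by the construction of the minimal classes (conditions~(I) and~(II)), the inclusions $\sE^{l_2}\subset A\modl_\sfpin$ and $\sF^{l_2}\subset B\modl_\sfpfl$ are the content of the Proposition preceding the present Corollary, and the remaining ones are obvious. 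Using Lemma~\ref{sfp-injective-flat-modules-are-closed-under}(a) and the closure property~(I) of $\sE^{l_2}$, one checks that $\sE^{l_2}$ and $A\modl_\sfpin$ are coresolving subcategories of $A\modl$ in which every object admits an admissible monomorphism into an injective $A$-module whose cokernel again lies in the subcategory; hence both are exact categories with enough injectives, the injective objects being precisely the injective $A$-modules. Dually, $\sF^{l_2}$, $B\modl_\flat$, and $B\modl_\sfpfl$ have enough projectives, namely the projective $B$-modules. Finally, since all sfp-injective left $A$-modules have injective dimension $\le n$ and all sfp-flat left $B$-modules have projective dimension $\le n$ (for the integer~$n$ fixed in that Proposition), every object of each of these five exact categories has injective, resp.\ projective, dimension $\le n$, so each of them has finite homological dimension.

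Next I would feed this into the machinery of~\cite[Appendix~A]{Pcosh}. Finite homological dimension yields $\sD^\abs(\sC)=\sD^\co(\sC)=\sD^\ctr(\sC)=\sD(\sC)$ for each of the five exact categories~$\sC$ (the $\co$ and $\ctr$ versions whenever the ambient infinite direct sums or products are available, which they are by Lemma~\ref{sfp-injective-flat-modules-are-closed-under} and the properties of $\sE^{l_2}$, $\sF^{l_2}$), accounting for the shorthand $\sD^{\abs=\varnothing}$ in the statement. For the embeddings $\sE^{l_2}\subset A\modl_\sfpin$, \,$\sF^{l_2}\subset B\modl_\sfpfl$, and $B\modl_\flat\subset B\modl_\sfpfl$, every object of the larger category has finite coresolution, resp.\ resolution, dimension (at most~$n$, via an injective or projective resolution) with respect to the smaller one, so by~\cite[Proposition~A.5.6]{Pcosh}, applied as in the proof of Proposition~\ref{two-rings-maximal-classes-essentially-the-same}, the induced functors $\sD^\star(\sE^{l_2})\to\sD^\star(A\modl_\sfpin)$, \ $\sD^\star(\sF^{l_2})\to\sD^\star(B\modl_\sfpfl)$, and $\sD^\star(B\modl_\flat)\to\sD^\star(B\modl_\sfpfl)$ are triangulated equivalences for $\star=\b$, $+$, $-$, $\varnothing$ (and for the bounded exotic symbols, and for $\co$ or $\ctr$ where defined). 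Enough injectives/projectives together with finite homological dimension then give $\Hot^\star(A\modl_\inj)\simeq\sD^\star(\sE^{l_2})$ and $\Hot^\star(B\modl_\proj)\simeq\sD^\star(B\modl_\flat)$ for the same~$\star$ by~\cite[Corollary~A.6.2]{Pcosh} and its dual (for $\star=\varnothing$ one may alternatively combine Proposition~\ref{sfp-inj-homotopy-coderived-comparison} with the equivalence $\sD^\co(\sE^{l_2})\simeq\sD^\co(A\modl)$ provided by the dual of~\cite[Proposition~A.3.1(b)]{Pcosh}, and dually). The middle equivalences $\sD^\star(\sE^{l_2})\simeq\sD^\star(\sF^{l_2})$ are Theorem~\ref{two-rings-upper-main-thm}. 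Splicing all of this together proves the last displayed chain (for $\star=\b$, $+$, $-$, $\varnothing$) and, after inserting $\sD^\abs=\sD$, the middle portion of the long chain.

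It remains to attach the two ends $\sD^\co(A\modl)$ and $\sD^\ctr(B\modl)$. By Proposition~\ref{sfp-inj-homotopy-coderived-comparison}(a) the embedding $A\modl_\sfpin\to A\modl$ induces $\sD^\co(A\modl_\sfpin)\simeq\sD^\co(A\modl)$, and the left-hand side equals $\sD(A\modl_\sfpin)$ by finite homological dimension; dually $\sD^\ctr(B\modl_\sfpfl)\simeq\sD^\ctr(B\modl)=\sD(B\modl_\sfpfl)$ by Proposition~\ref{sfp-flat-homotopy-contraderived-comparison}(a); and parts~(b) of these two Propositions --- where the hypotheses on $A$ and $B$ re-enter --- give $\sD^\co(A\modl)\simeq\Hot(A\modl_\inj)$ and $\sD^\ctr(B\modl)\simeq\Hot(B\modl_\proj)$. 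For the first, explicitly functorial, assertion I would instead invoke Theorem~\ref{two-rings-generalized-main-thm} for the pair $(\sE,\sF)=(A\modl_\sfpin,B\modl_\sfpfl)$, which satisfies conditions~(I\+-IV) by the preceding Proposition and is closed under infinite direct sums and products by Lemma~\ref{sfp-injective-flat-modules-are-closed-under}, with $\star=\co$: this produces a triangulated equivalence $\sD^\co(A\modl_\sfpin)\simeq\sD^\co(B\modl_\sfpfl)$ implemented by the appropriately defined functors $\boR\Hom_A(D^\bu,{-})$ and $D^\bu\ot_B^\boL{-}$, and composing it with the equivalences of Propositions~\ref{sfp-inj-homotopy-coderived-comparison}(a) and~\ref{sfp-flat-homotopy-contraderived-comparison}(a) and with $\sD^\co(B\modl_\sfpfl)=\sD^\ctr(B\modl_\sfpfl)$ gives $\sD^\co(A\modl)\simeq\sD^\ctr(B\modl)$. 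I expect the only genuine work to be the structural step of the first paragraph --- verifying that the five exact categories involved have enough injectives or projectives furnished by $A\modl_\inj$, resp.\ $B\modl_\proj$, and have finite homological dimension; once that is in place, everything else is a formal assembly of equivalences already established in the paper, with no new homological input required.
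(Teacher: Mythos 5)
Your argument is correct and, for most of the statement, follows the same route as the paper: both proofs rest on the observation that the exact categories $A\modl_\sfpin$, $\sE^{l_2}$, $\sF^{l_2}$, $B\modl_\flat$, $B\modl_\sfpfl$ have enough injectives (resp.\ projectives) furnished by $A\modl_\inj$ (resp.\ $B\modl_\proj$) and finite homological dimension, so that their conventional and absolute (and, where available, co/contraderived) derived categories coincide and are identified with $\Hot^\star(A\modl_\inj)$ or $\Hot^\star(B\modl_\proj)$ by the machinery of~\cite[Appendix~A]{Pcosh}, with Propositions~\ref{sfp-inj-homotopy-coderived-comparison} and~\ref{sfp-flat-homotopy-contraderived-comparison} attaching the two ends $\sD^\co(A\modl)$ and $\sD^\ctr(B\modl)$, and Theorem~\ref{two-rings-upper-main-thm} supplying $\sD^\star(\sE^{l_2})\simeq\sD^\star(\sF^{l_2})$. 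The one point where you genuinely deviate is the construction of the functorial equivalence $\sD^\co(A\modl)\simeq\sD^\ctr(B\modl)$: the paper obtains it by repeating the construction of~\cite[Theorem~4.5]{Pfp}, defining $\boR\Hom_A(D^\bu,{-})$ via $\sD^\co(A\modl)\simeq\Hot(A\modl_\inj)$ and $D^\bu\ot_B^\boL{-}$ via $\sD^\ctr(B\modl)\simeq\sD^\abs(B\modl_\flat)$ or $\Hot(B\modl_\proj)$, whereas you apply Theorem~\ref{two-rings-generalized-main-thm} with $\star=\co$ to the pair $(\sE,\sF)=(A\modl_\sfpin,B\modl_\sfpfl)$ --- which is legitimate, since the Proposition preceding the Corollary verifies the conditions~(I\+-IV) for this pair and Lemma~\ref{sfp-injective-flat-modules-are-closed-under} gives closure under infinite direct sums --- and then identify $\sD^\co(B\modl_\sfpfl)=\sD^\ctr(B\modl_\sfpfl)$ using finite homological dimension before invoking Proposition~\ref{sfp-flat-homotopy-contraderived-comparison}(a). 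Your variant stays entirely inside the machinery already set up in this paper and avoids appealing to the external construction of~\cite{Pfp}; the paper's route has the mild advantage of producing the functors directly on the homotopy categories of injectives/projectives, in the form used for the comparisons in the long chain, but up to the identifications already established the two constructions yield the same equivalence, so your proof is a valid (and slightly more self-contained) assembly of the same ingredients.
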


\begin{proof}
 The exact categories $A\modl_\sfpin$ and $B\modl_\sfpfl$ have finite
homological dimensions by assumption.
 Hence so do their full subcategories $\sE^{l_2}$, $\sF^{l_2}$, and
$B\modl_\flat$ satisfying the condition~(I) or~(II).
 It follows easily (see, e.~g., \cite[Remark~2.1]{Psemi}
and~\cite[Proposition~A.5.6]{Pcosh}) that a complex in any one of these
exact categories is acyclic if and only if it is absolutely acyclic,
and that their (conventional or absolute) derived categories are
equivalent to the homotopy categories of complexes of injective or
projective objects.
 The same, of course, applies to the coderived and/or contraderived
categories of those of these exact categories that happen to be
closed under the infinite direct sums or infinite products in their
respective abelian module categories.
 The same also applies to the bounded versions of the conventional
or absolute derived categories and bounded versions of the homotopy
categories.

 Propositions~\ref{sfp-inj-homotopy-coderived-comparison}
and~\ref{sfp-flat-homotopy-contraderived-comparison} provide
the equivalences with the coderived category $\sD^\co(A\modl)$ or
the contraderived category $\sD^\ctr(B\modl)$.
 Thus we have shown in all the cases that the mentioned triangulated
categories of complexes of $A$\+modules are equivalent to each other
and the mentioned triangulated categories of complexes of $B$\+modules
are equivalent to each other.
 It remains to construct the equivalences connecting complexes of
$A$\+modules with complexes of $B$\+modules.

 Specifically, the equivalence $\sD^\co(A\modl)\simeq\sD^\ctr(B\modl)$
can be obtained in the same way as in~\cite[Theorem~4.5]{Pfp}, using
the equivalence $\sD^\co(A\modl)\simeq\Hot(A\modl_\inj)$ in order to
construct the derived functor $\boR\Hom_A(D^\bu,{-})$ and
the equivalence $\sD^\ctr(B\modl)\simeq\sD^\abs(B\modl_\flat)$ or
$\sD^\ctr(B\modl)\simeq\Hot(B\modl_\proj)$ in order to construct
the derived functor $D^\bu\ot_B^\boL{-}$.
 More generally, the equivalence $\sD^\star(\sE^{l_2})\simeq
\sD^\star(\sF^{l_2})$ can be produced as a particular case of
Theorem~\ref{two-rings-upper-main-thm} above.
\end{proof}

\Section{Base Change} \label{two-rings-base-change-subsecn}
\label{two-rings-relative-dualizing-subsecn}

 The aim of this section and the next one is to formulate
a generalization of the definitions and results of~\cite[Section~5]{Pfp}
that would fit naturally in our present context.
 Our exposition is informed by that in~\cite[Section~5]{Chr}.

 Let $A\rarrow R$ and $B\rarrow S$ be two homomorphisms of associative
rings.
 Let $\sE\subset A\modl$ be a full subcategory satisfying
the condition~(I), and let $\sF\subset B\modl$ be a full subcategory
satisfying the condition~(II).
 We denote by $\sG=\sG_\sE\subset R\modl$ the full subcategory formed
by all the left $R$\+modules whose underlying left $A$\+modules belong
to~$\sE$, and by $\sH=\sH_\sF\subset S\modl$ the full subcategory formed
by all the left $S$\+modules whose underlying left $B$\+modules belong
to~$\sF$.

\begin{lem} \label{underlying-modules-of-injectives-projectives}
\textup{(a)} The full subcategory\/ $\sG_\sE\subset R\modl$ satisfies
the condition~(I) if and only if the underlying $A$\+modules of all
the injective left $R$\+modules belong to\/~$\sE$. \par
\textup{(b)} The full subcategory\/ $\sH_\sF\subset S\modl$ satisfies
the condition~(II) if and only if the underlying $B$\+modules of all
the projective left $S$\+modules belong to\/~$\sF$.  \qed
\end{lem}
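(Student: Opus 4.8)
The plan is to reduce both parts to the corresponding closure properties of $\sE$ and $\sF$ via the restriction-of-scalars functors. First I would record the elementary but crucial observation that the forgetful functor $R\modl\rarrow A\modl$ attached to the homomorphism $A\rarrow R$ is exact and faithful: it sends a short exact sequence of left $R$\+modules to a short exact sequence of left $A$\+modules, and it both preserves and reflects injective morphisms together with their cokernels. Since $\sG_\sE$ is by definition the full preimage of $\sE$ under this functor, closure of $\sE$ under extensions yields at once closure of $\sG_\sE$ under extensions, and closure of $\sE$ under the cokernels of injective morphisms yields the same property for $\sG_\sE$. Thus, of the three clauses constituting condition~(I), the first two hold for $\sG_\sE$ unconditionally; the only clause that can possibly fail is the requirement that $\sG_\sE$ contain every injective left $R$\+module.

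Now the ``if'' direction of~(a) follows immediately: an injective left $R$\+module $I$ belongs to $\sG_\sE$ precisely when the underlying left $A$\+module of $I$ belongs to $\sE$, so under the stated hypothesis $\sG_\sE$ contains all injective left $R$\+modules and hence satisfies~(I). The ``only if'' direction is even more immediate, since condition~(I) for $\sG_\sE$ explicitly demands $R\modl_\inj\subset\sG_\sE$, which by the definition of $\sG_\sE$ says exactly that the underlying $A$\+module of every injective left $R$\+module lies in~$\sE$. Part~(b) is handled dually: the restriction-of-scalars functor $S\modl\rarrow B\modl$ along $B\rarrow S$ is exact and faithful, preserving and reflecting surjective morphisms and their kernels, so closure of $\sF$ under extensions and under the kernels of surjective morphisms transfers verbatim to $\sH_\sF$, and condition~(II) for $\sH_\sF$ reduces to the demand that $\sH_\sF$ contain every projective left $S$\+module, i.e., that the underlying left $B$\+module of each projective left $S$\+module lie in~$\sF$.

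I do not expect any real obstacle here; the statement is essentially bookkeeping. The one point worth stating clearly is precisely the reduction just described: the two ``closure'' clauses in conditions~(I) and~(II) are inherited by $\sG_\sE$ and $\sH_\sF$ for free, because restriction of scalars is exact, so the entire content of the lemma is concentrated in the clause about injective (respectively projective) modules --- and the underlying $A$\+ or $B$\+module structure of an injective $R$\+module or a projective $S$\+module is genuinely not governed by $\sE$ or $\sF$ in general, which is exactly why an ``if and only if'' is the sharpest available formulation.
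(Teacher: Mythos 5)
Your proof is correct and coincides with the argument the paper leaves implicit (the lemma is stated there without a written proof, as immediate): since restriction of scalars is exact, the closure clauses of conditions~(I) and~(II) transfer automatically from $\sE$ and $\sF$ to $\sG_\sE$ and $\sH_\sF$, so the conditions reduce to the clause about injective (respectively, projective) modules, which by the very definition of $\sG_\sE$ and $\sH_\sF$ is exactly the stated criterion.
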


 Assume further that the equivalent conditions of
Lemma~\ref{underlying-modules-of-injectives-projectives}(a)
and~(b) hold, and additionally that the full subcategory
$\sE\subset A\modl$ is closed under infinite direct sums and the full
subcategory $\sF\subset B\modl$ is closed under infinite products.
 Then we get two commutative diagrams of triangulated functors,
where the vertical arrows are Verdier quotient functors described
in Section~\ref{pseudo-derived-introd}, and the horizontal arrows
are the forgetful functors:
$$
\begin{diagram}
\node{\sD^\co(R\modl)}\arrow{s,A}\arrow[2]{e}
\node[2]{\sD^\co(A\modl)}\arrow{s,A} \\
\node{\sD(\sG_\sE)}\arrow{s,A}\arrow[2]{e,}
\node[2]{\sD(\sE)}\arrow{s,A} \\
\node{\sD(R\modl)}\arrow[2]{e}
\node[2]{\sD(A\modl)}
\end{diagram} \qquad
\begin{diagram}
\node{\sD^\ctr(S\modl)}\arrow{s,A}\arrow[2]{e}
\node[2]{\sD^\ctr(B\modl)}\arrow{s,A} \\
\node{\sD(\sH_\sF)}\arrow{s,A}\arrow[2]{e,}
\node[2]{\sD(\sF)}\arrow{s,A} \\
\node{\sD(S\modl)}\arrow[2]{e}
\node[2]{\sD(B\modl)}
\end{diagram}
$$

 We recall that a triangulated functor is called \emph{conservative}
if it reflects isomorphisms, or equivalently, takes nonzero objects
to nonzero objects.
 For example, the forgetful functors $\sD(R\modl)\rarrow\sD(A\modl)$
and $\sD(S\modl)\rarrow\sD(B\modl)$ are conservative, while
the forgetful functors $\sD^\co(R\modl)\rarrow\sD^\co(A\modl)$ and
$\sD^\ctr(S\modl)\rarrow\sD^\ctr(B\modl)$ are \emph{not}, in general.

\begin{lem} \label{semi-pseudo-triangulated-conservative}
 The forgetful functors\/ $\sD(\sG_\sE)\rarrow\sD(\sE)$ and\/
$\sD(\sH_\sF)\rarrow\sD(\sF)$ are conservative.
\end{lem}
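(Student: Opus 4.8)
The plan is to verify that each of these two forgetful functors reflects zero objects (which, for a triangulated functor, is the same as being conservative, as recalled just above); since the two assertions are strictly dual, I would carry out the argument only for $\sD(\sG_\sE)\rarrow\sD(\sE)$. Thus I take a complex $X^\bu$ over the exact category $\sG_\sE$ whose underlying complex of left $A$\+modules is zero in $\sD(\sE)$, and aim to show that $X^\bu$ is already zero in $\sD(\sG_\sE)$.

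First I would record two structural facts about the exact categories at hand. By the standing hypothesis the equivalent conditions of Lemma~\ref{underlying-modules-of-injectives-projectives}(a) hold, so $\sG_\sE$ satisfies condition~(I); in particular it is closed under cokernels of admissible monomorphisms in $R\modl$, hence weakly idempotent complete, and therefore the acyclic complexes form a thick subcategory of its homotopy category and the zero objects of $\sD(\sG_\sE)$ are exactly the complexes over $\sG_\sE$ that are acyclic in the exact-category sense. The same applies to $\sE\subset A\modl$, which satisfies condition~(I) by hypothesis. Moreover, since $\sG_\sE$ is closed under extensions in $R\modl$, a complex over $\sG_\sE$ is acyclic over $\sG_\sE$ if and only if it is exact as a complex of $R$\+modules and all its cocycle modules $Z^n=\ker(X^n\to X^{n+1})$ lie in $\sG_\sE$ (the resulting short exact sequences $0\rarrow Z^n\rarrow X^n\rarrow Z^{n+1}\rarrow0$ are then automatically conflations in $\sG_\sE$); the analogous characterization holds for complexes over $\sE$.

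Now the restriction-of-scalars functor $R\modl\rarrow A\modl$ is exact and faithful, so $X^\bu$ is exact as a complex of $R$\+modules if and only if its underlying complex of $A$\+modules is, and this functor sends the $R$\+module cocycles of $X^\bu$ to the $A$\+module cocycles of the underlying complex. By hypothesis the underlying complex is zero in $\sD(\sE)$, hence acyclic over $\sE$, hence exact with all cocycles in $\sE$; therefore $X^\bu$ is an exact complex of $R$\+modules whose cocycles $Z^n$ have underlying $A$\+modules in $\sE$, that is $Z^n\in\sG_\sE$ by the very definition of $\sG_\sE$. Consequently $X^\bu$ is acyclic over $\sG_\sE$, i.e.\ zero in $\sD(\sG_\sE)$, which is what was wanted; the argument for $\sD(\sH_\sF)\rarrow\sD(\sF)$ is obtained by replacing condition~(I) by condition~(II), monomorphisms by epimorphisms, and cokernels by kernels throughout. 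I do not expect a genuine obstacle here: the only point that calls for care is the passage from ``zero in $\sD(\sE)$'' to ``acyclic over $\sE$'', which rests on the weak idempotent completeness noted above together with the standard theory of derived categories of exact categories (for which I would cite \cite{Bueh}, or~\cite[Section~A.1]{Pcosh}); everything else is routine bookkeeping with kernels along an exact faithful functor.
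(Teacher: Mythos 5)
Your proof is correct and is essentially the paper's own argument made explicit: the paper's proof consists of the single remark that the statement ``follows from the definition of the derived category of an exact category,'' and the content behind that remark is exactly what you spell out --- zero objects of $\sD(\sG_\sE)$ and $\sD(\sE)$ are (up to the standard caveats) the acyclic complexes, and a complex over $\sG_\sE$ is acyclic precisely when its underlying complex over $\sE$ is, since exactness is tested in the ambient module categories and the cocycles lie in $\sG_\sE$ exactly when their underlying $A$\+modules lie in~$\sE$. The only point where you are slightly more generous than the standard references is in deducing thickness of the class of acyclic complexes (closure under direct summands in the homotopy category, needed for ``zero in $\sD(\sE)$ implies acyclic over $\sE$'') from weak idempotent completeness alone, whereas B\"uhler/Neeman derive it from idempotent completeness, i.e.\ closure of $\sE$ under direct summands, which conditions (I)--(II) do not formally grant; this is, however, precisely the level of detail the paper itself passes over, and the classes arising in its applications are closed under direct summands.
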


\begin{proof}
 Follows from the definition of the derived category of
an exact category.
\end{proof}

 One can say that a complex of left $A$\+modules is
\emph{$\sE$\+pseudo-coacyclic} if its image under the Verdier
quotient functor $\sD^\co(A\modl)\rarrow\sD(\sE)$ vanishes.
 All coacyclic complexes are pseudo-coacyclic, and all pseudo-coacyclic
complexes are acyclic.

 Similarly, one can say that a complex of left $B$\+modules is
\emph{$\sF$\+pseudo-contraacyclic} if its image under the Verdier
quotient functor $\sD^\ctr(B\modl)\rarrow\sD(\sF)$ vanishes.
 All contraacyclic complexes are pseudo-contraacyclic, and all
pseudo-contraacyclic complexes are acyclic.

\begin{lem} \label{semi-pseudo-co-contra-acyclic}
\textup{(a)} Let\/ $\sE\subset A\modl$ be a full subcategory
satisfying the condition~(I), closed under infinite direct sums, and
containing the underlying $A$\+modules of injective left $R$\+modules.
 Then a complex of left $R$\+modules is\/
$\sG_\sE$\+pseudo-coacyclic if and only if it is\/
$\sE$\+pseudo-coacyclic as a complex of left $A$\+modules. \par
\textup{(b)} Let\/ $\sF\subset B\modl$ be a full subcategory
satisfying the condition~(II), closed under infinite products, and
containing the underlying $B$\+modules of projective left $S$\+modules.
 Then a complex of left $S$\+modules is\/
$\sH_\sF$\+pseudo-contraacyclic if and only if it is\/
$\sF$\+pseudo-contraacyclic as a complex of left $B$\+modules.
\end{lem}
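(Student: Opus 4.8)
The plan is to deduce both implications of part~(a) formally from the commutative diagram of triangulated functors displayed immediately before the statement (and its analogue, the right-hand diagram, for part~(b)), together with Lemma~\ref{semi-pseudo-triangulated-conservative}. First I would check that the diagram is actually available under the present hypotheses: since $\sE$ contains the underlying $A$\+modules of the injective left $R$\+modules, Lemma~\ref{underlying-modules-of-injectives-projectives}(a) shows that $\sG_\sE\subset R\modl$ satisfies the condition~(I); and since $\sE$ is closed under infinite direct sums in $A\modl$ and the forgetful functor preserves direct sums, $\sG_\sE$ is closed under infinite direct sums in $R\modl$. Hence the Verdier quotient functor $\sD^\co(R\modl)\rarrow\sD(\sG_\sE)$ exists and fits into the commutative square together with the forgetful functors $\sD^\co(R\modl)\rarrow\sD^\co(A\modl)$, \ $\sD(\sG_\sE)\rarrow\sD(\sE)$ and the Verdier quotient $\sD^\co(A\modl)\rarrow\sD(\sE)$. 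The point to keep in mind is the resulting identity of composite functors $\sD^\co(R\modl)\rarrow\sD(\sE)$: going down then right equals going right then down. Translating the definitions, a complex of $R$\+modules $X^\bu$ is $\sG_\sE$\+pseudo-coacyclic exactly when its image in $\sD(\sG_\sE)$ vanishes, while $X^\bu$ is $\sE$\+pseudo-coacyclic as a complex of $A$\+modules exactly when its image under the composite $\sD^\co(R\modl)\rarrow\sD^\co(A\modl)\rarrow\sD(\sE)$ vanishes.

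Granting this, the ``only if'' direction is immediate: if the image of $X^\bu$ in $\sD(\sG_\sE)$ is zero, then applying the forgetful functor $\sD(\sG_\sE)\rarrow\sD(\sE)$ shows that the image of $X^\bu$ under $\sD(\sG_\sE)\rarrow\sD(\sE)$ is zero, and by the commutativity identity this equals the image of $X^\bu$ under $\sD^\co(R\modl)\rarrow\sD^\co(A\modl)\rarrow\sD(\sE)$; so $X^\bu$ is $\sE$\+pseudo-coacyclic over~$A$. For the ``if'' direction, which is the substantive one, suppose conversely that $X^\bu$ is $\sE$\+pseudo-coacyclic as a complex of $A$\+modules, i.e.\ its image under $\sD^\co(R\modl)\rarrow\sD^\co(A\modl)\rarrow\sD(\sE)$ is zero. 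By the same commutativity identity, the image $\overline X$ of $X^\bu$ in $\sD(\sG_\sE)$ becomes zero after applying the forgetful functor $\sD(\sG_\sE)\rarrow\sD(\sE)$. At this point I would invoke Lemma~\ref{semi-pseudo-triangulated-conservative}: this forgetful functor is conservative, hence it reflects zero objects, so $\overline X=0$ in $\sD(\sG_\sE)$; that is, $X^\bu$ is $\sG_\sE$\+pseudo-coacyclic. Part~(b) is handled in exactly the same fashion, using the right-hand commutative diagram, Lemma~\ref{underlying-modules-of-injectives-projectives}(b), and the conservativity of $\sD(\sH_\sF)\rarrow\sD(\sF)$ from Lemma~\ref{semi-pseudo-triangulated-conservative}.

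I do not expect a real obstacle here: once the diagram and Lemma~\ref{semi-pseudo-triangulated-conservative} are in place, the argument is pure diagram chasing, and the only bookkeeping issue is the (benignly neglected) existence of the Verdier quotients. The one conceptual point worth stressing in the write-up is \emph{where} conservativity enters: the ``if'' direction genuinely relies on the conservativity of $\sD(\sG_\sE)\rarrow\sD(\sE)$, and the corresponding forgetful functor $\sD^\co(R\modl)\rarrow\sD^\co(A\modl)$ between the \emph{coderived} categories is not conservative in general (as already noted in the text). Thus it is precisely the passage from the coderived category to the pseudo-coderived category $\sD(\sG_\sE)$ that makes the ``pseudo-coacyclicity is detected on the underlying $A$\+module'' phenomenon hold.
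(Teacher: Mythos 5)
Your proof is correct and is essentially the paper's argument: the paper disposes of this lemma in one line, calling it a restatement of Lemma~\ref{semi-pseudo-triangulated-conservative}, i.e.\ exactly the conservativity of the forgetful functors $\sD(\sG_\sE)\rarrow\sD(\sE)$ and $\sD(\sH_\sF)\rarrow\sD(\sF)$ applied to the commutative square of Verdier quotients and forgetful functors, which is what you spell out (including the correct preliminary check via Lemma~\ref{underlying-modules-of-injectives-projectives} that the square exists under the stated hypotheses).
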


\begin{proof}
 This is a restatement of
Lemma~\ref{semi-pseudo-triangulated-conservative}.
\end{proof}

 The terminology in the following definition follows that
in~\cite[Section~5]{Pfp}, where ``relative dualizing complexes''
are discussed.
 In~\cite[Section~5]{Chr}, a related phenomenon is called ``base
change''.

 A \emph{relative pseudo-dualizing complex} for a pair of associative
ring homomorphisms $A\rarrow R$ and $B\rarrow S$ is a set of data
consisting of a pseudo-dualizing complex of $A$\+$B$\+bimodules
$L^\bu$, a pseudo-dualizing complex of $R$\+$S$\+bimodules $U^\bu$,
and a morphism of complexes of $A$\+$B$\+bimodules $L^\bu\rarrow U^\bu$
satisfying the following condition:
\begin{enumerate}
\renewcommand{\theenumi}{\roman{enumi}}
\setcounter{enumi}{3}
\item the induced morphism $R\ot_A^\boL L^\bu\rarrow U^\bu$ is
an isomorphism in the derived category of left $R$\+modules
$\sD^-(R\modl)$, and the induced morphism $L^\bu\ot_B^\boL S\rarrow
U^\bu$ is an isomorphism in the derived category of right
$S$\+modules $\sD^-(\modr S)$.
\end{enumerate}

 Notice that the condition~(ii) in the definition of
a pseudo-dualizing complex in
Section~\ref{two-rings-auslander-bass-subsecn} holds for
the complex $U^\bu$ whenever it holds for the complex $L^\bu$
and the above condition~(iv) is satisfied.
 The following result, which is our version of~\cite[Theorem~5.1]{Chr},
explains what happens with the condition~(iii).
 We will assume that the complex $L^\bu$ is concentrated in
the cohomological degrees $-d_1\le m\le d_2$ and the complex $U^\bu$
is concentrated in the cohomological degrees $-t_1\le m\le t_2$.
 Let $L^\bu\.{}^\rop$ denote the complex $L^\bu$ viewed as a complex
of $B^\rop$\+$A^\rop$\+bimodules.

\begin{prop} \label{relative-pseudo-dualizing-auslander-equivalence}
 Let $L^\bu$ be a pseudo-dualizing complex of $A$\+$B$\+bimodules,
$U^\bu$ be a finite complex of $R$\+$S$\+bimodules, and $L^\bu\rarrow
U^\bu$ be a morphism of complexes of $A$\+$B$\+bimodules satisfying
the condition~(iv).
 Then $U^\bu$ is a pseudo-dualizing complex of $R$\+$S$\+bimodules
if and only if there exists an integer $l_1\ge d_1$ such that the right
$A$\+module $R$ belongs to the class\/ $\sF_{l_1}(L^\bu\.{}^\rop)\subset
A^\rop\modl$ and the left $B$\+module $S$ belongs to the class\/
$\sF_{l_1}(L^\bu)\subset B\modl$.
\end{prop}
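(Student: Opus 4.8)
The plan is to peel condition~(iii) off the definition of a pseudo-dualizing complex and then match its two halves with the two claimed Auslander-class memberships, following the scheme of~\cite[Theorem~5.1]{Chr}.

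First I would record that condition~(ii) of Section~\ref{two-rings-auslander-bass-subsecn} holds for $U^\bu$ automatically once~(iv) does: picking a quasi-isomorphism ${}'\!\.P^\bu\rarrow L^\bu$ from a bounded above complex of finitely generated projective left $A$\+modules, the complex $R\ot_A{}'\!\.P^\bu$ is a bounded above complex of finitely generated projective left $R$\+modules representing $R\ot_A^\boL L^\bu\simeq U^\bu$, so $U^\bu$ is strongly finitely presented as a complex of left $R$\+modules; symmetrically, using $L^\bu\ot_B^\boL S\simeq U^\bu$, it is strongly finitely presented as a complex of right $S$\+modules. Hence ``$U^\bu$ is a pseudo-dualizing complex of $R$\+$S$\+bimodules'' is equivalent to ``the homothety maps $R\rarrow\Hom_{\sD^\b(\modr S)}(U^\bu,U^\bu[*])$ and $S^\rop\rarrow\Hom_{\sD^\b(R\modl)}(U^\bu,U^\bu[*])$ are isomorphisms of graded rings'', and since each of these maps is a homomorphism of graded rings by construction, ``isomorphism of graded rings'' here means exactly ``bijective in every cohomological degree''.

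Next I would translate each homothety map through an adjunction. Condition~(iv) together with the conservativity of the forgetful functor from the derived category of $A$\+$S$\+bimodules (resp.\ $R$\+$B$\+bimodules) to that of right $S$\+modules (resp.\ right $B$\+modules) yields isomorphisms $U^\bu\simeq R\ot_A^\boL L^\bu$ in $\sD^\b(R\modl)$, \,$U^\bu\simeq L^\bu\ot_B^\boL S$ in $\sD^\b(\modr S)$, and also $U^\bu\simeq L^\bu\ot_B^\boL S$ in $\sD^\b(A\modl)$ and $U^\bu\simeq R\ot_A^\boL L^\bu$ in $\sD^\b(\modr B)$, all compatible with the units of the relevant extension-of-scalars functors. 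Putting $U^\bu\simeq R\ot_A^\boL L^\bu$ in the first argument, applying the adjunction $(R\ot_A^\boL{-},\ \text{restriction})$ between $\sD(A\modl)$ and $\sD(R\modl)$, and then replacing $U^\bu|_A$ by $L^\bu\ot_B^\boL S$, I obtain a natural identification $\Hom_{\sD^\b(R\modl)}(U^\bu,U^\bu[*])\cong H^*\boR\Hom_A(L^\bu,\>L^\bu\ot_B^\boL S)$ under which the homothety map $S^\rop\rarrow\Hom_{\sD^\b(R\modl)}(U^\bu,U^\bu[*])$ becomes $H^*$ of the adjunction morphism $S\rarrow\boR\Hom_A(L^\bu,\>L^\bu\ot_B^\boL S)$, i.e.\ of the very map appearing in the definition of $\sF_{l_1}(L^\bu)$ applied to $F=S$. (The compatibility to check here is that the unit $L^\bu\rarrow(R\ot_A^\boL L^\bu)|_A$ corresponds, via~(iv), to the derived ``$x\mapsto x\ot1$'' map $L^\bu\rarrow L^\bu\ot_B^\boL S$.) Running the same argument with $L^\bu$ replaced by $L^\bu\.{}^\rop$, \,$U^\bu$ by $U^\bu\.{}^\rop$, and $A,B$ by $S^\rop,R^\rop$ — equivalently, putting $U^\bu\simeq L^\bu\ot_B^\boL S$ in the first argument, using extension-restriction along $B\rarrow S$ on the right, and replacing $U^\bu|_B$ by $R\ot_A^\boL L^\bu$ — identifies $\Hom_{\sD^\b(\modr S)}(U^\bu,U^\bu[*])$ with $H^*\boR\Hom_{B^\rop}(L^\bu\.{}^\rop,\>L^\bu\.{}^\rop\ot_{A^\rop}^\boL R)$ and carries the homothety map $R\rarrow\Hom_{\sD^\b(\modr S)}(U^\bu,U^\bu[*])$ to $H^*$ of the adjunction morphism appearing in the definition of $\sF_{l_1}(L^\bu\.{}^\rop)$ applied to $F=R$.

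Finally I would do the boundedness bookkeeping and assemble both implications. Since $L^\bu\ot_B^\boL S\simeq U^\bu$ is a finite complex, $\Tor_n^B(L^\bu,S)=H^{-n}(U^\bu)$ vanishes for all $n$ exceeding some bound, and likewise $\Tor_n^{A^\rop}(L^\bu\.{}^\rop,R)=H^{-n}(U^\bu)$ vanishes for all large~$n$; and since $L^\bu$ is strongly finitely presented as a left $A$\+module (resp.\ as a right $B$\+module), the complex $\boR\Hom_A(L^\bu,\>L^\bu\ot_B^\boL S)$ (resp.\ $\boR\Hom_{B^\rop}(L^\bu\.{}^\rop,\>L^\bu\.{}^\rop\ot_{A^\rop}^\boL R)$) is cohomologically bounded below, so for the corresponding adjunction morphism ``isomorphism in $\sD^+$'' coincides with ``isomorphism in $\sD$''. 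Choosing any $l_1\ge d_1$ large enough that those $\Tor$\+groups vanish in degrees $>l_1$, membership $S\in\sF_{l_1}(L^\bu)$ becomes equivalent to the $S^\rop$\+homothety map of $U^\bu$ being an isomorphism, and $R\in\sF_{l_1}(L^\bu\.{}^\rop)$ to the $R$\+homothety map of $U^\bu$ being an isomorphism. Combined with the reduction of the first paragraph this gives both directions: if $U^\bu$ is pseudo-dualizing, both memberships hold for every such~$l_1$; conversely, if $S\in\sF_{l_1}(L^\bu)$ and $R\in\sF_{l_1}(L^\bu\.{}^\rop)$ for some $l_1\ge d_1$ — using that membership in these classes forces the respective adjunction morphisms to be isomorphisms, and that $\sF_{l_1}$ only grows with $l_1$ — both homothety maps of $U^\bu$ are isomorphisms of graded rings, so $U^\bu$ is pseudo-dualizing. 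The step I expect to be the main obstacle is the middle one: verifying that the chain of natural isomorphisms coming from~(iv) and the two extension-of-scalars adjunctions genuinely carries each homothety map of $U^\bu$ to the corresponding Auslander-class adjunction morphism of $L^\bu$; the automatic part of~(ii), the vanishing of the high $\Tor$, and the $\sD^+$/$\sD$ passage are routine, as is the translation to opposite rings.
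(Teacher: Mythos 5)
Your proposal is correct and follows essentially the same route as the paper: the key step in both is the chain of base-change/adjunction isomorphisms $\boR\Hom_R(U^\bu,U^\bu)\simeq\boR\Hom_R(R\ot_A^\boL L^\bu,\>U^\bu)\simeq\boR\Hom_A(L^\bu,U^\bu)\simeq\boR\Hom_A(L^\bu,\>L^\bu\ot_B^\boL S)$ (and its mirror image for right modules) identifying the two homothety maps of $U^\bu$ with the adjunction morphisms defining membership of $S$ in $\sF_{l_1}(L^\bu)$ and of $R$ in $\sF_{l_1}(L^\bu\.{}^\rop)$, concluded by taking any $l_1\ge d_1$, $l_1\ge t_1$. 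Your additional verifications (that condition~(ii) for $U^\bu$ is automatic from~(iv), and the Tor-vanishing bookkeeping) just spell out remarks the paper makes in the text immediately preceding the proposition.
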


\begin{proof}
 The key observation is that the natural isomorphism
$\boR\Hom_R(U^\bu,U^\bu)\simeq\boR\Hom_R(R\ot_A^\boL L^\bu,\>U^\bu)
\simeq\boR\Hom_A(L^\bu,U^\bu)\simeq\boR\Hom_A(L^\bu,\>
L^\bu\ot_B^\boL S)$ identifies the homothety morphism
$S^\rop\rarrow\boR\Hom_R(U^\bu,U^\bu)$ with the adjunction morphism
$S\rarrow\boR\Hom_A(L^\bu,\>L^\bu\ot_B^\boL S)$.
 Similarly, the natural isomorphism $\boR\Hom_{S^\rop}(U^\bu,U^\bu)\simeq
\boR\Hom_{B^\rop}(L^\bu,\>R\ot_A^\boL L^\bu)$ identifies the homothety
morphism $R\rarrow\boR\Hom_{S^\rop}(U^\bu,U^\bu)$ with the adjunction
morphism $R\rarrow\boR\Hom_{B^\rop}(L^\bu,\>R\ot_A^\boL L^\bu)$.
 It remains to say that one can take any integer $l_1$ such that
$l_1\ge d_1$ and $l_1\ge t_1$.
\end{proof}

 The next proposition is our version of~\cite[Proposition~5.3]{Chr}.

\begin{prop} \label{relative-maximal-corresponding-classes}
 Let $L^\bu\rarrow U^\bu$ be a relative pseudo-dualizing complex for
a pair of ring homomorphisms $A\rarrow R$ and $B\rarrow S$.
 Let $l_1$ be an integer such that $l_1\ge d_1$ and $l_1\ge t_1$.
 Then \par
\textup{(a)} a left $R$\+module belongs to the full subcategory\/
$\sE_{l_1}(U^\bu)\subset R\modl$ if and only if its underlying
$A$\+module belongs to the full subcategory\/
$\sE_{l_1}(L^\bu)\subset A\modl$; \par
\textup{(b)} a left $S$\+module belongs to the full subcategory\/
$\sF_{l_1}(U^\bu)\subset S\modl$ if and only if its underlying
$B$\+module belongs to the full subcategory\/
$\sF_{l_1}(L^\bu)\subset B\modl$.
\end{prop}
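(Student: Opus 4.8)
The plan is to reduce both assertions to two \emph{base-change isomorphisms} comparing the derived functors attached to $U^\bu$ over the pair $(R,S)$ with those attached to $L^\bu$ over the pair $(A,B)$ after restriction of scalars, and then to invoke the conservativity of the forgetful functors recalled in this section.

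\emph{Part~(a).} Fix a left $R$\+module $E$ and write $\bar E$ for its underlying left $A$\+module. From the isomorphism $R\ot_A^\boL L^\bu\simeq U^\bu$ of condition~(iv) and the extension-restriction adjunction $\boR\Hom_R(R\ot_A^\boL M,{-})\simeq\boR\Hom_A(M,{-})$ (second argument restricted to $A$) one gets a natural isomorphism $\boR\Hom_R(U^\bu,E)\simeq\boR\Hom_A(L^\bu,\bar E)$, compatible with the forgetful functor $S\modl\rarrow B\modl$. In particular $\Ext^n_R(U^\bu,E)\cong\Ext^n_A(L^\bu,\bar E)$ for all $n$, so the vanishing conditions for $n>l_1$ in the definitions of $\sE_{l_1}(U^\bu)$ and $\sE_{l_1}(L^\bu)$ coincide. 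Using the other half of condition~(iv), namely $L^\bu\ot_B^\boL S\simeq U^\bu$, together with associativity of the derived tensor product, one then obtains
$$
 U^\bu\ot_S^\boL\boR\Hom_R(U^\bu,E)\;\simeq\;(L^\bu\ot_B^\boL S)\ot_S^\boL\boR\Hom_R(U^\bu,E)\;\simeq\;L^\bu\ot_B^\boL\boR\Hom_A(L^\bu,\bar E)
$$
in $\sD^-(A\modl)$. The crucial point is that these identifications carry the counit $U^\bu\ot_S^\boL\boR\Hom_R(U^\bu,E)\rarrow E$ of the adjunction $(U^\bu\ot_S^\boL{-},\,\boR\Hom_R(U^\bu,{-}))$ to the counit $L^\bu\ot_B^\boL\boR\Hom_A(L^\bu,\bar E)\rarrow\bar E$ of the adjunction $(L^\bu\ot_B^\boL{-},\,\boR\Hom_A(L^\bu,{-}))$. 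Granting this, the former counit is an isomorphism in $\sD^-(R\modl)$ if and only if its image under the conservative forgetful functor $\sD^-(R\modl)\rarrow\sD^-(A\modl)$, which is the latter counit, is an isomorphism in $\sD^-(A\modl)$; together with the matching of the $\Ext$\+conditions this yields that $E\in\sE_{l_1}(U^\bu)$ if and only if $\bar E\in\sE_{l_1}(L^\bu)$.

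\emph{Part~(b).} This is dual. For a left $S$\+module $F$ with underlying left $B$\+module $\bar F$, the isomorphism $L^\bu\ot_B^\boL S\simeq U^\bu$ gives $U^\bu\ot_S^\boL F\simeq L^\bu\ot_B^\boL\bar F$ in $\sD^-(A\modl)$, hence $\Tor^S_n(U^\bu,F)\cong\Tor^B_n(L^\bu,\bar F)$ and the $\Tor$\+vanishing conditions agree; and $R\ot_A^\boL L^\bu\simeq U^\bu$ gives a natural isomorphism $\boR\Hom_R(U^\bu,\,U^\bu\ot_S^\boL F)\simeq\boR\Hom_A(L^\bu,\,L^\bu\ot_B^\boL\bar F)$ carrying the unit $F\rarrow\boR\Hom_R(U^\bu,\,U^\bu\ot_S^\boL F)$ of $(U^\bu\ot_S^\boL{-},\,\boR\Hom_R(U^\bu,{-}))$ to the unit $\bar F\rarrow\boR\Hom_A(L^\bu,\,L^\bu\ot_B^\boL\bar F)$ of $(L^\bu\ot_B^\boL{-},\,\boR\Hom_A(L^\bu,{-}))$. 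Conservativity of the forgetful functor $\sD^+(S\modl)\rarrow\sD^+(B\modl)$ then finishes the argument as in part~(a).

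The routine ingredients --- abstract existence of the base-change isomorphisms, invariance of $\Ext$ and $\Tor$ under restriction of scalars, conservativity of the forgetful functors on the relevant bounded and unbounded derived categories --- I would dispatch briefly. \textbf{The main obstacle} is the compatibility of the base-change isomorphisms with the adjunction unit and counit: one has to check that the displayed identifications intertwine the canonical morphisms, not merely the underlying objects. I would carry this out, following the pattern of the proofs of Lemma~\ref{two-rings-contains-injectives-flats} and Proposition~\ref{relative-pseudo-dualizing-auslander-equivalence}, by fixing bounded above resolutions of $L^\bu$ by finitely generated projective right $B$\+modules and by finitely generated projective right $A$\+modules, writing down the resulting explicit representatives of the Hom and tensor product complexes, and verifying the intertwining directly; alternatively, one can present it as the Beck--Chevalley (``mate'') compatibility between the $({\ot}^\boL,\boR\Hom)$\+adjunctions and the two restriction-of-scalars adjunctions. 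The inequalities $l_1\ge d_1$ and $l_1\ge t_1$ serve only to guarantee that the classes $\sE_{l_1}(L^\bu)$, $\sE_{l_1}(U^\bu)$ and $\sF_{l_1}(L^\bu)$, $\sF_{l_1}(U^\bu)$ are well-formed as in Section~\ref{two-rings-auslander-bass-subsecn}, and play no further role.
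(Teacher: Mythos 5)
Your argument is correct and is essentially the paper's own proof: the paper likewise deduces both parts from the commutativity (up to natural isomorphism) of the squares formed by $\boR\Hom_R(U^\bu,{-})$, $U^\bu\ot_S^\boL{-}$, $\boR\Hom_A(L^\bu,{-})$, $L^\bu\ot_B^\boL{-}$ and the forgetful functors --- which is exactly your base-change isomorphisms coming from condition~(iv) --- together with the compatibility of the adjunctions with the forgetful functors and the conservativity of the latter. The compatibility of units/counits that you single out as the main point to verify is precisely what the paper invokes (without further detail) as ``compatibility of the adjunctions with the forgetful functors,'' so your proposal matches the paper's route, only spelled out more explicitly.
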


\begin{proof}
 The assertions follow from the commutative diagrams of the pairs
of adjoint functors and the forgetful functors
$$\dgARROWLENGTH=4em
\begin{diagram} 
\node{\sD(R\modl)}\arrow[2]{e,t}{\boR\Hom_R(U^\bu,{-})}\arrow{s}
\node[2]{\sD(S\modl)}\arrow{s} \\
\node{\sD(A\modl)}\arrow[2]{e,t}{\boR\Hom_A(L^\bu,{-})}
\node[2]{\sD(B\modl)}
\end{diagram}
\,\,\,\,
\begin{diagram} 
\node{\sD(R\modl)}\arrow{s}
\node[2]{\sD(S\modl)}\arrow{s}\arrow[2]{w,t}{U^\bu\ot^\boL_S{-}} \\
\node{\sD(A\modl)}
\node[2]{\sD(B\modl)}\arrow[2]{w,t}{L^\bu\ot^\boL_B{-}}
\end{diagram}
$$
together with the compatibility of the adjunctions with the forgetful
functors and conservativity of the forgetful functors.
\end{proof}

\begin{prop} \label{relative-abstract-corresponding-classes}
 Let $L^\bu\rarrow U^\bu$ be a relative pseudo-dualizing complex for
a pair of ring homomorphisms $A\rarrow R$ and $B\rarrow S$, and
let\/ $\sE\subset A\modl$ and\/ $\sF\subset B\modl$ be a pair of full
subcategories satisfying the conditions~(I\+-IV) with respect to
the pseudo-dualizing complex $L^\bu$ with some parameters $l_1$
and~$l_2$ such that $l_1\ge\nobreak d_1$, $l_1\ge\nobreak t_1$,
$l_2\ge\nobreak d_2$, and $l_2\ge\nobreak t_2$.
 Suppose that the underlying $A$\+modules of all the injective
left $R$\+modules belong to\/ $\sE$ and the underlying $B$\+modules
of all the projective left $S$\+modules belong to\/~$\sF$.
 Then the pair of full subcategories\/ $\sG_\sE\subset R\modl$ and\/
$\sH_\sF\subset S\modl$ satisfies the conditions~(I\+-IV) with
respect to the pseudo-dualizing complex $U^\bu$ with the same
parameters $l_1$ and~$l_2$.
\end{prop}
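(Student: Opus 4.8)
The plan is to verify the four conditions~(I\+-IV) for the pair $(\sG_\sE,\sH_\sF)$ relative to $U^\bu$ one at a time, each time reducing to the already-known property of $(\sE,\sF)$ relative to $L^\bu$ by means of the forgetful functors. Conditions~(I) and~(II) come essentially for free: the forgetful functors $R\modl\rarrow A\modl$ and $S\modl\rarrow B\modl$ are exact, so $\sG_\sE$ and $\sH_\sF$ inherit from $\sE$ and $\sF$ the closure under extensions and under the cokernels of admissible monomorphisms, resp.\ the kernels of admissible epimorphisms; and the two hypotheses — that the underlying $A$\+modules of injective left $R$\+modules lie in $\sE$, and the underlying $B$\+modules of projective left $S$\+modules lie in $\sF$ — are precisely what Lemma~\ref{underlying-modules-of-injectives-projectives}(a)\+-(b) requires in order to conclude that $\sG_\sE$ satisfies~(I) and $\sH_\sF$ satisfies~(II).

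For condition~(III), I would proceed as follows. Let $G\in\sG_\sE$, so the underlying left $A$\+module of $G$ lies in $\sE$. The base-change compatibility used in the proof of Proposition~\ref{relative-maximal-corresponding-classes} (coming from condition~(iv) and the derived adjunction between extension and restriction of scalars) says that the forgetful functor $\sD(S\modl)\rarrow\sD(B\modl)$ carries $\boR\Hom_R(U^\bu,G)$ to $\boR\Hom_A(L^\bu,G)$; hence, by condition~(III) for $L^\bu$, the object $\boR\Hom_R(U^\bu,G)\in\sD^+(S\modl)$ has cohomology concentrated in the degrees $-l_2\le m\le l_1$, since these cohomology $S$\+modules coincide, as abelian groups, with the cohomology of $\boR\Hom_A(L^\bu,G)$. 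Next I would choose a complex of left $S$\+modules $V^\bu$ representing $\boR\Hom_R(U^\bu,G)$, concentrated in the degrees $-l_2\le m\le l_1$, with $V^m$ a projective $S$\+module for all $-l_2<m\le l_1$; this is available by taking a projective resolution of the object (a bounded above complex of projective $S$\+modules, which may be chosen concentrated in degrees $\le l_1$) and applying the canonical truncation $\tau_{\ge -l_2}$. By the hypothesis on projective $S$\+modules, the underlying $B$\+module of $V^m$ lies in $\sF$ for $-l_2<m\le l_1$; on the other hand $V^\bu$, regarded as a complex of $B$\+modules, represents $\boR\Hom_A(L^\bu,G)$ by the same base-change compatibility, and by condition~(III) for $L^\bu$ this object is representable by a complex concentrated in the degrees $-l_2\le m\le l_1$ with all terms in $\sF$. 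Applying \cite[Corollary~A.5.2]{Pcosh} to the resolving subcategory $\sF\subset B\modl$, exactly as in the proof of Proposition~\ref{two-rings-minimal-corresponding-classes-prop}, then forces $V^{-l_2}$ to have underlying $B$\+module in $\sF$, i.e., $V^{-l_2}\in\sH_\sF$. Thus all terms of $V^\bu$ lie in $\sH_\sF$, which is condition~(III) for $U^\bu$.

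Condition~(IV) would be handled by the dual argument. For $H\in\sH_\sF$ the forgetful functor $\sD(R\modl)\rarrow\sD(A\modl)$ takes $U^\bu\ot_S^\boL H$ to $L^\bu\ot_B^\boL H$ (again the base-change compatibility from the proof of Proposition~\ref{relative-maximal-corresponding-classes}), so by condition~(IV) for $L^\bu$ the object $U^\bu\ot_S^\boL H\in\sD^-(R\modl)$ has cohomology concentrated in the degrees $-l_1\le m\le l_2$. I would then pick a representing complex $W^\bu$ of left $R$\+modules concentrated in those degrees with $W^m$ an injective $R$\+module for $-l_1\le m<l_2$ — obtained from an injective resolution concentrated in degrees $\ge -l_1$ by the truncation $\tau_{\le l_2}$ — observe that the underlying $A$\+modules of these injective $R$\+modules lie in $\sE$, note that $W^\bu$ regarded as a complex of $A$\+modules represents $L^\bu\ot_B^\boL H$ and hence, by condition~(IV) for $L^\bu$, is representable by a complex in the degrees $-l_1\le m\le l_2$ with terms in $\sE$, and conclude $W^{l_2}\in\sG_\sE$ from the dual version of \cite[Corollary~A.5.2]{Pcosh} for the coresolving subcategory $\sE\subset A\modl$. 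Then all terms of $W^\bu$ lie in $\sG_\sE$, which is condition~(IV) for $U^\bu$.

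The step I expect to be the crux is this ``bootstrapping'' of the extreme term in~(III) and~(IV): knowing that $\boR\Hom_R(U^\bu,G)$ has the right cohomological bounds and a nice underlying $B$\+complex does not by itself produce a representing complex over $S$ with all terms in $\sH_\sF$, because conditions~(III)\+-(IV) only promise \emph{some} good representing complex, not that every complex in the correct degree range is good. The remedy is to arrange that every term of the chosen complex except the extreme one is automatically in $\sH_\sF$ (resp.\ $\sG_\sE$) — using that projective $S$\+modules (resp.\ injective $R$\+modules) restrict into $\sF$ (resp.\ $\sE$) — and then to control the remaining term via the (co)resolution-dimension estimate of \cite[Corollary~A.5.2]{Pcosh}. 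The role of the hypotheses $l_1\ge t_1$ and $l_2\ge t_2$ (beyond $l_1\ge d_1$ and $l_2\ge d_2$) is merely to ensure that the a priori degree bounds coming from the length of the complex $U^\bu$ stay inside the intervals $[-l_2,l_1]$ and $[-l_1,l_2]$, so that the truncations used above do not discard any cohomology.
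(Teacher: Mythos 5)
Your proof is correct and follows essentially the same route as the paper, whose own proof is just a two-line reduction: conditions (I)--(II) via Lemma~\ref{underlying-modules-of-injectives-projectives}, and (III)--(IV) via the base-change identifications of Proposition~\ref{relative-maximal-corresponding-classes} together with the standard (co)resolution-dimension argument of \cite[Corollary~A.5.2]{Pcosh}. Your write-up merely spells out the details (the choice of the partially projective/injective representing complexes and the control of the extreme term), exactly as the paper does in the analogous step of Proposition~\ref{two-rings-minimal-corresponding-classes-prop}.
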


\begin{proof}
 The conditions~(I\+-II) hold by
Lemma~\ref{underlying-modules-of-injectives-projectives},
and the conditions~(III\+-IV) are easy to check using
the standard properties of the (co)resolution
dimensions~\cite[Corollary~A.5.2]{Pcosh}.
\end{proof}

\begin{cor} \label{relative-abstract-main-cor}
 In the context and assumptions of
Proposition~\textup{\ref{relative-abstract-corresponding-classes}},
for any symbol\/ $\star=\b$, $+$, $-$, $\varnothing$, $\abs+$,
$\abs-$, $\co$, $\ctr$, or\/~$\abs$, there is a triangulated
equivalence\/ $\sD^\star(\sG_\sE)\simeq\sD^\star(\sH_\sF)$ provided
by (appropriately defined) mutually inverse functors\/
$\boR\Hom_R(U^\bu,{-})$ and $U^\bu\ot_S^\boL{-}$.

 Here, in the case\/ $\star=\co$ it is assumed that the full
subcategories\/ $\sE\subset A\modl$ and\/ $\sF\subset B\modl$ are
closed under infinite direct sums, while in the case\/ $\star=\ctr$
it is assumed that these two full subcategories are closed under
infinite products.
\end{cor}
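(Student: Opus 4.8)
The plan is to deduce the statement at once from Theorem~\ref{two-rings-generalized-main-thm}, applied to the rings $R$ and $S$, the pseudo-dualizing complex of $R$\+$S$\+bimodules $U^\bu$ (which is part of the data of a relative pseudo-dualizing complex, hence is indeed a pseudo-dualizing complex of $R$\+$S$\+bimodules), and the pair of full subcategories $\sG_\sE\subset R\modl$ and $\sH_\sF\subset S\modl$. For this I need to know that the pair $(\sG_\sE,\sH_\sF)$ satisfies the conditions~(I\+-IV) with respect to $U^\bu$; but this is exactly the conclusion of Proposition~\ref{relative-abstract-corresponding-classes}, whose hypotheses coincide with the hypotheses imported into the present corollary (in particular the inequalities $l_1\ge d_1$, $l_1\ge t_1$, $l_2\ge d_2$, $l_2\ge t_2$ are precisely what that proposition requires), and the parameters $l_1$ and $l_2$ are retained unchanged.

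First I would invoke Proposition~\ref{relative-abstract-corresponding-classes} to obtain conditions~(I\+-IV) for $(\sG_\sE,\sH_\sF)$ and the complex $U^\bu$. Then, for each of the symbols $\star=\b$, $+$, $-$, $\varnothing$, $\abs+$, $\abs-$, or~$\abs$, Theorem~\ref{two-rings-generalized-main-thm} applies with no further input and yields the triangulated equivalence $\sD^\star(\sG_\sE)\simeq\sD^\star(\sH_\sF)$ provided by the derived functors $\boR\Hom_R(U^\bu,{-})$ and $U^\bu\ot_S^\boL{-}$, built from the adjoint pair of DG\+functors $\Hom_R(U^\bu,{-})$ and $U^\bu\ot_S{-}$ exactly as in the appendix.

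It remains to treat the cases $\star=\co$ and $\star=\ctr$, where Theorem~\ref{two-rings-generalized-main-thm} additionally demands that $\sG_\sE$ and $\sH_\sF$ be closed under infinite direct sums (for $\star=\co$), respectively infinite products (for $\star=\ctr$), in $R\modl$ and $S\modl$. Here I would use that the forgetful functors $R\modl\rarrow A\modl$ and $S\modl\rarrow B\modl$ are exact and preserve both infinite direct sums and infinite products; since $\sG_\sE$ and $\sH_\sF$ are by definition the preimages of $\sE$ and $\sF$ under these functors, they inherit from $\sE$ and $\sF$ closure under infinite direct sums and closure under infinite products. Thus the closure assumptions imposed in the corollary on $\sE$ and $\sF$ transport to $\sG_\sE$ and $\sH_\sF$, and Theorem~\ref{two-rings-generalized-main-thm} again applies.

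Since all of the real work has already been carried out in Proposition~\ref{relative-abstract-corresponding-classes} and Theorem~\ref{two-rings-generalized-main-thm}, there is no genuine obstacle in this argument; the only point calling for any attention is the bookkeeping of the closure hypotheses in the $\co$ and $\ctr$ cases, and the verification that the parameter inequalities demanded by Proposition~\ref{relative-abstract-corresponding-classes} are exactly the ones assumed here.
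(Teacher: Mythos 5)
Your proposal is correct and follows essentially the same route as the paper, which simply observes that the statement is a particular case of Theorem~\ref{two-rings-generalized-main-thm} applied to $U^\bu$ and the pair $(\sG_\sE,\sH_\sF)$, whose conditions~(I\+-IV) are supplied by Proposition~\ref{relative-abstract-corresponding-classes}. Your explicit check that closure of $\sE$ and $\sF$ under direct sums/products transfers to $\sG_\sE$ and $\sH_\sF$ via the sum- and product-preserving forgetful functors is a correct filling-in of a detail the paper leaves implicit.
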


\begin{proof}
 This is a particular case of
Theorem~\ref{two-rings-generalized-main-thm}.
\end{proof}

 In the situation of Corollary~\ref{relative-abstract-main-cor}
the triangulated equivalences $\sD^\star(\sG_\sE)\simeq
\sD^\star(\sH_\sF)$ and $\sD^\star(\sE)\simeq\sD^\star(\sF)$ form
a commutative diagram with the triangulated forgetful functors
\begin{equation} \label{relative-pseudo-dualizing-triangulated-diagram}
\begin{diagram} 
\node{\sD^\star(\sG_\sE)}\arrow[2]{e,=}\arrow{s}
\node[2]{\sD^\star(\sH_\sF)}\arrow{s} \\
\node{\sD^\star(\sE)}\arrow[2]{e,=}
\node[2]{\sD^\star(\sF)}
\end{diagram}
\end{equation}

\Section{Relative Dualizing Complexes}

 Let $A$ be an associative ring.
 The \emph{sfp\+injective dimension} of an $A$\+module is the minimal
length of its coresolution by sfp\+injective $A$\+modules.
 The sfp\+injective dimension of a left $A$\+module $E$ is equal to
the supremum of all the integers $n\ge\nobreak0$ for which there exists
a strongly finitely presented left $A$\+module $M$ such that
$\Ext^n_A(M,E)\ne0$.
 The \emph{sfp\+flat dimension} of an $A$\+module is the minimal
length of its resolution by sfp\+flat $A$\+modules.
 The sfp\+flat dimension of a left $A$\+module $F$ is equal to
the supremum of all the integers $n\ge\nobreak0$ for which there exists
a strongly finitely presented right $A$\+module $N$ such that
$\Tor_n^A(N,F)\ne0$.

\begin{lem} \label{sfp-flat-injective-dimension-dualization}
 The sfp\+flat dimension of a right $A$\+module $G$ is equal to
the sfp\+injective dimension of the left $A$\+module\/
$\Hom_\boZ(G,\boQ/\boZ)$.  \qed
\end{lem}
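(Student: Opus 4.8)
The plan is to reduce the lemma to the two characterizations of the sfp\+flat and sfp\+injective dimensions in terms of the (non)vanishing of $\Tor$ and $\Ext$ against strongly finitely presented modules, which were recorded in the paragraph immediately preceding the statement, and then to invoke the classical character\+module (Pontryagin) duality between $\Tor$ and $\Ext$.

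First I would establish the following natural isomorphism of abelian groups, valid for an arbitrary left $A$\+module~$M$ and any integer $n\ge0$:
$$
 \Ext^n_A\bigl(M,\,\Hom_\boZ(G,\boQ/\boZ)\bigr)\;\simeq\;
 \Hom_\boZ\bigl(\Tor^A_n(G,M),\,\boQ/\boZ\bigr).
$$
Choosing a projective resolution $P_\bu\rarrow M$ of the left $A$\+module~$M$, the tensor\+hom adjunction gives an isomorphism of complexes of abelian groups $\Hom_A(P_\bu,\Hom_\boZ(G,\boQ/\boZ))\simeq\Hom_\boZ(G\ot_A P_\bu,\,\boQ/\boZ)$. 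Since $\boQ/\boZ$ is an injective abelian group, the contravariant functor $\Hom_\boZ({-},\boQ/\boZ)$ is exact and hence commutes with the passage to (co)homology, so that applying $H^n$ to the right\+hand side identifies $\Ext^n_A(M,\Hom_\boZ(G,\boQ/\boZ))$ with $\Hom_\boZ(H_n(G\ot_A P_\bu),\boQ/\boZ)=\Hom_\boZ(\Tor^A_n(G,M),\boQ/\boZ)$. Because $\boQ/\boZ$ is moreover an injective cogenerator of the category of abelian groups, the functor $\Hom_\boZ({-},\boQ/\boZ)$ reflects zero objects; so the displayed isomorphism yields, for every $n\ge0$ and every strongly finitely presented left $A$\+module~$M$, the equivalence $\Ext^n_A(M,\Hom_\boZ(G,\boQ/\boZ))=0\iff\Tor^A_n(G,M)=0$.

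It then remains to put the pieces together. By the characterization recalled above, the sfp\+injective dimension of the left $A$\+module $\Hom_\boZ(G,\boQ/\boZ)$ is the supremum of those $n\ge0$ for which $\Ext^n_A(M,\Hom_\boZ(G,\boQ/\boZ))\ne0$ for some strongly finitely presented left $A$\+module~$M$; and, by the obvious right\+module analogue of that characterization, the sfp\+flat dimension of the right $A$\+module $G$ is the supremum of those $n\ge0$ for which $\Tor^A_n(G,M)\ne0$ for some strongly finitely presented left $A$\+module~$M$. The two suprema run over the same family of test modules, and by the equivalence just established they coincide degree by degree; hence they are equal, which is the assertion of the lemma.

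As for difficulties: there is essentially no real obstacle here, since this is the standard $\Tor$/$\Ext$ duality via the Pontryagin dual. The only points requiring a little care are bookkeeping ones: keeping the left/right module conventions consistent (so that $\Hom_\boZ(G,\boQ/\boZ)$ genuinely carries a left $A$\+module structure and the test modules $M$ appearing on the two sides are literally the same strongly finitely presented left $A$\+modules), and matching the homological indexing of $\Tor^A_n$ with the cohomological indexing of $\Ext^n_A$ across the exact contravariant functor $\Hom_\boZ({-},\boQ/\boZ)$.
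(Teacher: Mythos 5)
Your argument is correct, and it is exactly the standard character-module duality $\Ext^n_A(M,\Hom_\boZ(G,\boQ/\boZ))\simeq\Hom_\boZ(\Tor_n^A(G,M),\boQ/\boZ)$ that the paper has in mind: the lemma is stated there with no written proof (the \qed marks it as immediate from the $\Ext$/$\Tor$ characterizations of the two dimensions recalled just before it). Your bookkeeping on module sides, the injective-cogenerator property of $\boQ/\boZ$, and the matching of the two suprema over the same class of strongly finitely presented left $A$-modules are all as intended.
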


 Let $A\rarrow R$ and $B\rarrow S$ be homomorphisms of associative
rings.

\begin{lem} \label{sfp-injective-flat-dimensions-of-underlying-modules}
\textup{(a)} The supremum of sfp\+injective dimensions of the underlying
left $A$\+modules of injective left $R$\+modules is equal to
the sfp\+flat dimension of the right $A$\+module~$R$. \par
\textup{(b)} The supremum of sfp\+flat dimensions of the underlying
left $B$\+modules of projective left $S$\+modules is equal to
the sfp\+flat dimension of the left $B$\+module~$S$.
\end{lem}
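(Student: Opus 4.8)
The plan is to reduce both parts to the cohomological characterizations of sfp\+injective and sfp\+flat dimension recalled just above (in terms of the (non)vanishing of $\Ext^n_A(M,-)$ and $\Tor^A_n(N,-)$ against strongly finitely presented modules $M$, $N$), combined with suitable closure properties and with the elementary fact that, up to direct summands, every injective module is a product of copies of a single injective cogenerator while every projective module is a direct sum of copies of the ring.

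For part~(a), set $R^+=\Hom_\boZ(R,\boQ/\boZ)$, the right $A$\+module $R$ being regarded also as a right $R$\+module. The tensor\+hom adjunction gives a natural isomorphism $\Hom_R(-,R^+)\cong\Hom_\boZ(-,\boQ/\boZ)$, which shows that $R^+$ is an injective left $R$\+module and a cogenerator of $R\modl$; hence every injective left $R$\+module is a direct summand of a product of copies of~$R^+$. Next I would record that, for each $n\ge0$, the class of left $A$\+modules of sfp\+injective dimension at most~$n$ is closed under direct summands and under arbitrary products: the latter holds because $\Ext^k_A(M,-)$ commutes with products for $M$ strongly finitely presented (a projective resolution of $M$ by finitely generated projective modules computes it, and both $\Hom_A(P,-)$ with $P$ finitely generated projective and the passage to cohomology commute with products). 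Combining these observations, the supremum of the sfp\+injective dimensions of the underlying left $A$\+modules of all injective left $R$\+modules equals the sfp\+injective dimension of the single left $A$\+module~$R^+$ (``$\le$'' because each injective left $R$\+module is a summand of a product of copies of~$R^+$; ``$\ge$'' because $R^+$ is itself injective over~$R$). Finally, Lemma~\ref{sfp-flat-injective-dimension-dualization} applied to the right $A$\+module $G=R$ identifies the sfp\+injective dimension of the left $A$\+module $R^+=\Hom_\boZ(R,\boQ/\boZ)$ with the sfp\+flat dimension of the right $A$\+module~$R$, which is the assertion.

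Part~(b) follows the same pattern but requires no dualization. Every projective left $S$\+module is a direct summand of a direct sum of copies of~$S$, and for each $n\ge0$ the class of left $B$\+modules of sfp\+flat dimension at most~$n$ is closed under direct summands and under arbitrary direct sums, since $\Tor^B_k(N,-)$ commutes with direct sums for every strongly finitely presented right $B$\+module~$N$. Hence the supremum over projective left $S$\+modules of the sfp\+flat dimension of the underlying left $B$\+module equals the sfp\+flat dimension of the single left $B$\+module~$S$ (``$\le$'' because a projective left $S$\+module is a summand of a direct sum of copies of~$S$; ``$\ge$'' because $S$ is projective over itself).

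The only slightly delicate point is in part~(a): one has to pin down the appropriate test injective module and verify the closure of finite sfp\+injective dimension under products, after which the result is formal given Lemma~\ref{sfp-flat-injective-dimension-dualization}. If one prefers to avoid character modules, part~(a) can instead be run through the change\+of\+rings isomorphism $\boR\Hom_A(M,J)\cong\boR\Hom_R(R\ot_A^\boL M,\>J)$ for $J$ an injective left $R$\+module, which yields $\Ext^n_A(M,J)\cong\Hom_R(\Tor^A_n(R,M),\>J)$ and reduces the statement to the detection of nonvanishing by an injective cogenerator.
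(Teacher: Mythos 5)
Your proof is correct and follows essentially the same route as the paper: part~(a) reduces to the single injective cogenerator $\Hom_\boZ(R,\boQ/\boZ)$, of which every injective left $R$\+module is a direct summand of a product of copies, and then invokes Lemma~\ref{sfp-flat-injective-dimension-dualization}, while part~(b) uses closure of finite sfp\+flat dimension under direct sums and summands together with $S$ being projective over itself. You merely spell out the closure-under-products/sums details that the paper leaves implicit.
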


\begin{proof}
 In part~(a), one notices that the injective left $R$\+modules are
precisely the direct summands of infinite products of copies of
the $R$\+module $\Hom_\boZ(R,\boQ/\boZ)$, and takes into account
Lemma~\ref{sfp-flat-injective-dimension-dualization}.
 Part~(b) is easy
(cf.\ Lemma~\ref{sfp-injective-flat-modules-are-closed-under}).
\end{proof}

  Assume that all sfp\+injective left $A$\+modules have finite
injective dimensions and all sfp\+flat left $B$\+modules have finite
projective dimensions, as in Section~\ref{two-rings-dualizing-subsecn}.
 Fix an integer $n\ge0$, and set $\sE=A\modl_\sfpin(n)\subset A\modl$
to be the full subcategory of all left $A$\+modules whose sfp\+injective
dimension does not exceed~$n$.
 Similarly, set $\sF=B\modl_\sfpfl(n)\subset B\modl$ to be the full
subcategory of all left $B$\+modules whose sfp\+flat dimension
does not exceed~$n$.

\begin{prop} \label{derived-finite-sfp-dim-co-contra-derived}
\textup{(a)} The embedding of exact/abelian categories\/
$\sE\rarrow A\modl$ induces an equivalence of triangulated categories\/
$\sD^{\abs=\varnothing}(\sE)\simeq\sD^\co(A\modl)$. \par
\textup{(b)} The embedding of exact/abelian categories\/
$\sF\rarrow B\modl$ induces an equivalence of triangulated categories\/
$\sD^{\abs=\varnothing}(\sF)\simeq\sD^\ctr(B\modl)$.
\end{prop}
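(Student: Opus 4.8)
The plan is to prove part~(a) in detail; part~(b) is entirely dual, with ``coresolving'' replaced by ``resolving'', ``injective'' by ``projective'', $\sD^\co$ by $\sD^\ctr$, infinite direct sums by infinite products, and Lemma~\ref{sfp-injective-flat-modules-are-closed-under}(a) by~(b). The statement $\sD^{\abs=\varnothing}(\sE)\simeq\sD^\co(A\modl)$ unpacks into two claims, which I would establish separately: first, that in the exact category $\sE$ a complex is acyclic if and only if it is absolutely acyclic, so that $\sD^\abs(\sE)=\sD(\sE)$; and second, that the embedding $\sE\rarrow A\modl$ induces a triangulated equivalence of this common category with $\sD^\co(A\modl)$. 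The starting point is to check that $\sE=A\modl_\sfpin(n)$ is a coresolving subcategory of $A\modl$ closed under infinite direct sums. By Lemma~\ref{sfp-injective-flat-modules-are-closed-under}(a) the full subcategory $A\modl_\sfpin\subset A\modl$ is closed under extensions, cokernels of injective morphisms, and infinite direct sums, and --- since every $A$\+module embeds into an injective, hence sfp\+injective, module --- it is coresolving. Because the sfp\+injective dimension of a module coincides with its coresolution dimension relative to $A\modl_\sfpin$, the standard properties of coresolution dimension (\cite[Proposition~2.3]{St}, \cite[Lemma~A.5.4]{Pcosh}, and the assertions dual to these) show that $\sE=A\modl_\sfpin(n)$ is again coresolving; and it is closed under infinite direct sums, since a direct sum of finite sfp\+injective coresolutions of length~$\le n$ is again such a coresolution, infinite direct sums being exact in $A\modl$.

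Next I would observe that the exact category $\sE$ has finite homological dimension. By the standing assumption the category $A\modl_\sfpin$ has finite homological dimension --- as used in the proof of Corollary~\ref{two-rings-dualizing-main-cor}, there is an integer $N\ge0$ bounding the injective dimensions of all sfp\+injective left $A$\+modules. Hence every object of $\sE$ has injective dimension at most $n+N$ over~$A$, and, the injective $A$\+modules lying in~$\sE$ and being injective objects of the exact category~$\sE$, every object of~$\sE$ admits a coresolution of length~$\le n+N$ by injective objects of~$\sE$. From this, by the comparison of acyclicity notions in an exact category of finite homological dimension (\cite[Remark~2.1]{Psemi}, \cite[Proposition~A.5.6]{Pcosh}, as recalled in the proof of Corollary~\ref{two-rings-dualizing-main-cor}), the classes of acyclic, absolutely acyclic, and coacyclic complexes in~$\sE$ all coincide, so the canonical functors $\sD^\abs(\sE)\rarrow\sD^\co(\sE)\rarrow\sD(\sE)$ are isomorphisms of triangulated categories. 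Finally, since $\sE\subset A\modl$ is a coresolving subcategory closed under infinite direct sums, the dual form of \cite[Proposition~A.3.1(b)]{Pcosh} (the one invoked in Section~\ref{pseudo-derived-introd}) shows that the embedding induces a triangulated equivalence $\sD^\co(\sE)\simeq\sD^\co(A\modl)$. Composing, $\sD^{\abs=\varnothing}(\sE)=\sD(\sE)\simeq\sD^\co(A\modl)$, which is the assertion of part~(a).

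I do not anticipate a genuine obstacle here; every step is an application of the cited results of \cite{St}, \cite{Psemi}, and \cite{Pcosh} on coresolving subcategories, coresolution dimension, coderived categories, and exact categories of finite homological dimension. The only point requiring some care is the passage from the hypothesis that every sfp\+injective left $A$\+module has finite injective dimension to a \emph{uniform} bound~$N$ on these dimensions; this is what guarantees that the exact category~$\sE$ itself (and not only the ambient abelian category) has finite homological dimension, with the ordinary injective $A$\+modules furnishing enough injectives, and it is implicit in the way the standing assumption is used throughout Section~\ref{two-rings-dualizing-subsecn}.
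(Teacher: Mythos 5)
Your proof is correct and rests on the same two pillars as the paper's own argument: the exact category $\sE=A\modl_\sfpin(n)$ (resp.\ $\sF=B\modl_\sfpfl(n)$) has finite homological dimension under the standing assumptions, which by \cite[Remark~2.1]{Psemi} and \cite[Proposition~A.5.6]{Pcosh} collapses $\sD^\abs(\sE)=\sD^\co(\sE)=\sD(\sE)$, and the coresolving/resolving comparison of co/contraderived categories identifies the result with $\sD^\co(A\modl)$ (resp.\ $\sD^\ctr(B\modl)$). The only difference from the paper is the routing of this last identification: you apply the dual version of \cite[Proposition~A.3.1(b)]{Pcosh} directly to $\sE$ (after checking it is coresolving and closed under direct sums), whereas the paper quotes Propositions~\ref{sfp-inj-homotopy-coderived-comparison}--\ref{sfp-flat-homotopy-contraderived-comparison}, i.e., passes through $A\modl_\sfpin$ or $\Hot(A\modl_\inj)$; the content is the same, and your version is marginally more self-contained. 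Concerning the uniform bound~$N$ that you flag: it does in fact follow from the stated hypothesis, since sfp-injective modules are closed under infinite products (Lemma~\ref{sfp-injective-flat-modules-are-closed-under}(a)) and $\Ext_A^*(M,{-})$ commutes with products, so a family of sfp-injective modules of unbounded injective dimensions would yield a single sfp-injective module of infinite injective dimension (dually for sfp-flat modules, using closure under direct sums or products and a Tor/character-module argument); this is consistent with how the paper reads its assumption in the proof of Corollary~\ref{two-rings-dualizing-main-cor} (``have finite homological dimensions by assumption'').
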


\begin{proof}
 Follows from~\cite[Remark~2.1]{Psemi}, 
Propositions~\ref{sfp-inj-homotopy-coderived-comparison}\+-%
\ref{sfp-flat-homotopy-contraderived-comparison},
and~\cite[Proposition~A.5.6]{Pcosh}
(cf.\ the proof of Corollary~\ref{two-rings-dualizing-main-cor}).
\end{proof}

 In other words, in the terminology of
Section~\ref{two-rings-base-change-subsecn}, one can say that
the class of $\sE$\+pseudo-coacyclic complexes coincides with that
of coacyclic complexes of left $A$\+modules, while the class of
$\sF$\+pseudo-contraacyclic complexes coincides with that of
contraacyclic complex of left $B$\+modules.

\medskip

 The following definitions were given in the beginning
of~\cite[Section~5]{Pfp}.
 The \emph{$R/A$\+semicoderived category} of left $R$\+modules
$\sD^\sico_A(R\modl)$ is defined as the quotient category of
the homotopy category of complexes of left $R$\+modules $\Hot(R\modl)$
by its thick subcategory of complexes of $R$\+modules \emph{that are
coacyclic as complexes of $A$\+modules}.
 Similarly, the \emph{$S/B$\+semicontraderived category} of left
$S$\+modules $\sD^\sictr_B(S\modl)$ is defined as the quotient category
of the homotopy category of complexes of left $S$\+modules
$\Hot(S\modl)$ by its thick subcategory of complexes of $S$\+modules
\emph{that are contraacyclic as complexes of $B$\+modules}.

 As in Section~\ref{two-rings-base-change-subsecn}, we denote by
$\sG_\sE\subset R\modl$ the full subcategory of all left $R$\+modules
whose underlying $A$\+modules belong to~$\sE$, and by $\sH_\sF
\subset S\modl$ the full subcategory of all left $S$\+modules whose
underlying $B$\+modules belong to~$\sF$.
 The next proposition is our version of~\cite[Theorems~5.1
and~5.2]{Pfp}.

\begin{prop} \label{semi-co-contra-derived-comparison}
\textup{(a)} Assume that all sfp\+injective left $A$\+modules have
finite injective dimensions and the sfp\+flat dimension of the right
$A$\+module $R$ does not exceed~$n$.
 Then the embedding of exact/abelian categories\/
$\sG_\sE\rarrow R\modl$ induces an equivalence of triangulated
categories\/ $\sD(\sG_\sE)\simeq\sD^\sico_A(R\modl)$. \par
\textup{(b)} Assume that all sfp\+flat left $B$\+modules have
finite projective dimensions and the sfp\+flat dimension of the left
$B$\+module $S$ does not exceed~$n$.
 Then the embedding of exact/abelian categories\/
$\sH_\sF\rarrow S\modl$ induces an equivalence of triangulated
categories\/ $\sD(\sH_\sF)\simeq\sD^\sictr_B(S\modl)$.
\end{prop}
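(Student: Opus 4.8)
The two parts are dual, so the plan is to prove~(a) and deduce~(b) by passing to opposite rings, interchanging infinite direct sums with infinite products, $\co$ with $\ctr$, and coresolving with resolving subcategories. The first step is to check that $\sG_\sE\subset R\modl$ is a coresolving subcategory closed under infinite direct sums. Closure under direct sums is immediate: sfp\+injective left $A$\+modules are closed under infinite direct sums by Lemma~\ref{sfp-injective-flat-modules-are-closed-under}, and infinite direct sums are exact in $A\modl$, so $\sE=A\modl_\sfpin(n)$, and hence $\sG_\sE$, is closed under them. For the coresolving property, Lemma~\ref{sfp-injective-flat-dimensions-of-underlying-modules}(a) identifies the hypothesis on the sfp\+flat dimension of the right $A$\+module $R$ with the statement that the underlying left $A$\+module of every injective left $R$\+module lies in~$\sE$; by Lemma~\ref{underlying-modules-of-injectives-projectives}(a) this means $\sG_\sE$ satisfies the condition~(I), and since $\sG_\sE$ then contains all injective $R$\+modules and $R\modl$ has enough injectives, $\sG_\sE$ is coresolving.

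Next I would record the relevant equivalences of exotic derived categories. Since $\sG_\sE\subset R\modl$ is coresolving and closed under direct sums, the dual of~\cite[Proposition~A.3.1(b)]{Pcosh} gives a triangulated equivalence $\sD^\co(\sG_\sE)\simeq\sD^\co(R\modl)$. Likewise $\sE\subset A\modl$ is coresolving and closed under direct sums, and, the standing hypothesis on sfp\+injective modules making the exact category $\sE$ of finite homological dimension, Proposition~\ref{derived-finite-sfp-dim-co-contra-derived}(a) together with~\cite[Remark~2.1]{Psemi} and~\cite[Proposition~A.5.6]{Pcosh} yields triangulated equivalences $\sD^\co(\sE)=\sD^\abs(\sE)=\sD(\sE)\simeq\sD^\co(A\modl)$ induced by the embedding $\sE\rarrow A\modl$. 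From this I extract two facts about a complex $X^\bu$ with terms in $\sE$: it is coacyclic in the exact category $\sE$ if and only if it is acyclic in~$\sE$ (by finite homological dimension), and it is coacyclic in $\sE$ if and only if it is coacyclic as a complex of $A$\+modules (the ``only if'' being trivial, the ``if'' coming from faithfulness of the equivalence $\sD^\co(\sE)\rarrow\sD^\co(A\modl)$). Finally, the forgetful functor $R\modl\rarrow A\modl$ is exact and preserves direct sums, hence takes coacyclic complexes to coacyclic complexes and induces $\sD^\co(R\modl)\rarrow\sD^\co(A\modl)$; by definition $\sD^\sico_A(R\modl)$ is the Verdier quotient of $\sD^\co(R\modl)$ by the thick subcategory of complexes of $R$\+modules coacyclic as complexes of $A$\+modules, and under the equivalence $\sD^\co(\sG_\sE)\simeq\sD^\co(R\modl)$ the above forgetful functor corresponds to the one induced by $\sG_\sE\rarrow\sE$.

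The crux is then to identify, inside $\sD^\co(\sG_\sE)$, the thick subcategory of complexes acyclic in the exact category $\sG_\sE$ (whose Verdier quotient is $\sD(\sG_\sE)$) with the thick subcategory corresponding, under the equivalence of the previous step, to the complexes of $R$\+modules coacyclic over~$A$ (whose Verdier quotient is $\sD^\sico_A(R\modl)$). For a complex $X^\bu$ with terms in $\sG_\sE$, the cocycles are kernels and images of morphisms of $R$\+modules, so their underlying $A$\+modules are precisely the cocycles of the underlying complex of $A$\+modules; hence, the forgetful functor being exact and conservative, $X^\bu$ is acyclic in the exact category $\sG_\sE$ if and only if its underlying complex of $A$\+modules is acyclic in the exact category~$\sE$. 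By the two facts extracted above, this in turn holds if and only if that underlying complex is coacyclic in $\sE$, and then if and only if it is coacyclic as a complex of $A$\+modules, i.e.\ iff $X^\bu$ lies in the second thick subcategory. The two thick subcategories therefore coincide, and the Verdier quotient of $\sD^\co(\sG_\sE)$ by their common value is simultaneously $\sD(\sG_\sE)$ and $\sD^\sico_A(R\modl)$; tracing through the constructions shows the resulting equivalence is the one induced by $\sG_\sE\rarrow R\modl$.

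I expect the main obstacle to be exactly this last matching of kernels, which forces one to juggle three notions of (co)acyclicity and to use the finer structure of $\sE$ in two places: faithfulness of $\sD^\co(\sE)\rarrow\sD^\co(A\modl)$ is what allows passing from coacyclicity over $A$ back to coacyclicity in $\sE$, while the finite homological dimension of $\sE$ is what converts coacyclicity in $\sE$ into acyclicity in $\sE$ --- the latter being the notion that controls cocycles and hence makes contact with the ordinary derived category $\sD(\sG_\sE)$. By contrast, verifying that $\sG_\sE$ is coresolving and closed under direct sums, and that the forgetful functors are exact and conservative, is routine.
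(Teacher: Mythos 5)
Your proposal is correct and takes essentially the same route as the paper: the paper's proof just feeds Lemma~\ref{sfp-injective-flat-dimensions-of-underlying-modules} into Lemma~\ref{semi-pseudo-co-contra-acyclic} (which is the conservativity of the forgetful functor $\sD(\sG_\sE)\rarrow\sD(\sE)$ from Lemma~\ref{semi-pseudo-triangulated-conservative}, i.~e., exactly the cocycle comparison you carry out by hand) and then invokes Proposition~\ref{derived-finite-sfp-dim-co-contra-derived} to identify $\sE$\+pseudo-coacyclicity with coacyclicity over~$A$. Your matching of thick subcategories inside $\sD^\co(\sG_\sE)\simeq\sD^\co(R\modl)$ is an inlined version of the same argument (with only the minor point, easily checked, that coacyclicity over $A$ is invariant under isomorphism in $\sD^\co(R\modl)$, which the paper's pseudo-coacyclicity formulation absorbs automatically).
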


\begin{proof}
 The assumptions of Lemma~\ref{semi-pseudo-co-contra-acyclic}(a)
or~(b) hold by
Lemma~\ref{sfp-injective-flat-dimensions-of-underlying-modules},
so its conclusion is applicable, and it remains to recall
Proposition~\ref{derived-finite-sfp-dim-co-contra-derived}.
\end{proof}

 So, in the assumptions of
Proposition~\ref{semi-co-contra-derived-comparison},
the $R/A$\+semicoderived category of left $R$\+modules is
a pseudo-coderived category of left $R$\+modules and
the $S/B$\+semicontraderived category of left $S$\+modules
is a pseudo-contraderived category of left $S$\+modules, in
the sense of Section~\ref{pseudo-derived-introd}.

\medskip

 A \emph{relative dualizing complex} for a pair of associative ring
homomorphisms $A\rarrow R$ and $B\rarrow S$ is a relative
pseudo-dualizing complex $L^\bu\rarrow U^\bu$ in the sense of
the definition in Section~\ref{two-rings-relative-dualizing-subsecn}
such that $L^\bu=D^\bu$ is a dualizing complex of $A$\+$B$\+bimodules
in the sense of the definition in
Section~\ref{two-rings-dualizing-subsecn}.
 In other words, the condition~(i) of
Section~\ref{two-rings-dualizing-subsecn} and the conditions~(ii\+iii)
of Section~\ref{two-rings-auslander-bass-subsecn} have to be
satisfied for $D^\bu$, the condition~(iii)
of Section~\ref{two-rings-auslander-bass-subsecn} has to be satisfied
for $U^\bu$, and the condition~(iv) of
Section~\ref{two-rings-relative-dualizing-subsecn} has to be satisfied
for the morphism $D^\bu\rarrow U^\bu$.

 Notice that, in the assumption of finiteness of flat dimensions of
the right $A$\+module $R$ and the left $B$\+module~$S$,
the condition~(iii) for the complex $U^\bu$ follows from the similar
condition for the complex $L^\bu$ together with the condition~(iv),
by Proposition~\ref{relative-pseudo-dualizing-auslander-equivalence}
and Remark~\ref{finite-injective-flat-dim-bass-auslander-remark}.

 The following corollary is our generalization
of~\cite[Theorem~5.6]{Pfp}.

\begin{cor} \label{relative-dualizing-cor}
 Let $A$ and $B$ be associative rings such that all sfp\+injective
left $A$\+modules have finite injective dimensions and all sfp\+flat
left $B$\+modules have finite projective dimensions.
 Let $A\rarrow R$ and $B\rarrow S$ be associative ring homomorphisms
such that  the ring $R$ is a right $A$\+module of finite flat dimension
and the ring $S$ is a left $B$\+module of finite flat dimension.
 Let $D^\bu\rarrow U^\bu$ be a relative dualizing complex for
$A\rarrow R$ and $B\rarrow S$.
 Then there is a triangulated equivalence\/ $\sD^\sico_A(R\modl)\simeq
\sD^\sictr_B(S\modl)$ provided by mutually inverse functors\/
$\boR\Hom_R(U^\bu,{-})$ and $U^\bu\ot_S^\boL{-}$.
\end{cor}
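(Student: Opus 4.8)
The plan is to derive this from Corollary~\ref{relative-abstract-main-cor} (the base change form of the main theorem) together with Proposition~\ref{semi-co-contra-derived-comparison} (which identifies $\sD(\sG_\sE)$ and $\sD(\sH_\sF)$ with the semicoderived and semicontraderived categories), applied to a suitable pair of classes $\sE\subset A\modl$ and $\sF\subset B\modl$.

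First I would fix an integer $n\ge0$ large enough that the flat dimension of the right $A$-module $R$ and the flat dimension of the left $B$-module $S$ both do not exceed~$n$ (and, as the bookkeeping below will require, large enough to absorb the constants coming from the complexes $D^\bu$ and $U^\bu$). Set $\sE=A\modl_\sfpin(n)$ and $\sF=B\modl_\sfpfl(n)$, the full subcategories of modules of sfp-injective, respectively sfp-flat, dimension at most~$n$. By the Proposition preceding Corollary~\ref{two-rings-dualizing-main-cor}, the classes $A\modl_\sfpin$ and $B\modl_\sfpfl$ satisfy the conditions~(I--IV) with respect to $D^\bu$ for suitable parameters (here one uses that these conditions only weaken as the second parameter grows, so it may be taken large); feeding these into Lemma~\ref{two-rings-abstract-classes-co-resolution-dimension}, I obtain that $\sE$ and $\sF$ satisfy~(I--IV) with respect to $D^\bu$ for parameters $l_1$, $l_2$ which, after enlarging $n$ and $l_2$ if necessary, also satisfy $l_1\ge t_1$ and $l_2\ge t_2$. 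Next, by Lemma~\ref{sfp-injective-flat-dimensions-of-underlying-modules} the finiteness of the flat dimensions of $R$ over $A$ and of $S$ over $B$, together with the choice of~$n$, guarantees that the underlying $A$-modules of all injective left $R$-modules lie in $\sE$ and the underlying $B$-modules of all projective left $S$-modules lie in $\sF$; moreover $\sE$ and $\sF$ are closed under infinite direct sums, respectively products, because so are $A\modl_\sfpin$ and $B\modl_\sfpfl$ (Lemma~\ref{sfp-injective-flat-modules-are-closed-under}) and the (co)resolution dimension is preserved under these exact operations.

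With these verifications in hand, Proposition~\ref{relative-abstract-corresponding-classes} shows that $\sG_\sE\subset R\modl$ and $\sH_\sF\subset S\modl$ satisfy the conditions~(I--IV) with respect to $U^\bu$ with the same parameters $l_1$, $l_2$, so Corollary~\ref{relative-abstract-main-cor} (taken for $\star=\varnothing$) yields a triangulated equivalence $\sD(\sG_\sE)\simeq\sD(\sH_\sF)$ given by mutually inverse functors $\boR\Hom_R(U^\bu,{-})$ and $U^\bu\ot_S^\boL{-}$. Simultaneously, Proposition~\ref{semi-co-contra-derived-comparison}, whose hypotheses hold by the standing assumptions on $A$ and $B$ and by the choice of~$n$, provides triangulated equivalences $\sD(\sG_\sE)\simeq\sD^\sico_A(R\modl)$ and $\sD(\sH_\sF)\simeq\sD^\sictr_B(S\modl)$ induced by the exact embeddings. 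Composing the three equivalences gives $\sD^\sico_A(R\modl)\simeq\sD^\sictr_B(S\modl)$.

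The remaining point is to see that this composite equivalence is implemented by $\boR\Hom_R(U^\bu,{-})$ and $U^\bu\ot_S^\boL{-}$ as functors between the semiderived categories themselves, not merely between $\sD(\sG_\sE)$ and $\sD(\sH_\sF)$. For this I would check that the identification equivalences $\sD(\sG_\sE)\simeq\sD^\sico_A(R\modl)$ and $\sD(\sH_\sF)\simeq\sD^\sictr_B(S\modl)$ intertwine the derived functors of the two pictures, which follows from the explicit construction of these derived functors in Theorem~\ref{two-rings-generalized-main-thm} and their compatibility with the Verdier quotient functors, exactly as in the commutative square~\eqref{relative-pseudo-dualizing-triangulated-diagram}; concretely, one uses that $\boR\Hom_R(U^\bu,{-})$ and $U^\bu\ot_S^\boL{-}$ carry complexes that are coacyclic (respectively, contraacyclic) after restriction of scalars into complexes of the same kind. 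The main obstacle, such as it is, is thus not any single deep step but the coordination of the parameters $n$, $l_1$, $l_2$ so that the hypotheses of all the cited results hold simultaneously, together with this last functor-identification bookkeeping.
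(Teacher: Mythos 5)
Your proposal is correct and follows essentially the same route as the paper, whose proof is precisely the combination of Corollary~\ref{relative-abstract-main-cor} (for $\star=\varnothing$) with Proposition~\ref{semi-co-contra-derived-comparison}, applied to the classes $\sE=A\modl_\sfpin(n)$ and $\sF=B\modl_\sfpfl(n)$ fixed earlier in that section. The parameter bookkeeping and the verification of the hypotheses (conditions~(I--IV) via the dualizing-complex proposition and Lemma~\ref{two-rings-abstract-classes-co-resolution-dimension}, the underlying-module condition via Lemma~\ref{sfp-injective-flat-dimensions-of-underlying-modules}) that you spell out are exactly what the paper leaves implicit in its one-line proof.
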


\begin{proof}
 Combine Corollary~\ref{relative-abstract-main-cor}
(for $\star=\varnothing$) with
Proposition~\ref{semi-co-contra-derived-comparison}.
\end{proof}

 The triangulated equivalences provided by
Corollaries~\ref{two-rings-dualizing-main-cor}
and~\ref{relative-dualizing-cor} form a commutative diagram with
the (conservative) triangulated forgetful functors
\begin{equation}
\dgARROWLENGTH=4em
\begin{diagram} 
\node{\sD^\sico_A(R\modl)}\arrow[2]{e,=}\arrow{s}
\node[2]{\sD^\sictr_B(S\modl)}\arrow{s} \\
\node{\sD^\co(A\modl)}\arrow[2]{e,=}
\node[2]{\sD^\ctr(B\modl)}
\end{diagram}
\end{equation}
 This is the particular case of the commutative
diagram~\eqref{relative-pseudo-dualizing-triangulated-diagram}
that occurs in the situation of
Corollary~\ref{relative-dualizing-cor}.

\Section{Deconstructibility of the Auslander and Bass Classes}
\label{deconstructibility-secn}

 Let $L^\bu$ be a pseudo-dualizing complex for associative rings $A$
and~$B$.
 Assume that the finite complex $L^\bu$ is situated in the cohomological
degrees $-d_1\le m\le d_2$, and choose an integer $l_1\ge d_1$.
 As in Section~\ref{two-rings-auslander-bass-subsecn}, we consider
the Auslander class of left $B$\+modules $\sF_{l_1}=\sF_{l_1}(L^\bu)
\subset B\modl$ and the Bass class of left $A$\+modules
$\sE_{l_1}=\sE_{l_1}(L^\bu)\subset A\modl$.
 In this section we will show, generalizing the result
of~\cite[Proposition~3.10]{EH}, that $\sF_{l_1}$ and $\sE_{l_1}$ are
\emph{deconstructible classes} of modules.

 We recall the notation $\tau_{\le n}$ and $\tau_{\ge n}$ for
the functors of canonical truncation of complexes of modules.

\begin{lem} \label{ext-tor-filtered-colimit-lemma}
\textup{(a)} For every $n\in\boZ$, the functor assigning to a left
$B$\+module $N$ the left $A$\+module\/ $\Tor_n^B(L^\bu,N)$
preserves filtered inductive limits. \par
\textup{(b)} For every $n\in\boZ$ and $i\in\boZ$, the functor
assigning to a left $B$\+module $N$ the left $B$\+module\/
$\Ext^i_A(L^\bu,\tau_{\ge-n}(L^\bu\ot_B^\boL N))$ preserves
filtered inductive limits. \par
\textup{(c)} For every $n\in\boZ$, the functor assigning to a left
$A$\+module $M$ the left $B$\+module\/ $\Ext^n_A(L^\bu,M)$
preserves filtered inductive limits. \par
\textup{(d)} For every $i\in\boZ$, the functor assigning to a left
$A$\+module $M$ the left $A$\+module\/
$\Tor^B_i(L^\bu,\boR\Hom_A(L^\bu,M))$ preserves filtered
inductive limits.
\end{lem}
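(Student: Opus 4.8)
The plan is to prove parts~(a) and~(c) directly from the strong finite presentability of $L^\bu$, and then to deduce parts~(b) and~(d) from~(a) and~(c) by a truncation argument followed by a finite d\'evissage.

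\emph{Parts~(a) and~(c).} Since $L^\bu$ is strongly finitely presented as a complex of right $B$\+modules and is concentrated in the cohomological degrees $\le\nobreak d_2$, Lemma~\ref{sfp-complexes-lemma}(b) supplies a quasi-isomorphism $Q^\bu\rarrow L^\bu$ with $Q^\bu$ a complex of finitely generated projective right $B$\+modules concentrated in degrees $\le\nobreak d_2$; being a bounded above complex of flat modules, $Q^\bu$ is homotopy flat, so $\Tor^B_n(L^\bu,N)=H^{-n}(Q^\bu\ot_B N)$ for every left $B$\+module~$N$. As $H^{-n}(Q^\bu\ot_B N)$ is a subquotient of the three-term complex $Q^{-n-1}\ot_B N\rarrow Q^{-n}\ot_B N\rarrow Q^{-n+1}\ot_B N$, each functor $Q^m\ot_B{-}$ preserves filtered inductive limits (as $Q^m$ is finitely generated projective), and passing to subquotients preserves them as well (filtered inductive limits being exact in $A\modl$), part~(a) follows. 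Part~(c) is dual: choosing a quasi-isomorphism $P^\bu\rarrow L^\bu$ with $P^\bu$ a complex of finitely generated projective left $A$\+modules concentrated in degrees $\le\nobreak d_2$ (again by Lemma~\ref{sfp-complexes-lemma}(b)), one has $\Ext^n_A(L^\bu,M)=H^n(\Hom_A(P^\bu,M))$, a subquotient of $\Hom_A(P^{-n+1},M)\rarrow\Hom_A(P^{-n},M)\rarrow\Hom_A(P^{-n-1},M)$; each $\Hom_A(P^m,{-})$ preserves filtered inductive limits because $P^m$ is finitely presented.

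\emph{Parts~(b) and~(d).} The two points to keep in mind are that for a single module the complex $L^\bu\ot_B^\boL N$ is concentrated in cohomological degrees $\le\nobreak d_2$ while $\boR\Hom_A(L^\bu,M)$ is concentrated in degrees $\ge-d_2$ (both visible from $Q^\bu$ and $P^\bu$), and that, since the finite complex $L^\bu$ lives in degrees $[-d_1,d_2]$, the functor $L^\bu\ot_B^\boL{-}$ carries a complex with cohomology in degrees $\ge\nobreak b$ to one with cohomology in degrees $\ge b-d_1$. For part~(d), fix~$i$ and pick any integer $N\ge d_1-i$. Tensoring the distinguished triangle $\tau_{\le N}\boR\Hom_A(L^\bu,M)\rarrow\boR\Hom_A(L^\bu,M)\rarrow\tau_{\ge N+1}\boR\Hom_A(L^\bu,M)$ with $L^\bu$, the last term acquires cohomology only in degrees $>-i$, whence $\Tor^B_i(L^\bu,\boR\Hom_A(L^\bu,M))=H^{-i}\bigl(L^\bu\ot_B^\boL\tau_{\le N}\boR\Hom_A(L^\bu,M)\bigr)$. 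Now $\tau_{\le N}\boR\Hom_A(L^\bu,M)$ is a bounded complex with cohomology in degrees $[-d_2,N]$, and its tower of canonical truncations $\tau_{\le q}\boR\Hom_A(L^\bu,M)$ for $-d_2\le q\le N$ is functorial in~$M$, the cone of the step $\tau_{\le q}\rarrow\tau_{\le q+1}$ being the module $\Ext^{q+1}_A(L^\bu,M)$ placed in cohomological degree $q+1$. Applying $L^\bu\ot_B^\boL{-}$ and passing to the associated cohomology long exact sequences — which are functorial in~$M$, and along which filtered inductive limits are exact — one proves by induction on~$q$, starting from $\tau_{\le-d_2}$, that each functor $M\mapsto H^j\bigl(L^\bu\ot_B^\boL\tau_{\le q}\boR\Hom_A(L^\bu,M)\bigr)$ preserves filtered inductive limits; indeed the contribution of each cone is $H^{j-q-1}\bigl(L^\bu\ot_B^\boL\Ext^{q+1}_A(L^\bu,M)\bigr)=\Tor^B_{q+1-j}(L^\bu,\Ext^{q+1}_A(L^\bu,M))$, which preserves filtered inductive limits by part~(c) followed by part~(a). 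Taking $q=N$ and $j=-i$ yields part~(d). Part~(b) is entirely analogous: one may assume $n\ge-d_2$ (otherwise the truncation vanishes and the functor is zero), the complex $Z(N):=\tau_{\ge-n}(L^\bu\ot_B^\boL N)$ has cohomology in degrees $[-n,d_2]$, and applying $\boR\Hom_A(L^\bu,{-})$ to the functorial tower of canonical truncations of $Z(N)$ — whose successive cones are the modules $\Tor^B_{-q-1}(L^\bu,N)$ placed in degree $q+1$ — one shows by induction that $N\mapsto\Ext^j_A(L^\bu,\tau_{\le q}Z(N))$ preserves filtered inductive limits, the cone contributions being $\Ext^{j-q-1}_A(L^\bu,\Tor^B_{-q-1}(L^\bu,N))$, which preserve them by part~(a) followed by part~(c).

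The main obstacle is the reduction to bounded complexes in part~(d): since $\boR\Hom_A(L^\bu,M)$ can be unbounded above (it has $\Ext^n_A(L^\bu,M)\ne0$ for arbitrarily large~$n$ when $M$ has infinite injective dimension), the functor $L^\bu\ot_B^\boL{-}$ cannot be applied to it ``degreewise'', and one must first isolate a bounded canonical truncation carrying all the cohomology relevant to a given $\Tor^B_i$. Once this is in place, the remaining argument is a finite d\'evissage using nothing beyond the triangulated functors $\boR\Hom_A(L^\bu,{-})$ and $L^\bu\ot_B^\boL{-}$ together with the cohomology-level statements~(a) and~(c); in particular, no explicit resolution of $L^\bu$ by bimodules is needed.
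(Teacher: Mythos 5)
Parts~(a) and~(c) of your argument are correct and essentially agree with the paper's (both reduce to the bounded above complexes of finitely generated projectives supplied by condition~(ii)). Your part~(b) is also correct, and it takes a genuinely different route: the paper computes $L^\bu\ot_B^\boL N$ via functorial free resolutions of the diagram $(N_\alpha)$ and then applies $\Hom_A({}'\!L^\bu,{-})$, whereas you run a finite d\'evissage over the canonical truncations of the bounded complex $\tau_{\ge-n}(L^\bu\ot_B^\boL N)$, reducing everything to the module-level statements~(a) and~(c); this works because that complex has cohomology only in the degrees $-n\le m\le d_2$, so the induction terminates after finitely many steps.

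Part~(d), however, contains a genuine gap. The asserted principle that $L^\bu\ot_B^\boL{-}$ carries a complex with cohomology in degrees $\ge b$ to one with cohomology in degrees $\ge b-d_1$ amounts to saying that $L^\bu$ has flat (Tor-)dimension at most~$d_1$ as a complex of right $B$-modules, and this is not implied by the definition of a pseudo-dualizing complex: condition~(ii) only gives a \emph{bounded above} resolution by finitely generated projectives. For instance, for $A=B=R$ a Noetherian local non-Gorenstein ring and $L^\bu$ a dualizing complex, one has $\Tor_j^R(L^\bu,k)\ne0$ for infinitely many~$j$. Consequently $L^\bu\ot_B^\boL\tau_{\ge N+1}\boR\Hom_A(L^\bu,M)$ can very well have cohomology in degrees $\le-i$, the identification $\Tor^B_i(L^\bu,\boR\Hom_A(L^\bu,M))=H^{-i}\bigl(L^\bu\ot_B^\boL\tau_{\le N}\boR\Hom_A(L^\bu,M)\bigr)$ fails, and in fact this Tor group depends on $\Ext^q_A(L^\bu,M)$ for arbitrarily large~$q$, so no single bounded truncation suffices. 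The argument can be repaired without abandoning your d\'evissage: $\boR\Hom_A(L^\bu,M)$ is the homotopy colimit of the telescope of its truncations $\tau_{\le N}$, the functor $L^\bu\ot_B^\boL{-}$ preserves coproducts and hence such homotopy colimits, and the cohomology of a telescope is the corresponding direct limit; so $H^{-i}(L^\bu\ot_B^\boL\boR\Hom_A(L^\bu,M))\simeq\varinjlim_N H^{-i}(L^\bu\ot_B^\boL\tau_{\le N}\boR\Hom_A(L^\bu,M))$ naturally in~$M$, each stage preserves filtered inductive limits by your induction, and the two filtered colimits commute. (The paper instead argues at the chain level, using functorial injective resolutions of the modules $M_\alpha$ and a bounded above complex of flat right $B$-modules resolving~$L^\bu$.) As written, though, your part~(d) does not go through.
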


\begin{proof}
 First of all we notice that in all the four cases it suffices to check
that the functor in question preserves filtered inductive limits when
viewed as a functor taking values in the category of abelian groups.
 Part~(a) holds for any complex of $A$\+$B$\+bimodules $L^\bu$
(it suffices to replace $L^\bu$, viewed as a complex of right
$B$\+modules, by its homotopy flat resolution).
 Part~(c) holds because the complex $L^\bu$, viewed as a complex
of left $A$\+modules, is quasi-isomorphic to a bounded above
complex of finitely generated projective left $A$\+modules.

 To prove part~(b), it is convenient to use the existence of
a (nonadditive) functor assigning to a module its free resolution.
 Given a filtered diagram of left $B$\+modules $(N_\alpha)$,
this allows to construct a filtered diagram of nonpositively
cohomologically graded complexes of free left $B$\+modules
$(Q^\bu_\alpha)$ endowed with a quasi-isomorphism of diagrams
of complexes of left $B$\+modules $Q^\bu_\alpha\rarrow N_\alpha$.
 Then the inductive limit of such free resolutions
$\varinjlim_\alpha Q^\bu_\alpha$ is a flat resolution of
the left $B$\+module $\varinjlim_\alpha N_\alpha$.

 Now we have a filtered diagram of complexes of left $A$\+modules
$\tau_{\ge -n}(L^\bu\ot_B Q^\bu_\alpha)$ representing the derived
category objects $\tau_{\ge -n}(L^\bu\ot_B^\boL N_\alpha)$,
and the complex of left $A$\+modules $\tau_{\ge -n}(L^\bu\ot_B
\varinjlim_\alpha Q^\bu_\alpha)$ representing the derived category
object $\tau_{\ge -n}(L^\bu\ot_B^\boL \varinjlim_\alpha N_\alpha)$
is the inductive limit of this diagram.
 Denote by ${}'\!L^\bu$ a bounded above complex of finitely generated
projective left $A$\+modules endowed with a quasi-isomorphism of
complexes of left $A$\+modules ${}'\!L^\bu\rarrow L^\bu$.
 It remains to observe the functor $\Hom_A({}'\!L^\bu,{-})$ preserves
inductive limits of diagrams of uniformly bounded below complexes of
left $A$\+modules.

 To prove part~(d), we use the existence of an (also nonadditive)
functor assigning to a module its injective resolution.
 Given a filtered diagram of left $A$\+modules $(M_\alpha)$, this allows
to produce a filtered diagram of nonnegatively cohomologically graded
complexes of injective left $A$\+modules $(J^\bu_\alpha)$ endowed with
a quasi-isomorphism of diagrams of complexes of left $A$\+modules
$M_\alpha\rarrow J^\bu_\alpha$.
 By Lemma~\ref{sfp-injective-flat-modules-are-closed-under}(a),
the inductive limit $\varinjlim_\alpha J^\bu_\alpha$ is a complex of
sfp\+injective left $A$\+modules (but we will not need to use this fact).
 Let $H^\bu$ be a nonnegatively cohomologically graded complex of
injective left $A$\+modules endowed with a quasi-isomorphism of
complexes of left $A$\+modules $\varinjlim_\alpha J^\bu_\alpha
\rarrow H^\bu$.
 Then $H^\bu$ is an injective resolution of the left $A$\+module
$\varinjlim_\alpha M_\alpha$.

 Let us show that the induced morphism of complexes of left $B$\+modules
$\varinjlim_\alpha\Hom_A(L^\bu,J^\bu_\alpha)\rarrow
\Hom_A(L^\bu,H^\bu)$ is a quasi-isomorphism.
 It is enough to check that this is a quasi-isomorphism of complexes of
abelian groups.
 As above, let ${}'\!L^\bu$ be a bounded above complex of finitely
generated projective left $A$\+modules endowed with a quasi-isomorphism
of complexes of left $A$\+modules ${}'\!L^\bu\rarrow L^\bu$.
 Then the morphisms of complexes of abelian groups
$\Hom_A(L^\bu,J^\bu_\alpha)\rarrow\Hom_A({}'\!L^\bu,J^\bu_\alpha)$
and $\Hom_A(L^\bu,H^\bu)\rarrow\Hom_A({}'\!L^\bu,H^\bu)$ are
quasi-isomorphisms, since $J^\bu_\alpha$ and $H^\bu$ are bounded
below complexes of injective left $A$\+modules.
 Hence it suffices to check that the morphism of complexes of abelian
groups $\varinjlim_\alpha\Hom_A({}'\!L^\bu,J^\bu_\alpha)\rarrow
\Hom_A({}'\!L^\bu,H^\bu)$ is a quasi-isomorphism.
{\hbadness=1400\par}

 The natural map $\varinjlim_\alpha\Hom_A({}'\!L^\bu,J^\bu_\alpha)
\rarrow\Hom_A({}'\!L^\bu,\varinjlim_\alpha J^\bu_\alpha)$ is
an isomorphism of complexes of abelian groups, since ${}'\!L^\bu$ is
a bounded above complex of finitely presented left $A$\+modules and
$J^\bu_\alpha$ are uniformly bounded below complexes of
left $A$\+modules.
 So it remains to check that applying the functor
$\Hom_A({}'\!L^\bu,{-})$ to the morphism of complexes of left
$A$\+modules $\varinjlim_\alpha J^\bu_\alpha\rarrow H^\bu$
produces a quasi-isomorphism of complexes of abelian groups.
 Let $K^\bu$ denote the cone of the latter morphism of complexes of
left $A$\+modules.
 Then $K^\bu$ is a bounded below acyclic complex of left $A$\+modules.
 Since ${}'\!L^\bu$ is a bounded above complex of projective left
$A$\+modules, the complex of abelian groups
$\Hom_A({}'\!L^\bu,K^\bu)$ is acyclic.

 This proves that the morphism of complexes of left $B$\+modules
$\varinjlim_\alpha\Hom_A(L^\bu,J^\bu_\alpha)\allowbreak\rarrow
\Hom_A(L^\bu,H^\bu)$ is a quasi-isomorphism.
 Now the complex $\Hom_A(L^\bu,J_\alpha^\bu)$ represents the derived
category object $\boR\Hom_A(L^\bu,M_\alpha)$, while the complex
$\Hom_A(L^\bu,H^\bu)$ represents the derived category object
$\boR\Hom_A(L^\bu,\varinjlim_\alpha M_\alpha)\in\sD^+(B\modl)$.
 Denote by ${}''\!L^\bu$ a bounded above compelex of flat right
$B$\+modules endowed with a quasi-isomorphism of complexes of right
$B$\+modules ${}''\!L^\bu\rarrow L^\bu$.
 It remains to observe that the functor ${}''\!L^\bu\ot_B{-}$ takes
inductive limits of complexes of left $B$\+modules to inductive limits of
complexes of abelian groups, and quasi-isomorphisms of complexes of
left $B$\+modules to quasi-isomorphisms of complexes of abelian groups.
\end{proof}

\begin{cor} \label{two-rings-closed-filtered-colimits}
\textup{(a)} The full subcategory\/ $\sF_{l_1}\subset B\modl$ is closed
under filtered inductive limits. \par
\textup{(b)} The full subcategory\/ $\sE_{l_1}\subset A\modl$ is closed
under filtered inductive limits.
\end{cor}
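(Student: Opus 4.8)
The plan is to deduce both statements formally from Lemma~\ref{ext-tor-filtered-colimit-lemma}, using that filtered inductive limits of abelian groups are exact (so an inductive limit of isomorphisms is an isomorphism) together with the naturality in the module argument of the adjunction morphisms appearing in the definitions of\/ $\sE_{l_1}$ and\/ $\sF_{l_1}$. Throughout, one uses that the complex $L^\bu$, viewed over $A$ on the left or over $B$ on the right, is quasi-isomorphic to a bounded above complex of finitely generated projectives concentrated in cohomological degrees $\le d_2$, which keeps all the relevant cohomology bounded; recall also the paper's notation $\Ext^i_A(L^\bu,X^\bu)=H^i\boR\Hom_A(L^\bu,X^\bu)$ and $\Tor_i^B(L^\bu,X^\bu)=H^{-i}(L^\bu\ot_B^\boL X^\bu)$.

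\emph{Part~(a).} Let $(F_\alpha)$ be a filtered diagram of $B$\+modules in $\sF_{l_1}$ and put $F=\varinjlim_\alpha F_\alpha$. By Lemma~\ref{ext-tor-filtered-colimit-lemma}(a), $\Tor_n^B(L^\bu,F)=\varinjlim_\alpha\Tor_n^B(L^\bu,F_\alpha)=0$ for all $n>l_1$, which is the first of the two conditions defining $\sF_{l_1}$. Consequently $L^\bu\ot_B^\boL F$ has cohomology concentrated in the degrees $-l_1\le m\le d_2$, so the canonical morphism $L^\bu\ot_B^\boL F\rarrow\tau_{\ge-l_1}(L^\bu\ot_B^\boL F)$ is an isomorphism in $\sD^-(A\modl)$; the same applies to each $F_\alpha$. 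Hence $\boR\Hom_A(L^\bu,\,L^\bu\ot_B^\boL F)\simeq\boR\Hom_A(L^\bu,\tau_{\ge-l_1}(L^\bu\ot_B^\boL F))$, and Lemma~\ref{ext-tor-filtered-colimit-lemma}(b) (taken with $n=l_1$) shows that the canonical map $\varinjlim_\alpha H^i\boR\Hom_A(L^\bu,\,L^\bu\ot_B^\boL F_\alpha)\rarrow H^i\boR\Hom_A(L^\bu,\,L^\bu\ot_B^\boL F)$ is an isomorphism for every $i\in\boZ$. Now consider the unit morphism $\eta\colon\Id\rarrow\boR\Hom_A(L^\bu,\,L^\bu\ot_B^\boL{-})$ of the adjunction; it is natural on $A\modl$ (with values in $\sD(B\modl)$), so for each $\alpha$ there is a commutative square connecting $\eta_{F_\alpha}$ to $\eta_F$ through the structure maps $F_\alpha\rarrow F$. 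Applying $H^i$ and passing to the filtered inductive limit over $\alpha$ (using the previous isomorphism together with $\varinjlim_\alpha H^i(F_\alpha)=H^i(F)$), we obtain $H^i(\eta_F)=\varinjlim_\alpha H^i(\eta_{F_\alpha})$ for every $i$. Since each $F_\alpha\in\sF_{l_1}$, the morphism $\eta_{F_\alpha}$ is an isomorphism, hence so is $H^i(\eta_{F_\alpha})$, and therefore so is $H^i(\eta_F)$, for every $i$. As $F$ and $\boR\Hom_A(L^\bu,\,L^\bu\ot_B^\boL F)$ both have bounded cohomology, $\eta_F$ is an isomorphism in $\sD^+(B\modl)$, and thus $F\in\sF_{l_1}$.

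\emph{Part~(b).} Let $(E_\alpha)$ be a filtered diagram of $A$\+modules in $\sE_{l_1}$ and put $E=\varinjlim_\alpha E_\alpha$. By Lemma~\ref{ext-tor-filtered-colimit-lemma}(c), $\Ext^n_A(L^\bu,E)=\varinjlim_\alpha\Ext^n_A(L^\bu,E_\alpha)=0$ for all $n>l_1$. For the counit morphism $\varepsilon\colon L^\bu\ot_B^\boL\boR\Hom_A(L^\bu,{-})\rarrow\Id$, which is natural on $A\modl$ with values in $\sD(A\modl)$, the same naturality-square argument as in part~(a), combined with Lemma~\ref{ext-tor-filtered-colimit-lemma}(d) (which says exactly that $\varinjlim_\alpha H^{-i}(L^\bu\ot_B^\boL\boR\Hom_A(L^\bu,E_\alpha))\rarrow H^{-i}(L^\bu\ot_B^\boL\boR\Hom_A(L^\bu,E))$ is an isomorphism for every $i$) and the exactness of filtered inductive limits, yields $H^j(\varepsilon_E)=\varinjlim_\alpha H^j(\varepsilon_{E_\alpha})$ for every $j\in\boZ$. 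Each $\varepsilon_{E_\alpha}$ is an isomorphism because $E_\alpha\in\sE_{l_1}$, so $H^j(\varepsilon_E)$ is an inductive limit of isomorphisms, hence an isomorphism, for every $j$. In particular $H^j(L^\bu\ot_B^\boL\boR\Hom_A(L^\bu,E))\cong H^j(E)$ vanishes for $j\ne0$, so both the source and the target of $\varepsilon_E$ have bounded cohomology, and a morphism in $\sD(A\modl)$ between objects with bounded cohomology that induces isomorphisms on all cohomology modules is an isomorphism. Hence $\varepsilon_E$ is an isomorphism in $\sD^-(A\modl)$ and $E\in\sE_{l_1}$.

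There is no genuine obstacle here: the corollary is a formal consequence of Lemma~\ref{ext-tor-filtered-colimit-lemma}. The only two points worth a remark are that it is the adjunction morphism itself, rather than merely some abstract isomorphism of cohomology groups, that is shown to be invertible --- this is supplied by the naturality squares above --- and that the truncation $\tau_{\ge-l_1}$ occurring in Lemma~\ref{ext-tor-filtered-colimit-lemma}(b) causes no loss, since for modules in $\sF_{l_1}$ (and for their filtered inductive limits, by Lemma~\ref{ext-tor-filtered-colimit-lemma}(a)) the $\Tor$\+vanishing already makes the truncation morphism a quasi-isomorphism.
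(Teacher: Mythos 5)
Your proposal is correct and follows essentially the same route as the paper: both deduce the corollary from Lemma~\ref{ext-tor-filtered-colimit-lemma}, first obtaining the Tor (resp.\ Ext) vanishing for the limit module from part~(a) (resp.~(c)), noting that the truncation $\tau_{\ge-l_1}$ is then harmless, and finally using parts~(b) and~(d) together with the naturality of the adjunction morphisms and exactness of filtered colimits to see that the adjunction morphism for the limit is a filtered colimit of isomorphisms on cohomology, hence an isomorphism. The paper's proof is just a compressed version of your argument, leaving the naturality squares implicit.
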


\begin{proof}
 Part~(a): let $(F_\alpha)$ be a filtered diagram of left $B$\+modules
$F_\alpha\in\sF_{l_1}$.
 Set $F=\varinjlim_\alpha F_\alpha$.
 Then, by the definition, $\Tor_n^B(L^\bu,F_\alpha)=0$ for $n>l_1$,
and by Lemma~\ref{ext-tor-filtered-colimit-lemma}(a) it follows that
$\Tor_n^B(L^\bu,F)=0$ for $n>l_1$.
 Hence $L^\bu\ot_B^\boL F_\alpha=
\tau_{\ge-l_1}(L^\bu\ot_B^\boL F_\alpha)$ and
$L^\bu\ot_B^\boL F=\tau_{\ge-l_1}(L^\bu\ot_B^\boL F)$.
 By the definition, the adjunction morphisms $F_\alpha\rarrow
\boR\Hom_A(L^\bu,\>L^\bu\ot_B^\boL F_\alpha)$ are isomorphisms in
$\sD(B\modl)$, and by Lemma~\ref{ext-tor-filtered-colimit-lemma}(b)
we can conclude that the adjunction morphism $F\rarrow
\boR\Hom_A(L^\bu,\>L^\bu\ot_B^\boL F)$ is an isomorphism in
$\sD(B\modl)$, too.
 The proof of part~(b) is similar, based on
Lemma~\ref{ext-tor-filtered-colimit-lemma}(c\+d).
\end{proof}

 Let $R$ be an associative ring and $\sG\subset R\modl$ be a class of
left $R$\+modules.
 Following the papers~\cite{EH,ST}, we say that $\sG$ is
a \emph{Kaplansky class} if there exists a cardinal~$\kappa$ such that
for every left $R$\+module $G\in\sG$ and every element $x\in G$
there is an $R$\+submodule $F\subset G$ with less than~$\kappa$
generators for which $F$, $G/F\in\sG$ and $x\in F$.

\begin{lem} \label{ext-tor-cardinality}
 Let $\lambda$~be an infinite cardinal not smaller than
the cardinalities of the rings $A$ and~$B$.  Then \par
\textup{(a)} for every complex $N^\bu$ of left $B$\+modules with
the cohomology modules $H^i(N^\bu)$, \ $i\in\boZ$, of the cardinalities
not exceeding~$\lambda$, the cardinalities of the left $A$\+modules
$\Tor^B_j(L^\bu,N^\bu)$, \,$j\in\boZ$, do not exceed~$\lambda$; \par
\textup{(b)} for every \emph{bounded below} complex $M^\bu$ of left
$A$\+modules with the cohomology modules $H^i(M^\bu)$, \ $i\in\boZ$,
of the cardinalities not exceeding~$\lambda$, the cardinalities of
the left $B$\+modules $\Ext^j_A(L^\bu,N^\bu)$, \,$j\in\boZ$,
do not exceed~$\lambda$.
\end{lem}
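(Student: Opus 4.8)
The plan is to reduce both parts to the case where the second argument is a complex all of whose \emph{terms}, not merely cohomology modules, have cardinality at most~$\lambda$, and then to compute $\Tor$ and $\Ext$ against a strongly finitely presented resolution of~$L^\bu$ by an entirely termwise count.

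\emph{Step 1: small models.} First I would establish the auxiliary fact that if $X^\bu$ is any complex over an associative ring of cardinality $\le\lambda$ with $\mathrm{card}(H^i(X^\bu))\le\lambda$ for all~$i$, then some subcomplex ${}'\!X^\bu\subseteq X^\bu$ with $\mathrm{card}({}'\!X^i)\le\lambda$ for all~$i$ includes into $X^\bu$ by a quasi-isomorphism, and ${}'\!X^\bu$ may be chosen bounded below whenever $X^\bu$~is. This is a standard downward L\"owenheim--Skolem type argument: one starts from the subcomplex generated (over the ring, and closed under the differential) by a $\le\lambda$-sized set of cocycles whose classes generate each $H^i(X^\bu)$; one then repeatedly enlarges it, in $\omega$ steps, by $\le\lambda$ many elements chosen so as to kill the kernel of the current map to $H^\bu(X^\bu)$; and one passes to the union. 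Each term of the union has cardinality $\le\aleph_0\cdot\lambda=\lambda$, the induced map on cohomology is surjective already after the first step, and passage to a subcomplex cannot create nonzero terms in degrees where $X^\bu$ vanishes. I expect the only genuinely non-routine point of the whole lemma to be the verification that this inclusion is injective on cohomology: a cocycle of ${}'\!X^\bu$ that bounds in $X^\bu$ already lies in some finite stage of the construction, and its class is made a coboundary at the next stage, hence is zero in $H^\bu$ of the union.

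\emph{Step 2: part~(a).} By Step~1 I may assume $\mathrm{card}(N^i)\le\lambda$ for all~$i$. Using condition~(ii) for $L^\bu$ together with Lemma~\ref{sfp-complexes-lemma}(b), choose a quasi-isomorphism ${}''\!L^\bu\rarrow L^\bu$ with ${}''\!L^\bu$ a complex of finitely generated projective right $B$\+modules concentrated in the cohomological degrees $\le d_2$; such a complex is homotopy flat, so $\Tor_j^B(L^\bu,N^\bu)=H^{-j}({}''\!L^\bu\ot_BN^\bu)$. In each cohomological degree~$m$, the complex ${}''\!L^\bu\ot_BN^\bu$ is the direct sum of the groups ${}''\!L^p\ot_BN^{m-p}$ over $p\le d_2$, that is a countable direct sum of abelian groups of cardinality $\le\lambda$ each; hence every term, and therefore every subquotient such as $\Tor_j^B(L^\bu,N^\bu)$, has cardinality at most $\aleph_0\cdot\lambda=\lambda$.

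\emph{Step 3: part~(b).} Here the boundedness-below hypothesis is used. By Step~1 I may assume $\mathrm{card}(M^i)\le\lambda$ for all~$i$ and $M^i=0$ for $i<-r$ with some integer~$r$. Choose a quasi-isomorphism ${}'\!L^\bu\rarrow L^\bu$ with ${}'\!L^\bu$ a complex of finitely generated projective left $A$\+modules concentrated in the cohomological degrees $\le d_2$; being a bounded above complex of projectives, it is homotopy projective, so $\Ext_A^j(L^\bu,M^\bu)=H^j\Hom_A({}'\!L^\bu,M^\bu)$. In cohomological degree~$m$ the complex $\Hom_A({}'\!L^\bu,M^\bu)$ is the product of the groups $\Hom_A({}'\!L^p,M^{p+m})$ over $p$; but ${}'\!L^p=0$ for $p>d_2$ and $M^{p+m}=0$ for $p<-r-m$, so this product is \emph{finite}, and each factor, being a direct summand of a finite power of $M^{p+m}$, has cardinality $\le\lambda$. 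Thus every term of $\Hom_A({}'\!L^\bu,M^\bu)$, hence also its subquotient $\Ext_A^j(L^\bu,M^\bu)$, has cardinality $\le\lambda$. (It is precisely boundedness below in~(b) that keeps this Hom-totalization a finite product rather than an infinite one; an infinite product of $\lambda$-sized groups would in general be too large, whereas the infinite \emph{direct} sum occurring in Step~2 is harmless, which is why part~(a) needs no boundedness hypothesis.)
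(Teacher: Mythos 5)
Your argument is correct, and its endgame coincides with what the paper leaves implicit: after arranging that the second argument has all terms of cardinality at most~$\lambda$, one computes against a bounded above complex of finitely generated projectives quasi-isomorphic to $L^\bu$ and counts termwise, using that the totalization is a countable direct sum in part~(a) and a finite product in part~(b) (the latter being exactly where bounded-below-ness enters). Where you diverge from the paper is in the reduction to small terms. The paper first truncates: in~(a) it writes $N^\bu$ as the filtered union of its canonical truncations $\tau_{\le n}N^\bu$ and uses that $\Tor$ commutes with filtered inductive limits (and that modules of cardinality $\le\lambda$ are preserved by them), reducing to a bounded above complex; in~(b) it observes that for fixed~$j$ the group $\Ext^j_A(L^\bu,M^\bu)$ depends only on a large enough finite truncation $\tau_{\le n}M^\bu$, because $L^\bu$ is resolved by a bounded above complex; in both cases it then invokes the standard existence of a quasi-isomorphic bounded above (resp.\ finite) complex with terms of cardinality $\le\lambda$ mapping to the truncated complex. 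You instead prove a single downward L\"owenheim--Skolem-type lemma producing a quasi-isomorphic \emph{subcomplex} with $\lambda$-sized terms for an arbitrary, possibly unbounded complex (your Step~1 is sound: surjectivity on cohomology from the initial cocycle choices, injectivity from the $\omega$-stage killing process, and boundedness below is inherited by subcomplexes), and you dispose of the truncation in~(b) by the finite-product observation directly. The paper's route reuses machinery already present (truncations, compatibility of $\Tor$ with filtered colimits, small resolutions of bounded above complexes, which are classical); yours is more self-contained and uniform, handling unbounded complexes in one stroke at the price of proving the subcomplex lemma, whose only delicate point — injectivity of the induced map on cohomology — you address correctly.
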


\begin{proof}
 Part~(a): representing the complex $N^\bu$ as the inductive limit of its
subcomplexes of canonical truncation $\tau_{\le n}(N^\bu)$, one can
assume $N^\bu$ to be bounded above (since the $\Tor$ commutes with
filtered inductive limits and the class of all modules of the cardinality
not exceeding~$\lambda$ is preserved by countable inductive limits).
 Then there is a bounded above complex of left $B$\+modules ${}'\!N^\bu$
with the terms ${}'\!N^i$ of the cardinality not exceeding~$\lambda$,
endowed with a quasi-isomorphism of complexes of left $B$\+modules
${}'\!N^\bu\rarrow N^\bu$.
 One can also replace $L^\bu$ by a quasi-isomorphic bounded above
complex of finitely generated projective right $B$\+modules, and
the desired assertion follows.

 Part~(b): for every given degree~$j$, the left $B$\+module
$\Ext_A^j(L^\bu,M^\bu)$ only depends on a large enough finite
truncation $\tau_{\le n}M^\bu$ of the complex~$M^\bu$ (since
the complex $L^\bu$ is bounded above).
 So one can assume $M^\bu$ to be a finite complex; and then there is
a finite complex of left $A$\+modules ${}'\!M^\bu$ with the terms
${}'\!M^i$ of the cardinality not exceeding~$\lambda$, endowed with
a quasi-isomorphism of complexes of left $A$\+modules
${}'\!M^\bu\rarrow M^\bu$.
 It remains to replace $L^\bu$ by a quasi-isomorphic bounded above
complex of finitely generated projective left $A$\+modules.
\end{proof}

 The following lemma is our version of~\cite[Proposition~3.10]{EH}.

\begin{lem} \label{auslander-bass-kaplansky}
\textup{(a)} The class of left $B$\+modules\/ $\sF_{l_1}\subset B\modl$
is a Kaplansky class. \par
\textup{(b)} The class of left $A$\+modules\/ $\sE_{l_1}\subset A\modl$
is a Kaplansky class.
\end{lem}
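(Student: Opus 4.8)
The plan is to follow the pattern of \cite[Proposition~3.10]{EH}. Fix an infinite cardinal $\lambda$ not smaller than $|A|$ and $|B|$, so that Lemmas~\ref{ext-tor-cardinality} and~\ref{ext-tor-filtered-colimit-lemma} are available, and take $\kappa=\lambda^+$. For part~(a): given $G\in\sF_{l_1}$ and $x\in G$, I would build an ascending chain of submodules $F_0\subseteq F_1\subseteq F_2\subseteq\dotsb$ of $G$ with $x\in F_0$ and $|F_i|\le\lambda$ for all $i$, and then put $F=\bigcup_{i<\omega}F_i$; then $|F|\le\lambda$, so $F$ has fewer than $\kappa$ generators, and the task is to choose the $F_i$ so that both $F$ and $G/F$ lie in $\sF_{l_1}$. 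Throughout, write $T^j(M)=\Ext^j_A(L^\bu,\tau_{\ge-l_1}(L^\bu\ot^\boL_B M))$ and let $\phi_M\colon M\rarrow T^0(M)$ be the natural transformation obtained by composing the adjunction morphism $M\rarrow\boR\Hom_A(L^\bu,L^\bu\ot^\boL_B M)$ with the map induced by the canonical truncation $L^\bu\ot^\boL_B M\rarrow\tau_{\ge-l_1}(L^\bu\ot^\boL_B M)$; by Lemma~\ref{ext-tor-filtered-colimit-lemma}(b), each $T^j$ preserves filtered inductive limits.

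The chain is arranged so that, for every $i$, the following ``defects'' of $F_i$ are annihilated at the next step: (1) every element of $\Tor^B_n(L^\bu,F_i)$ with $n>l_1$ dies in $\Tor^B_n(L^\bu,F_{i+1})$; (2) every element of the kernel of $\Tor^B_{l_1}(L^\bu,F_i)\rarrow\Tor^B_{l_1}(L^\bu,G)$ dies in $\Tor^B_{l_1}(L^\bu,F_{i+1})$; (3) every element of $T^j(F_i)$ with $j\ne0$ dies in $T^j(F_{i+1})$; (4) every element of the cokernel of $\phi_{F_i}$ dies in the cokernel of $\phi_{F_{i+1}}$. Each of the groups involved has cardinality at most~$\lambda$: for the $\Tor$ groups this is Lemma~\ref{ext-tor-cardinality}(a); for $T^j(F_i)$ one first observes that $\tau_{\ge-l_1}(L^\bu\ot^\boL_B F_i)$ is cohomologically bounded with cohomology modules of cardinality $\le\lambda$ (again Lemma~\ref{ext-tor-cardinality}(a)) and then invokes Lemma~\ref{ext-tor-cardinality}(b). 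Because $G\in\sF_{l_1}$, the image of each defect element of $F_i$ already vanishes in the corresponding group attached to $G$ (for~(1), $\Tor^B_n(L^\bu,G)=0$; for~(3), $T^j(G)=H^j(G)=0$; for~(4), $\phi_G$ is an isomorphism); expressing $G$ as the filtered union of its at-most-$\lambda$-generated submodules containing $F_i$ and using that the relevant functors preserve filtered colimits (parts~(a) and~(b) of Lemma~\ref{ext-tor-filtered-colimit-lemma}), each such defect element already dies in one such submodule. Letting $F_{i+1}$ be the submodule generated by $F_i$ together with these witnessing submodules --- at most~$\lambda$ of them, each of size $\le\lambda$ --- produces the chain.

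It remains to verify that $F=\bigcup_iF_i$ succeeds. From~(1) and part~(a) of Lemma~\ref{ext-tor-filtered-colimit-lemma} we get $\Tor^B_n(L^\bu,F)=0$ for $n>l_1$, hence $L^\bu\ot^\boL_B F\simeq\tau_{\ge-l_1}(L^\bu\ot^\boL_B F)$, and the adjunction morphism $F\rarrow\boR\Hom_A(L^\bu,L^\bu\ot^\boL_B F)$ is identified on cohomology with $\phi_F$ in degree~$0$ and with the zero map $0\rarrow T^j(F)$ in degrees $j\ne0$. By~(3) and Lemma~\ref{ext-tor-filtered-colimit-lemma}(b), $T^j(F)=0$ for $j\ne0$; moreover $\phi_F$ is injective because $\phi_G$ is (by naturality, the composite of $F_i\hookrightarrow G$ with $\phi_G$ factors through $\phi_{F_i}$, and that composite is injective, so every $\phi_{F_i}$ and hence their filtered colimit $\phi_F$ is injective), and surjective by~(4). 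Therefore the adjunction morphism for $F$ is an isomorphism, so $F\in\sF_{l_1}$. Condition~(2) together with $G\in\sF_{l_1}$ forces $\Tor^B_n(L^\bu,G/F)=0$ for all $n>l_1$, via the long exact sequence of $\Tor^B_n(L^\bu,{-})$ attached to $0\rarrow F\rarrow G\rarrow G/F\rarrow0$; and the adjunction morphism for $G/F$ is an isomorphism by the triangulated two-out-of-three principle applied to the morphism from the triangle $(F,G,G/F)$ to its image under $\boR\Hom_A(L^\bu,L^\bu\ot^\boL_B{-})$. Hence $G/F\in\sF_{l_1}$, completing part~(a).

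Part~(b) is entirely dual, carried out inside $A\modl$ with $\sF_{l_1}$ replaced by $\sE_{l_1}$: one replaces $\Tor^B_n(L^\bu,{-})$ by $\Ext^n_A(L^\bu,{-})$, the functors $T^j$ by $\Tor^B_{-j}(L^\bu,\boR\Hom_A(L^\bu,{-}))$, and the unit $M\rarrow\boR\Hom_A(L^\bu,L^\bu\ot^\boL_B M)$ by the counit $L^\bu\ot^\boL_B\boR\Hom_A(L^\bu,M)\rarrow M$; the filtered-colimit preservation is now supplied by parts~(c) and~(d) of Lemma~\ref{ext-tor-filtered-colimit-lemma} (no truncation being needed, since $\boR\Hom_A(L^\bu,{-})$ always lands in cohomologically bounded-below complexes), and the cardinality bounds by Lemmas~\ref{ext-tor-cardinality}(a) and~(b). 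Here no analogue of condition~(2) is required, because the long exact sequence of $\Ext^n_A(L^\bu,{-})$ propagates the vanishing in degrees $n>l_1$ directly; on the other hand the map $\Tor^B_0(L^\bu,\boR\Hom_A(L^\bu,F_i))\rarrow F_i$ induced by the counit need not become injective after passage to a submodule, so the list of chain conditions must be enlarged by the requirement that its kernel be annihilated at the next step. The main obstacle, in both parts, is exactly this bookkeeping: keeping track of which functors preserve filtered colimits already at the finite stages (this is why the canonical truncations $\tau_{\ge-l_1}$ occur in Lemma~\ref{ext-tor-filtered-colimit-lemma}(b)) versus which identifications of the adjunction morphisms with $\phi$, $\psi$, and their truncated cousins become available only for the union $F$ once the relevant $\Tor$- or $\Ext$-vanishing has been achieved, together with the routine check that all the kernels and cokernels of adjunction maps in play again define functors commuting with filtered inductive limits.
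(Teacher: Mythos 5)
Your proof is correct and takes essentially the same route as the paper's: the \cite{EH}-style construction of a countable increasing chain of submodules of size at most~$\lambda$ that kills the obstruction classes, with Lemma~\ref{ext-tor-cardinality} giving the cardinality bounds and Lemma~\ref{ext-tor-filtered-colimit-lemma} the passage to the union, the only differences being bookkeeping (the paper annihilates the defects of the quotients $G/N^{(m)}$ directly instead of your long-exact-sequence plus two-out-of-three treatment of $G/F$, and kills kernel elements of the degree-zero map rather than deducing injectivity by naturality). Do make sure that in part~(b) the cokernel of the counit map $\Tor_0^B(L^\bu,\boR\Hom_A(L^\bu,F_i))\rarrow F_i$ is also among the classes annihilated along the chain, since for a submodule inclusion neither injectivity nor surjectivity of this map is automatic.
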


\begin{proof}
 Let $\lambda$ be an infinite cardinal not smaller than
the cardinalities of the rings $A$ and~$B$.
 We will show that for every left $B$\+module $G\in\sF_{l_1}$ and any
$B$\+submodule $N\subset G$ with at most~$\lambda$ generators there
exists a $B$\+submodule $F\subset G$ with at most~$\lambda$ generators
such that $F$, $G/F\in\sF_{l_1}$ and $N\subset F\subset G$.
 The class of left $A$\+modules $\sE_{l_1}$ has the same property (with
respect to the same cardinals~$\lambda$).

 Indeed, by Lemma~\ref{ext-tor-cardinality}, for any left $B$\+module
$N$ of the cardinality not exceeding~$\lambda$ and every $i$,
$n\in\boZ$, the cardinalities of the modules $\Tor_i^B(L^\bu,N)$ do not
exceed~$\lambda$; and it follows that the cardinalities of the modules
$\Ext^i_A(L^\bu,\tau_{\ge-n}(L^\bu\ot_B^\boL N))$ do not
exceed~$\lambda$, either.
 (Similarly, for any left $A$\+module $M$ of the cardinality
not exceeding~$\lambda$, the cardinalities of the modules
$\Ext_A^i(L^\bu,M)$ do not exceed~$\lambda$, and consequently
the cardinalities of the modules $\Tor_i^B(L^\bu,\boR\Hom_A(L^\bu,M))$
do not exceed~$\lambda$ as well.)

 Furthermore,  for any left $B$\+module $G\in\sF_{l_1}$ and
a submodule $N\subset G$ of the cardinality not exceeding~$\lambda$,
from the homology long exact sequence induced by the short exact
sequence of left $B$\+modules $0\rarrow N\rarrow G\rarrow G/N\rarrow0$
one observes that the cardinalities of the modules $\Tor^i_B(L^\bu,G/N)$
do not exceed~$\lambda$ for all $i>l_1$.
 The same applies to the cardinalities of the modules $\Ext^i_A(L^\bu,
\tau_{\ge-l_1}(L^\bu\ot_B^\boL G/N))$, \ $i\in\boZ\setminus\{0\}$,
and the cardinalities of the kernel and the cokernel of the natural map
$G/N\rarrow\Ext^0_A(L^\bu,\tau_{\ge-l_1}(L^\bu\ot_B^\boL G/N))$.

 Let $N_\alpha$ denote the filtered diagram of all $B$\+submodules
$N\subset N_\alpha\subset G$ such that the left $B$\+module
$N_\alpha/N$ is finitely generated.
 Then $G=\varinjlim_\alpha N_\alpha$.
 By Lemma~\ref{ext-tor-filtered-colimit-lemma}(a), for every $n>l_1$
one has $\varinjlim_\alpha\Tor_n^B(L^\bu,N_\alpha)=
\Tor_n^B(L^\bu,G)=0$.
 Hence for every element $\xi\in\Tor_n^B(L^\bu,N)$ there
exists an index $\alpha=\alpha(\xi)$ such that the image of~$\xi$ in
$\Tor_n^B(L^\bu,N_\alpha)$ vanishes.
 Similarly, one has $\varinjlim_\alpha\Tor_n^B(L^\bu,G/N_\alpha)=
\Tor_n^B(L^\bu,0)=0$.
 Hence for every element $\xi'\in\Tor_n^B(L^\bu,G/N)$, \
$n>l_1$, there exists $\alpha=\alpha(\xi')$ such that the image
of~$\xi'$ in $\Tor_n^B(L^\bu,G/N_\alpha)$ vanishes.

 In the same fashion, using
Lemma~\ref{ext-tor-filtered-colimit-lemma}(b), one finds
an index~$\alpha(\eta)$ for every element~$\eta$ of
$\Ext^i_A(L^\bu,\tau_{\ge-l_1}(L^\bu\ot_B^\boL N))$,
\ $i\in\boZ\setminus\{0\}$, an index~$\alpha(\eta')$ for every
element~$\eta'$ of $\Ext^i_A(L^\bu,
\tau_{\ge-l_1}(L^\bu\ot_B^\boL G/N))$,
\ $i\in\boZ\setminus\{0\}$, an index~$\alpha(\zeta)$ for every
element~$\zeta$ of the kernel or cokernel of the map
$N\rarrow\Ext^0_A(L^\bu,\tau_{\ge-l_1}(L^\bu\ot_B^\boL N))$,
and an index~$\alpha(\zeta')$ for every element~$\zeta'$ of
the kernel or cokernel of the map $G/N\rarrow
\Ext^0_A(L^\bu,\tau_{\ge-l_1}(L^\bu\ot_B^\boL G/N))$.
 Set $N'\subset G$ to be the sum of all the submodules
$N_\alpha\subset G$ over the chosen indices $\alpha=\alpha(\xi)$,
$\alpha(\xi')$, $\alpha(\eta)$, etc.
 The cardinality of the set of all chosen indices does not
exceed~$\lambda$, hence the cardinality of left $B$\+module $N'$
does not exceed~$\lambda$, either.

 By construction, the map $\Tor_n^B(L^\bu,N)\rarrow\Tor_n^B(L^\bu,N')$
induced by the embedding of left $B$\+modules $N\rarrow N'$
vanishes for all $n>l_1$, and so does the map $\Tor_n^B(L^\bu,G/N)
\rarrow\Tor_n^B(L^\bu,G/N')$, the map
$\Ext^i_A(L^\bu,\tau_{\ge-l_1}(L^\bu\ot_B^\boL N))\rarrow
\Ext^i_A(L^\bu,\tau_{\ge-l_1}(L^\bu\ot_B^\boL N'))$ for
$i\in\boZ\setminus\{0\}$, etc.
 It remains to iterate our construction over the well-ordered set of
nonnegative integers, producing an increasing chain of $B$\+submodules
$N\subset N'\subset N''\subset\dotsb\subset G$, and put
$F=\bigcup_{m\ge0}N^{(m)}\subset G$.
 By Lemma~\ref{ext-tor-filtered-colimit-lemma}(a\+b), we have
$F\in\sF_{l_1}$ and $G/F\in\sF_{l_1}$.
 This finishes the proof of part~(a), and the proof of part~(b)
is similar.
\end{proof}

 Let $R$ be an associative ring and $\sS\subset R\modl$ be a class of
left $R$\+modules.
 A left $R$\+module $G$ is said to be \emph{$\sS$\+filtered} if there
exists a ordinal~$\alpha$ and an increasing chain of $R$\+submodules
$G_i\subset G$ indexed by $0\le i\le\alpha$, such that $G_0=0$, \
$G_\alpha=G$, \ $G_j=\bigcup_{i<j} G_i$ for all limit ordinals
$j\le\alpha$, and the quotient module $G_{i+1}/G_i$ is isomorphic to
a left $R$\+module from $\sS$ for every $0\le i<\alpha$.
 A class of left $R$\+modules $\sG\subset R\modl$ is said to be
\emph{deconstructible} if there is a set (rather than a proper class) of
left $R$\+modules $\sS$ such that $\sG$ consists precisely of all
the $\sS$\+filtered left $R$\+modules.

\begin{cor} \label{auslander-bass-deconstructible}
\textup{(a)} The class of left $B$\+modules\/ $\sF_{l_1}\subset B\modl$
is deconstructible. \par
\textup{(b)} The class of left $A$\+modules\/ $\sE_{l_1}\subset A\modl$
is deconstructible.
\end{cor}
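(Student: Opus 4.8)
The plan is to deduce deconstructibility of $\sF_{l_1}$ and $\sE_{l_1}$ from the structural properties established above together with the standard theory of Kaplansky classes (in the spirit of \cite{EH,ST}). I describe the argument for part~(a); part~(b) is analogous, using the dual closure properties (cokernels of injective morphisms and injective $A$\+modules in place of kernels of surjective morphisms and flat $B$\+modules). First I would collect what is available: $\sF_{l_1}\subset B\modl$ is closed under extensions (Lemma~\ref{two-rings-E-F-closed-kernels-cokernels}(b)), contains $0$ (Lemma~\ref{two-rings-contains-injectives-flats}(b)), is closed under filtered inductive limits (Corollary~\ref{two-rings-closed-filtered-colimits}(a)), and is a Kaplansky class (Lemma~\ref{auslander-bass-kaplansky}(a)). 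Moreover, the proof of Lemma~\ref{auslander-bass-kaplansky}(a) actually yields the stronger \emph{relative} form of the Kaplansky property: there is an infinite cardinal~$\lambda$ (any $\lambda\ge\max(|A|,|B|,\aleph_0)$ works) such that for every $G\in\sF_{l_1}$ and every submodule $N\subset G$ generated by at most~$\lambda$ elements there is a submodule $F$ with $N\subset F\subset G$, generated by at most~$\lambda$ elements, and with $F\in\sF_{l_1}$ and $G/F\in\sF_{l_1}$.

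Fix such a~$\lambda$ and let $\sS$ be a set of representatives for the isomorphism classes of left $B$\+modules lying in $\sF_{l_1}$ and generated by at most~$\lambda$ elements (this is a set, as all such modules are quotients of $B^{(\lambda)}$). I claim $\sF_{l_1}$ equals the class of $\sS$\+filtered modules, which proves part~(a). Every $\sS$\+filtered module lies in $\sF_{l_1}$: along a filtration, successor steps are handled by closure under extensions and limit steps (direct unions) by closure under filtered inductive limits, so $\sF_{l_1}$ is closed under transfinite extensions. Conversely, given $G\in\sF_{l_1}$, I would well-order a generating set of~$G$ and build a continuous increasing chain $0=G_0\subset G_1\subset\dotsb$ of submodules of~$G$ with $G_i\in\sF_{l_1}$, $G/G_i\in\sF_{l_1}$, each $G_{i+1}/G_i$ isomorphic to a member of~$\sS$, and each generator eventually contained in some~$G_i$. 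At a limit step one sets $G_j=\bigcup_{i<j}G_i$; then $G_j$ and $G/G_j=\varinjlim_{i<j}G/G_i$ lie in $\sF_{l_1}$. At a successor step, given $G_i$ with $G_i,\,G/G_i\in\sF_{l_1}$, one applies the relative Kaplansky property \emph{to the module $G/G_i$} and to the cyclic submodule generated by the image of the first not-yet-covered generator; this produces $G_i\subset G_{i+1}\subset G$ with $G_{i+1}/G_i$ generated by at most~$\lambda$ elements and lying in $\sF_{l_1}$ (hence isomorphic to a member of~$\sS$) and with $G/G_{i+1}\in\sF_{l_1}$, while $G_{i+1}\in\sF_{l_1}$ by closure under extensions (since $G_{i+1}$ sits in an extension of $G_{i+1}/G_i$ by $G_i$). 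Then $\bigcup_iG_i=G$, so $(G_i)$ is an $\sS$\+filtration of~$G$, i.e.\ $G$ is $\sS$\+filtered.

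The one point requiring care is the successor step, where one wants $G_{i+1}/G_i\in\sF_{l_1}$: naively this would call for closure of $\sF_{l_1}$ under cokernels of monomorphisms, which is \emph{not} available (only closure under kernels of epimorphisms is, by Lemma~\ref{two-rings-E-F-closed-kernels-cokernels}(b)). The trick that resolves this is to apply the Kaplansky lemma not to $G$ itself but to the quotient $G/G_i$, whose relevant submodule then simply \emph{is} the subquotient $G_{i+1}/G_i$. This is exactly the mechanism behind the general statement that a Kaplansky class closed under extensions and direct limits is deconstructible, so one could alternatively just invoke that result from the literature. For $\sE_{l_1}$ the same scheme applies verbatim; there closure under cokernels of injective morphisms is in fact available (Lemma~\ref{two-rings-E-F-closed-kernels-cokernels}(a)), so the successor step is even more direct.
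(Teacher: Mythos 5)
Your argument is correct and follows essentially the same route as the paper: the paper deduces the corollary from Lemma~\ref{auslander-bass-kaplansky}, Corollary~\ref{two-rings-closed-filtered-colimits}, and Lemma~\ref{two-rings-E-F-closed-kernels-cokernels} by invoking the standard fact that a Kaplansky class closed under extensions and filtered inductive limits is deconstructible (with $\sS$ the small modules of the class), which is exactly the statement you prove. The only difference is that you spell out the transfinite construction (including the correct device of applying the relative Kaplansky property to $G/G_i$ at successor steps), whereas the paper leaves this to the references; your write-up is a valid expansion of the same proof.
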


\begin{proof}
 It is not difficult to see that any Kaplansky class of modules $\sG$
closed under extensions and filtered inductive limits is deconstructible
(one take $\sS$ to be a set of representatives of the isomorphism
classes of left $R$\+modules from $\sG$ with less than~$\kappa$
generators).
 Conversely, any deconstructible class is Kaplansky
(see~\cite[Lemmas~6.7 and~6.9]{HT} or~\cite[Lemma~2.5]{ST}).
 Thus the assertions of the corollary follow from
Lemmas~\ref{two-rings-E-F-closed-kernels-cokernels},
\ref{two-rings-closed-filtered-colimits},
and~\ref{auslander-bass-kaplansky}.
\end{proof}

 In view of the Eklof--Trlifaj theorem~\cite[Theorem~10]{ET}, it follows
from Lemma~\ref{two-rings-E-F-closed-kernels-cokernels}(b) and
Corollary~\ref{auslander-bass-deconstructible}(a) that the Auslander
class $\sF_{l_1}(L^\bu)\subset B\modl$ is the left-hand part of
a \emph{hereditary complete cotorsion pair} in $B\modl$
(cf.~\cite[Theorem~3.11]{EH}).
 It would be interesting to know whether the Bass class
$\sE_{l_1}(L^\bu)\subset A\modl$ is always the \emph{right-hand} part of
a hereditary complete cotorsion pair (cf.\ the recent paper~\cite{SS},
where the authors prove that Gorenstein injective modules over
an arbitrary ring form the right-hand part of a hereditary
complete cotorsion pair).

\Section{Derived Deconstructible Classes have Hom Sets}
\label{hom-sets-secn}

 The notions of a Kaplansky class and a deconstructible class of objects
are applicable not only to the categories of modules, but more generally
to Grothendieck abelian categories~\cite{St0} (and even more generally
to locally presentable abelian categories~\cite[Section~4]{PR}).
 For the purposes of this section, it is important that one can speak
about Kaplansky or deconstructible classes of \emph{complexes
of modules}.

 Specifically, let $\sG\subset R\modl$ be a deconstructible class of
modules (e.~g., a Kaplansky class of modules closed under extensions
and filtered inductive limits).
 Then the full subcategory $\sG$ inherits the exact category structure
from the abelian category $R\modl$, so one can speak about exact
sequences in~$\sG$.
 It is important for us that the class of all exact sequences in $\sG$
is a Kaplansky class in the abelian category $\sC(R\modl)$ of complexes
of left $R$\+modules.
 The following lemma provides a precise formulation suitable for our
purposes.

\begin{lem} \label{locally-small-localization-lemma}
 Let\/ $\sG\subset R\modl$ be a deconstructible class of
left $R$\+modules.
 Then there is a proper class of cardinals~$\kappa$ with the following
property.
 For every exact complex $G^\bu$ in\/ $\sG$ and any subcomplex of left
$R$\+modules $N^\bu\subset G^\bu$ whose every term $N^i$, \,$i\in\boZ$,
is an $R$\+module with less than~$\kappa$ generators, there exists
a subcomplex of left $R$\+modules $F^\bu\subset G^\bu$ for which
$N^\bu\subset F^\bu\subset G^\bu$, both $F^\bu$ and $G^\bu/F^\bu$
are exact complexes in\/ $\sG$, and every term $F^i$, \,$i\in\boZ$, of
the complex $F^\bu$ is an $R$\+module with less than~$\kappa$ generators.
\end{lem}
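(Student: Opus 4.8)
The plan is to deduce the statement from the Hill lemma for modules (\cite[Lemmas~6.7 and~6.9]{HT}; see also \cite{ST,ET}), applied componentwise to $G^\bu$. Fix once and for all a \emph{set} $\sS$ of left $R$-modules such that $\sG$ is exactly the class of $\sS$-filtered modules, and call a cardinal $\kappa$ \emph{admissible} if it is regular and uncountable, $\kappa>|R|$, and $\kappa>|S|$ for every $S\in\sS$; since $\sS$ is a set, $\sup_{S\in\sS}|S|$ is a cardinal, so there is a proper class of admissible~$\kappa$. For an admissible~$\kappa$ the conditions ``fewer than $\kappa$ generators'' and ``fewer than $\kappa$ elements'' are equivalent for an $R$-module, and for an $\sS$-filtered module $M$ with a chosen $\sS$-filtration the Hill lemma produces a family $\mathcal{H}(M)$ of submodules of $M$ forming a complete sublattice of the lattice of all submodules (closed under arbitrary sums and intersections) such that (i) for $P\subseteq P'$ with $P,P'\in\mathcal{H}(M)$ the module $P'/P$ is again $\sS$-filtered, hence lies in $\sG$; and (ii) every subset of $M$ of cardinality $<\kappa$ is contained in some member of $\mathcal{H}(M)$ which is $\sS$-filtered by a filtration of length $<\kappa$, and therefore has fewer than $\kappa$ elements.

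Now fix an admissible $\kappa$, let $G^\bu$ be an exact complex of $R$-modules with all terms $G^i\in\sG$, and let $N^\bu\subseteq G^\bu$ be a subcomplex with each $N^i$ having $<\kappa$ generators. For each $i$ choose an $\sS$-filtration of $G^i$ and set $\mathcal{H}^i=\mathcal{H}(G^i)$; write $Z^i$ and $B^i$ for the cocycles and coboundaries of $G^\bu$, so exactness of $G^\bu$ says $Z^i=B^i$ for all~$i$ --- the identity that makes ``choose a preimage of a cocycle'' well posed. I would then build an increasing chain of families $(F^i_n\in\mathcal{H}^i)_{i\in\boZ}$, $n<\omega$, each $F^i_n$ with $<\kappa$ generators, as follows. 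At stage~$0$, by~(ii), pick $F^i_0\in\mathcal{H}^i$ with $N^i\subseteq F^i_0$ and $F^i_0$ having $<\kappa$ generators. At stage $n+1$, by~(ii) again, pick $F^i_{n+1}\in\mathcal{H}^i$ with $<\kappa$ generators containing the ($<\kappa$-sized) union of $F^i_n$, of $d^{i-1}(F^{i-1}_n)$, and --- for each element $x$ of a fixed generating set of the module $F^{i+1}_n\cap B^{i+1}$ (which has $<\kappa$ elements) --- of a chosen $y\in G^i$ with $d^i(y)=x$. Finally put $F^i=\bigcup_{n<\omega}F^i_n$.

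It then remains to check, routinely, that $F^\bu$ works: $F^i\in\mathcal{H}^i$ since Hill families are closed under unions of chains; $F^i$ has $<\kappa$ elements, hence $<\kappa$ generators, since $\kappa$ is regular uncountable and each $F^i_n$ does; and $N^i\subseteq F^i$. The family $F^\bu$ is a subcomplex because $d^i(F^i)=\bigcup_n d^i(F^i_n)\subseteq\bigcup_n F^{i+1}_{n+1}=F^{i+1}$, and it is exact because if $x\in F^i$ with $d^ix=0$ then $x\in Z^i=B^i$ and $x\in F^i_n$ for some~$n$, so $x$ is an $R$-linear combination of the chosen generators of $F^i_n\cap B^i$ and the same combination of their chosen preimages, which were put into $F^{i-1}_{n+1}\subseteq F^{i-1}$, exhibits $x\in d^{i-1}(F^{i-1})$, whence $H^i(F^\bu)=0$. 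By~(i) every $F^i$ and every $G^i/F^i$ lies in $\sG$, so $F^\bu$ is an exact complex in $\sG$; and from the short exact sequence of complexes $0\to F^\bu\to G^\bu\to G^\bu/F^\bu\to0$ and the cohomology long exact sequence, acyclicity of $F^\bu$ and of $G^\bu$ forces acyclicity of $G^\bu/F^\bu$, which is therefore also an exact complex in $\sG$. The one place needing genuine care is the interleaving in this $\omega$-step construction: one must simultaneously turn $F^\bu$ into a subcomplex, supply every cocycle of $F^\bu$ with a coboundary-preimage \emph{inside} $F^\bu$, and never let a term grow past $<\kappa$ generators --- and it is precisely the identity $Z^i=B^i$ together with the regularity of $\kappa$ that makes these finitely many kinds of demand close up after $\omega$ stages.
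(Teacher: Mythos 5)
Your argument only establishes a weaker statement than the one the paper needs, and this is a genuine gap rather than a stylistic difference. In this lemma ``exact complex in $\sG$'' must be understood in the sense of the exact category $\sG$ (with the exact structure inherited from $R\modl$): the complex is exact as a complex of $R$\+modules \emph{and} all of its cycle modules belong to~$\sG$. That is the notion that defines acyclicity for the derived category $\sD^\star(\sG)$, it is what is used in the subsequent theorem on Hom sets (there, exactness of $G^\bu/F^\bu$ in this strong sense is what guarantees that the comparison map of cocones is again a quasi-isomorphism in $\sD^\star(\sG)$), and it is exactly what the paper's own one-line proof extracts from \v St'ov\'\i\v cek's result [St0, Theorem~4.2(2)], which asserts deconstructibility of the class of acyclic complexes \emph{with cycles in}~$\sG$ inside the category of complexes. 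Your componentwise Hill-family construction controls only the terms: it gives $F^i,\ G^i/F^i\in\sG$ and exactness of $F^\bu$ and $G^\bu/F^\bu$ as complexes of modules, but nothing forces the cycles $Z^i(F^\bu)=F^i\cap Z^i(G^\bu)$, nor $Z^i(G^\bu/F^\bu)\cong Z^i(G^\bu)/Z^i(F^\bu)$, to lie in~$\sG$; for a general deconstructible class (think of flat modules) module-theoretic acyclicity with terms in $\sG$ does not imply cycles in~$\sG$. A symptom of the problem is that you never use the hypothesis that $G^\bu$ is exact \emph{in $\sG$} beyond the identity $Z^i=B^i$ of plain modules.

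The good news is that your approach can be repaired within its own framework, precisely by using the unused part of the hypothesis: the cycles $Z^i$ of $G^\bu$ lie in $\sG$, so fix Hill families $\mathcal{H}(Z^i)$ for them as well (the same admissible cardinals~$\kappa$ work, since the same set $\sS$ filters the cycles), and interleave one more demand into your $\omega$\+step closure: at stage $n$ choose $C^i_n\in\mathcal{H}(Z^i)$ of cardinality $<\kappa$ containing $F^i_n\cap Z^i$, and put $C^i_n$ into $F^i_{n+1}$. Then $F^i\cap Z^i=\bigcup_n C^i_n\in\mathcal{H}(Z^i)$, so both $F^i\cap Z^i$ and $Z^i/(F^i\cap Z^i)$ are $\sS$\+filtered, hence in $\sG$; combined with the acyclicity of $F^\bu$ you already proved (which also yields $Z^i(G^\bu/F^\bu)\cong Z^i(G^\bu)/Z^i(F^\bu)$), this makes $F^\bu$ and $G^\bu/F^\bu$ exact complexes in $\sG$ in the required sense. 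With that amendment your direct construction becomes a legitimate, self-contained alternative to the paper's proof, which simply cites the deconstructibility of the class of acyclic complexes with cycles in $\sG$ and the Hill lemma in the category of complexes.
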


\begin{proof}
 This is a particular case of~\cite[Theorem~4.2(2)]{St0}.
\end{proof}

 For the terminological discussion of ``existence of Hom sets in Verdier
quotient categories'', see Section~\ref{introd-hom-sets} in
the introduction.

\begin{thm} \label{derived-deconstructible-class-has-hom-sets}
 Let $R$ be an associative ring and\/ $\sG\subset R\modl$ be
a deconstructible class of left $R$\+modules.
 Then, for any conventional derived category symbol\/ $\star=\b$, $+$,
$-$, or\/~$\varnothing$, the derived category\/ $\sD^\star(\sG)$
has Hom sets.
\end{thm}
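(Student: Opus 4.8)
The plan is to realize $\sD^\star(\sG)$ as the Verdier quotient of the homotopy category $\Hot^\star(\sG)$ by the thick subcategory of complexes acyclic in the exact category $\sG$, and to bound the size of its Hom groups by a calculus of fractions fed by a L\"owenheim--Skolem argument based on Lemma~\ref{locally-small-localization-lemma}. First I would recall that the quasi-isomorphisms --- the morphisms in $\Hot^\star(\sG)$ whose cone is acyclic in $\sG$ --- form a multiplicative system compatible with the triangulated structure (a standard fact about derived categories of exact categories; cf.~\cite{Bueh}), so that for any $X^\bu$ and $Y^\bu$ in $\C^\star(\sG)$
$$
 \Hom_{\sD^\star(\sG)}(X^\bu,Y^\bu)\;=\;\varinjlim_{s}\;
 \Hom_{\Hot^\star(\sG)}(W^\bu,Y^\bu),
$$
the colimit being taken over all quasi-isomorphisms $s\:W^\bu\to X^\bu$ with $W^\bu\in\C^\star(\sG)$; by the calculus of fractions it is a filtered colimit. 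Since $\Hot^\star(\sG)$ has Hom sets --- its morphisms are chain homotopy classes of morphisms in the honest category $\C^\star(\sG)$ --- it suffices to produce an essentially small cofinal family of such quasi-isomorphisms $s$, for then the colimit coincides with one over an essentially small subcategory and is a set. The argument is carried out inside $\Hot^\star(\sG)$ separately for each $\star=\b$, $+$, $-$, $\varnothing$, so it is uniform in $\star$.

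Fix $X^\bu$ and $Y^\bu$, and choose a regular cardinal $\kappa$ from the proper class supplied by Lemma~\ref{locally-small-localization-lemma} which is moreover larger than $|R|$ and than the cardinality of every term of $X^\bu$. The cofinality statement I would prove is: for every quasi-isomorphism $s\:W^\bu\to X^\bu$ in $\C^\star(\sG)$ there is a subcomplex $V^\bu\subseteq W^\bu$ belonging to $\C^\star(\sG)$, each term of which has fewer than $\kappa$ generators, such that the composite $V^\bu\hookrightarrow W^\bu\xrightarrow{\,s\,}X^\bu$ is again a quasi-isomorphism. Granting this, the roofs $s|_{V^\bu}\:V^\bu\to X^\bu$ with source of this bounded size form an essentially small subcategory (isomorphism classes of such complexes and the morphisms between them being sets), which is cofinal by construction; hence $\Hom_{\sD^\star(\sG)}(X^\bu,Y^\bu)$ is a set.

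It remains to construct $V^\bu$, and this is where Lemma~\ref{locally-small-localization-lemma} enters and where the main obstacle lies. The mapping cone $C^\bu$ of $s$ is a complex with terms in $\sG$ which is acyclic in $\sG$, and the canonical degreewise split short exact sequence $0\to X^\bu\to C^\bu\to W^\bu[1]\to0$ exhibits $X^\bu$ as a subcomplex of $C^\bu$ whose terms have fewer than $\kappa$ generators. Applying Lemma~\ref{locally-small-localization-lemma} to $G^\bu=C^\bu$ with $N^\bu=X^\bu$ --- and, if necessary, iterating it transfinitely (with $\kappa$ regular) so that the successive quotients stay acyclic in $\sG$, using that $\sG$ is closed under transfinite extensions --- one obtains a subcomplex $C_0^\bu\subseteq C^\bu$, acyclic in $\sG$, with fewer than $\kappa$ generators in each term, containing $X^\bu$ and compatible with the grading decomposition $C^\bu=X^\bu\oplus W^\bu[1]$; thus $C_0^\bu=X^\bu\oplus V^\bu[1]$ for a subcomplex $V^\bu\subseteq W^\bu$. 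The terms of $V^\bu$ lie in $\sG$, being direct summands of the ($\sG$-)terms of $C_0^\bu$ and $\sG$ being closed under direct summands; $V^\bu$ is bounded as prescribed by $\star$ because $W^\bu$ is; and since $C_0^\bu$ is the mapping cone of $s|_{V^\bu}$ and is acyclic in $\sG$, the map $s|_{V^\bu}$ is a quasi-isomorphism. The delicate point --- the main obstacle --- is exactly this construction: the passage to a small subcomplex must be performed inside the \emph{exact} category $\sG$, that is, it must preserve the condition that a mapping cone be acyclic \emph{in $\sG$} rather than merely exact as a complex of modules; it is the strong form of the L\"owenheim--Skolem property packaged in Lemma~\ref{locally-small-localization-lemma} (and established via the Hill lemma in~\cite{St0}) that makes this possible.
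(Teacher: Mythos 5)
Your overall route is the same as the paper's: describe $\Hom_{\sD^\star(\sG)}(X^\bu,Y^\bu)$ as a filtered colimit over the morphisms being inverted, and use Lemma~\ref{locally-small-localization-lemma} to cut each such morphism down to one of bounded size, so that the colimit runs over an essentially small cofinal family. But the implementation has a genuine gap at the decisive step. You apply the lemma to the cone $C^\bu$ of~$s$ with $N^\bu=X^\bu$, obtain $X^\bu\subset C_0^\bu\subset C^\bu$, and split $C_0^\bu=X^\bu\oplus V^\bu[1]$ termwise; this much is fine (a submodule $M\subset A\oplus B$ containing $A$ satisfies $M=A\oplus(M\cap B)$, and one checks that $C_0^\bu$ is literally the cone of~$s|_{V^\bu}$). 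The problem is your assertion that the terms of $V^\bu$ lie in $\sG$ because ``$\sG$ is closed under direct summands'': a deconstructible class need not be closed under direct summands. For instance, the class of $\{R\}$\+filtered modules is exactly the class of free modules, which is deconstructible but not summand-closed whenever $R$ admits a non-free projective module. Nothing in Lemma~\ref{locally-small-localization-lemma} forces the complement $V^{n+1}=C_0^n\cap W^{n+1}$ to belong to~$\sG$; if it does not, then $V^\bu$ is not an object of $\Hot^\star(\sG)$, the morphism $s|_{V^\bu}$ does not live in the category being localized, and the cofinality argument collapses. (The transfinite iteration you mention is also unnecessary: the lemma already yields $F^\bu$ and $G^\bu/F^\bu$ exact in $\sG$ in a single application.)

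The paper's proof sidesteps exactly this point by never asking the small subcomplex of the exact complex to contain $X^\bu$ itself, only its image. A morphism $X_1^\bu\rarrow X^\bu$ with exact cone is, up to isomorphism in $\Hot^\star(\sG)$, the cocone of a morphism $X^\bu\rarrow G^\bu$ with $G^\bu$ an exact complex in~$\sG$; the image of $X^\bu$ in $G^\bu$ has terms with at most~$\lambda$ generators, so Lemma~\ref{locally-small-localization-lemma} produces a small exact subcomplex $F^\bu\subset G^\bu$ through which $X^\bu\rarrow G^\bu$ factors, and the refined morphism to $X^\bu$ is the cocone of $X^\bu\rarrow F^\bu$. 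Its terms are extensions (indeed direct sums) of terms of $X^\bu$ and terms of $F^\bu$, hence lie in $\sG$ simply because a deconstructible class is closed under extensions; no summand-closure is needed. Your argument does become correct if one adds the hypothesis that $\sG$ is closed under direct summands (which happens to hold for the Auslander and Bass classes to which the theorem is applied), but as written it does not prove the theorem for an arbitrary deconstructible class; replacing the step ``split off $V^\bu\subset W^\bu$'' by the cocone construction repairs it.
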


\begin{proof}
 The derived category $\sD^\star(\sG)$ is obtained from the (similarly
bounded or unbounded) homotopy category $\Hot^\star(\sG)$ by
inverting all the morphisms whose cones are homotopy equivalent to
exact complexes in~$\sG$ \cite{Neem0,Kel,Bueh}.
 In order to show that the resulting category has Hom sets, we check
that the class of all morphisms we are inverting is locally small in
$\Hot^\star(\sG)$ in the sense of~\cite[Set-Theoretic
Considerations~10.3.6]{Wei}.
 Indeed, given any ($\star$\+bounded) complex $X^\bu$ in $\sG$,
morphisms $X_1^\bu\rarrow X^\bu$ in $\Hot^\star(\sG)$ with an exact cone
are in bijection with morphisms $X^\bu\rarrow G^\bu$ in
$\Hot^\star(\sG)$ with $G^\bu$ an exact complex.
 Given a fixed complex $X^\bu$, let $\lambda$~be the maximal cardinality
of generator sets of its terms $X^i$, \,$i\in\boZ$.
 Let $\kappa>\lambda$ be a cardinal with the property described in
Lemma~\ref{locally-small-localization-lemma}.
 Then, for any complex $Y^\bu$ in $\sG$, it suffices, for the purposes of
constructing morphisms $X^\bu\rarrow Y^\bu$ in $\sD^\star(\sG)$,
to consider cocones $X_1^\bu\rarrow X^\bu$ of morphisms $X^\bu\rarrow
F^\bu$, where $F^\bu$ ranges over the ($\star$\+bounded) exact complexes
in $\sG$ whose terms are left $R$\+modules with
less than~$\kappa$ generators.
\end{proof}

 Let $L^\bu$ be a pseudo-dualizing complex for associative rings
$A$ and~$B$.
 As in Sections~\ref{two-rings-auslander-bass-subsecn}
and~\ref{deconstructibility-secn}, we assume that the finite complex
$L^\bu$ is situated in the cohomological degrees $-d_1\le m\le d_2$,
and choose an integer $l_1\ge d_1$.

\begin{cor} \label{lower-pseudo-derived-have-hom-sets}
 For any pseudo-dualizing complex $L^\bu$, the lower pseudo-coderived
category of left $A$\+modules\/ $\sD'_{L^\bu}(A\modl)=\sD(\sE_{l_1})$
and the lower pseudo-contraderived category of left $B$\+modules
$\sD''_{L^\bu}(B\modl)=\sD(\sF_{l_1})$ have Hom sets.
\end{cor}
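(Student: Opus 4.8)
The plan is to obtain this corollary as an immediate formal consequence of the two results proved just before it, namely Corollary~\ref{auslander-bass-deconstructible} and Theorem~\ref{derived-deconstructible-class-has-hom-sets}. First I would recall that, by the definitions given at the end of Section~\ref{two-rings-auslander-bass-subsecn}, one has $\sD'_{L^\bu}(A\modl)=\sD(\sE_{l_1})$ and $\sD''_{L^\bu}(B\modl)=\sD(\sF_{l_1})$, where $\sE_{l_1}=\sE_{l_1}(L^\bu)\subset A\modl$ is the Bass class and $\sF_{l_1}=\sF_{l_1}(L^\bu)\subset B\modl$ is the Auslander class, each endowed with the exact category structure inherited from the ambient abelian category of modules (as noted after Lemma~\ref{two-rings-closed-sums-products}). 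These unbounded derived categories do not depend on the choice of $l_1\ge d_1$ by Proposition~\ref{two-rings-maximal-classes-essentially-the-same}, so it suffices to treat one value of~$l_1$.

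Next I would invoke Corollary~\ref{auslander-bass-deconstructible}, which tells us that $\sF_{l_1}\subset B\modl$ and $\sE_{l_1}\subset A\modl$ are deconstructible classes of modules. Then I would apply Theorem~\ref{derived-deconstructible-class-has-hom-sets}, with the conventional derived category symbol $\star=\varnothing$, twice: once to the deconstructible class $\sE_{l_1}$ inside $R\modl$ for $R=A$, and once to the deconstructible class $\sF_{l_1}$ inside $R\modl$ for $R=B$. This yields directly that $\sD(\sE_{l_1})$ and $\sD(\sF_{l_1})$ have Hom sets, which is precisely the assertion of the corollary.

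Since all the substantive work — the Kaplansky-class estimates of Lemmas~\ref{ext-tor-cardinality} and~\ref{auslander-bass-kaplansky}, the closure properties of Lemma~\ref{two-rings-E-F-closed-kernels-cokernels} and Corollary~\ref{two-rings-closed-filtered-colimits}, and the set-theoretic localization argument underlying Theorem~\ref{derived-deconstructible-class-has-hom-sets} — has already been carried out, I do not expect any genuine obstacle in the proof of this corollary; it is a pure citation. The only point I would be careful to verify is that the exact structure on $\sE_{l_1}$ (resp.\ $\sF_{l_1}$) implicit in Theorem~\ref{derived-deconstructible-class-has-hom-sets} is the same one used to form the pseudo-derived categories, which is indeed the case.
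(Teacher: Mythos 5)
Your proposal is correct and coincides with the paper's own (one-line) proof: the corollary is deduced by combining Corollary~\ref{auslander-bass-deconstructible} (deconstructibility of $\sE_{l_1}$ and $\sF_{l_1}$) with Theorem~\ref{derived-deconstructible-class-has-hom-sets} applied over $R=A$ and $R=B$. Your additional remarks on the independence of~$l_1$ and the inherited exact structures are accurate but not needed.
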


\begin{proof}
 Follows from Theorem~\ref{derived-deconstructible-class-has-hom-sets}
and Corollary~\ref{auslander-bass-deconstructible}.
\end{proof}

\Section{Existence of Fully Faithful Adjoints}
\label{existence-of-adjoints-secn}

 The aim of this section is to prove existence of the adjoint functors
promised in Section~\ref{introd-adjoints}.
 We start with the adjoint functors on
the diagram~\eqref{easy-adjoints-diagram} before passing to
the ones on the diagram~\eqref{hard-adjoints-diagram}.

\begin{lem} \label{quotients-adjoints-compositions}
 Let $q\:\sD_2\rarrow\sD_1$ and $p\:\sD_1\rarrow\sD_0$ be Verdier
quotient functors between triangulated categories.
 Let $r\:\sD_0\rarrow\sD_2$ be a right adjoint functor to the composition
$pq\:\sD_2\rarrow\sD_0$.
 Then the composition $qr\:\sD_0\rarrow\sD_1$ is a right adjoint functor
to the functor $p\:\sD_1\rarrow\sD_0$.
\end{lem}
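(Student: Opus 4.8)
The plan is to produce, naturally in $X\in\sD_1$ and $Y\in\sD_0$, a bijection $\Hom_{\sD_1}(X,qr(Y))\cong\Hom_{\sD_0}(p(X),Y)$, which is precisely the assertion that $qr$ is right adjoint to~$p$. To keep naturality automatic, I would fix the comparison map intrinsically: writing $\varepsilon\colon (pq)\,r\rarrow\Id_{\sD_0}$ for the counit of the given adjunction (with $pq$ left adjoint and $r$ right adjoint), set $\phi_{X,Y}(g)=\varepsilon_Y\circ p(g)$ for a morphism $g\colon X\rarrow qr(Y)$ in $\sD_1$; here $p(g)\colon p(X)\rarrow p(q(r(Y)))=(pq)(r(Y))$, so the composition is legitimate. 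Functoriality of $p$ and naturality of $\varepsilon$ make $\phi$ a natural transformation $\Hom_{\sD_1}(-,qr(-))\rarrow\Hom_{\sD_0}(p(-),-)$ of bifunctors, and the whole content of the lemma is that each $\phi_{X,Y}$ is bijective.

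Two observations suffice. First, let $\sN\subseteq\sD_2$ be the full subcategory of objects $N$ with $q(N)=0$; it is closed under shifts. For $N\in\sN$ the adjunction isomorphism for $pq\dashv r$ gives $\Hom_{\sD_2}(N,r(Y))\cong\Hom_{\sD_0}((pq)(N),Y)=\Hom_{\sD_0}(0,Y)=0$, so $r(Y)$ lies in the right orthogonal $\sN^{\perp}=\{\,Z\in\sD_2:\Hom_{\sD_2}(N,Z)=0\text{ for all }N\in\sN\,\}$, which is again closed under shifts. Second, the standard property of Verdier localizations: if $Z\in\sN^{\perp}$, then for every $W\in\sD_2$ the map $\Hom_{\sD_2}(W,Z)\rarrow\Hom_{\sD_1}(q(W),q(Z))$ induced by $q$ is a bijection. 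Indeed, a morphism $q(W)\rarrow q(Z)$ is a roof consisting of $s\colon W'\rarrow W$ with cone in $\sN$ and $f\colon W'\rarrow Z$; applying $\Hom_{\sD_2}(-,Z)$ to the distinguished triangle on $s$ and using $\mathrm{cone}(s)\in\sN$ together with $Z\in\sN^{\perp}$ (closed under shifts) shows that $s^{*}\colon\Hom_{\sD_2}(W,Z)\rarrow\Hom_{\sD_2}(W',Z)$ is an isomorphism; hence $f=s^{*}(g)$ for a unique $g$, which yields surjectivity, and injectivity follows from the same fact.

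To finish, recall that the Verdier quotient functor $q$ is essentially surjective (one may realize $\sD_1$ with the same objects as $\sD_2$), so choose $W\in\sD_2$ with $q(W)=X$. Combining the second observation (applied to $Z=r(Y)\in\sN^{\perp}$) with the adjunction $pq\dashv r$ gives
\[
\Hom_{\sD_1}(X,qr(Y))\;\cong\;\Hom_{\sD_2}(W,r(Y))\;\cong\;\Hom_{\sD_0}((pq)(W),Y)\;=\;\Hom_{\sD_0}(p(X),Y).
\]
Tracing $g\colon X\rarrow qr(Y)$ through this chain: it has a unique preimage $\tilde g\colon W\rarrow r(Y)$ under $q$, the adjunction isomorphism sends $\tilde g$ to $\varepsilon_Y\circ (pq)(\tilde g)$, and $(pq)(\tilde g)=p(q(\tilde g))=p(g)$; so the composite carries $g$ to $\varepsilon_Y\circ p(g)=\phi_{X,Y}(g)$. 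Therefore $\phi_{X,Y}$ is bijective (and, as it must be, independent of the auxiliary choice of $W$), and $qr$ is right adjoint to~$p$.

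The only step that is not pure diagram chasing is the second observation — the classical fact that a Verdier quotient functor restricts to a fully faithful functor on the right orthogonal of its kernel — and even that is a two-line calculus-of-fractions argument. The one bookkeeping subtlety to handle carefully is that the composite bijection in the last display depends a priori on the chosen $W$; that is exactly why it is cleaner to take the intrinsically defined $\phi_{X,Y}$ as the comparison map and then merely verify that it agrees with the composite, rather than to build the adjunction out of the composite directly.
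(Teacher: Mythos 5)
Your argument is correct, and it takes a different route from the paper's. The paper argues entirely in the language of semiorthogonal decompositions: since $r$ is right adjoint to the Verdier quotient $pq$, it is fully faithful and its essential image $r(\sD_0)$ together with $\sK^0=\ker(pq)$ forms a semiorthogonal decomposition of $\sD_2$; passing to the quotient by $\sK^1=\ker(q)\subset\sK^0$ carries this to a semiorthogonal decomposition of $\sD_1$ with pieces $\sK^0/\sK^1=\ker(p)$ and $qr(\sD_0)$, which is exactly the statement that $qr$ is a (fully faithful) right adjoint to~$p$. You instead build the adjunction isomorphism by hand: the counit of $pq\dashv r$ forces $r(Y)$ into the right orthogonal of $\ker(q)$, the classical calculus-of-fractions fact gives that $q$ is fully faithful on that orthogonal, and composing the two bijections — with the careful check that the composite agrees with the intrinsically defined comparison map $\phi_{X,Y}(g)=\varepsilon_Y\circ p(g)$, so that naturality and independence of the chosen lift $W$ come for free — yields the adjunction. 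The two proofs rest on the same orthogonality computation (the paper's ``passing to the quotient'' step tacitly uses precisely your second observation), but yours is more explicit and self-contained at the level of Hom sets, while the paper's packaging is shorter and records as a byproduct the extra structure (full faithfulness of $qr$ and the induced semiorthogonal decomposition of $\sD_1$) that the surrounding section actually uses. No gaps.
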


\begin{proof}
 Let $\sK^0\subset\sD_2$ denote the kernel of the functor~$pq$ and
$\sK^1\subset\sD_2$ the kernel of the functor~$q$; so
$\sK^1\subset\sK^0$.
 Then $\sK^0/\sK^1\subset\sD_2/\sK^1=\sD_1$ is the kernel of
the functor~$p$.
 The functor $r\:\sD_0\rarrow\sD_2$ is fully faithful (as an adjoint
to a Verdier quotient functor) and its essential image $r(\sD_0)$
together with the full subcategory $\sK^0$ form a semi-orthogonal
decomposition of the triangulated category~$\sD_2$.
 The composition $(pq)r$ is the identity functor $\sD_0\rarrow\sD_0$.

 Passing to the quotient category by the full subcategory $\sK^1
\subset\sK^0\subset\sD_2$, we conclude that the functor~$qr$ is fully
faithful, the two full subcategories $\sK^0/\sK^1$ and $qr(\sD_0)$
form a semi-orthogonal decomposition of the triangulated
category~$\sD_1$, and the composition $p(qr)$ is the identity functor.
 Thus the functor~$qr$ is right adjoint to~$p$.
\end{proof}

 Now we can construct the adjoint functors shown on
the diagram~\eqref{easy-adjoints-diagram}.
 More generally, let $\sE\subset A\modl$ and $\sF\subset B\modl$ be
a pair of full subcategories satisfying the conditions~(I\+-IV) of
Section~\ref{two-rings-abstract-classes} for a given pseudo-dualizing
complex $L^\bu$ and two integers $l_1$ and~$l_2$.
 Assume that the full subcategory $\sE$ is closed under infinite
direct sums in $A\modl$, while the full subcategory $\sF$ is closed
under infinite products in $B\modl$.
 Then there exist fully faithful adjoint functors shown by curvilinear
arrows on the diagram
\begin{equation} \label{abstract-easy-adjoints}
\begin{tikzcd}
\Hot(A\modl) \arrow[d, two heads] &&&&&
\Hot(B\modl) \arrow[d, two heads] \\
\sD^\co(A\modl) \arrow[d, two heads] &&&&&
\sD^\ctr(B\modl) \arrow[d, two heads] \\
\sD(\sE) \arrow[d, two heads]
\arrow[rrrrr, Leftrightarrow, no head, no tail] &&&&&
\sD(\sF) \arrow[d, two heads] \\
\sD(A\modl)
\arrow[uuu, tail, bend left=90] \arrow[uuu, tail, bend right=90]
\arrow[uu, tail, bend left=75] \arrow[uu, tail, bend right=75]
\arrow[u, tail, bend left=60] \arrow[u, tail, bend right=60]
&&&&& \sD(B\modl)
\arrow[uuu, tail, bend left=90] \arrow[uuu, tail, bend right=90]
\arrow[uu, tail, bend left=72] \arrow[uu, tail, bend right=72]
\arrow[u, tail, bend left=60] \arrow[u, tail, bend right=60]
\end{tikzcd}
\end{equation}

 Indeed, for any associative ring $R$ the natural Verdier quotient
functor $K\:\Hot(R\modl)\rarrow\sD(R\modl)$ has a right and
a left adjoint.
 The fully faithful left adjoint functor $K_\lambda\:\sD(R\modl)\rarrow
\Hot(R\modl)$ to the functor~$K$ assigns to a complex of left
$R$\+modules its homotopy projective resolution.
 The fully faithful right adjoint functor $K_\rho\:\sD(R\modl)\rarrow
\Hot(R\modl)$ to the functor~$K$ assigns to a complex of right
$R$\+modules its homotopy injective resolution.
{\emergencystretch=3em\hbadness=2325\par}

 Lemma~\ref{quotients-adjoints-compositions} tells that the other
fully faithful adjoint functors on
the diagram~\eqref{abstract-easy-adjoints} can be obtained as
the compositions of the functors $K_\lambda$ and $K_\rho$ with
the Verdier quotient functors on the diagram.
 We refer to the discussion in Section~\ref{pseudo-derived-introd},
based on~\cite[Proposition~A.3.1(b)]{Pcosh},
for the constructions of the Verdier quotient functors
$\sD^\co(A\modl)\rarrow\sD(\sE)$ and $\sD^\ctr(B\modl)
\rarrow\sD(\sF)$.

 In particular, the right adjoint functor $P_\rho\:\sD(A\modl)\rarrow
\sD(\sE)$ to the Verdier quotient functor $P\:\sD(\sE)\rarrow
\sD(A\modl)$ assigns to a complex of left $A$\+modules $M^\bu$
a homotopy injective complex of injective left $A$\+modules $J^\bu$
quasi-isomorphic to $M^\bu$, viewed as an object
$P_\rho(M^\bu)=J^\bu\in\sD(\sE)$ of the conventional derived category
of the exact category~$\sE$.
 Here it is important that, according to the condition~(I), all
the injective left $A$\+modules belong to~$\sE$.

 To construct the image $P_\lambda(M^\bu)$ of the complex $M^\bu$
under the left adjoint functor $P_\lambda\:\sD(A\modl)\rarrow\sD(\sE)$
to the functor $P$, consider a homotopy projective complex
of left $A$\+modules $G^\bu$ quasi-isomorphic to~$M^\bu$.
 Let $G^\bu\rarrow E^\bu$ be a morphism of complexes of left
$A$\+modules with a coacyclic cone acting from the complex $G^\bu$ to
a complex $E^\bu$ with the terms $E^i$ belonging to the full subcategory
$\sE\subset A\modl$.
 Then $P_\lambda(M^\bu)=E^\bu\in\sD(\sE)$.

 Similarly, the left adjoint functor $Q_\lambda\:\sD(B\modl)\rarrow
\sD(\sF)$ to the Verdier quotient functor $Q\:\sD(\sF)\rarrow
\sD(B\modl)$ assigns to a complex of left $B$\+modules $N^\bu$
a homotopy projective complex of projective left $B$\+modules $G^\bu$
quasi-isomorphic to $N^\bu$, viewed as an object
$Q_\lambda(N^\bu)=G^\bu\in\sD(\sF)$ of the conventional derived
category of the exact category~$\sF$.
 Here it is important that, according to the condition~(II), all
the projective left $B$\+modules belong to~$\sF$.

 To construct the image $Q_\rho(N^\bu)$ of the complex $N^\bu$
under the right adjoint functor $Q_\rho\:\sD(B\modl)\rarrow\sD(\sF)$
to the functor $Q$, consider a homotopy injective complex of
left $B$\+modules $J^\bu$ quasi-isomorphic to~$N^\bu$.
 Let $F^\bu\rarrow J^\bu$ be a morphism of complexes of left
$B$\+modules with a contraacyclic cone acting into the complex $J^\bu$
from a complex $F^\bu$ with the terms $F^i$ belonging to the full
subcategory $\sF\subset B\modl$.
 Then $Q_\rho(N^\bu)=F^\bu\in\sD(\sF)$.

 To end, let us prove existence of the adjoint functors on
the diagram~\eqref{hard-adjoints-diagram} from
Section~\ref{introd-adjoints}.

\begin{thm}
 Let $L^\bu$ be a pseudo-dualizing complex for associative rings $A$
and~$B$.
 Then \par
\textup{(a)} assuming that all sfp\+injective left $A$\+modules have
finite injective dimensions, the natural Verdier quotient functor\/
$\sD^\co(A\modl)\rarrow\sD'_{L^\bu}(A\modl)$ has a right adjoint; \par
\textup{(b)} assuming that the ring $A$ is left coherent and all
fp\+injective left $A$\+modules have finite injective dimensions,
the natural Verdier quotient functor\/ $\sD^\co(A\modl)\rarrow
\sD'_{L^\bu}(A\modl)$ has a left adjoint; \par
\textup{(c)} assuming that all sfp\+flat left $B$\+modules have finite
projective dimensions, the natural Verdier quotient functor\/
$\sD^\ctr(B\modl)\rarrow\sD''_{L^\bu}(B\modl)$ has a right adjoint; \par
\textup{(d)} assuming that the ring $B$ is right coherent and all
flat left $B$\+modules have finite projective dimensions, the natural
Verdier quotient functor\/ $\sD^\ctr(B\modl)\rarrow
\sD''_{L^\bu}(B\modl)$ has a left adjoint.
\end{thm}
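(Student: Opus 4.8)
The plan is to obtain all four adjoints from Brown representability. The point is that, under the stated hypotheses, the source categories $\sD^\co(A\modl)$ and $\sD^\ctr(B\modl)$ are well generated --- and, in the coherent cases, even compactly generated --- while the Verdier quotient functors in question preserve infinite direct sums and products, and the target categories $\sD'_{L^\bu}(A\modl)=\sD(\sE_{l_1})$ and $\sD''_{L^\bu}(B\modl)=\sD(\sF_{l_1})$ have Hom sets. I will spell out part~(a); parts~(b)--(d) follow by the evident modifications: passing from direct sums to products for the left adjoints, and from $A$, $\sE_{l_1}$, injectives to $B$, $\sF_{l_1}$, projectives for the contraderived side.

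First I would record the two structural facts about the big category. Under the hypothesis of~(a), Proposition~\ref{sfp-inj-homotopy-coderived-comparison}(b) provides a triangulated equivalence $\sD^\co(A\modl)\simeq\Hot(A\modl_\inj)$, and the latter is a well generated triangulated category; in particular it has coproducts, computed termwise. Under the \emph{coherent} hypothesis of~(b), the same category $\sD^\co(A\modl)\simeq\Hot(A\modl_\inj)$ is moreover \emph{compactly} generated (this is where left coherence, in addition to the finiteness of injective dimensions of fp\+injective $A$\+modules, is used, via the literature recalled in Section~\ref{two-rings-dualizing-subsecn}), and it then also has products, computed termwise because products of injective $A$\+modules are injective. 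Next I would observe that the natural Verdier quotient functor $\Pi\:\sD^\co(A\modl)\rarrow\sD(\sE_{l_1})$ preserves coproducts, and, in the setting of~(b), also products: through the equivalence $\sD^\co(A\modl)\simeq\sD^\co(\sE_{l_1})$ of Section~\ref{pseudo-derived-introd}, induced by the exact embedding $\sE_{l_1}\rarrow A\modl$ that preserves both coproducts and products (Lemma~\ref{two-rings-closed-sums-products}(a)), the functor $\Pi$ is identified with the Verdier quotient $\sD^\co(\sE_{l_1})\rarrow\sD(\sE_{l_1})$, and on either side coproducts and products are computed termwise in $\sE_{l_1}$, direct sums and products being exact in $A\modl$.

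The conclusions then follow formally. For~(a): the category $\sD^\co(A\modl)$ is well generated and $\Pi$ preserves coproducts, so for every object $Y$ of $\sD(\sE_{l_1})$ the contravariant cohomological functor $X\mapsto\Hom_{\sD(\sE_{l_1})}(\Pi(X),Y)$ on $\sD^\co(A\modl)$ sends coproducts to products and is therefore representable by the Brown representability theorem of Neeman; the representing object is the value of the desired right adjoint at~$Y$. It is essential here that the groups $\Hom_{\sD(\sE_{l_1})}(\Pi(X),Y)$ be honest sets, which is exactly the statement that $\sD'_{L^\bu}(A\modl)=\sD(\sE_{l_1})$ has Hom sets (Corollary~\ref{lower-pseudo-derived-have-hom-sets}), itself resting on the deconstructibility of the Bass class (Corollary~\ref{auslander-bass-deconstructible}(b)). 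For~(b): since $\sD^\co(A\modl)$ is now \emph{compactly} generated, its opposite category satisfies Brown representability (Neeman's ``Brown representability for the dual''), and since $\Pi$ preserves products, the covariant functor $X\mapsto\Hom_{\sD(\sE_{l_1})}(Y,\Pi(X))$ is corepresentable; the corepresenting object is the value of the desired left adjoint at~$Y$. Parts~(c) and~(d) are established in the same way, using Proposition~\ref{sfp-flat-homotopy-contraderived-comparison} in place of Proposition~\ref{sfp-inj-homotopy-coderived-comparison}, Lemma~\ref{two-rings-closed-sums-products}(b) in place of~(a), and Corollary~\ref{auslander-bass-deconstructible}(a) in place of~(b); here one uses that $\sD^\ctr(B\modl)\simeq\Hot(B\modl_\proj)$ is well generated, that its coproducts are computed termwise, that its products are computed termwise via the intrinsic description of the contraderived category (contraacyclic complexes being closed under products), and that under right coherence of $B$ it is compactly generated.

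The main obstacle is not the Brown-representability formalism but the verification of its hypotheses on the source categories: that $\sD^\co(A\modl)$ and $\sD^\ctr(B\modl)$ are well generated, and, in the coherent cases, compactly generated. This is precisely why the finite-homological-dimension conditions on sfp\+injective and sfp\+flat modules and the left/right coherence conditions are imposed: they identify $\sD^\co(A\modl)$ and $\sD^\ctr(B\modl)$ with the more tractable homotopy categories $\Hot(A\modl_\inj)$ and $\Hot(B\modl_\proj)$, and upgrade ``well generated'' to ``compactly generated'' where the dual Brown representability theorem demands it. The remaining input --- that $\sD'_{L^\bu}(A\modl)$ and $\sD''_{L^\bu}(B\modl)$ have Hom sets --- has already been secured, via the deconstructibility of the Auslander and Bass classes, in Corollary~\ref{lower-pseudo-derived-have-hom-sets}.
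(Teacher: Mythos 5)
Your proposal follows essentially the same route as the paper: identify $\sD^\co(A\modl)$ with $\Hot(A\modl_\inj)$ and $\sD^\ctr(B\modl)$ with $\Hot(B\modl_\proj)$ under the finite-dimension hypotheses, invoke well-generation (resp.\ compact generation under coherence), use the Hom-set result coming from deconstructibility, check preservation of coproducts/products by the quotient functors, and conclude by the two forms of Brown representability. One justification step, however, is stated incorrectly: for part~(b) you argue that the functor $\Pi$ preserves products because ``on either side coproducts and products are computed termwise in $\sE_{l_1}$, direct sums and products being exact in $A\modl$.'' Exactness of products does give termwise products in $\sD(\sE_{l_1})$, but it does \emph{not} give them in $\sD^\co(\sE_{l_1})$: coacyclic complexes are only known to be closed under infinite direct sums, not under products, so the quotient $\Hot(\sE_{l_1})\rarrow\sD^\co(\sE_{l_1})$ need not preserve products. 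The correct check --- which is how the paper argues, and which you in fact already have at hand from your remark that products in $\Hot(A\modl_\inj)$ are termwise products of injectives --- is to compute the product in $\sD^\co(A\modl)$ on complexes of injectives and observe that $A\modl_\inj\subset\sE_{l_1}$ is closed under products, so the same termwise product computes the product in $\sD(\sE_{l_1})$. (Your treatment of the contraderived side is fine, since contraacyclic complexes are closed under products by definition, and coproducts there are handled via $\Hot(B\modl_\proj)$.) With this repair the argument coincides with the paper's proof.
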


\begin{proof}
 First of all we show that all the four exotic derived categories in
question have infinite direct sums and products, and both the Verdier
quotient functors preserve both the infinite direct sums and products.
 It is helpful to keep in mind that the Verdier quotient category of
a triangulated category with infinite direct sums by a triangulated
subcategory closed under infinite direct sums has infinite sums, and
the Verdier quotient functor in such a situation preserves infinite direct
sums~\cite[Lemma~3.2.10]{N-book}.
 Dually, the same assertions apply to infinite products.

 Assume that the finite complex $L^\bu$ is situated in
the cohomologiclal degrees $-d_1\le m\le d_2$, and choose an integer
$l_1\ge d_1$.
 By Lemma~\ref{two-rings-closed-sums-products}, both the full
subcategories $\sE_{l_1}\subset A\modl$ and $\sF_{l_1}\subset B\modl$
are closed under both the infinite direct sums and products.
 Thus $\sE_{l_1}$ and $\sF_{l_1}$ are exact categories with exact
functors of infinite direct sums and products, and it follows that
their derived categories $\sD'_{L^\bu}(A\modl)=\sD(\sE_{l_1})$
and $\sD''_{L^\bu}(B\modl)=\sD(\sF_{l_1})$ have infinite direct sums
and products.

 The coderived category $\sD^\co(A\modl)$ is the Verdier quotient
category of the homotopy category $\Hot(A\modl)$ by the full subcategory
of coacyclic complexes, which is, by the definition, closed under infinite
direct sums in $\Hot(A\modl)$.
 Hence the triangulated category $\sD^\co(A\modl)$ has infinite
direct sums.
 Similarly, the contraderived category $\sD^\ctr(B\modl)$ is the Verdier
quotient category of the homotopy category $\Hot(B\modl)$ by the full
subcategory of contraacyclic complexes, which is, by the definition, closed
under infinite products in $\Hot(B\modl)$.
 Hence the triangulated category $\sD^\ctr(B\modl)$ has infinite
products.

 The Verdier quotient functor $\sD^\co(A\modl)=\sD^\co(\sE_{l_1})
\rarrow\sD(\sE_{l_1})=\sD'_{L^\bu}(A\modl)$ preserves infinite
direct sums, since $\sE_{l_1}$ is an exact category with exact functors
of infinite direct sums.
 The Verdier quotient functor $\sD^\ctr(B\modl)=\sD^\ctr(\sF_{l_1})
\rarrow\sD(\sF_{l_1})=\sD''_{L^\bu}(B\modl)$ preserves infinite
products, since $\sF_{l_1}$ is an exact category with exact fuctors
of infinite products.

 Assuming that all sfp\+injective left $A$\+modules have finite
injective dimensions, the triangulated functor $\Hot(A\modl_\inj)
\rarrow\sD^\co(A\modl)$ induced by the inclusion
$A\modl_\inj\rarrow A\modl$ is an equivalence by
Proposition~\ref{sfp-inj-homotopy-coderived-comparison}(b).
 The additive category of injective left $A$\+modules $A\modl_\inj$
has infinite products, hence so does its homotopy category.
 Similarly, assuming that all sfp\+flat left $B$\+modules have finite
projective dimensions, the triangulated functor $\Hot(B\modl_\proj)
\rarrow\sD^\ctr(B\modl)$ induced by the inclusion
$B\modl_\proj\rarrow B\modl$ is an equivalence by
Proposition~\ref{sfp-flat-homotopy-contraderived-comparison}(b).
 The additive category of projective left $B$\+modules $B\modl_\proj$
has infinite direct sums, hence so does its homotopy category.

 The functor $\sD^\co(A\modl)=\Hot(A\modl_\inj)\rarrow\sD(\sE_{l_1})$
preserves infinite products, since $\sE_{l_1}$ is an exact category with
exact functors of infinite products and $A\modl_\inj\subset\sE_{l_1}$
is a split exact subcategory closed under infinite products.
 The functor $\sD^\ctr(B\modl)=\Hot(B\modl_\proj)\rarrow\sD(\sF_{l_1})$
preserves infinite direct sums, since $\sF_{l_1}$ is an exact category with
exact functors of infinite direct sums and $B\modl_\proj\subset\sF_{l_1}$
is a split exact subcategory closed under infinite direct sums.

 For the rest of our argument, it is important that the lower
pseudo-derived categories $\sD'_{L^\bu}(A\modl)$ and
$\sD''_{L^\bu}(B\modl)$ have Hom sets by
Corollary~\ref{lower-pseudo-derived-have-hom-sets}.

 By~\cite[Theorem~5.9]{Neem}, for any associative ring $B$
the homotopy category of projective modules $\Hot(B\modl_\proj)$
is \emph{well-generated} in the sense of~\cite{N-book,Kra}.
 By~\cite[Theorem~3.13]{Neem2}, for any associative ring $A$
the homotopy category of injective modules $\Hot(A\modl_\inj)$
is well-generated.
 According to the Brown representability theorem for well-generated
triangulated categories (see~\cite[Proposition~8.4.2]{N-book}
or~\cite[Theorem~5.1.1]{Kra2}), a triangulated functor from
a well-generated triangulated category to a triangulated category with
small Hom sets has a right adjoint if and only if it preserves infinite
direct sums.
 This proves parts~(a) and~(c).

 By~\cite[Proposition~7.14]{Neem}, for any right coherent ring $B$
the homotopy category of projective left $B$\+modules
$\Hot(B\modl_\proj)$ is compactly generated.
 By~\cite[Corollary~6.13]{St2} (see also~\cite[Corollary~2.6(b)]{Pfp}),
for any left coherent ring $A$ the homotopy category of injective
left $A$\+modules $\Hot(A\modl_\inj)$ is compactly generated.
 According to the covariant Brown representability theorem for
compactly generated triangulated categories
(see~\cite[Theorem~8.6.1]{N-book}, \cite[Section~2]{Kra1},
or~\cite[Proposition~5.3.1(2)]{Kra2}), a triangulated functor from
a compactly triangulated category to a triangulated category with
small Hom sets has a left adjoint if and only if it preserves infinite
products.
 This proves parts~(b) and~(d).
\end{proof}

\appendix
\bigskip
\section*{Appendix.  Derived Functors of Finite Homological
Dimension~II}
\medskip
\setcounter{section}{1}
\setcounter{thm}{0}

 The aim of this appendix is to work out a generalization of
the constructions of~\cite[Appendix~B]{Pmgm} that is needed for
the purposes of the present paper.
 We use an idea borrowed from~\cite[Appendix~A]{Ef} in order to
simplify and clarify the exposition.

\subsection{Posing the problem} \label{posing-problem-appx}
 First we need to recall some notation from~\cite{Pmgm}.
 Given an additive category $\sA$, we denote by $\sC^+(\sA)$
the category of bounded below complexes in $\sA$, viewed either
as a DG\+category (with complexes of morphisms), or simply as
an additive category, with closed morphisms of degree~$0$.
 When $\sA$ is an exact category, the full subcategory $\sC^{\ge0}(\sA)
\subset\sC^+(\sA)$ of nonnegatively cohomologically graded complexes
in $\sA$ and closed morphisms of degree~$0$ between them has
a natural exact category structure, with termwise exact short exact
sequences of complexes.

 Let $\sE$ be an exact category and $\sJ\subset\sE$ be a coresolving
subcategory (in the sense of Section~\ref{pseudo-derived-introd}),
endowed with the exact category structure inherited from~$\sE$.
 As it was pointed out in~\cite{Pmgm}, a closed morphism in
$\sC^+(\sJ)$ is a quasi-isomorphism of complexes in $\sJ$ if and only
if it is a quasi-isomorphism of complexes in~$\sE$.
 A short sequence in $\sC^{\ge0}(\sJ)$ is exact in $\sC^{\ge0}(\sJ)$ if
and only if it is exact in $\sC^{\ge0}(\sE)$.

 Modifying slightly the notation in~\cite{Pmgm}, we denote by
${}_\sE\sC^{\ge0}(\sJ)$ the full subcategory in the exact category
$\sC^{\ge0}(\sJ)$ consisting of all the complexes $0\rarrow J^0\rarrow
J^1\rarrow J^2\rarrow\dotsb$ in $\sJ$ for which there exists an object
$E\in\sE$ together with a morphism $E\rarrow J^0$ such that the sequence
$0\rarrow E\rarrow J^0\rarrow J^1\rarrow\dotsb$ is exact in~$\sE$.
 By the definition, one has ${}_\sE\sC^{\ge0}(\sJ)=\sC^{\ge0}(\sJ)\cap
{}_\sE\sC^{\ge0}(\sE)\subset\sC^{\ge0}(\sE)$.
 The full subcategory ${}_\sE\sC^{\ge0}(\sJ)$ is closed under extensions
and the cokernels of admissible monomorphisms in $\sC^{\ge0}(\sJ)$;
so it inherits an exact category structure.

\medskip

 Let $\sB$ be another exact category and $\sF\subset\sB$ be
a resolving subcategory.
 We will suppose that the additive category $\sB$ contains the images
of idempotent endomorphisms of its objects.
 Let $-l_2\le l_1$ be two integers.
 Denote by $\sC^{\ge-l_2}(\sB)$ the exact category
$\sC^{\ge 0}(\sB)[l_2]\subset\sC^+(\sB)$ of complexes in $\sB$
concentrated in the cohomological degrees~$\ge-l_2$, and
by $\sC^{\ge -l_2}(\sB)^{\le l_1}\subset\sC^{\ge -l_2}(\sB)$ the full
subcategory consisting of all complexes $0\rarrow B^{-l_2}\rarrow
\dotsb\rarrow B^{l_1}\rarrow\dotsb$ such that the sequence $B^{l_1}
\rarrow B^{l_1+1}\rarrow B^{l_1+2}\rarrow\dotsb$ is exact in~$\sB$.
 Furthermore, let $\sC_\sF^{\ge-l_2}(\sB)^{\le l_1}\subset
\sC^{\ge -l_2}(\sB)^{\le l_1}$ be the full subcategory of all complexes
that are isomorphic in the derived category $\sD(\sB)$ to complexes
of the form $0\rarrow F^{-l_2}\rarrow\dotsb\rarrow F^{l_1}\rarrow0$,
with the terms belonging to $\sF$ and concentrated in the cohomological
degrees $-l_2\le m\le l_1$.

 For example, one has $\sC^{\ge0}(\sB)^{\le 0}={}_\sB\sC^{\ge0}(\sB)$.
 The full subcategory $\sC^{\ge-l_2}(\sB)^{\le l_1}$ is closed under
extensions and the cokernels of admissible monomorphisms in
the exact category $\sC^{\ge-l_2}(\sB)$, while (essentially
by~\cite[Proposition~2.3(2)]{St} or~\cite[Lemma~A.5.4(a\+b)]{Pcosh})
the full subcategory $\sC_\sF^{\ge-l_2}(\sB)^{\le l_1}$ is closed
under extensions and the kernels of admissible epimorphisms in
$\sC^{\ge-l_2}(\sB)^{\le l_1}$.
 So the full subcategory $\sC_\sF^{\ge-l_2}(\sB)^{\le l_1}$ inherits
an exact category structure from $\sC^{\ge-l_2}(\sB)$.

\medskip

 Suppose that we are given a DG\+functor $\Psi\:\sC^+(\sJ)\rarrow
\sC^+(\sB)$ taking acyclic complexes in the exact category $\sJ$ to
acyclic complexes in the exact category~$\sB$.
 Suppose further that the restriction of $\Psi$ to the subcategory
${}_\sE\sC^{\ge0}(\sJ)\subset\sC^+(\sJ)$ is an exact functor between
exact categories
\begin{equation} \label{psi-exact-between-exact-cat}
 \Psi\:{}_\sE\sC^{\ge0}(\sJ)\lrarrow\sC_\sF^{\ge-l_2}(\sB)^{\le l_1}.
\end{equation}
 Our aim is to construct the right derived functor
\begin{equation} \label{R-Psi-D-star}
 \boR\Psi\:\sD^\star(\sE)\lrarrow\sD^\star(\sF)
\end{equation}
acting between any bounded or unbounded, conventional or absolute
derived categories $\sD^\star$ with the symbols $\star=\b$, $+$, $-$,
$\varnothing$, $\abs+$, $\abs-$, or~$\abs$.

 Under certain conditions, one can also have the derived functor
$\boR\Psi$ acting between the coderived or contraderived categories,
$\star=\co$ or~$\ctr$, of the exact categories $\sE$ and~$\sF$.
 When the exact categories $\sE$ and $\sB$ have exact functors of
infinite product, the full subcategories $\sJ\subset\sE$
and $\sF\subset\sB$ are closed under infinite products, and
the functor $\Psi$ preserves infinite products, there will be
the derived functor $\boR\Psi$ acting between the contraderived
categories, $\boR\Psi\:\sD^\ctr(\sE)\rarrow\sD^\ctr(\sF)$.

 When the exact categories $\sE$ and $\sB$ have exact functors of
infinite direct sum, the full subcategory $\sF\subset\sB$ is closed
under infinite direct sums, and for any family of complexes
$J^\bu_\alpha\in\sC^{\ge0}(\sJ)$ and a complex $I^\bu\in\sC^{\ge0}(\sJ)$
endowed with a quasi-isomorphism $\bigoplus_\alpha J^\bu_\alpha\rarrow
I^\bu$ of complexes in the exact category $\sE$, the induced morphism
$$
 \bigoplus\nolimits_\alpha\Psi(J^\bu_\alpha)\lrarrow\Psi(I^\bu)
$$
is a quasi-isomorphism of complexes in the exact category $\sB$, there
will be the derived functor $\boR\Psi$ acting between
the coderived categories, $\boR\Psi\:\sD^\co(\sE)\rarrow\sD^\co(\sF)$.

 The construction of the derived functor $\boR\Psi$
in~\cite[Appendix~B]{Pmgm} is the particular case of the construction
below corresponding to the situation with $\sF=\sB$.

\subsection{The construction of derived functor}
\label{derived-functor-construction-appx}
 The following construction of the derived
functor~\eqref{R-Psi-D-star} is based on a version of
the result of~\cite[Proposition~A.3]{Ef}.

 Since the DG\+functor $\Psi\:\sC^+(\sJ)\rarrow\sC^+(\sB)$ preserves
quasi-isomorphisms, it induces a triangulated functor
$$
 \Psi\:\sD^+(\sJ)\lrarrow\sD^+(\sB).
$$
 Taking into account the triangulated equivalence $\sD^+(\sJ)\simeq
\sD^+(\sE)$ (provided by the dual version
of~\cite[Proposition~A.3.1(a)]{Pcosh}), we obtain the derived functor
$$
 \boR\Psi\:\sD^+(\sE)\lrarrow\sD^+(\sB).
$$
 Now our assumptions on $\Psi$ imply that the functor $\boR\Psi$ takes
the full subcategory $\sD^\b(\sE)\subset\sD^+(\sE)$ into the full
subcategory $\sD^\b(\sF)\subset\sD^\b(\sB)\subset\sD^+(\sB)$; hence
the triangulated functor
\begin{equation} \label{R-Psi-D-b}
 \boR\Psi\:\sD^\b(\sE)\lrarrow\sD^\b(\sF).
\end{equation}

 For any exact category $\sA$, we denote by $\sC(\sA)$ the exact
category of unbounded complexes in $\sA$, with termwise exact short
exact sequences of complexes.
 In order to construct the derived functor $\boR\Psi$ for the derived
categories with the symbols other than $\star=\b$, we are going
to substitute into~\eqref{R-Psi-D-b} the exact category $\sC(\sE)$
in place of $\sE$ and the exact category $\sC(\sF)$ in place of~$\sF$.

 For any category $\Gamma$ and DG\+category $\DG$, there is
a DG\+category whose objects are all the functors $\Gamma\rarrow\DG$
taking morphisms in $\Gamma$ to closed morphisms of degree~$0$
in~$\DG$, and whose complexes of morphisms are constructed as
the complexes of morphisms of functors.
 We denote this DG\+category by $\DG^\Gamma$.
 So diagrams of any fixed shape in a given DG\+category form
a DG\+category.
 Given a DG\+functor $F\:{}'\DG\rarrow{}''\DG$, there is the induced
DG\+functor between the categories of diagrams $F^\Gamma\:{}'\DG^\Gamma
\rarrow{}''\DG^\Gamma$.
 In particular, the DG\+category of complexes $\sC(\DG)$ in a given
DG\+category $\DG$ can be constructed as a full DG\+subcategory of
the DG\+category of diagrams of the corresponding shape in~$\DG$.

 Applying this construction to the DG\+functor $\Psi$ and restricting
to the full DG\+subcategories of bicomplexes that are uniformly
bounded on the relevant side, we obtain a DG\+functor
$$
 \Psi_\sC\:\sC^+(\sC(\sJ))\lrarrow\sC^+(\sC(\sB)).
$$
 Here the categories of unbounded complexes $\sC(\sJ)$ and $\sC(\sB)$
are simply viewed as additive/exact categories of complexes and
closed morphisms of degree~$0$ between them.
 The DG\+structures come from the differentials raising the degree
in which the bicomplexes are bounded below.

 The functor $\Psi_\sC$ takes acyclic complexes in the exact category
$\sC(\sJ)$ to acyclic complexes in the exact category~$\sC(\sB)$.
 In view of the standard properties of the resolution
dimension~\cite[Corollary~A.5.2]{Pcosh}, the functor $\Psi_\sC$ takes
the full subcategory ${}_{\sC(\sE)}\sC^{\ge0}(\sC(\sJ))\subset
\sC^+(\sC(\sJ))$ into the full subcategory
$\sC_{\sC(\sF)}^{\ge-l_2}(\sC(\sB))^{\le l_1}\subset\sC^+(\sC(\sB))$,
$$
 \Psi_\sC\:{}_{\sC(\sE)}\sC^{\ge0}(\sC(\sJ))\lrarrow
 \sC_{\sC(\sF)}^{\ge-l_2}(\sC(\sB))^{\le l_1}.
$$
 Finally, the functor $\Psi_\sC$ is exact in restriction to
the exact category ${}_{\sC(\sE)}\sC^{\ge0}(\sC(\sJ))$, since the functor
$\Psi$ is exact in restriction to the exact category
${}_\sE\sC^{\ge0}(\sJ)$.

 Applying the construction of the derived
functor~\eqref{R-Psi-D-b} to the DG\+functor $\Psi_\sC$ in place
of~$\Psi$, we obtain a triangulated functor
\begin{equation} \label{R-Psi-C-D-b}
 \boR\Psi_\sC\:\sD^\b(\sC(\sE))\lrarrow\sD^\b(\sC(\sF)). 
\end{equation}
 Similarly one can construct the derived functors
$\boR\Psi_{\sC^{\le0}}\:\sD^\b(\sC^{\le0}(\sE))\rarrow
\sD^\b(\sC^{\le0}(\sF))$ and $\boR\Psi_{\sC^{\ge0}}\:
\sD^\b(\sC^{\ge0}(\sE))\rarrow\sD^\b(\sC^{\ge0}(\sF))$ acting between
the bounded derived categories of the exact categories of nonpositively
or nonnegatively cohomologically graded complexes.
 Shifting and passing to the direct limits of fully faithful embeddings,
one can obtain the derived functors 
$\boR\Psi_{\sC^-}\:\sD^\b(\sC^-(\sE))\rarrow\sD^\b(\sC^-(\sF))$ and
$\boR\Psi_{\sC^+}\:\sD^\b(\sC^+(\sE))\rarrow\sD^\b(\sC^+(\sF))$ acting
between the bounded derived categories of the exact categories of
bounded above or bounded below complexes, etc.

\medskip

 In order to pass from~\eqref{R-Psi-C-D-b} to~\eqref{R-Psi-D-star}
with $\star=\abs$, we will apply the following version
of~\cite[Proposition~A.3(2)]{Ef}.
 Clearly, for any exact category $\sA$ the totalization of bounded
complexes of complexes in $\sA$ is a triangulated functor
\begin{equation}\label{abs-totalization}
 \sD^\b(\sC(\sA))\lrarrow\sD^\abs(\sA).
\end{equation}

\begin{prop} \label{d-b-c-d-abs-efimov}
 For any exact category\/ $\sA$, the totalization
functor~\eqref{abs-totalization} is a Verdier quotient functor.
 Its kernel is the thick subcategory generated by the contractible
complexes in\/ $\sA$, viewed as objects of\/ $\sC(\sA)$.
\end{prop}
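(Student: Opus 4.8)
The plan is to realize $\sD^\abs(\sA)$ as a Verdier quotient of $\sD^\b(\sC(\sA))$ and read off the kernel directly. Write $T\:\sD^\b(\sC(\sA))\rarrow\sD^\abs(\sA)$ for the totalization functor~\eqref{abs-totalization}, and let $\sK\subset\sD^\b(\sC(\sA))$ be the thick subcategory generated by the contractible complexes in $\sA$, each regarded as a one-term complex of objects of $\sC(\sA)$ (placed in outer cohomological degree~$0$). The easy half of the assertion is that $\sK\subset\ker T$: the total complex of such a generator is the contractible complex itself, which is absolutely acyclic, and the kernel of a triangulated functor is thick. Hence $T$ factors through a triangulated functor $\bar T\:\sD^\b(\sC(\sA))/\sK\rarrow\sD^\abs(\sA)$, and the whole statement will follow once $\bar T$ is shown to be an equivalence.

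Two ingredients go into this. First, the insertion functor $\iota\:\sC(\sA)\rarrow\sC^\b(\sC(\sA))$ in outer degree~$0$ satisfies $\mathrm{Tot}\circ\iota=\Id$, so every object of $\sD^\abs(\sA)$ lies in the image of $T$; in particular $\bar T$ is essentially surjective. Second, modulo $\sK$ the ``outer'' translation of $\sD^\b(\sC(\sA))$ agrees, on outer-degree-$0$ objects, with the ``inner'' shift of complexes in $\sA$: applying $\iota$ to the termwise split short exact sequence of complexes in $\sA$
\[
 0\rarrow N\rarrow\mathrm{Cone}(\Id_N)\rarrow N[1]\rarrow 0,
\]
which is a short exact sequence in $\sC(\sA)$ with contractible, hence $\sK$-contained, middle term, produces in $\sD^\b(\sC(\sA))/\sK$ a distinguished triangle identifying $\iota(N[1])$ with the outer translate of $\iota(N)$, naturally in~$N$. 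Iterating, $\iota$ becomes a triangulated functor after passing to the quotient and carries absolutely acyclic complexes into $\sK$, so it descends to a triangulated functor $\bar\iota\:\sD^\abs(\sA)\rarrow\sD^\b(\sC(\sA))/\sK$ with $\bar T\circ\bar\iota=\Id$.

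It remains to see that $\bar\iota$ is also a right inverse to $\bar T$ up to isomorphism, i.e.\ that $\bar\iota\circ\bar T$ (equivalently, $X\mapsto\iota(\mathrm{Tot}\,X)$) is isomorphic to the identity of $\sD^\b(\sC(\sA))/\sK$. Here I would run the d\'evissage along the stupid filtration of $X\in\sC^\b(\sC(\sA))$ in the outer direction: the subobjects $\sigma_{\ge p}X\subset X$ form a finite filtration of $X$ (by termwise split, hence admissible, monomorphisms) whose successive quotients are the columns $X^{p,\bu}$ placed in outer degree~$p$, that is, the outer translates of the outer-degree-$0$ objects $\iota(X^{p,\bu})$; totalization carries this to the evident finite filtration of $\mathrm{Tot}\,X$ in $\sC(\sA)$, whose successive quotients are the inner shifts $X^{p,\bu}[-p]$. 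By the shift identification of the previous paragraph these two lists of subquotients coincide modulo~$\sK$, and comparing the two filtrations term by term should yield the desired isomorphism $X\simeq\iota(\mathrm{Tot}\,X)$. The same d\'evissage also shows, at the outset, that $\sD^\b(\sC(\sA))$ is generated as a triangulated subcategory by the objects $\iota(N)$, $N\in\sC(\sA)$.

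The step I expect to be the main obstacle is making the last comparison genuinely \emph{functorial} — the stupid filtration is not a functor and one must match the connecting morphisms — so that $\bar\iota\circ\bar T\simeq\Id$ holds as an isomorphism of functors rather than merely object by object. A route that sidesteps this: using the generation statement above, full faithfulness of $\bar T$ reduces, by induction over distinguished triangles (five-lemma applied to the $\Hom$ long exact sequences), to the bijectivity of the maps $\Hom_{\sD^\b(\sC(\sA))/\sK}(\iota M,\iota N[n])\rarrow\Hom_{\sD^\abs(\sA)}(M,N[n])$ for all complexes $M$, $N$ in $\sA$ and all $n\in\boZ$. Proving this last statement — that modding $\sD^\b(\sC(\sA))$ out by $\sK$ inverts, between complexes genuinely living in $\sA$, exactly those morphisms that become invertible in $\sD^\abs(\sA)$ — is the real content: the acyclic complexes of complexes account, via totalization, for the absolutely acyclic complexes, and $\sK$ accounts for the extra identifications created by allowing an auxiliary complex direction. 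Once this is in hand, $\bar T$ is an equivalence, and hence $T$ is a Verdier quotient functor with kernel $\sK$.
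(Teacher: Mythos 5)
Your reduction is set up correctly as far as it goes: the containment $\sK\subset\ker T$, the essential surjectivity of $\bar T$ via the insertion functor~$\iota$, and the identification of the outer translate of $\iota(N)$ with $\iota(N[1])$ modulo $\sK$ (via the cone-of-identity sequence) are all fine. But the proposal stops exactly where the proposition actually lives. Everything hinges on showing that $\bar\iota\circ\bar T\simeq\Id$ as functors (equivalently, that $\ker T\subset\sK$, equivalently the bijectivity of the maps $\Hom_{\sD^\b(\sC(\sA))/\sK}(\iota M,\iota N[n])\rarrow\Hom_{\sD^\abs(\sA)}(M,N[n])$), and you explicitly leave this unproved, calling it ``the real content.'' The object-by-object d\'evissage along the stupid filtration does not settle it: having $FG=\Id$ together with non-natural isomorphisms $X\cong GF(X)$ does not imply that $F$ is fully faithful, so without matching the connecting morphisms (or proving the Hom-bijectivity by some other device) the argument does not close. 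Note also that even the earlier step ``$\iota$ carries absolutely acyclic complexes into $\sK$'' already uses (at least the object-wise form of) this same comparison, so it cannot be quoted as an independent ingredient. As written, the proposal is a correct reduction of the statement to an equivalent unproven statement, i.e.\ there is a genuine gap.

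The paper's proof avoids this difficulty by Efimov's two-step device. First one treats the additive category $\sA$ with the split exact structure $\sA_\spl$: then $\sC(\sA_\spl)$ is a Frobenius exact category whose projective-injective objects are precisely the contractible complexes, $\sD^\abs(\sA_\spl)=\Hot(\sA)$ is its stable category, and the classical comparison between the two constructions of the stable category of a Frobenius category identifies $\sD^\b(\sC(\sA_\spl))$ modulo the bounded homotopy category of complexes of projective-injectives with $\Hot(\sA)$, with totalization as the inverse equivalence --- this is exactly the functorial statement your filtration argument is struggling to produce, obtained here from a known theorem. Second, one passes from $\sA_\spl$ to $\sA$ by taking further Verdier quotients on both sides: by the acyclic bounded complexes of complexes on the left (turning $\sD^\b(\sC(\sA_\spl))$ into $\sD^\b(\sC(\sA))$) and by their totalizations on the right (turning $\Hot(\sA)$ into $\sD^\abs(\sA)$). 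If you want to salvage your direct approach, the missing Hom-bijectivity is essentially the Frobenius stable-category comparison in disguise, and proving it from scratch amounts to redoing that theorem.
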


\begin{proof}
 Denote by $\sA_\spl$ the additive category $\sA$ endowed with
the split exact category structure (i.~e., all the short exact
sequences are split).
 Following~\cite{Ef}, one first checks the assertion of proposition
for the exact category~$\sA_\spl$.

 In this case, $\sC(\sA_\spl)$ is a Frobenius exact category
whose projective-injective objects are the contractible complexes,
and $\sD^\abs(\sA_\spl)=\Hot(\sA_\spl)$ is the stable category of the
Frobenius exact category $\sC(\sA_\spl)$.
 The quotient category of the bounded derived category
$\sD^\b(\sC(\sA_\spl))$ by the bounded homotopy category of complexes
of projective-injective objects in $\sC(\sA_\spl)$ is just another
construction of the stable category of a Frobenius exact
category, and the totalization functor is the inverse equivalence to
the comparison functor between the two constructions of
the stable category.

 Then, in order to pass from the functor~\eqref{abs-totalization}
for the exact category $\sA_\spl$ to the similar functor for the exact
category $\sA$, one takes the quotient category by the acyclic
bounded complexes of complexes on the left-hand side, transforming
$\sD^\b(\sC(\sA_\spl))$ into $\sD^\b(\sC(\sA))$, and the quotient
category by the totalizations of such bicomplexes on the right-hand
side, transforming $\Hot(\sA)$ into $\sD^\abs(\sA)$.
\end{proof}

 It remains to notice that the contractible complexes in $\sA$ are
the direct summands of the cones of identity endomorphisms of
complexes in $\sA$, and the functor~\eqref{R-Psi-C-D-b}
obviously takes the cones of identity endomorphisms of complexes
in $\sE$ (viewed as objects of $\sC(\sE)$) to bicomplexes whose
totalizations are contractible complexes in~$\sF$.
 This provides the desired derived functor~\eqref{R-Psi-D-star}
for $\star=\abs$.

 In order to pass from~\eqref{R-Psi-C-D-b} to~\eqref{R-Psi-D-star}
with $\star=\varnothing$, the following corollary of
Proposition~\ref{d-b-c-d-abs-efimov} can be applied.
 Consider the totalization functor
\begin{equation} \label{conv-derived-totalization}
 \sD^\b(\sC(\sA))\lrarrow\sD(\sA).
\end{equation}

\begin{cor} \label{d-b-c-d-conv}
 For any exact category\/ $\sA$, the totalization
functor~\eqref{conv-derived-totalization} is a Verdier quotient functor.
 Its kernel is the thick subcategory generated by the acyclic complexes
in\/ $\sA$, viewed as objects of\/ $\sC(\sA)$.  \qed
\end{cor}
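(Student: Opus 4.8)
The plan is to deduce Corollary~\ref{d-b-c-d-conv} from Proposition~\ref{d-b-c-d-abs-efimov} by factoring the totalization functor~\eqref{conv-derived-totalization} through the absolute derived category. Indeed, the totalization functor $\sD^\b(\sC(\sA))\rarrow\sD(\sA)$ is the composition of the totalization functor~\eqref{abs-totalization}, which I will denote here by $F\:\sD^\b(\sC(\sA))\rarrow\sD^\abs(\sA)$, with the natural comparison functor $\sD^\abs(\sA)\rarrow\sD(\sA)$; the latter is well-defined because every absolutely acyclic complex in $\sA$ is acyclic. First I would record that $\sD^\abs(\sA)\rarrow\sD(\sA)$ is itself a Verdier quotient functor whose kernel is the thick subcategory $\mathcal N\subset\sD^\abs(\sA)$ generated by the acyclic complexes in $\sA$. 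This is immediate from the definitions: both $\sD^\abs(\sA)$ and $\sD(\sA)$ are Verdier quotients of the homotopy category $\Hot(\sA)$ --- by the thick subcategory of absolutely acyclic complexes and by the thick subcategory of acyclic complexes, respectively --- and the former thick subcategory is contained in the latter, so the universal property of Verdier quotients applies (cf.~\cite[Remark~2.1]{Psemi}). Since the composition of two Verdier quotient functors is again a Verdier quotient functor, this already proves the first assertion of the corollary.

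It remains to identify the kernel of the composition, which is $F^{-1}(\mathcal N)$. By Proposition~\ref{d-b-c-d-abs-efimov}, the functor $F$ is a Verdier quotient whose kernel $\sK\subset\sD^\b(\sC(\sA))$ is the thick subcategory generated by the contractible complexes in $\sA$, viewed as objects of $\sC(\sA)$. Here I would invoke the standard lattice correspondence between the thick subcategories of a Verdier quotient $\mathcal D/\sK$ and the thick subcategories of $\mathcal D$ containing $\sK$ (see, e.g., \cite[Chapter~2]{N-book}): under it, the preimage $F^{-1}(\mathcal N)$ is the thick subcategory of $\sD^\b(\sC(\sA))$ generated by $\sK$ together with any family of objects whose images under $F$ generate $\mathcal N$. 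Taking this family to consist of the acyclic complexes in $\sA$ viewed as one-column objects of $\sC(\sA)$ --- whose totalizations are the acyclic complexes themselves, so that their $F$-images do generate $\mathcal N$ --- I conclude that $F^{-1}(\mathcal N)$ is the thick subcategory generated by $\sK$ and these one-column acyclic complexes. Finally, since a contractible complex in $\sA$ is in particular acyclic, the subcategory $\sK$ is already contained in the thick subcategory generated by the one-column acyclic complexes; hence $F^{-1}(\mathcal N)$ coincides with the latter, which is exactly the assertion of the corollary.

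The only point requiring genuine care is the identification of $\sD^\abs(\sA)\rarrow\sD(\sA)$ as a Verdier quotient functor with kernel the thick subcategory generated by acyclic complexes, and the clean bookkeeping of kernels under the composition of Verdier quotients via the lattice correspondence; everything else is formal once Proposition~\ref{d-b-c-d-abs-efimov} is in hand. As an alternative, one could instead repeat the Frobenius-category argument of Proposition~\ref{d-b-c-d-abs-efimov} verbatim with $\sD(\sA)$ in place of $\sD^\abs(\sA)$ throughout, passing to a further quotient by the acyclic complexes of complexes at the last step exactly as in the final paragraph of that proof; but the factorization approach above is shorter and reuses the proposition as a black box.
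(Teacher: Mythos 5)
Your factorization of the totalization functor through $\sD^\abs(\sA)$ is exactly how the paper intends the corollary to follow from Proposition~\ref{d-b-c-d-abs-efimov} (the paper gives no separate argument), and your bookkeeping is sound: the comparison functor $\sD^\abs(\sA)\rarrow\sD(\sA)$ is a Verdier quotient whose kernel is the thick subcategory generated by the acyclic complexes, a composition of Verdier quotients is a Verdier quotient, and the kernel of the composition is indeed the thick subcategory generated by $\ker F$ together with any family of objects whose $F$\+images generate that kernel. The one step that fails in the stated generality is the last one: ``a contractible complex in $\sA$ is in particular acyclic'' is \emph{not} true for an arbitrary exact category; it requires $\sA$ to be weakly idempotent complete (see~\cite{Bueh}). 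A standard counterexample: if $e$ is a nonsplit idempotent endomorphism of an object $X$ of an additive category equipped with the split exact structure, then the unbounded complex with all terms $X$ and differentials alternating between $e$ and $1-e$ is contractible (the identity maps form a contracting homotopy), but it is not acyclic, since acyclicity would force $e$ to split. As the corollary is asserted for any exact category, this step needs a replacement.

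The repair is short and keeps your argument intact. For any complex $C^\bu$ in $\sA$, the cone of its identity endomorphism is acyclic in any exact structure: its differentials factor through the evident split epimorphisms and split monomorphisms, and the resulting kernel--cokernel pairs $C^n\rarrow C^{n+1}\oplus C^n\rarrow C^{n+1}$ are biproduct diagrams, hence conflations. Moreover, if $C^\bu$ is contractible with a contracting homotopy~$h$, then the unitriangular automorphism with off-diagonal entry $-h$ provides an isomorphism of complexes $C^\bu[1]\oplus C^\bu\simeq\operatorname{cone}(\operatorname{id}_{C^\bu})$. Hence every contractible complex is a direct summand of an acyclic complex, so the one-column contractible complexes do lie in the thick subcategory of $\sD^\b(\sC(\sA))$ generated by the one-column acyclic complexes, and your identification of $F^{-1}(\mathcal N)$ with that thick subcategory---and with it the corollary---goes through for an arbitrary exact category. (In the paper's applications the relevant categories contain images of idempotents, so your original justification would suffice there, but not for the corollary as stated.)
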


 Using the condition that
the functor~\eqref{psi-exact-between-exact-cat} takes short exact
sequences to short exact sequences together
with~\cite[Lemma~B.2(e)]{Pmgm}, one shows that
the functor~\eqref{R-Psi-C-D-b} takes acyclic complexes in~$\sE$ (viewed
as objects of $\sC(\sE)$) to bicomplexes with acyclic totalizations.
 This provides the derived functor~\eqref{R-Psi-D-star} for
$\star=\varnothing$.

\medskip

 To construct the derived functors $\boR\Psi$ acting between
the bounded above and bounded below versions of the conventional and
absolute derived categories (with $\star=+$, $-$, $\abs+$, or~$\abs-$),
one can notice that the functors $\boR\Psi$ for $\star=\varnothing$
or~$\abs$ take bounded above/below complexes to (objects representable
by) bounded above/below complexes, and use the fact that the embedding
functors from the bounded above/below conventional/absolute derived
categories into the unbounded ones are fully
faithful~\cite[Lemma~A.1.1]{Pcosh}.
 Alternatively, one can repeat the above arguments with the categories
of unbounded complexes $\sC(\sA)$ replaced with the bounded above/below
ones $\sC^-(\sA)$ or $\sC^+(\sA)$.
 The derived functor $\boR\Psi$ with $\star=\b$ constructed in such
a way agrees with the functor~\eqref{R-Psi-D-b}.

 To construct the derived functor $\boR\Psi$ acting between
the coderived or contraderived categories (under the respective
assumptions in Section~\ref{posing-problem-appx}), one considers
the derived functor $\boR\Psi$ for $\star=\abs$, and checks that
the kernel of the composition $\sC(\sE)\rarrow\sD^\abs(\sE)\rarrow
\sD^\abs(\sF)\rarrow\sD^\co(\sF)$ or $\sC(\sE)\rarrow\sD^\abs(\sE)\rarrow
\sD^\abs(\sF)\rarrow\sD^\ctr(\sF)$ is closed under the infinite direct
sums or infinite products, respectively.
 The facts that the kernels of the additive functors $\sC(\sF)\rarrow
\sD^{\co/\ctr}(\sF)$ are closed under the infinite direct sums/products
and the total complex of a finite acyclic complex of unbounded
complexes in $\sF$ is absolutely acyclic need to be used.

\subsection{The dual setting} \label{dual-setting-appx}
 The notation $\sC^{\le0}(\sB)\subset\sC^-(\sB)$ for an additive or
exact category $\sB$ has the similar or dual meaning to the one in
Section~\ref{posing-problem-appx}.

 Let $\sF$ be an exact category and $\sP\subset\sF$ be a resolving
subcategory, endowed with the inherited exact category structure.
 A closed morphism in $\sC^-(\sP)$ is a quasi-isomorphism of complexes
in $\sP$ if and only if it is a quasi-isomorphism of complexes in~$\sF$.
 A short sequence in $\sC^{\le0}(\sP)$ is exact in $\sC^{\le0}(\sP)$ if
and only if it is exact in~$\sC^{\le0}(\sF)$.

 Following the notation in Section~\ref{posing-problem-appx}, denote
by ${}_\sF\sC^{\le0}(\sP)$ the full subcategory in the exact category
$\sC^{\le0}(\sP)$ consisting of all the complexes
$\dotsb\rarrow P^{-2}\rarrow P^{-1}\rarrow P^0\rarrow0$ in $\sP$ for
which there exists an object $F\in\sF$ together with a morphism
$P^0\rarrow F$ such that the sequence $\dotsb\rarrow P^{-1}\rarrow
P^0\rarrow F\rarrow0$ is exact in~$\sF$.
 By the definition, one has ${}_\sF\sC^{\le0}(\sP)=\sC^{\le0}(\sP)\cap
{}_\sF\sC^{\le0}(\sF)\subset\sC^{\le0}(\sF)$.
 The full subcategory ${}_\sF\sC^{\le0}(\sP)$ is closed under extensions
and the kernels of admissible epimorphisms in $\sC^{\le0}(\sP)$; so
it inherits an exact category structure.

 Let $\sA$ be another exact category and $\sE\subset\sA$ be
a coresolving subcategory.
 Suppose that the additive category $\sA$ contains the images of
idempotent endomorphisms of its objects.
 Let $-l_1\le l_2$ be two integers.
 Denote by $\sC^{\le l_2}(\sA)$ the exact category
$\sC^{\le0}(\sA)[-l_2]\subset\sC^-(\sA)$ of complexes in $\sA$
concentrated in the cohomological degrees~$\le\nobreak l_2$, and by
$\sC^{\le l_2}(\sA)^{\ge-l_1}\subset\sC^{\le l_2}(\sA)$ the full subcategory
consisting of all complexes $\dotsb\rarrow A^{-l_1}\rarrow\dotsb\rarrow
A^{l_2}\rarrow0$ such that the sequence $\dotsb\rarrow A^{-l_1-2}\rarrow
A^{-l_1-1}\rarrow A^{-l_1}$ is exact in~$\sA$.
 Furthermore, let $\sC_\sE^{\le l_2}(\sA)^{\ge-l_1}\subset
\sC^{\le l_2}(\sA)^{\ge-l_1}$ be the full subcategory of all complexes
that are isomorphic in the derived category $\sD(\sA)$ to complexes
of the form $0\rarrow E^{-l_1}\rarrow\dotsb\rarrow E^{l_2}\rarrow0$,
with the terms belonging to~$\sE$ and concentrated in the cohomological
degrees $-l_1\le m\le l_2$.

 For example, one has $\sC^{\le0}(\sA)^{\ge0}={}_\sA\sC^{\le0}(\sA)$.
 The full subcategory $\sC^{\le l_2}(\sA)^{\ge -l_1}$ is closed under
extensions and the kernels of admissible epimorphisms in the exact
category $\sC^{\le l_2}(\sA)$, while the full subcategory
$\sC_\sE^{\le l_2}(\sA)^{\ge-l_1}$ is closed under extension and
the cokernels of admissible monomorphisms in
$\sC^{\le l_2}(\sA)^{\ge-l_1}$.
 So the full subcategory $\sC_\sE^{\le l_2}(\sA)^{\ge-l_1}$ inherits
an exact category structure from $\sC^{\le l_2}(\sA)$.

\medskip

 Suppose that we are given a DG\+functor $\Phi\:\sC^-(\sP)\rarrow
\sC^-(\sA)$ taking acyclic complexes in the exact category $\sP$ to
acyclic complexes in the exact category~$\sA$.
 Suppose further that the restriction of $\Phi$ to the subcategory
${}_\sF\sC^{\le0}(\sP)\subset\sC^-(\sP)$ is an exact functor between
exact categories
\begin{equation}
 {}_\sF\sC^{\le0}(\sP)\lrarrow \sC_\sE^{\le l_2}(\sA)^{\ge-l_1}.
\end{equation}
 Then the construction dual to that in
Section~\ref{derived-functor-construction-appx} provides
the left derived functor
\begin{equation} \label{L-Phi-D-star}
 \boL\Phi\:\sD^\star(\sF)\rarrow\sD^\star(\sE)
\end{equation}
acting between any bounded or unbounded, conventional or absolute
derived categories $\sD^\star$ with the symbols $\star=\b$, $+$, $-$,
$\varnothing$, $\abs+$, $\abs-$, or~$\abs$.

 Under certain conditions, one can also have the derived functor
$\boL\Phi$ acting between the coderived or contraderived categories.
 When the exact categories $\sF$ and $\sA$ have exact functors of
infinite direct sum, the full subcategories $\sP\subset\sF$ and
$\sE\subset\sA$ are closed under infinite direct sums, and the functor
$\Phi$ preserves infinite direct sums, there is the derived
functor $\boL\Phi\:\sD^\co(\sF)\rarrow\sD^\co(\sE)$.

 When the exact categories $\sF$ and $\sA$ have exact functors of
infinite product, the full subcategory $\sE\subset\sA$ is closed under
infinite products, and for any family of complexes $P^\bu_\alpha\in
\sC^{\le0}(\sP)$ and a complex $Q^\bu\in\sC^{\le0}(\sP)$ endowed with
a quasi-isomorphism $Q^\bu\rarrow\prod_\alpha P^\bu_\alpha$ of
complexes in the exact category $\sF$, the induced morphism
$$
 \Phi(Q^\bu)\lrarrow\prod\nolimits_\alpha\Phi(P^\bu_\alpha)
$$
is a quasi-isomorphism of complexes in the exact category $\sA$,
there is the derived functor $\boL\Phi\:\sD^\ctr(\sF)\rarrow
\sD^\ctr(\sE)$.

\medskip

 Let us spell out the major steps of the construction of
the derived functor~\eqref{L-Phi-D-star}.
 Since the DG\+functor $\Phi\:\sC^-(\sP)\rarrow\sC^-(\sA)$ preserves
quasi-isomorphisms, it induces a triangulated functor
$\Phi\:\sD^-(\sP)\rarrow\sD^-(\sA)$.
 Taking into account the triangulated equivalence $\sD^-(\sP)
\simeq\sD^-(\sF)$ provided by~\cite[Proposition~A.3.1(a)]{Pcosh}, we
obtain the derived functor $\boL\Phi\:\sD^-(\sF)\rarrow\sD^-(\sA)$.
 Our assumptions on $\Phi$ imply that this functor $\boL\Phi$
takes the full subcategory $\sD^\b(\sF)\subset\sD^-(\sF)$ into
the full subcategory $\sD^\b(\sE)\subset\sD^\b(\sA)\subset\sD^-(\sA)$;
hence the triangulated functor
\begin{equation} \label{L-Phi-D-b}
 \boL\Phi\:\sD^\b(\sF)\rarrow\sD^\b(\sE).
\end{equation}

 Passing from the DG\+functor $\Phi\:\sC^-(\sP)\rarrow\sC^-(\sA)$
to the induced DG\+functor between the DG\+categories of unbounded
complexes in the given DG\+categories, as explained in
Section~\ref{derived-functor-construction-appx}, and restricting to
the full DG\+subcategories of uniformly bounded bicomplexes, one
obtains the DG\+functor
$$
 \Phi_\sC\:\sC^-(\sC(\sP))\lrarrow\sC^-(\sC(\sA)).
$$
 The functor $\Phi_\sC$ takes acyclic complexes in the exact category
$\sC(\sP)$ to acyclic complexes in the exact category $\sC(\sA)$.
 It also takes the full subcategory ${}_{\sC(\sF)}\sC^{\le0}(\sC(\sP))
\subset\sC^-(\sC(\sP))$ into the full subcategory
$\sC_{\sC(\sE)}^{\le l_2}(\sC(\sA))^{\ge-l_1}\subset\sC^-(\sC(\sA))$.
 So we can apply the construction of the derived
functor~\eqref{L-Phi-D-b} to the DG\+functor $\Phi_\sC$ in place of
$\Phi$, and produce a triangulated functor
\begin{equation} \label{L-Phi-C-D-b}
 \boL\Phi_\sC\:\sD^\b(\sC(\sF))\rarrow\sD^\b(\sC(\sE)).
\end{equation}

 Using Proposition~\ref{d-b-c-d-abs-efimov} and
Corollary~\ref{d-b-c-d-conv}, one shows that the triangulated
functor~\eqref{L-Phi-C-D-b} descends to a triangulated
functor~\eqref{L-Phi-D-star} between the absolute or conventional
derived categories, $\star=\abs$ or~$\varnothing$.
 The cases of bounded above or below absolute or conventional
derived categories, $\star=+$, $-$, $\abs+$, or~$\abs-$ can be
treated as explained in Section~\ref{derived-functor-construction-appx}.
 Under the respective assumptions, one can also descend from
the absolute derived categories to the coderived or contraderived
categories, producing the derived functor~\eqref{L-Phi-D-star}
for $\star=\co$ or~$\ctr$.

\subsection{Deriving adjoint functors}  \label{deriving-adjoints-appx}
 Let $\sA$ and $\sB$ be exact categories containing the images of
idempotent endomorphisms of its objects, let $\sJ\subset\sE\subset\sA$
be coresolving subcategories in $\sA$, and let $\sP\subset\sF\subset\sB$
be resolving subcategories in~$\sB$.

 Let $\Psi\:\sC^+(\sJ)\rarrow\sC^+(\sB)$ be a DG\+functor satisfying
the conditions of Section~\ref{posing-problem-appx}, and let
$\Phi\:\sC^-(\sP)\rarrow\sC^-(\sA)$ be a DG\+functor satisfying
the conditions of Section~\ref{dual-setting-appx}.
 Suppose that the DG\+functors $\Phi$ and $\Psi$ are partially adjoint,
in the sense that for any two complexes $J^\bu\in\sC^+(\sJ)$ and
$P^\bu\in\sC^-(\sP)$ there is a natural isomorphism of complexes
of abelian groups
\begin{equation} \label{dg-adjunction}
 \Hom_\sA(\Phi(P^\bu),J^\bu)\simeq\Hom_\sB(P^\bu,\Psi(J^\bu)),
\end{equation}
where $\Hom_\sA$ and $\Hom_\sB$ denote the complexes of morphisms
in the DG\+categories of unbounded complexes $\sC(\sA)$ and
$\sC(\sB)$.

 Our aim is to show that the triangulated functor $\boL\Phi$
\eqref{L-Phi-D-star} is left adjoint to the triangulated functor
$\boR\Phi$ \eqref{R-Psi-D-star}, for any symbol $\star=\b$, $+$,
$-$, $\varnothing$, $\abs+$, $\abs-$, or~$\abs$.
 When the functors $\boL\Phi$ and $\boR\Psi$ acting between
the categories $\sD^\co$ or $\sD^\ctr$ are defined (i.~e., the related
conditions in Sections~\ref{posing-problem-appx}
and~\ref{dual-setting-appx} are satisfied), the former of them is
also left adjoint to the latter one.

 Our first step is the following lemma.

\begin{lem}
 In the assumptions above, the induced triangulated functors\/
$\Phi\:\sD^-(\sP)\allowbreak\rarrow\sD^-(\sA)$ and\/
$\Psi\:\sD^+(\sJ)\rarrow\sD^+(\sB)$ are partially adjoint, in
the sense that for any complexes $J^\bu\in\sC^+(\sJ)$ and
$P^\bu\in\sC^-(\sP)$ there is a natural isomorphism of abelian groups
of morphisms in the unbounded derived categories
$$
 \Hom_{\sD(\sA)}(\Phi(P^\bu),J^\bu)\simeq
 \Hom_{\sD(\sB)}(P^\bu,\Psi(J^\bu)).
$$
\end{lem}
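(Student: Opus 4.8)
The plan is to reduce the statement to the DG\+level adjunction \eqref{dg-adjunction}, which controls morphisms of complexes up to homotopy, and then to upgrade ``homotopy class of morphisms'' to ``morphism in the derived category'' by exploiting the boundedness of the complexes at hand together with the fact that $\sJ$ is coresolving in $\sA$ and $\sP$ is resolving in $\sB$. Recall that the functors in the lemma are the tautological ones induced by the DG\+functors $\Phi$ and $\Psi$ (which preserve quasi-isomorphisms), so $\Phi(P^\bu)\in\sC^-(\sA)$ and $\Psi(J^\bu)\in\sC^+(\sB)$ literally represent the objects in question.

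First I would apply \eqref{dg-adjunction} to $P^\bu\in\sC^-(\sP)$ and $J^\bu\in\sC^+(\sJ)$. Since $\Phi(P^\bu)$ is bounded above and $J^\bu$ is bounded below, the complex of morphisms $\Hom_\sA(\Phi(P^\bu),J^\bu)$ computed in the DG\+category $\sC(\sA)$ is the ordinary Hom\+complex of graded maps, so taking $H^0$ recovers the group of chain homotopy classes $\Hom_{\Hot(\sA)}(\Phi(P^\bu),J^\bu)$; the same applies on the $\sB$\+side. Hence \eqref{dg-adjunction} already yields a natural isomorphism
\[ \Hom_{\Hot(\sA)}(\Phi(P^\bu),J^\bu)\;\cong\;\Hom_{\Hot(\sB)}(P^\bu,\Psi(J^\bu)), \]
with naturality in $P^\bu$ and $J^\bu$ inherited from that of the DG\+adjunction.

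It then remains to verify that the localization maps $\Hom_{\Hot(\sA)}(\Phi(P^\bu),J^\bu)\rarrow\Hom_{\sD(\sA)}(\Phi(P^\bu),J^\bu)$ and $\Hom_{\Hot(\sB)}(P^\bu,\Psi(J^\bu))\rarrow\Hom_{\sD(\sB)}(P^\bu,\Psi(J^\bu))$ are isomorphisms. On the left, $J^\bu$ is bounded below, so a morphism into it in $\sD(\sA)$ depends only on a two\+sided canonical truncation of $\Phi(P^\bu)$, which is a bounded complex; one is thereby reduced to $\sD^+(\sA)$, where the triangulated equivalence $\sD^+(\sJ)\simeq\sD^+(\sE)\hookrightarrow\sD^+(\sA)$ supplied by the dual of \cite[Proposition~A.3.1(a)]{Pcosh}, together with the existence of $\sJ$\+coresolutions of bounded below complexes, shows that a morphism into the bounded below complex of $\sJ$\+objects $J^\bu$ is already represented by a genuine morphism of complexes, unique up to homotopy. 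On the right, $\Psi(J^\bu)$ is bounded below; the mirror\+image reduction — truncating $\Psi(J^\bu)$ from above to a bounded complex, and then invoking \cite[Proposition~A.3.1(a)]{Pcosh} itself, i.e.\ $\sD^-(\sP)\simeq\sD^-(\sF)\hookrightarrow\sD^-(\sB)$, together with $\sP$\+resolutions — identifies $\Hom_{\Hot(\sB)}(P^\bu,\Psi(J^\bu))$ with $\Hom_{\sD(\sB)}(P^\bu,\Psi(J^\bu))$. Transporting the displayed isomorphism through these two identifications gives the conclusion.

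The main obstacle is precisely this last step: the identity ``homotopy classes $=$ derived morphisms'' fails for arbitrary complexes, and the argument must use carefully that the left\+hand Hom goes from a bounded\+above complex into a bounded\+below complex with terms in a coresolving subcategory (and dually on the right). The cleanest way to handle it is to first cut both complexes down to genuinely bounded complexes by canonical truncations — legitimate because the relevant $\Hom$ groups are unchanged — and only then appeal to \cite[Proposition~A.3.1]{Pcosh} and its dual, which already encode the comparison between homotopy and derived morphisms on the two sides without destroying the fact that $\Phi$ and $\Psi$ preserve quasi\+isomorphisms.
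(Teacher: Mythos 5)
Your first step (taking cohomology of the DG\+adjunction~\eqref{dg-adjunction} to obtain the isomorphism of homotopy-category Hom groups) is exactly the paper's first step. But the second step of your argument contains a genuine gap: you claim that the localization maps $\Hom_{\Hot(\sA)}(\Phi(P^\bu),J^\bu)\rarrow\Hom_{\sD(\sA)}(\Phi(P^\bu),J^\bu)$ and $\Hom_{\Hot(\sB)}(P^\bu,\Psi(J^\bu))\rarrow\Hom_{\sD(\sB)}(P^\bu,\Psi(J^\bu))$ are isomorphisms. In the generality of the appendix this is false: $\sJ$ is only assumed to be a coresolving subcategory of $\sA$ and $\sP$ a resolving subcategory of $\sB$; there is no injectivity/projectivity hypothesis making $J^\bu$ homotopy injective or $P^\bu$ homotopy projective. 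For instance, take $\sA$ to be abelian groups, $\sJ=\sE=\sA$ (trivially coresolving), $X^\bu=\boZ/2$ in degree~$0$ and $J^\bu=\boZ/2$ placed in degree~$1$: then $\Hom_{\sD(\sA)}(X^\bu,J^\bu)=\Ext^1_\boZ(\boZ/2,\boZ/2)\ne0$ while $\Hom_{\Hot(\sA)}(X^\bu,J^\bu)=0$. The equivalence $\sD^+(\sJ)\simeq\sD^+(\sA)$ that you invoke does \emph{not} say that homotopy classes of maps into a bounded below complex of $\sJ$\+objects compute derived Hom; it only says that such complexes are cofinal among targets of quasi-isomorphisms out of $J^\bu$. (A secondary problem: your reduction to bounded complexes uses canonical truncations and $t$\+structure-type vanishing, which are not available in an arbitrary exact category $\sA$.)

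The paper's proof avoids asserting that either localization map is bijective. Instead it expresses each derived Hom group as a filtered colimit of homotopy Hom groups: $\Hom_{\sD(\sA)}(\Phi(P^\bu),J^\bu)$ is the colimit of $\Hom_{\Hot(\sA)}(\Phi(P^\bu),I^\bu)$ over quasi-isomorphisms $J^\bu\rarrow I^\bu$ with $I^\bu\in\sC^+(\sJ)$ (cofinality holds because any quasi-isomorphism $J^\bu\rarrow A^\bu$ can be followed by a quasi-isomorphism $A^\bu\rarrow I^\bu$ into a bounded below complex in $\sJ$, as $\sJ$ is coresolving), and dually $\Hom_{\sD(\sB)}(P^\bu,\Psi(J^\bu))$ is computed via quasi-isomorphisms $Q^\bu\rarrow P^\bu$ with $Q^\bu\in\sC^-(\sP)$. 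The homotopy-level adjunction, applied termwise in these colimits (using that $\Phi$ and $\Psi$ preserve quasi-isomorphisms), then identifies the two derived Hom groups. If you want to keep your shortcut, you would have to add the hypothesis that objects of $\sJ$ (resp.\ $\sP$) are injective (resp.\ projective) in a suitable sense, as in the paper's application $\sJ=A\modl_\inj$, $\sP=B\modl_\proj$ --- but then the lemma would no longer cover the abstract setting it is stated in.
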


\begin{proof}
 Passing to the cohomology groups in the DG\+adjunction
isomorphism~\eqref{dg-adjunction}, one obtains an isomorphism
of the groups of morphisms in the homotopy categories
$$
 \Hom_{\Hot(\sA)}(\Phi(P^\bu),J^\bu)\simeq
 \Hom_{\Hot(\sB)}(P^\bu,\Psi(J^\bu)).
$$

 In order to pass from this to the desired isomorphism of the groups
of morphisms in the unbounded derived categories, one can notice that
for any (unbounded) complex $A^\bu\in\sC(\sA)$ endowed with
a quasi-isomorphism $J^\bu\rarrow A^\bu$ of complexes in $\sA$ there
exists a bounded below complex $I^\bu\in\sC^+(\sJ)$ together with
a quasi-isomorphism $A^\bu\rarrow I^\bu$ of complexes in~$\sA$.
 The composition $J^\bu\rarrow A^\bu\rarrow I^\bu$ is then
a quasi-isomorphism of bounded below complexes in~$\sJ$.
 Similarly, for any (unbounded) complex $B^\bu\in\sC(\sB)$ endowed
with a quasi-isomorphism $B^\bu\rarrow P^\bu$ of complexes in $\sB$
there exists a bounded above complex $Q^\bu\in\sC^-(\sP)$ together with
a quasi-isomorphism $Q^\bu\rarrow B^\bu$ of complexes in~$\sB$.
 The composition $Q^\bu\rarrow B^\bu\rarrow P^\bu$ is then
a quasi-isomorphism of bounded above complexes in~$\sP$.
\end{proof}

 Restricting to the full subcategories $\sD^\b(\sE)\subset\sD^+(\sJ)
\subset\sD(\sA)$ and $\sD^\b(\sF)\subset\sD^-(\sP)\subset\sD(\sB)$,
we conclude that the derived functor
$\boL\Phi\:\sD^\b(\sF)\rarrow\sD^\b(\sE)$ \eqref{L-Phi-D-b} is
left adjoint to the derived functor
$\boR\Psi\:\sD^\b(\sE)\rarrow\sD^\b(\sF)$~\eqref{R-Psi-D-b}.
 Replacing all the exact categories with the categories of unbounded
complexes in them, we see that the derived functor
$\boL\Phi_\sC\:\sD^\b(\sC(\sF))\rarrow\sD^\b(\sC(\sE))$
\eqref{L-Phi-C-D-b} is left adjoint to the derived functor
$\boR\Psi_\sC\:\sD^\b(\sC(\sE))\rarrow
\sD^\b(\sC(\sF))$~\eqref{R-Psi-C-D-b}.

 In order to pass to the desired adjunction between the derived
functors $\boR\Psi\:\sD^\star(\sE)\rarrow\sD^\star(\sF)$
\eqref{R-Psi-D-star} and $\boL\Phi\:\sD^\star(\sF)\rarrow\sD^\star(\sE)$
\eqref{L-Phi-D-star}, it remains to apply the next (well-known) lemma.
{\hbadness=1600\par}

\begin{lem} \label{descent-adjunction-lem}
 Suppose that we are given two commutative diagrams of triangulated
functors
$$\dgARROWLENGTH=2.5em
\begin{diagram} 
\node{\sD_1}\arrow[3]{e,t}{G}\arrow[2]{s,A}
\node[3]{\sD_2}\arrow[2]{s,A} \\ \\
\node{\overline\sD_1}\arrow[3]{e,t}{\overline G}
\node[3]{\overline\sD_2}
\end{diagram}
\qquad\quad
\begin{diagram} 
\node{\sD_1}\arrow[2]{s,A}
\node[3]{\sD_2}\arrow[2]{s,A}\arrow[3]{w,t}{F} \\ \\
\node{\overline\sD_1}
\node[3]{\overline\sD_2}\arrow[3]{w,t}{\overline F}
\end{diagram}
$$
where the vertical arrows are Verdier quotient functors.
 Suppose further that the functor $F\:\sD_2\rarrow\sD_1$ is left
adjoint to the functor $G\:\sD_1\rarrow\sD_2$.
 Then the functor $\overline F\:\overline\sD_2\rarrow\overline\sD_1$
is also naturally left adjoint to the functor
$\overline G\:\overline\sD_1\rarrow\overline\sD_2$.
\end{lem}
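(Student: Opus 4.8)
The plan is to descend the unit and counit of the adjunction $F \dashv G$ along the two Verdier quotient functors and then check that the descended natural transformations satisfy the triangle identities. Write $Q_1 \colon \sD_1 \rarrow \overline\sD_1$ and $Q_2 \colon \sD_2 \rarrow \overline\sD_2$ for the Verdier quotient functors, so that commutativity of the two given squares reads $Q_2 G = \overline G\,Q_1$ and $Q_1 F = \overline F\,Q_2$. Let $\eta \colon \Id_{\sD_2} \rarrow GF$ and $\epsilon \colon FG \rarrow \Id_{\sD_1}$ be the unit and counit of $F \dashv G$, so that the triangle identities $(\epsilon F)(F\eta) = \Id_F$ and $(G\epsilon)(\eta G) = \Id_G$ hold.

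The tool used throughout is the standard property of a Verdier localization $Q \colon \sD \rarrow \overline\sD$: for any triangulated category $\sD'$, precomposition with $Q$ is a fully faithful functor from triangulated functors $\overline\sD \rarrow \sD'$ to triangulated functors $\sD \rarrow \sD'$. In particular, a natural transformation between two functors out of $\overline\sD$ coincides with its whiskering by $Q$, so to produce such a transformation it suffices to produce the corresponding transformation of the whiskered functors. Applying this to $Q_2$: whiskering $\eta$ on the left with $Q_2$ yields a natural transformation $Q_2 \rarrow Q_2 GF = \overline G\,Q_1 F = \overline G\,\overline F\,Q_2$, which descends uniquely to $\overline\eta \colon \Id_{\overline\sD_2} \rarrow \overline G\,\overline F$ with $\overline\eta\,Q_2 = Q_2\eta$. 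Symmetrically, whiskering $\epsilon$ with $Q_1$ yields $\overline F\,\overline G\,Q_1 = Q_1 FG \rarrow Q_1$, descending to $\overline\epsilon \colon \overline F\,\overline G \rarrow \Id_{\overline\sD_1}$ with $\overline\epsilon\,Q_1 = Q_1\epsilon$.

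It remains to verify the triangle identities $(\overline\epsilon\,\overline F)(\overline F\,\overline\eta) = \Id_{\overline F}$ and $(\overline G\,\overline\epsilon)(\overline\eta\,\overline G) = \Id_{\overline G}$; once these hold, $\overline F$ is left adjoint to $\overline G$ with unit $\overline\eta$ and counit $\overline\epsilon$, which is the assertion. By faithfulness of precomposition with $Q_2$, the first identity may be checked after whiskering with $Q_2$ on the right, and the second after whiskering with $Q_1$; and since $Q_2$ is essentially surjective it is then enough to evaluate at objects of the form $Q_2 X$, $X \in \sD_2$. Unravelling the defining relations $\overline\eta\,Q_2 = Q_2\eta$, $\overline\epsilon\,Q_1 = Q_1\epsilon$ together with the compatibilities $Q_2 G = \overline G\,Q_1$ and $Q_1 F = \overline F\,Q_2$, the value of the first identity at $Q_2 X$ turns out to be precisely the image under the functor $Q_1$ of the triangle identity $(\epsilon F)(F\eta) = \Id_F$ evaluated at $X$; it therefore holds because $Q_1$ preserves composition and identities. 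The second identity is treated identically with the roles of $Q_1$ and $Q_2$ interchanged.

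I expect no genuine obstacle: the statement is, as noted, well known. The only point that wants care — and which is the real content of the argument — is the bookkeeping in the last step, namely keeping track of the several whiskerings and the identifications $Q_2 G = \overline G\,Q_1$, $Q_1 F = \overline F\,Q_2$ so that the composites align with the images of the original triangle identities. A more computational route would be to construct the adjunction isomorphism $\Hom_{\overline\sD_1}(\overline F\,\overline Y,\overline X) \simeq \Hom_{\overline\sD_2}(\overline Y,\overline G\,\overline X)$ directly through the calculus of fractions describing morphisms in the Verdier quotients, but this is messier and I would avoid it.
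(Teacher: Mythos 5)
Your proposal is correct and follows essentially the same route as the paper: the paper's own proof consists precisely of the observation that the unit $\Id_{\sD_2}\rarrow G\circ F$ and counit $F\circ G\rarrow\Id_{\sD_1}$ induce morphisms $\Id_{\overline\sD_2}\rarrow\overline G\circ\overline F$ and $\overline F\circ\overline G\rarrow\Id_{\overline\sD_1}$, which is exactly your descent of $\eta$ and $\epsilon$ along the Verdier quotient functors. Your verification of the triangle identities by whiskering with $Q_1$, $Q_2$ is the bookkeeping the paper leaves implicit, and it is carried out correctly.
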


\begin{proof}
 The adjunction morphisms $F\circ G\rarrow\Id_{\sD_1}$ and
$\Id_{\sD_2}\rarrow G\circ F$ induce adjunction morphisms
$\overline F\circ \overline G\rarrow\Id_{\overline\sD_1}$ and
$\Id_{\overline\sD_2}\rarrow \overline G\circ \overline F$.
\end{proof}

\subsection{Triangulated equivalences}
 The following theorem describes the situation in which the adjoint
triangulated functors $\boR\Psi$ and $\boL\Phi$ turn out to be
triangulated equivalences (cf.\ the proofs of~\cite[Theorems~4.9
and~5.10]{Pmgm}, \cite[Theorems~3.6 and~4.3]{Pmc}, and
\cite[Theorem~7.6]{PMat}, where this technique was used).

\begin{thm} \label{appx-triangulated-equivalence-thm}
 In the context of Section~\ref{deriving-adjoints-appx},
suppose that the adjoint derived functors\/ $\boR\Psi\:\sD^\b(\sE)
\rarrow\sD^\b(\sF)$ \eqref{R-Psi-D-b} and\/
$\boL\Phi\:\sD^\b(\sF)\rarrow\sD^\b(\sE)$ \eqref{L-Phi-D-b}
are mutually inverse triangulated equivalences.
 Then so are the adjoint derived functors\/ $\boR\Psi\:\sD^\star(\sE)
\rarrow\sD^\star(\sF)$ \eqref{R-Psi-D-star} and\/
$\boL\Phi\:\sD^\star(\sF)\rarrow\sD^\star(\sE)$ \eqref{L-Phi-D-star}
for all the symbols\/ $\star=\b$, $+$, $-$, $\varnothing$, $\abs+$,
$\abs-$, or~$\abs$, and also for any one of the symbols\/ $\star=\co$
or\/~$\ctr$ for which these two functors are defined by
the constructions of Sections~\ref{derived-functor-construction-appx}%
\+-\ref{dual-setting-appx}.

 Moreover, assume that the adjunction morphisms\/
$\boL\Phi(\Psi(J))\rarrow J$ and $P\rarrow\boR\Psi(\Phi(P))$ are
isomorphisms in\/ $\sD^\b(\sE)$ and\/ $\sD^\b(\sF)$ for all
objects $J\in\sJ$ and $P\in\sP$.
 Then the adjoint derived functors~\eqref{R-Psi-D-star}
and~\eqref{L-Phi-D-star} are mutually inverse triangulated
equivalences for all the symbols\/~$\star$ for which they are defined.
\end{thm}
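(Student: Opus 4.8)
The plan is to reduce the claim, for an arbitrary symbol~$\star$, to the corresponding statement about the bounded derived categories of the exact categories of complexes, $\sD^\b(\sC(\sE))$ and $\sD^\b(\sC(\sF))$, and then to prove that statement by a d\'evissage. For $\star=\b$ the first assertion requires nothing, as there the two functors \emph{are} the hypothesised equivalences~\eqref{R-Psi-D-b}--\eqref{L-Phi-D-b}. For the remaining conventional and exotic symbols, recall from Sections~\ref{derived-functor-construction-appx}--\ref{dual-setting-appx} that $\boR\Psi\:\sD^\star(\sE)\to\sD^\star(\sF)$ is obtained from $\boR\Psi_\sC\:\sD^\b(\sC(\sE))\to\sD^\b(\sC(\sF))$ by descent along the totalization Verdier quotient functors of Proposition~\ref{d-b-c-d-abs-efimov}, Corollary~\ref{d-b-c-d-conv}, and their coderived/contraderived variants (and for $\star=+,-,\abs+,\abs-$ it is in addition simply the restriction of the functor for $\star=\varnothing$ or~$\abs$ along the fully faithful embeddings of~\cite[Lemma~A.1.1]{Pcosh}), and similarly $\boL\Phi$ descends from $\boL\Phi_\sC$, with $\boL\Phi_\sC$ left adjoint to $\boR\Psi_\sC$ by Section~\ref{deriving-adjoints-appx}. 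The kernels of these quotient functors correspond to one another under $\boR\Psi_\sC$ and $\boL\Phi_\sC$ --- a totalization of a contractible (resp. acyclic, coacyclic, contraacyclic) complex of complexes over $\sE$ goes to one of the same kind over $\sF$, and conversely, which in the $\co$ and $\ctr$ cases is exactly where the extra assumptions of Sections~\ref{posing-problem-appx} and~\ref{dual-setting-appx} enter --- so the remainder of the reduction is formal: an adjoint pair of mutually inverse equivalences, descended along a compatible pair of Verdier quotients (cf.\ Lemma~\ref{descent-adjunction-lem}), stays an adjoint pair whose unit and counit are isomorphisms, hence a pair of mutually inverse equivalences. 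Thus it remains to prove that $\boL\Phi_\sC$ and $\boR\Psi_\sC$ are mutually inverse triangulated equivalences.

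It is enough to prove the second assertion, because its hypothesis is a consequence of that of the first: if $\boR\Psi$, $\boL\Phi$ are already mutually inverse on $\sD^\b(\sE)$, $\sD^\b(\sF)$, then for $J\in\sJ$ one has $\boR\Psi(J)=\Psi(J)$ (a one-term complex is its own coresolution) and the counit $\boL\Phi(\Psi(J))\to J$ of an adjoint equivalence is invertible, and dually for $P\in\sP$ --- so the object-wise adjunction isomorphisms demanded by the second assertion hold. Assume them, then. Since the classes of objects on which the counit $\boL\Phi_\sC\boR\Psi_\sC\to\Id$ (resp. the unit $\Id\to\boR\Psi_\sC\boL\Phi_\sC$) is invertible are thick triangulated subcategories, and since, by stupid truncation in the outer complex direction, every object of $\sD^\b(\sC(\sE))$ lies in the thick subcategory generated by the objects of $\sC(\sE)$ placed in outer degree~$0$, it suffices to verify the counit invertible on each $E^\bu\in\sC(\sE)$ (and, dually, the unit invertible on each $F^\bu\in\sC(\sF)$); the same argument run with $\sE$, $\sF$ in place of $\sC(\sE)$, $\sC(\sF)$ also settles the case $\star=\b$. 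Pick a coresolution $0\to E^\bu\to J^{(0)}\to J^{(1)}\to\dotsb$ in the coresolving subcategory $\sC(\sJ)\subset\sC(\sE)$, with cosyzygies $Z^{(0)}=E^\bu$ and $Z^{(i+1)}=\operatorname{coker}(Z^{(i)}\to J^{(i)})\in\sC(\sE)$. First, the counit is invertible on each $J^{(i)}\in\sC(\sJ)$: this follows from its invertibility on objects of $\sJ$ by a further d\'evissage in the inner complex direction (the bounded inner truncations of $J^{(i)}$ lying in the thick subcategory generated by $\sJ$) together with the finite homological dimension of $\Psi$ and $\Phi$, which makes $\boR\Psi_\sC$ and $\boL\Phi_\sC$ commute with the limits over those truncations. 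Applying the counit transformation to the admissible short exact sequences $0\to Z^{(i)}\to J^{(i)}\to Z^{(i+1)}\to0$ and using that it is invertible on the middle terms, we get $W_{i+1}\simeq W_i[1]$ for the cones $W_i=\operatorname{cone}\bigl(\boL\Phi_\sC\boR\Psi_\sC(Z^{(i)})\to Z^{(i)}\bigr)$, hence $W_n\simeq W_0[n]$ for all $n\ge0$.

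The final step --- which is also the main obstacle --- is to deduce that $W_0=0$, and this is exactly where condition~\eqref{psi-exact-between-exact-cat} is used: it says that $\boR\Psi_\sC$ and $\boL\Phi_\sC$ have finite homological dimension, so, all the $Z^{(n)}$ being concentrated in a single cohomological degree, the cohomology of $\boL\Phi_\sC\boR\Psi_\sC(Z^{(n)})$, and therefore of $W_n$, is confined to a band of degrees independent of~$n$; an object $W_0$ all of whose shifts $W_0[n]$, $n\ge0$, lie in a fixed band must vanish. Informally, the finite homological dimension together with the passage to the categories of complexes guarantees that the tail of a coresolution is pushed out of every fixed range of degrees, which is what allows the induction along the coresolution to terminate; without this the d\'evissage would be empty. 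The remaining points --- that $\boR\Psi_\sC$, $\boL\Phi_\sC$ indeed have finite homological dimension, that they commute with the relevant limits, and the matching of quotient kernels in the $\co$ and $\ctr$ cases --- are routine though somewhat lengthy, and follow the pattern of~\cite[Appendix~B]{Pmgm}, \cite[Theorems~3.6 and~4.3]{Pmc}, and~\cite[Theorem~7.6]{PMat}, with the simplification of~\cite[Appendix~A]{Ef} already built into Proposition~\ref{d-b-c-d-abs-efimov}.
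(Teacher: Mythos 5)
Your reduction to the bicomplex level, the descent along the totalization quotient functors via Lemma~\ref{descent-adjunction-lem}, and the observation that the first assertion follows from the second all agree with the paper. But the core of your argument --- establishing that $\boL\Phi_\sC$ and $\boR\Psi_\sC$ are mutually inverse on $\sD^\b(\sC(\sE))$ and $\sD^\b(\sC(\sF))$ --- has a genuine gap at the step where you claim the counit is invertible on an object $J^{(i)}\in\sC(\sJ)$, i.e., on an \emph{unbounded} complex of objects of $\sJ$ placed in outer degree~$0$. You propose a ``d\'evissage in the inner complex direction'' from the bounded inner truncations, combined with $\boR\Psi_\sC$ and $\boL\Phi_\sC$ ``commuting with the limits over those truncations''. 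Neither half of this works as stated: the thick subcategory of $\sD^\b(\sC(\sE))$ generated by bounded inner complexes does not contain the unbounded ones, and the homotopy (co)limits over truncations you want to pass to do not exist in $\sD^\b(\sC(\sE))$ --- no infinite products or direct sums are assumed in $\sE$ or $\sB$, and even if they were, no argument is offered for the claimed commutation. This is not one of the ``routine though somewhat lengthy'' remaining points; it is the crux. The subsequent band argument ($W_n\simeq W_0[n]$ with all $W_n$ representable by complexes in a fixed range of degrees, hence $W_0=0$) also relies on a vanishing-by-degree statement for Hom groups in the bounded derived category of an exact category that you neither state nor prove, though that part looks more repairable.

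The paper's proof avoids all of this by a different and much shorter mechanism: for any exact category $\sG$ the termwise evaluation functors $\Theta^n_\sG\:\sD^\b(\sC(\sG))\rarrow\sD^\b(\sG)$, \ $n\in\boZ$, form a jointly conservative family (a complex of complexes is acyclic if and only if it is termwise acyclic) and commute with the functors $\boR\Psi_\sC$, $\boL\Phi_\sC$ and $\boR\Psi$, $\boL\Phi$ on~$\sD^\b$. Hence the cone of either adjunction morphism on a bicomplex vanishes as soon as it vanishes termwise, which reduces the first assertion directly to the hypothesis on $\sD^\b(\sE)$ and $\sD^\b(\sF)$, with no induction along coresolutions and no band argument. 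For the second assertion the coresolution is used only once, and only for a \emph{bounded below} complex: replacing an object $E\in\sE$ by its $\sJ$\+coresolution viewed in $\sD^+(\sE)$ and applying the same termwise criterion reduces the adjunction isomorphism on objects of $\sE$ to the assumed isomorphisms on objects of $\sJ$ (and dually for $\sF$ and~$\sP$). If you want to salvage your outline, replace your inner d\'evissage by this conservativity argument; at that point the coresolution induction and the shift-out-of-a-band step become unnecessary.
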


\begin{proof}
 A complex of complexes in an exact category $\sG$ is acyclic if and
only if it is termwise acyclic.
 In other words, one can consider the family of functors
$\Theta^n_\sG\:\sC(\sG))\rarrow\sG$, indexed by the integers~$n$,
assigning to a complex $G^\bu$ its $n$\+th term~$G^n$.
 Then the family of induced triangulated functors $\Theta^n_\sG\:
\sD(\sC(\sG))\rarrow\sD(\sG)$ is conservative in total.
 This means that for any nonzero object $G^{\bu,\bu}\in\sD(\sC(\sG))$
there exists $n\in\boZ$ such that $\Theta^n_\sG(G^{\bu,\bu})\ne0$
in $\sD(\sG)$.

 Now the two such functors $\Theta^n_\sE\:\sD^\b(\sC(\sE))\rarrow
\sD^\b(\sE)$ and $\Theta^n_\sF\:\sD^\b(\sC(\sF))\rarrow\sD^\b(\sF)$
form commutative diagrams with the adjoint derived
functors~(\ref{R-Psi-D-b}\+-\ref{R-Psi-C-D-b})
and~(\ref{L-Phi-D-b}\+-\ref{L-Phi-C-D-b}).
 Therefore, the adjoint functors~\eqref{R-Psi-C-D-b}
and~\eqref{L-Phi-C-D-b} are mutually inverse equivalences whenever
so are the adjoint functors~\eqref{R-Psi-D-b} and~\eqref{L-Phi-D-b}.
 It remains to point out that, in the context of
Lemma~\ref{descent-adjunction-lem}, the two adjoint functors
$\overline F$ and $\overline G$ are mutually inverse equivalences
whenever so are the two adjoint functors $F$ and~$G$.

 This proves the first assertion of the theorem, and in fact somewhat
more than that.
 We have shown that the adjunction morphism
$\boL\Phi(\boR\Psi(E^\bu))\rarrow E^\bu$ is an isomorphism in
$\sD^\star(\sE)$ whenever for every $n\in\boZ$ the adjunction morphism
$\boL\Phi(\boR\Psi(E^n))\rarrow E^n$ is an isomorphism in $\sD^\b(\sE)$.
 Now, replacing an object $E\in\sE$ by its coresolution $J^\bu$ by
objects from $\sJ$, viewed as an object in $\sD^\star(\sE)$ with
$\star=+$, we see that it suffices to check that the adjunction
morphism is an isomorphism for an object $J\in\sJ$.
 Similarly, the adjunction morphism $F^\bu\rarrow
\boR\Psi(\boL\Phi(F^\bu))$ is an isomorphism in $\sD^\star(\sF)$
whenever for every $n\in\boZ$ the adjunction morphism
$F^n\rarrow\boR\Psi(\boL\Phi(F^n))$ is an isomorphism in $\sD^\b(\sF)$.
 Replacing an object $F\in\sF$ by its resolution $P^\bu$ by objects
from $\sP$, viewed as an object in $\sD^\star(\sF)$ with $\star=-$,
we see that it suffices to check that the adjunction morphism is
an isomorphism for an object $P\in\sP$.
\end{proof}

\bigskip

\end{document}